\documentclass[reqno,centertags,11pt,a4paper]{amsart}
\overfullrule=5pt
\usepackage{amssymb,mathrsfs}
\usepackage{color,umoline}
\usepackage[dvipsnames]{xcolor}
\usepackage{graphicx}
\usepackage{enumitem}
\usepackage[font=small,labelfont=bf]{caption}
\usepackage{tikz, caption, subcaption}
\usepackage{relsize}
\usepackage[mathscr]{eucal}
\usepackage{verbatim}
\usepackage[draft]{todonotes}
\usepackage{hyperref}
\usepackage{amsthm}
\usepackage[dvipsnames]{xcolor}
\usetikzlibrary{patterns}
\tikzstyle{EDR}=[draw=lightgray,line width=0pt,preaction={clip, postaction={pattern=north east lines, pattern color=gray}}]
\tikzstyle{EDR1}=[draw=lightgray,line width=0pt,preaction={clip, postaction={pattern=north west lines, pattern color=gray}}]

\addtolength{\baselineskip}{0.5\baselineskip}
\addtolength{\parskip}{10pt}

\textwidth=15cm \textheight=23cm
\oddsidemargin=0.5cm \evensidemargin=0.5cm
\voffset -0.75 true cm
\hoffset 0.25 true cm

\definecolor{mygray}{gray}{0.95}

\definecolor{mypink1}{rgb}{1.2,1.1,0.9}

\definecolor{mypink2}{rgb}{1.0,0.95 ,0.9}

\definecolor{mypink3}{rgb}{1.0,0.6,0.7}

\numberwithin{equation}{section}

\newtheorem{theorem}{Theorem}[section]
\newtheorem{definition}[theorem]{Definition}
\newtheorem{lemma}[theorem]{Lemma}

\newtheorem{proposition}[theorem]{Proposition}
\newtheorem{remark}[theorem]{Remark}

\numberwithin{equation}{section}

\newcommand{\supp}{\operatorname{Supp}}

\newcommand{\beq}{\begin{equation}}
	\newcommand{\eeq}{\end{equation}}
\newcommand{\beqq}{\begin{equation*}}
	\newcommand{\eeqq}{\end{equation*}}
\newcommand{\ben}{\begin{eqnarray}}
	\newcommand{\een}{\end{eqnarray}}
\newcommand{\beno}{\begin{eqnarray*}}
	\newcommand{\eeno}{\end{eqnarray*}}

\begin{document}
\setcounter{page}{1}
\title[Nc circular maximal inequality and local smoothing estimate]
{Noncommutative Bourgain's circular maximal theorem and a local smoothing estimate on the generalized Moyal planes}

\author[]{Guixiang Hong}
\address{
Institute for Advanced Study in Mathematics\\
Harbin Institute of Technology\\
Harbin
150001\\
China}
\email{gxhong@hit.edu.cn}

\author[]{Xudong Lai}
\address{
Institute for Advanced Study in Mathematics\\
Harbin Institute of Technology\\
Harbin
150001\\
China;
Zhengzhou Research Institute\\
Harbin Institute of Technology\\
Zhengzhou
450000\\
China}

\email{xudonglai@hit.edu.cn}
\author[]{Liang Wang}
\address{
School of Mathematics and Statistics\\
Wuhan University\\
Wuhan 430072\\
China;
Department of Mathematics\\
 City University of Hong Kong\\
 Hong Kong SAR\\
  China
}
\email{wlmath@whu.edu.cn}

\thanks{}

\subjclass[2010]{Primary  46L51; Secondary 42B20}
\keywords{Noncommutative $L_p$-spaces, quantum Euclidean
spaces, Moyal plane, noncommutative circular maximal inequality, local smoothing estimates, quantum wave equation, noncommutative square function inequalities associated to uniform decomposition, noncommutative Kakeya type maximal inequalities, microlocal analysis}

\date{\today
}
\begin{abstract}
 In this paper, we establish a local smoothing estimate on the generalized Moyal plane, also known as the two-dimensional quantum Euclidean space. This is a quantum version of the local smoothing estimate for the free wave equation due to Mockenhaupt$-$Seeger$-$Sogge \cite{MSS}. As an application and simultaneously one motivation, we obtain a noncommutative analogue of Bourgain's circular maximal theorem, resolving the problem after \cite{Hong}.
  \end{abstract}

\maketitle
\section{Introduction}

In 1976, Stein \cite{St76} introduced the spherical means on the Euclidean spaces and showed the surprising maximal inequality on $L_p(\mathbb R^d)$ for $d\geq 3$ and $\frac{d}{d-1}<p\leq\infty$, and the latter range is sharp in the sense that the maximal inequality cannot be true for $p\leq \frac{d}{d-1}$. Note that when $d=2$, $\frac{d}{d-1}=2$ and the circular maximal inequality cannot be true on $L_2(\mathbb R^d)$. This presented a significant obstacle to apply the $L_2$-orthogonality to extend Stein's results for circular means. Ten years later, Bourgain \cite{Bou85} (see \cite{Bo86} for a simpler but more general version) was able to surmount this obstacle by exploring interpolation, deep geometric consideration and variants of stopping time arguments, obtaining the circular maximal inequality and resolving the open problem left in \cite{St78}.

Motivated by the noncommutative martingale and ergodic maximal inequality \cite{Jun02, JX07, Mei09}, Stein's spherical maximal inequality was established in the noncommutative framework more than ten years ago by the first author and applied to the dimension-free operator-valued Hardy-Littlewood maximal inequality \cite{Hong}, see e.g. \cite{Pis98, Jun02, Anantharaman, CR, CXY, HJP,HLX,HLW, HRW, HWW} for more information on noncommutative maximal inequalities. However, Bourgain's arguments \cite{Bou85,Bo86} seem to be too difficult to admit a noncommutative analogue and thus the noncommutative version of circular maximal inequality had been left unsolved  before. In the classical setting, after Sogge's breakthrough \cite{Sogge92}, Muckenhoupt, Seeger and Sogge \cite{MSS} had provided an alternative proof of Bourgain's circular maximal inequality by exploring its close connection to local smoothing estimate, Bochner-Riesz means, Fourier restriction estimate and Kakeya's maximal inequality in two dimensional Euclidean spaces. See e.g. \cite{Tao} for more tight relationships among these seemingly unrelated topics. This leads us to study  these topics in the noncommutative setting by first focusing on quantum Euclidean spaces.

Quantum Euclidean spaces, the generalized Moyal planes or phase spaces, are the model examples of noncommutative locally compact manifolds, having appeared frequently in the literature of mathematical physics, such as string theory and noncommutative field theory \cite{DN01, NS98, SW99}.  In recent years, harmonic analysis on these noncommutative manifolds has been developing very rapidly, including the pointwise convergence of Fourier series and functional spaces on quantum tori \cite{CXY, Lai21, Xiong18},  singular integral theory and  pseudodifferential operator
theory \cite{GJM, GJP, JPMX}, as well as commutator estimates and quantum differentiability for quantum Euclidean spaces \cite{LSZ, Xiong1}. Note that all these work can be viewed as theories insider Calder\'on-Zygmund's framework that are closely related to {\it Littlewood-Paley square functions and noncommutative Hardy-Littlewood maximal inequalities}, but it is well-known that the previously mentioned topics around Fourier restriction estimates go dramatically different from Calder\'on-Zygmund theory. This provides us another impetus to investigate the local smoothing estimates of wave equation and the theory of partial differential equations on quantum Euclidean spaces.

 Along this research line, we have made some progresses \cite{FHW23,HLW23,Lai21}, including the sharp estimate of the Bochner-Riesz means on two-dimensional quantum tori, the Fourier restriction estimates with optimal indices on two-dimensional quantum Euclidean spaces and the noncommutative sharp endpoint $L_p$-estimates of Schr\"odinger equations; see \cite{ CGS, CHWW1,  CHWW2,M23,Rosenberg} for other progresses on partial differential equations.

 In this paper, we study the local smoothing estimates of quantum wave equations, obtaining the quantum Euclidean space analogues of the above-mentioned Muckenhoupt-Seeger-Sogge's results and resolving the problem of noncommutative circular maximal inequality.

The quantum Euclidean spaces admit  several equivalent definitions (see e.g. \cite{GIV,GJP, Re}). In this paper, we adopt the following one. Given a $d\times d $ real antisymmetric matrix $\theta$ with $d\in \mathbb N$, the quantum Euclidean space  $\mathcal{R}_\theta^d$ is a von Neumann subalgebra of $\mathcal{B}(L_2(\mathbb{R}^d))$ which is generated by  a family of unitary operators $\{U_\theta(r)\}_{r\in\mathbb{R}^d}$, where $\{U_\theta(r)\}_{r\in\mathbb{R}^d}$ satisfies the following Weyl relation:
\begin{equation*}
  U_\theta(r)U_\theta(s)=e^{\frac i2(s,\theta r)}U_\theta(r+s),\quad \text{for\: all}\: r,s\in\mathbb{R}^d.
\end{equation*}
The space $\mathcal{R}_\theta^d$ is a semifinite von Neumann algebra with a normal semifinite faithful trace $\tau_\theta$, and we denote by $L_p(\mathcal{R}_\theta^d)$ the noncommutative $L_p$ space associated to $(\mathcal{R}_\theta^d, \tau_\theta)$. When $\theta=0$, $L_p(\mathcal{R}_\theta^d)$ reduces to the usual $L_p$ space defined on $\mathbb{R}^d$ with the Lebesgue measure. As in the case $\theta=0$, there exists a canonical Laplacian $\Delta_\theta$ on $\mathcal R^d_\theta$. We refer the reader to Section \ref{S2} for more information and notions appearing below.

Let $u:\mathbb{R}_+\rightarrow \mathcal{R}_\theta^d$ be the solution to the Cauchy problem of  the wave equation
\begin{eqnarray}\label{weqq}
\left\{
\begin{array}{ll}
   (\partial_{tt}-\Delta_\theta)u=0, \  t\in \mathbb{R}_+,  \\[5pt]
 u(0)=x_0,\\[5pt]\partial_tu(0)=x_1
\end{array}
\right.
\end{eqnarray}
where $x_0,x_1\in\mathcal{S}(\mathcal{R}_\theta^d)$, the Schwartz class on $\mathcal{R}_\theta^d$.

As in the classical case $\theta=0$, one may represent the solution of \eqref{weqq} as Fourier multipliers,
\begin{equation*}
  u(t)=\int_{\mathbb{R}^d}\cos(|t\xi|){\hat{x}_0}(\xi)U_\theta(\xi)d\xi+ \int_{\mathbb{R}^d}\sin(|t\xi|)\frac{{\hat{x}_1}(\xi)}{|\xi|}U_\theta(\xi)d\xi,
\end{equation*}
where $\hat{x}(\xi)=\tau_\theta(xU^*_\theta(\xi))$.

For $\theta=0$ and a fixed time $t$,  Miyachi \cite{Miyachi80} and Peral \cite{Per80} showed the following sharp $L_p$ estimate for $p\in(1,\infty)$,
\begin{equation}\label{PM}
   \|u(t)\|_{L_{p}(\mathbb{R}^d)}\leq C_{p,t}\big(\|x_0\|_{L_{p, s_p}(\mathbb{R}^d)}+ \|x_1\|_{L_{p, s_p-1}(\mathbb{R}^d)}\big),
\end{equation}
where $s_p=(d-1)|\frac{1}{2}-\frac{1}{p}|$, $C_{p,t}$ is locally bounded and $\|\cdot\|_{L_{p, s}}$ denotes the inhomogeneous Sobolev norm with $s$ derivatives (see e.g. \cite[Page 13]{GF}). This estimate is sharp in the sense that the inequality \eqref{PM} fails if $s_p$ is replaced by any $\sigma$ with $\sigma<s_p$.

For general $\theta$, we get the same fixed time $L_p$ estimate.

\begin{proposition}\label{prop:fixed estimate}
With all the notions above,
\begin{equation}\label{PM2}
 \|u(t)\|_{L_{p}(\mathcal{R}_\theta^d)}\leq C_{p,t}\big(\|x_0\|_{L_{p,s_p}(\mathcal{R}_\theta^d)}+ \|x_1\|_{L_{p,s_p-1}(\mathcal{R}_\theta^d)}\big).
\end{equation}

\end{proposition}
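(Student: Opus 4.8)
The plan is to pass to the Fourier-multiplier picture, and then reduce \eqref{PM2} to an $L_2$-bound (which is essentially free) together with a single endpoint estimate, interpolated by an analytic family. Since $-\Delta_\theta U_\theta(\xi)=|\xi|^2U_\theta(\xi)$, the functional calculus gives $u(t)=T_{\cos(t|\xi|)}x_0+T_{\sin(t|\xi|)/|\xi|}x_1$, where $T_\phi x=\int_{\R^d}\phi(\xi)\hat x(\xi)U_\theta(\xi)\,d\xi$; and because $\|x\|_{L_{p,s}(\mathcal{R}_\theta^d)}=\|(1-\Delta_\theta)^{s/2}x\|_{L_p(\mathcal{R}_\theta^d)}$, the proposition is equivalent to the boundedness on $L_p(\mathcal{R}_\theta^d)$ of the two Fourier multipliers with symbols $\cos(t|\xi|)(1+|\xi|^2)^{-s_p/2}$ and $\frac{\sin(t|\xi|)}{|\xi|}(1+|\xi|^2)^{-(s_p-1)/2}$. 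Writing $\cos,\sin$ through $e^{\pm it|\xi|}$, and noting that for $|\xi|\ge1$ the factor $\frac{(1+|\xi|^2)^{1/2}}{|\xi|}$ is a symbol of order $0$, both symbols are, modulo a smooth compactly supported piece, of the form $a^{\pm}(\xi)e^{\pm it|\xi|}$ with $a^{\pm}$ supported in $\{|\xi|\ge1\}$ and of order $-s_p$. The compactly supported (low-frequency) piece has a Schwartz convolution kernel, hence gives a bounded operator on every $L_p(\mathcal{R}_\theta^d)$, $1\le p\le\infty$, by the elementary mapping properties of convolution operators on $\mathcal{R}_\theta^d$ (see Section~\ref{S2}); it remains to treat the high-frequency oscillatory part.

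Fix the sign $+$ (the $-$ case is its adjoint and is handled symmetrically). Consider the analytic family of Fourier multipliers $\{T_z\}$ on the strip $0\le\mathrm{Re}\,z\le(d-1)/2$ with symbols $\mathfrak{n}_z(\xi)=e^{z^2}\,(1-\chi(\xi))\,b(\xi)\,(1+|\xi|^2)^{-z/2}\,e^{it|\xi|}$, where $\chi\in C_c^\infty$ equals $1$ near the origin, $b$ is the relevant bounded order-$0$ symbol, and the factor $e^{z^2}$ tames the growth in $\mathrm{Im}\,z$; thus $T_{s_p}$ is, up to the harmless factor, the operator to be bounded. On $\mathrm{Re}\,z=0$ one has $|\mathfrak{n}_z(\xi)|\lesssim1$, so $\|T_z\|_{L_2(\mathcal{R}_\theta^d)\to L_2(\mathcal{R}_\theta^d)}\lesssim1$, since $x\mapsto\hat x$ is (up to a constant) a unitary from $L_2(\mathcal{R}_\theta^d)$ onto $L_2(\R^d)$. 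On $\mathrm{Re}\,z=(d-1)/2$ one must prove the endpoint bound $\|T_z\|_{L_\infty(\mathcal{R}_\theta^d)\to\mathrm{BMO}(\mathcal{R}_\theta^d)}\lesssim1$ (equivalently $\mathcal{H}_1\to L_1$), uniformly on the line. Granting this, Stein interpolation for analytic families on the noncommutative $L_p$-scale (with $\mathrm{BMO}(\mathcal{R}_\theta^d)$ playing the role of the $p=\infty$ endpoint) yields $\|T_{s_p}\|_{L_p(\mathcal{R}_\theta^d)\to L_p(\mathcal{R}_\theta^d)}\lesssim1$ for $2\le p<\infty$, the order $s_p=(d-1)(\tfrac12-\tfrac1p)$ being precisely the one obtained by interpolating $0$ at $L_2$ with $(d-1)/2$ at $\mathrm{BMO}$; the range $1<p<2$ then follows by duality, the adjoint symbol $e^{-it|\xi|}\overline{a^{+}(\xi)}$ being of the same class. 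Combined with the low-frequency bound, this proves \eqref{PM2}.

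The crux — and the main obstacle — is the endpoint bound on $\mathrm{Re}\,z=(d-1)/2$, at the critical order $-(d-1)/2$, where the fixed-time wave kernel just fails to be a Calder\'on--Zygmund kernel and $\mathrm{BMO}$ is needed to absorb the residual (logarithmic) singularity on the light cone. I would decompose $\mathfrak{n}_z$ dyadically over $|\xi|\approx2^k$ and use stationary phase to obtain the standard fixed-time kernel bounds — each piece is, up to Schwartz tails, concentrated in a $2^{-k}$-neighborhood of $\{|x|=t\}$ with $|K_{z,k}(x)|\lesssim 2^{-k}\big(1+2^k\big||x|-t\big|\big)^{-N}$, uniformly in $z$ on the line — these being exactly the estimates underlying Miyachi--Peral \cite{Miyachi80,Per80} and \cite{MSS}. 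One then feeds them into the noncommutative Calder\'on--Zygmund / Hardy-space machinery available on quantum Euclidean spaces \cite{GJM,JPMX}: what has to be checked is that the classical $L_\infty\to\mathrm{BMO}$ argument — local part controlled by the $L_2$-bound, far part by kernel size, oscillation and cancellation over atoms — uses only translation invariance, the trace, Plancherel and pointwise kernel estimates, and never the commutativity of pointwise multiplication, so that it transfers verbatim to $\mathcal{R}_\theta^d$. (Alternatively, if one is willing to invoke a transference theorem carrying classical completely bounded $L_p$-Fourier-multiplier norms to $L_p(\mathcal{R}_\theta^d)$, the argument collapses to checking that the Miyachi--Peral multiplier is completely bounded — which it is, being $L_2$-bounded with a Calder\'on--Zygmund kernel away from, and an $\mathcal{H}_1$-endpoint at, the critical order — whence \eqref{PM2} is immediate from \eqref{PM}.)
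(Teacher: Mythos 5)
Your overall skeleton (reduce to Fourier multipliers of order $-s_p$ times $e^{\pm it|\xi|}$, prove an $L_2$ bound and a critical endpoint bound, interpolate an analytic family, then dualize) is the right one, and your parenthetical alternative is in fact essentially the route the paper takes. But your primary route has a genuine gap at its crux: the endpoint bound $L_\infty(\mathcal{R}_\theta^d)\to\mathrm{BMO}(\mathcal{R}_\theta^d)$ (equivalently $\mathcal H_1\to L_1$) for the critical-order symbol $a(\xi)e^{it|\xi|}$ with $a$ of order $-(d-1)/2$. You propose to ``check that the classical argument transfers verbatim'' using the noncommutative Calder\'on--Zygmund machinery of \cite{GJM,JPMX}; this is precisely what does not work, and the paper says so explicitly (``the estimate \eqref{PM2} cannot be deduced from noncommutative H\"ormander--Mihlin Fourier multiplier theorems or Calder\'on--Zygmund theory''). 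The classical Miyachi--Peral endpoint is an $H_1\to L_1$ argument run on atoms supported in cubes, exploiting the localization of the kernel near the light cone relative to the support of the atom; on $\mathcal{R}_\theta^d$ with $\theta\neq0$ there are no spatial cubes or compactly supported atoms, the existing $H_1$/BMO theory on quantum Euclidean spaces is built for genuinely Calder\'on--Zygmund kernels, and the fixed-time wave kernel at critical order fails the relevant kernel hypotheses. So the step you flag as ``what has to be checked'' is not a verification but the entire difficulty, and it is left unproved.

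The paper circumvents this by a two-step detour that your parenthesis gestures at but does not carry out. First, a transference via the $*$-homomorphism $\sigma_\theta:U_\theta(\xi)\mapsto \exp_\xi\otimes U_\theta(\xi)$ reduces \eqref{PM2} to the \emph{operator-valued} estimate on $L_p(L_\infty(\mathbb{R}^d)\overline{\otimes}\mathcal M)$; since $\mathbb{R}^d$ is not discrete this transference is itself nontrivial (it needs the heat-kernel approximation $h_\varepsilon$ of Section~\ref{S6} and, because of the Hausdorff--Young step, only works for $p\ge2$, the range $1<p<2$ being recovered by duality at the level of $\mathcal{R}_\theta^d$). Second, in the operator-valued setting the complete boundedness you assert is supplied by a precise mechanism, namely Mei's theorem (\cite[Theorem 6.4]{Mei09}) that \emph{any} Fourier multiplier bounded on the classical $H_1(\mathbb{R}^d)$ is automatically bounded on the operator-valued Hardy space $H_1^{cr}(\mathcal N_d)$; combined with Miyachi's classical $H_1$ bound at order $s_1=(d-1)/2$, Plancherel at $L_2$, interpolation and duality, this gives the operator-valued estimate. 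If you replace your direct $\mathrm{BMO}(\mathcal{R}_\theta^d)$ endpoint by this transference-plus-Mei argument, your proof closes; as written, the endpoint is asserted rather than proved.
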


Note that as in the classical case the estimate \eqref{PM2} cannot be deduced from noncommutative H\"ormander-Mihlin Fourier multiplier theorems or Calder\'on-Zygmund theory. We will first explore a transference technique to reduce \eqref{PM2} to an operator-valued one, and then use an observation made by Mei \cite{Mei09} which says that any Fourier multiplier on classical Hardy spaces $H_1(\mathbb R^d)$ is automatically completely bounded. This, combined with the noncommutative  Hardy-BMO theory and analytic interpolation, will conclude the proof. It is worthwhile to mention that our transference technique for $\mathbb R^d$ is different from the ones used in the literature (cf. \cite{CPPR15, CP21}) for discrete groups, and thus needs some new arguments; actually, there are some open problems regarding the similar transference techniques for non-discrete groups, see e.g. Remark \ref{rem:trans}, \cite[Remark 3.5]{CP21}. Since it is viewed as a motivating result of the main local smoothing estimate of the present paper, we will prove it in the Appendix.

By the inequality \eqref{PM2}, one then has trivially
\begin{equation}\label{e:trilocal}
 \Big(\int_1^2 \|u(t)\|_{L_{p}(\mathcal{R}_\theta^d)}^pdt\Big)^{1/p}\lesssim\|x_0\|_{L_{p,s_p}(\mathcal{R}_\theta^d)}+ \|x_1\|_{L_{p,s_p-1}(\mathcal{R}_\theta^d)},
\end{equation}
where $\|\cdot\|_{L_{p,s}(\mathcal{R}_\theta^d)}$ is the Sobolev norm. A natural problem here is that if we consider the time-space estimates \eqref{e:trilocal} directly, can we weaken the regularities requirement of the initial data $x_0$ and $x_1$ for the wave equation \eqref{weqq}.
The first main result of the present paper is the following non-trivial local smoothing estimate on $\mathcal{R}_\theta^2$ for all $2\times 2 $ real  antisymmetric matrix $\theta$.

\begin{theorem}\label{qeloc}
Let $\theta$ be a $2\times 2 $ real  antisymmetric matrix and assume that $2<p<\infty$. Then for all $\kappa<\kappa(p)$, we have
\begin{equation}\label{loc}
  \|u\|_{L_p(L_\infty([1,2])\overline{\otimes}\mathcal{R}_\theta^2)}\leq C_{p,\kappa}\big(\|x_0\|_{L_{p,s_p-\kappa}(\mathcal{R}_\theta^2)}+\|x_1\|_{L_{p,s_p-\kappa-1}(\mathcal{R}_\theta^2)}\big),
\end{equation}
where $u(t)$ is the solution of the equation \eqref{weqq}, $C_{p,\kappa}$ is a constant that depends on $p,\kappa$, and
\begin{eqnarray*}
 \kappa(p)=\left\{
\begin{array}{ll}
    1/4-1/2p\ , \quad \ 2<p<4\ , \\[5pt]
 1/2p\ ,\quad\quad\quad\quad 4\leq p<\infty.
\end{array}
\right.
\end{eqnarray*}
\end{theorem}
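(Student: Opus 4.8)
The plan is to transplant the Mockenhaupt--Seeger--Sogge argument \cite{MSS} to $\mathcal{R}_\theta^2$, replacing its four classical ingredients --- Littlewood--Paley theory, the second dyadic (angular) decomposition of the light cone, the reduction of a single angular piece to a translation by microlocal analysis, and C\'ordoba's $L^4$/Kakeya estimate --- by noncommutative counterparts. First I would carry out the standard reductions: using $\cos(t\sqrt{-\Delta_\theta})=\tfrac12(e^{it\sqrt{-\Delta_\theta}}+e^{-it\sqrt{-\Delta_\theta}})$, and likewise for the $x_1$--term, which carries an additional factor $(-\Delta_\theta)^{-1/2}$ and thereby accounts for the one--derivative shift between the two terms in \eqref{loc}, and inserting a spatial Littlewood--Paley decomposition $x_0=\sum_{j\ge0}P_{2^j}x_0$, it suffices --- by the noncommutative Littlewood--Paley square function inequality on $L_p(\mathcal{R}_\theta^2)$, applied for each $t$ and integrated --- to establish, for every dyadic $\lambda=2^j$ and every $\kappa'<\kappa(p)$,
\[
\big\|e^{it\sqrt{-\Delta_\theta}}f\big\|_{L_p(L_\infty([1,2])\overline{\otimes}\mathcal{R}_\theta^2)}\ \lesssim\ \lambda^{s_p-\kappa'}\,\|f\|_{L_p(\mathcal{R}_\theta^2)},\qquad \widehat f\ \text{supported in}\ |\xi|\sim\lambda.
\]
Given $\kappa<\kappa(p)$ one then picks $\kappa'\in(\kappa,\kappa(p))$, and the surplus $\kappa'-\kappa>0$ makes the dyadic sum converge, so the whole problem reduces to this single--frequency half--wave estimate.

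Next I would decompose the annulus $|\xi|\sim\lambda$ into $N\sim\lambda^{1/2}$ sectors $\Theta_\nu$ of aperture $\sim\lambda^{-1/2}$, with Fourier projections $P_\nu$ and $u_\nu(t)=e^{it\sqrt{-\Delta_\theta}}P_\nu f$. The $u_\nu$ are orthogonal in $L_2(L_\infty([1,2])\overline{\otimes}\mathcal{R}_\theta^2)$ simply because they have disjoint spatial Fourier supports; passing to $p>2$ and invoking the noncommutative square function inequality associated to this angular decomposition (an essentially uniform decomposition after rescaling) reduces the above, at the cost of an arbitrarily small factor $\lambda^{\e}$, to bounding the column and row square functions $\big\|\big(\sum_\nu u_\nu^{*}u_\nu\big)^{1/2}\big\|$ and $\big\|\big(\sum_\nu u_\nu u_\nu^{*}\big)^{1/2}\big\|$ in $L_p(L_\infty([1,2])\overline{\otimes}\mathcal{R}_\theta^2)$, the two being symmetric and the general case following by interpolation. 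I would then treat each plank microlocally: on $\Theta_\nu$ the phase $t|\xi|$ differs from the linear phase $t\,\xi\cdot\omega_\nu$, with $\omega_\nu$ the centre direction of $\Theta_\nu$, by $O(1)$ on the support of $P_\nu$, so by the pseudodifferential calculus on quantum Euclidean spaces \cite{GJM,GJP} this correction is an $L_p(\mathcal{R}_\theta^2)$--bounded Fourier multiplier, uniformly in $t\in[1,2]$ and $\nu$. Hence $u_\nu(t)=\sigma_{t\omega_\nu}\!\big(\widetilde P_\nu f\big)+\mathrm{error}_\nu(t)$, where $\sigma_r$ is the translation $*$--automorphism of $\mathcal{R}_\theta^2$ and $\widetilde P_\nu$ a fattened sector projection: modulo negligible terms $u_\nu$ is carried by a ``quantum tube'' of dimensions $\lambda^{-1}\times\lambda^{-1/2}$ in the direction of $\Theta_\nu$ as $t$ sweeps $[1,2]$.

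The core is then a noncommutative Kakeya--type maximal inequality for $\big\|\sum_\nu u_\nu^{*}u_\nu\big\|_{L_{p/2}(L_\infty([1,2])\overline{\otimes}\mathcal{R}_\theta^2)}$ and its row analogue. At $p=2$ this is the identity $\int_1^2\sum_\nu\|u_\nu(t)\|_{L_2(\mathcal{R}_\theta^2)}^2\,dt=\|f\|_{L_2(\mathcal{R}_\theta^2)}^2$, with no loss, matching $\kappa(2)=s_2=0$. At $p=4$, traciality yields the exact analogue of C\'ordoba's reduction,
\[
\Big\|\sum_\nu u_\nu^{*}u_\nu\Big\|_{L_2(L_\infty([1,2])\overline{\otimes}\mathcal{R}_\theta^2)}^2=\int_1^2\sum_{\nu,\mu}\big\|u_\nu(t)\,u_\mu(t)^{*}\big\|_{L_2(\mathcal{R}_\theta^2)}^2\,dt,
\]
and after the microlocal reduction $u_\nu(t)u_\mu(t)^{*}$ has twisted--convolution Fourier support in $\Theta_\nu-\Theta_\mu$, whose size reflects the transversality of distinct sectors; this produces the gain $|\nu-\mu|^{-1}$ and, after summation, the $L^4$ endpoint with $\kappa(4)=1/8$ up to $\lambda^{\e}$. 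Interpolating this endpoint with the $p=2$ estimate gives the regime $2<p<4$, and interpolating it with the fixed--time bound \eqref{PM2} at $p=\infty$ gives the regime $4\le p<\infty$; the breakpoint $p=4$ is precisely where the C\'ordoba/Kakeya input enters. Summing over the dyadic frequencies and recombining the $x_0$-- and $x_1$--contributions then gives \eqref{loc}.

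The hard part will be the noncommutative Kakeya maximal inequality of the last step. C\'ordoba's argument is pointwise --- one counts, at each point of $\mathbb R^2$, how many $\lambda^{-1/2}$--tubes pass through it --- whereas the operators $u_\nu^{*}u_\nu$ do not commute and there is no pointwise supremum or overlap function on $\mathcal{R}_\theta^2$; one therefore needs a genuinely operator--valued Kakeya inequality, formulated with column/row $L_p$--norms and proved, plausibly, through a $TT^{*}$/bilinear argument together with the noncommutative Hardy--Littlewood maximal inequality of \cite{Hong} along the tube directions and a combinatorial bound on tube overlaps. A second, more technical, obstacle is to make the microlocal reduction rigorous on $\mathcal{R}_\theta^2$: since multiplication by spatial cut--offs is unavailable there, the localization and the estimation of the symbolic error terms must be handled entirely through the $L_p$--mapping properties of the quantum pseudodifferential calculus.
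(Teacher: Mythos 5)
Your outline reproduces the classical Mockenhaupt--Seeger--Sogge architecture (Littlewood--Paley reduction to a single frequency, angular decomposition into $\lambda^{-1/2}$-sectors, an $L^4$ biorthogonality/Kakeya step, interpolation with $p=2$ and the fixed-time $p=\infty$ bound), and the numerology is right. But there is a genuine gap: you propose to run the whole argument \emph{directly on} $\mathcal{R}_\theta^2$, and you yourself flag the two steps on which this founders --- a Kakeya-type inequality on a space with no points, tubes, or overlap functions, and a spatial/microlocal localization without commutative cut-offs --- without resolving either. These are not technicalities to be deferred; they are where the proof lives. The paper's essential first move, which is absent from your plan, is a \emph{transference}: via the $*$-homomorphism $\sigma_\theta(U(\xi))=\exp_\xi\otimes U(\xi)$, the intertwining identity $\sigma_\theta\circ T_m=(T_m\otimes id)\circ\sigma_\theta$, and a Gaussian-weight approximation (needed because $(\mathbb{R}^2,d\eta)$ is not a probability space), Theorem \ref{qeloc} is reduced to the operator-valued statement Theorem \ref{thmloc} over the \emph{commutative} base $\mathbb{R}^2\times[1,2]$ with noncommutative fiber $\mathcal{M}$. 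After that reduction, tubes, kernels concentrated on $2^{-j}\times 2^{-j/2}\times 1$ plates, and overlap counts are literal geometric objects again, and only the fiber-level noncommutativity has to be fought. Without this step your plan has no route to the Kakeya input.

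Two further points where your sketch would need repair even in the operator-valued setting. First, your C\'ordoba-type identity at $p=4$ leads to a double sum over all pairs $(\nu,\mu)$ of sectors, and the overlap function of the difference sets $\mathcal{U}_n^v-\mathcal{U}_m^w$ is \emph{not} uniformly $O(2^{j/2+2\epsilon j})$: it blows up near the origin on the near-diagonal $v\approx w$ (this is the content of Remark \ref{remark4}). The paper must split off $|v-w|\le 1000$ and control it by a strengthened square-function inequality for the $P^n$-decomposition (Lemma \ref{lpn}), keeping the geometric estimate (Lemma \ref{geo}) only for the separated pairs; because $|x^*x|\neq|xx^*|$ one cannot symmetrize this away. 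Second, the Kakeya input is not a pointwise tube-overlap count but an operator-valued maximal inequality in the $\|\sup^+\|$ sense (Lemmas \ref{ka} and \ref{kakeya}), proved by a Fourier-side $TT^*$ argument combined with the noncommutative Sobolev-embedding Lemma \ref{remax} in the angular parameter --- your suggestion of a ``$TT^*$/bilinear argument'' points in the right direction but is the part that actually has to be constructed. Finally, note that the interpolation in the paper is performed at the level of the Fourier integral operator bound (Proposition \ref{theorem3}) between $L_2$, $L_4$ and $L_\infty$, which is consistent with, and makes precise, your interpolation scheme.
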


This is the quantum Euclidean space analogue of Muckenhoupt-Seeger-Sogge's celebrated local smoothing estimate. Note that the latter has been playing an important role in motivating the striking progress on the local smoothing conjecture (see e.g. \cite{BD, GWZ20, Wo00}), which was posed by Sogge \cite{Sogge92} and is still open in the higher-dimensions $d\geq3$ with a recent resolution in the two dimensions \cite{GWZ20}. With Theorem \ref{qeloc}, it would be quite interesting to establish the noncommutative analogues of these progresses \cite{BD, Wo00}, especially, the quantum Euclidean space analogue of \cite{GWZ20}---\eqref{loc} with $\kappa<2\kappa(p)$. This turns out to be extremely challenging and will be taken care of elsewhere.

Via the above-mentioned non-standard transference  technique (see Section \ref{S6} for details), Theorem \ref{qeloc} reduces to the following operator-valued local smoothing estimate. Let $\mathcal{M}$ denote a semifinite von Neumann algebra equipped with semifinite normal faithful trace $\tau$. The associated dense ideal of elements with finite trace and $L_p$ spaces are denoted respectively by $\mathcal S(\mathcal M)$ and $L_p(\mathcal M)$.
Consider the tensor von Neumann algebra $L_\infty([1,2]\times\mathbb{R}^2)\overline{\otimes}\mathcal{M}$. For $f,g\in\mathcal{S}(\mathbb{R}^2)\otimes\mathcal{S}(\mathcal{M})$, one can define $u\in L_\infty([1,2]\times\mathbb{R}^2)\overline{\otimes}\mathcal{M}$ by \begin{equation*}
  u(x,t):=\int_{\mathbb{R}^d}e^{ix\xi}\cos(|t\xi|)\hat{f}(\xi)d\xi+ \int_{\mathbb{R}^d}e^{ix\xi}\sin(|t\xi|)\frac{\hat{g}(\xi)}{|\xi|}d\xi.
\end{equation*}
\begin{theorem}\label{thmloc}
Let $u,f,g$, $\kappa(p)$ be as above and assume that $2<p<\infty$. Then for all $\kappa<\kappa(p)$, we have
\begin{equation}\label{loc1}
  \|u\|_{L_p(L_\infty([1,2]\times\mathbb{R}^2)\overline{\otimes}\mathcal{M})}\leq C_{p,\kappa}\big(\|f\|_{L_{p,s_p-\kappa}(L_\infty(\mathbb{R}^2)\overline{\otimes}\mathcal{M})}+\|g\|_{L_{p,s_p-\kappa-1}(L_\infty(\mathbb{R}^2)\overline{\otimes}\mathcal{M})}\big),
\end{equation}
where  $\|f\|_{L_{p,s}(L_\infty(\mathbb{R}^2)\overline{\otimes}\mathcal{M})}=\big\|\big((1+|\cdot|)^{s/2}\hat{f}\big)^\vee\big\|_{L_{p}(L_\infty(\mathbb{R}^2)\overline{\otimes}\mathcal{M})}$  , and $C_{p,\kappa}$ is a constant that depends on $p,\kappa$.
\end{theorem}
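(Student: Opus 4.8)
The plan is to run the operator-valued analogue of the Mockenhaupt--Seeger--Sogge argument, where the classical $L^p(\mathbb{R}^2)$-norms are systematically replaced by $L_p(L_\infty(\mathbb{R}^2)\overline{\otimes}\mathcal{M})$-norms. First I would reduce to a single half-wave operator $e^{it\sqrt{-\Delta}}$ by writing $\cos(t|\xi|)=\frac12(e^{it|\xi|}+e^{-it|\xi|})$ and $\sin(t|\xi|)/|\xi|$ similarly, and by peeling off the low-frequency part (which is harmless since on $|\xi|\lesssim 1$ the multiplier and all its derivatives are bounded, so a crude kernel estimate plus the fact that convolution with an $L_1$ kernel is completely bounded on $L_p(L_\infty(\mathbb{R}^2)\overline\otimes\mathcal M)$ handles it). It therefore suffices to prove, for a Littlewood--Paley piece at frequency $\sim 2^j=\lambda$, the frequency-localized estimate
\begin{equation*}
\big\|e^{it\sqrt{-\Delta}}P_\lambda F\big\|_{L_p(L_\infty([1,2]\times\mathbb R^2)\overline\otimes\mathcal M)}\le C_{p,\kappa}\,\lambda^{s_p-\kappa}\,\|P_\lambda F\|_{L_p(L_\infty(\mathbb R^2)\overline\otimes\mathcal M)},
\end{equation*}
and then sum in $j$ using a noncommutative square-function (Littlewood--Paley) inequality in the $L_p(L_\infty(\mathbb R^2)\overline\otimes\mathcal M)$-setting, which is available because $\mathcal M$ is an arbitrary semifinite von Neumann algebra and Fourier multipliers bounded on classical $H_1$ are completely bounded (Mei's observation, already invoked in the paper for Proposition \ref{prop:fixed estimate}).

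The core of the matter is the frequency-localized bound. Here I would follow the decomposition of the $\lambda$-localized wave operator into $\sim\lambda^{1/2}$ angular sectors of aperture $\lambda^{-1/2}$ (the standard decomposition underlying the square-function estimate for the cone). For each sector $\nu$, the piece $F_\nu^\lambda:=e^{it\sqrt{-\Delta}}P_{\lambda,\nu}F$ is essentially supported, in space-time, on a $1\times\lambda^{-1/2}\times\lambda^{-1}$ plank. Two ingredients are then needed: (i) a \textbf{noncommutative square-function inequality associated to the uniform angular decomposition}, of the form
\begin{equation*}
\big\|e^{it\sqrt{-\Delta}}P_\lambda F\big\|_{L_p}\lesssim \lambda^{\epsilon}\,\Big\|\Big(\sum_\nu |F_\nu^\lambda|^2\Big)^{1/2}\Big\|_{L_p}
\end{equation*}
(interpreted with row/column norms in the noncommutative variable), which is the operator-valued version of the Mockenhaupt--Seeger--Sogge square-function estimate and ultimately rests on an $L_4$-biorthogonality computation plus interpolation with a trivial $L_\infty\to L_\infty$-type bound; and (ii) a \textbf{noncommutative Kakeya/Nikodym maximal estimate} in $L_{p/2}$ for the $\lambda^{-1/2}$-planks, used to pass from the square function over planks back to the $L_p$-norm of $F$. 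For (ii) I would transcribe the geometric $L_2$ Córdoba-type bound for the Kakeya maximal function at scale $\delta=\lambda^{-1/2}$ with a $\log$-loss, and upgrade it to $L_{p/2}$, $2\le p/2$, by interpolation; its complete boundedness should again follow from the $H_1$-complete-boundedness principle or directly because the Kakeya maximal operator is a positive operator dominated by an average of $L_1$-normalized indicators (so it acts as a sup of completely positive maps). Combining (i) and (ii), one recovers exactly the exponents $\kappa(p)=1/4-1/(2p)$ for $2<p<4$ and $\kappa(p)=1/(2p)$ for $p\ge4$, with the $\lambda^\epsilon$ losses absorbed by taking $\kappa<\kappa(p)$ strictly.

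For the endpoint bookkeeping I would argue separately in two regimes: for $p\ge4$ the $L_4$ biorthogonality estimate combined with Bernstein in the angular variable already gives the sharp power, so no Kakeya input is needed there beyond the trivial sector count; for $2<p<4$ one interpolates between the $p=4$ result and the trivial $L_2$-energy estimate $\|e^{it\sqrt{-\Delta}}P_\lambda F\|_{L_2(L_\infty([1,2])\overline\otimes L_2(\mathcal M))}\lesssim \|P_\lambda F\|_{L_2}$, but since $L_\infty([1,2])$ must be used on the left, the honest route is to keep the Kakeya maximal estimate in $L_{p/2}$ as above; the two computations of $\kappa(p)$ then match at $p=4$. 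The main obstacle I anticipate is ingredient (i): establishing the angular square-function inequality with operator-valued (hence row/column) square functions, because the classical proof uses pointwise almost-orthogonality and a $TT^*$ argument that do not commute cleanly with the noncommutative absolute values; the fix should be to run the $TT^*$/biorthogonality estimate at the Hilbert-space-valued level first (i.e. in $L_2(L_2(\mathbb R^2)\otimes\mathcal M)$ where everything is genuinely orthogonal) and then transfer to general $p$ via the complete-boundedness of the relevant multipliers on $H_1$, interpolating with a crude fixed-time bound — essentially the same strategy the paper already uses to prove Proposition \ref{prop:fixed estimate}. Everything else is a faithful noncommutative transcription of \cite{MSS}, with the von Neumann-algebra variable playing a purely passive (tensorial) role once the square-function and Kakeya estimates are in place.
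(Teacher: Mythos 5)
Your high-level plan coincides with the paper's: reduce to half-wave operators, peel off low frequencies by kernel estimates, prove a frequency-localized bound $2^{\mu j}$ with $\mu>s_p-\kappa(p)$ for the localized Fourier integral operator (this is exactly Proposition \ref{theorem3}), and obtain that bound by interpolating a sharp $L_4$ estimate with the trivial $L_2$ (Plancherel) and $L_\infty$ (kernel) bounds. However, two of your key technical claims would not survive. First, your bookkeeping of where the Kakeya input enters is off: the Nikodym/Kakeya maximal estimate is needed precisely to prove the $L_4$ bound itself (in the paper it is Lemma \ref{ka}, applied after dualizing $\big\|\sum_{v,n}|F_v(f_n)|^2\big\|_{L_2}$, so only an $L_2$ maximal bound for the tube averages is required), and it is not needed separately for $2<p<4$, where one simply interpolates the linear estimate \eqref{Th3} between $p=2$ and $p=4$. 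Your proposed noncommutative Kakeya maximal estimate in $L_{p/2}$ for general $p$ is therefore both unnecessary and substantially harder than anything actually used. Relatedly, the norm in \eqref{loc1} is an honest $L_p$ norm in $(x,t)$, not a maximal norm in $t$, so no $\sup_t$ enters Theorem \ref{thmloc} at all; the maximal norm and the Sobolev-embedding lemma only appear in the proof of Theorem \ref{theorem1}.

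Second, and more seriously, your proposed fix for the ``main obstacle'' --- running the $TT^*$/biorthogonality computation at the $L_2$ level and then transferring to general $p$ via complete boundedness of $H_1$-bounded multipliers --- cannot work: that transference route only produces Calder\'on--Zygmund/Mikhlin-type bounds, which is exactly what the paper emphasizes cannot yield local smoothing (it gives the fixed-time estimate \eqref{PM2}, i.e.\ $\kappa=0$). The genuine noncommutative difficulty in the $L_4$ argument is that $|x^*x|^2\neq|xx|^2$: expanding the fourth power produces adjoints, so one must control the overlap of the \emph{differences} $\mathcal{U}_{n}^{v}-\mathcal{U}_{m}^{w}$ of the Fourier supports rather than their sums, and this overlap count blows up near the diagonal $v\approx w$ (all such difference sets meet a neighborhood of the origin). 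The paper resolves this by splitting the double sum over sectors into a near-diagonal part $|v-w|\leq 1000$, handled by the strengthened square-function Lemma \ref{lpn}, and an off-diagonal part, for which a new geometric overlap estimate is proved (Lemma \ref{geo}; see Remark \ref{remark4}). Your proposal does not identify this issue, and without it the $L_4$ estimate --- the anchor of the whole theorem --- is not established.
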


As in the classical setting, Theorem \ref{qeloc} will be regarded as boundedness of a type of noncommutative Fourier integral operator $T$. Applying the Littlewood-Paley decomposition,  $T$ can be expressed as sum of some localization operators $\{T_j\}_{j\in\mathbb{N}}$, which serves to reduce the proof to the boundedness of each $T_j$. For small $j$, the boundedness follows from the  kernel estimate of $T_j$ and the noncommutative  Hardy-Littlewood maximal inequality. For large $j$, combined with the noncommutative Littlewood-Paley inequality and some properties of noncommutative maximal norm,  the boundedness of $T_j$ reduces to the following:
 \begin{equation}\label{IntTh3}
  \|F_jf\|_{L_p\left(L_\infty(\mathbb{R}^3)\overline{\otimes}\mathcal{M}\right)}\leq C_{\mu}2^{\mu j}\|f\|_{L_p\left(L_\infty(\mathbb{R}^2)\overline{\otimes}\mathcal{M}\right)},
   \end{equation}
where $F_jf(x,t)$ is an operator-valued Fourier integral operator and we refer to Proposition \ref{theorem3} for the precise definition.

  To get the estimate \eqref{IntTh3}, we need numerous modifications and improvements based on the main idea of \cite{MSS} to overcome the difficulties due to noncommutativity. This will constitute the main technical parts of the paper. Roughly speaking, a key fact in our proof is that $|x^*x|^2$ is not necessarily equal to $|xx|^2$ for a general operator $x$.
Thus there are some new inequalities such as {\it noncommutative square functions associated to uniform decomposition and noncommutative Kakeya type maximal inequalities} as well as new geometric structures that need to be handled as in \cite{Lai21, HLW23}. Especially in this paper, we need to prove a new geometric estimate relate to some subsets in $\mathbb{R}^3$ which is much more complicated than the two dimensional  geometric structures in \cite{Lai21, HLW23}. Notably, our alternative approach enables a slight refinement of the geometric estimate originally derived by Muckenhoupt, Seeger, and Sogge in \cite{MSS}. We refer to Remark \ref{remark4} and Remark \ref{rem5} for details.

 As in classical case, it is the estimate \eqref{IntTh3} that will provide one possible way  to the noncommutative version of Bourgain's circular maximal inequality.
Recall that the circular means with radius $t>0$ is defined by
\begin{equation*}
  \mathcal{A}_tf(x):=\int_{S^1}f(x-ty)d\mu(y),\quad f\in\mathcal{S}(\mathbb{R}^2)\otimes \mathcal S(\mathcal M),
\end{equation*}
where $d\mu$ is the induced Lebesgue measure on the circle $S^1$.  Stein's original approach \cite{St76, St78} is to embed $\mathcal{A}_t$ into an analytic family of linear operators $\mathcal{A}_t^\sigma$ of complex order $\sigma$, which is defined as
\begin{equation}\label{cir}
  \mathcal{A}_t^\sigma f(x):=\frac{1}{\Gamma(\sigma)}\int_{\mathbb{R}^2}(1-|y/t|^2)_+^{\sigma-1}f(x-y)dy, \quad \sigma\in\mathbb{C}.
\end{equation}
Note that $\mathcal{A}_t^\sigma$ is a priori defined for $\Re \sigma>0$ and extends to all $\sigma\in\mathbb{C}$ by analytic continuation. Then one may conclude $\mathcal{A}_t=\mathcal{A}_t^0$ by the Fourier transform since $\mathcal{A}_t^\sigma f(x)=(m_{\sigma}(t|\cdot|)\hat{f})^{\vee}(x)$,
where
\begin{equation*}
 m_{\sigma}(r)=2^{\sigma}\pi r^{-\sigma}J_\sigma(r),\quad r>0,
\end{equation*}
with $J_\sigma$ being the Bessel function (see e.g. \cite[Theorem 7]{St78}).

As in \cite{MSS}, we prove the maximal inequality associated to circular means $(\mathcal{A}_t^\sigma)_{t>0}$ with $\Re \sigma>-\kappa(p)$ and thus resolve the conjecture on the noncommutative analogue of Bourgain's circular maximal inequality in Section \ref{Se3.1}.

 \begin{theorem}\label{theorem1}
   Suppose that $2<p<\infty$, then for all $f\in {L_p(L_\infty(\mathbb{R}^2)\overline{\otimes}\mathcal{M})}$,
     \begin{equation}\label{Th1}
       \Big\|{\sup_{0<t<\infty}}^{+} \mathcal{A}_t^\sigma f\Big\|_{L_p(L_\infty(\mathbb{R}^2)\overline{\otimes}\mathcal{M})}\leq C_{p,\sigma}\|f\|_{L_p(L_\infty(\mathbb{R}^2)\overline{\otimes}\mathcal{M})},\qquad if\ \ \Re \sigma>-\kappa(p),
     \end{equation}
   where  $\mathcal{A}_t^\sigma f(x)=(m_{\sigma}(t|\cdot|)\hat{f})^{\vee}(x)$ and $C_{p,\sigma}$ is a constant that depends on $p,\sigma$.
 \end{theorem}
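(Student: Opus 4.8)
The plan is to run the Muckenhoupt--Seeger--Sogge strategy in the noncommutative setting, reducing \eqref{Th1} to the operator-valued Fourier integral estimate \eqref{IntTh3} and replacing each commutative step (dilation, Sobolev embedding in $t$, orthogonality) by its noncommutative counterpart: the dilation invariance and triangle inequality of the noncommutative maximal norm $\|(\cdot)_t\|_{L_p(\mathcal{N};L_\infty)}$ on $\mathcal{N}=L_\infty(\mathbb{R}^2)\overline{\otimes}\mathcal{M}$ (see \cite{Jun02,JX07}), the one-dimensional Sobolev embedding in $t$ in its noncommutative form, the noncommutative Hardy--Littlewood maximal inequality \cite{Mei09}, and the noncommutative Littlewood--Paley inequality. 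By density it suffices to treat $f\in\mathcal{S}(\mathbb{R}^2)\otimes\mathcal{S}(\mathcal{M})$. First I would split the multiplier: fix $\beta\in C_c^\infty$ with $\beta\equiv1$ on $[0,1]$ and $\supp\beta\subset[0,2]$, and write $m_\sigma(r)=m_\sigma(r)\beta(r)+\sum_{\nu\ge1}m_\sigma(r)\chi(2^{-\nu}r)=:m_\sigma^0(r)+\sum_{\nu\ge1}m_\sigma^\nu(r)$ for a dyadic bump $\chi$, so $\mathcal{A}_t^\sigma=\mathcal{A}_t^{\sigma,0}+\sum_{\nu\ge1}\mathcal{A}_t^{\sigma,\nu}$. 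Since $r\mapsto r^{-\sigma}J_\sigma(r)$ is entire in $r^2$, the function $m_\sigma^0(|\xi|)$ is a compactly supported smooth function of $\xi$, so $\mathcal{A}_t^{\sigma,0}f=f\ast\Phi_t$ with $\Phi_t(x)=t^{-2}\Phi(x/t)$, $\Phi=(m_\sigma^0(|\cdot|))^\vee\in\mathcal{S}(\mathbb{R}^2)$; hence $\big\|{\sup_{0<t<\infty}}^{+}\mathcal{A}_t^{\sigma,0}f\big\|\lesssim\|Mf\|\lesssim\|f\|_{L_p(\mathcal{N})}$ by the noncommutative Hardy--Littlewood maximal inequality, for every $p>1$ and every $\sigma$. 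This handles the low part.

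For $\nu\ge1$ I would use the Bessel asymptotics: for $r\gtrsim1$, $m_\sigma(r)=e^{ir}b_+(r)+e^{-ir}b_-(r)$ with $|\partial_r^k b_\pm(r)|\lesssim_k(1+r)^{-\Re\sigma-1/2-k}$, so $m_\sigma^\nu(|\xi|)=2^{-\nu(\sigma+1/2)}\big(e^{i|\xi|}\tilde b_+^\nu(|\xi|)+e^{-i|\xi|}\tilde b_-^\nu(|\xi|)\big)$ with $\tilde b_\pm^\nu$ order-zero symbols supported in $|\xi|\sim2^\nu$, uniformly in $\nu$. Thus $\mathcal{A}_t^{\sigma,\nu}$ is $2^{-\nu(\Re\sigma+1/2)}$ times half-wave propagators composed with $t$-dependent order-zero multipliers, frequency-localized to $t|\xi|\sim2^\nu$. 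Apply next the Littlewood--Paley decomposition $f=\sum_j P_jf$ with $P_jf$ localized to $|\xi|\sim2^j$; then $\mathcal{A}_t^{\sigma,\nu}P_jf$ lives at frequency $2^j$ and at $t\sim2^{\nu-j}$. Rescaling the resonant block $t\sim2^{\nu-j}$ to $t\in[1,2]$ (dilation invariance of $\|\cdot\|_{L_p(\mathcal{N})}$) turns $\mathcal{A}_t^{\sigma,\nu}P_jf$ into $2^{-\nu(\Re\sigma+1/2)}$ times the operator-valued Fourier integral operator $F_\nu$ of Proposition~\ref{theorem3} at frequency scale $2^\nu$ and time $t\in[1,2]$, applied to a dilate of $P_jf$ of the same $L_p$ norm (the $t$-dependent order-zero amplitude being of the type $F_\nu$ accommodates). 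Now \eqref{IntTh3}, combined with the one-dimensional noncommutative Sobolev embedding in $t$ (which costs a factor $2^{\nu/p}$ since $\partial_t F_\nu\sim2^\nu F_\nu$), gives the dyadic maximal bound for the wave propagator with loss $2^{\mu\nu}$ for any $\mu>\tfrac12-\kappa(p)$; together with the gain $2^{-\nu(\Re\sigma+1/2)}$ from the Bessel factor this yields, for every $\kappa<\kappa(p)$,
\[
  \big\|{\sup_{t\sim2^{\nu-j}}}^{+}\big|\mathcal{A}_t^{\sigma,\nu}P_jf\big|\big\|_{L_p(\mathcal{N})}\lesssim 2^{\nu(\mu-\Re\sigma-1/2)}\|P_jf\|_{L_p(\mathcal{N})}\lesssim 2^{-\nu(\kappa+\Re\sigma)}\|P_jf\|_{L_p(\mathcal{N})}.
\]

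It remains to reassemble. For fixed $\nu$ and fixed $t$ only $O(1)$ indices $j$ contribute to $\sum_j\mathcal{A}_t^{\sigma,\nu}P_jf$, those with $2^{\nu-j}\sim t$, so ${\sup_{0<t<\infty}}^{+}\big|\mathcal{A}_t^{\sigma,\nu}f\big|$ is dominated, up to a constant, by the supremum over $j$ of the frequency-disjoint pieces above. I would absorb that $\sup_j$ into $\|f\|_{L_p(\mathcal{N})}$ using the noncommutative Littlewood--Paley inequality together with the column and row $\ell_2\hookrightarrow\ell_\infty$ embeddings for the maximal norm, the disjointness of the frequency supports identifying the emerging square function with the genuine noncommutative Littlewood--Paley square function of $f$. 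This gives $\big\|{\sup_{0<t<\infty}}^{+}\mathcal{A}_t^{\sigma,\nu}f\big\|\lesssim 2^{-\nu(\kappa+\Re\sigma)}\|f\|_{L_p(\mathcal{N})}$; summing over $\nu\ge1$ by the triangle inequality for $\|(\cdot)_t\|_{L_p(\mathcal{N};L_\infty)}$, the series $\sum_{\nu\ge1}2^{-\nu(\kappa+\Re\sigma)}$ converges whenever $\Re\sigma>-\kappa$, and since $\kappa<\kappa(p)$ is free this covers exactly $\Re\sigma>-\kappa(p)$. Adding the low part from the first step proves \eqref{Th1}.

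The main obstacle is this reassembly across all dyadic scales, not any individual piece: one cannot sum $\sum_j\|P_jf\|_{L_p(\mathcal{N})}$, so both the frequency and the time-block decays must be kept and the supremum over $j$ must be routed through the noncommutative Littlewood--Paley inequality rather than through pointwise or orthogonality arguments. This is genuinely harder than in \cite{MSS} because $x\mapsto|x|$ is not linear, which forces one to carry the column and row versions of the maximal norm separately and to lean on the structural properties of the noncommutative maximal-norm construction (and, under the hood, on the operator-valued Hardy--Littlewood and Littlewood--Paley inequalities). The summation is moreover of endpoint type: the exponent $-(\kappa+\Re\sigma)$ is negative only by the margin that "$\kappa<\kappa(p)$" affords, so the full strength of the freedom in Theorem~\ref{thmloc}/\eqref{IntTh3} is needed. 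The deepest input --- that \eqref{IntTh3} holds with loss $2^{\mu\nu}$ down to $\mu=\tfrac12-\kappa(p)$, together with its noncommutative square-function-for-uniform-decomposition and noncommutative Kakeya-type ingredients --- is taken as given here.
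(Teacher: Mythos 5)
Your proposal is correct and follows essentially the same route as the paper: dyadic decomposition in $t|\xi|$, the low piece via the noncommutative Hardy--Littlewood maximal inequality, and for each high piece the Bessel asymptotics, a Littlewood--Paley decomposition in $|\xi|$ with rescaling of the resonant block $t\sim 2^{\nu-j}$ to $[1,2]$, Proposition~\ref{theorem3} combined with the noncommutative Sobolev embedding in $t$ at scale $\lambda=2^{-\nu}$, reassembly over $j$ through the noncommutative Littlewood--Paley square function, and geometric summation over $\nu$ under $\Re\sigma>-\kappa(p)$. The only cosmetic difference is in the reassembly step, where the paper constructs explicit $\ell_p$-summed majorants (via the self-adjoint characterization of the maximal norm and Lemma~\ref{emb}) rather than invoking an $\ell_2\hookrightarrow\ell_\infty$ embedding, but this is the same mechanism.
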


  \begin{remark}
{\rm  The argument for the implication from the estimate \eqref{IntTh3} to Theorem \ref{theorem1} is more complicated than the one to Theorem \ref{thmloc}, since it involves the noncommutative maximal norm and a completely bounded version of Sobolev embedding inequality in the category of operator space, which are well-known to be two of the most difficult objects in noncommutative analysis. We refer the reader to Subsection \ref{Subs3.2} and Subsection \ref{Se3.2} for details.}

 \end{remark}

This paper is organised as follows:
\begin{itemize}
  \item In Section \ref{S2}, we  present definitions, notions and notation mentioned above.
  \item In Section \ref{Se3}, we show how the estimate \eqref{IntTh3} (i.e. Proposition \ref{theorem3} in Section \ref{Subs3.2}) implies Theorem \ref{thmloc} and Theorem \ref{theorem1}. Furthermore, we reduce the proof of Proposition \ref{theorem3} to the case $p=4$.
  \item In Section \ref{Se4} and Section \ref{Se5}, we complete the proof of Proposition \ref{theorem3}.
  \item In Section \ref{S6}, we discuss the transference technique and give the proof of Theorem \ref{qeloc}.
  \item Finally, in Appendix \ref{AP1}, we discuss the wave equation on $\mathcal{R}_\theta^d$ and prove the fixed time $L_p$ estimate.
\end{itemize}
  \textbf{Notation:}  In what follows, we write
$A\lesssim_\alpha B$ if $A\le C_\alpha B$ for some constant $C_\alpha>0$ only depending on the index $\alpha$ or we may just write $A\lesssim B$, and we write $A\thickapprox B$ to mean that $A\lesssim B$ and $B \lesssim A$.
For a function $f$, we set $\tilde{f}(\cdot):=f(-\cdot)$ and $\hat{f}$ (resp. $\check{f}$) denotes the Fourier transform (resp. the inverse Fourier transform) of $f$. The notation $T_m$ stands for the Fourier multiplier associated with the symbol $m$. Given a function $f$, $\Re f$ denotes the real part of $f$  while  $\Im f$ represents the imaginary part. In this paper, we denote the tensor von Neumann algebra $(L_\infty(\mathbb{R}^3)\overline{\otimes}\mathcal{M},\int_{\mathbb{R}^3}d\mu\overline{\otimes}\tau)$ by $(\mathcal{N}, \varphi)$ and use these notions freely.

\section{Preliminaries}\label{S2}
\subsection{Noncommutative $L_p$ spaces}
 Let $(\mathcal{M},\tau)$ be a semifinite von Neumann algebra $\mathcal{M}$ endowed with  a normal semifinite faithful trace ($n.s.f.$ in short) $\tau$.
 Let $\mathcal{S}_+(\mathcal{M})$ be the set of all $x\in\mathcal{M}_+$ such that $\tau(\mathrm{supp}(x))<\infty$, where $\mathrm{supp}(x)$ is the support of $x$ (i.e. the least projection $e\in\mathcal{M}$ such that $ex=x=xe$)  and $\mathcal{S}(\mathcal{M})$ be the linear span of $\mathcal{S}_+(\mathcal{M})$. Then $\mathcal{S}(\mathcal{M})$ is a weak* dense $*$-subalgebra of $\mathcal{M}$. For $1\leq p<\infty$, we define \beqq \|x\|_p=(\tau(|x|^p))^{1/p},\quad x\in\mathcal{S}(\mathcal{M}),\eeqq
where $|x|=(x^*x)^{1/2}$ represents the modulus of $x$. The quantity $\|\cdot\|_p$ is a norm, and thus $(\mathcal{S}(\mathcal{M}),\|\cdot\|_p)$ forms a normed vector space. The completion of this space is known as the noncommutative $L_p$ space associated with $(\mathcal{M},\tau)$, denoted by  $L_p(\mathcal{M})$.
In the case $p=\infty$, we set $L_\infty(\mathcal{M})=\mathcal{M}$, and  define $\|x\|_\infty:=\|x\|_\mathcal{M}$.
 As the commutative $L_p$ spaces, the noncommutative $L_p$ spaces enjoy the basic properties such as the H\"{o}lder inequality, the duality, the interpolation etc.. For more information on the noncommutative $L_p$ spaces, we refer to \cite{PX03}.

 In this paper, we focus on the noncommutative $L_p$ spaces  associated with the pair $(L_\infty(\Sigma, \mu)\overline{\otimes}\mathcal{M},\int d\mu\overline{\otimes}\tau)$, where $(\Sigma,\mu)$ is a measurable space. Note that the Bochner $L_p$ space $L_p(\Sigma;L_p(\mathcal{M}))$ is isomorphic to $L_p(L_\infty(\Sigma, \mu)\overline{\otimes}\mathcal{M})$.  We will frequently employ the two well-known convexity inequalities involving operator-valued functions (see e.g. \cite[Lemma 2.4]{HLSX}, \cite[Page 9]{Mei09}).

 \begin{lemma}\label{convex}
 Let $(\Sigma,\mu)$ be a  measurable space. Suppose that $f$ is a $\mathcal{M}$-valued function on $\Sigma$ which is weak*-integrable and $g$ is a complex-valued integrable function. Then we have the following operator inequality,
 \begin{equation*}
   \Big|\int_{\Sigma}f(x)g(x)d\mu(x)\Big|^2\leq\int_{\Sigma}|g(x)|^2d\mu(x)\times\int_{\Sigma}|f(x)|^2d\mu(x).
 \end{equation*}
 \end{lemma}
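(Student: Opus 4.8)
The plan is to deduce the asserted operator inequality from the ordinary scalar Cauchy--Schwarz inequality by testing both sides against vectors. Realize $\mathcal{M}$ on a Hilbert space $H$ through a faithful normal $*$-representation, so that a self-adjoint element of $\mathcal{M}$ is positive exactly when $\langle\,\cdot\,\xi,\xi\rangle\ge 0$ for every $\xi\in H$. Put
\begin{equation*}
a:=\int_{\Sigma}|g(x)|^2\,d\mu(x),\qquad B:=\int_{\Sigma}f(x)^*f(x)\,d\mu(x),\qquad C:=\int_{\Sigma}f(x)g(x)\,d\mu(x),
\end{equation*}
so that the claim is $|C|^2=C^*C\le aB$. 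If $a=0$ then $g=0$ $\mu$-a.e. and both sides vanish; if $a=\infty$ the right-hand side is read as $+\infty$ and the inequality is vacuous once one observes (as below) that $\ker B\subseteq\ker C\subseteq\ker|C|^2$. So I may assume $0<a<\infty$.

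First I would turn the operator-valued integrals into $H$-valued ones. Since each vector functional $y\mapsto\langle y\zeta,\eta\rangle$ ($\zeta,\eta\in H$) lies in the predual $\mathcal{M}_*$, the defining property of the weak$^*$-integral gives, for every $\xi\in H$,
\begin{equation*}
\langle B\xi,\xi\rangle=\int_{\Sigma}\langle f(x)^*f(x)\xi,\xi\rangle\,d\mu(x)=\int_{\Sigma}\|f(x)\xi\|^2\,d\mu(x),
\end{equation*}
and, for every $\xi,\eta\in H$,
\begin{equation*}
\langle C\xi,\eta\rangle=\int_{\Sigma}g(x)\,\langle f(x)\xi,\eta\rangle\,d\mu(x).
\end{equation*}
In particular, for fixed $\xi$ the scalar function $x\mapsto\langle f(x)\xi,\eta\rangle$ is $\mu$-measurable and, by the first identity, $\int_{\Sigma}|\langle f(x)\xi,\eta\rangle|^2\,d\mu(x)\le\|\eta\|^2\langle B\xi,\xi\rangle<\infty$.

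The estimate is then immediate: for $\xi\in H$ and $\|\eta\|\le 1$, the scalar Cauchy--Schwarz inequality on $L^2(\mu)$ together with the pointwise bound $|\langle f(x)\xi,\eta\rangle|\le\|f(x)\xi\|$ gives
\begin{equation*}
\begin{split}
|\langle C\xi,\eta\rangle|^2 &=\Big|\int_{\Sigma}g(x)\langle f(x)\xi,\eta\rangle\,d\mu(x)\Big|^2\\
&\le\Big(\int_{\Sigma}|g(x)|^2\,d\mu(x)\Big)\Big(\int_{\Sigma}\|f(x)\xi\|^2\,d\mu(x)\Big)=a\,\langle B\xi,\xi\rangle.
\end{split}
\end{equation*}
Taking the supremum over $\|\eta\|\le 1$ yields $\langle C^*C\xi,\xi\rangle=\|C\xi\|^2\le a\langle B\xi,\xi\rangle=\langle aB\,\xi,\xi\rangle$ for all $\xi\in H$, which is precisely $|C|^2\le aB$. (The same computation applied to $\xi\in\ker B$ forces $f(x)\xi=0$ $\mu$-a.e., hence $C\xi=0$, which is the remark used above for $a=\infty$.)

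The only point that requires genuine care — the hard part — is the measure-theoretic bookkeeping: one must check that the hypotheses (weak$^*$-integrability of $f$, $g\in L^1(\mu)$, and finiteness of the quantities on the right) make all the scalar integrals above absolutely convergent and legitimize pulling the pairing $\langle\,\cdot\,\xi,\eta\rangle$ inside the weak$^*$-integral defining $C$ after multiplication by $g$. In every use of the lemma $f$ is $\mu$-essentially bounded and the underlying set has finite measure, so these points are routine. As an alternative that avoids working vector-by-vector, one can instead observe that the $M_2(\mathcal{M})$-valued function $x\mapsto\big(\begin{smallmatrix}g(x)1_{\mathcal{M}}\\ f(x)^*\end{smallmatrix}\big)\big(\begin{smallmatrix}\overline{g(x)}1_{\mathcal{M}}&f(x)\end{smallmatrix}\big)$ is pointwise positive, integrate it to obtain $\big(\begin{smallmatrix}a&C\\ C^*&B\end{smallmatrix}\big)\ge 0$ in $M_2(\mathcal{M})$, and read off $|C|^2=C^*C\le aB$ from the Schur-complement criterion for positivity of $2\times2$ operator matrices.
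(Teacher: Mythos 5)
The paper does not prove this lemma at all: it is quoted as a known convexity inequality with references to \cite[Lemma 2.4]{HLSX} and \cite[Page 9]{Mei09}, so there is no in-paper argument to compare against. Your proposal is a correct, self-contained proof. The main route (realize $\mathcal M\subseteq B(H)$, use that vector functionals lie in $\mathcal M_*$ to pull $\langle\,\cdot\,\xi,\eta\rangle$ inside the weak*-integral, apply scalar Cauchy--Schwarz, and take the supremum over $\|\eta\|\le1$ to get $\|C\xi\|^2\le a\langle B\xi,\xi\rangle$, i.e. $C^*C\le aB$) is sound, and you correctly use the paper's convention $|x|^2=x^*x$ so that the square falls on the column side matching $\int f^*f$. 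Your handling of the degenerate cases $a=0$ and $a=\infty$ (the latter genuinely possible since $g$ is only assumed in $L^1$) is sensible, and your closing alternative via positivity of the integrated $2\times2$ matrix $\bigl(\begin{smallmatrix}a&C\\ C^*&B\end{smallmatrix}\bigr)$ and the Schur complement is essentially the standard argument found in the cited sources; it has the advantage of staying inside $\mathcal M$ and avoiding the vector-by-vector measure-theoretic checks. Either version suffices for every application in the paper, where $f$ is bounded on a set of finite measure.
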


 \begin{lemma}\label{convex1}
 Let $(\Sigma,\mu)$ be a  measurable space. Suppose that $f$ is a weak*-integrable  $\mathcal{M}_+$-valued function on $\Sigma$ and $g$ is a positive integrable function. Then for $1<p<\infty$, we have the following operator inequality,
 \begin{equation*}
  \int_{\Sigma}f(x)g(x)d\mu(x)\leq\Big(\int_{\Sigma}|g(x)|^{p^\prime}d\mu(x)\Big)^{1/{p^\prime}}\times\Big(\int_{\Sigma}|f(x)|^pd\mu(x)\Big)^{1/p}.
 \end{equation*}
 Here, $p^\prime$ denotes the conjugate index of $p$, i.e. $\frac{1}{p}+\frac{1}{p^\prime}=1$.
 \end{lemma}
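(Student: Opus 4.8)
The plan is to strip off the scalar weight $g$ by a Hölder-type substitution, reducing the statement to an operator Jensen inequality, and then to prove that inequality by a doubling induction whose base case is the range $1<p\le 2$, where $t\mapsto t^p$ is operator convex. Throughout I would first reduce to the case where $\mu$ is a finite measure and $f,g$ are simple with $g>0$: this is harmless, since the inequality is insensitive to the set $\{g=0\}$ (there the left side vanishes and, by operator monotonicity of $t\mapsto t^{1/p}$, dropping the positive contribution of $f^p$ on that set only decreases the right side), and a routine limiting argument recovers the general case, while on a finite measure space every integral below is automatically finite and $g$ is bounded away from $0$.

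Next I would perform the substitution. Put $C:=\int_\Sigma g^{p'}\,d\mu$ (which we may assume finite, else there is nothing to prove), let $d\nu:=C^{-1}g^{p'}\,d\mu$ be the associated probability measure, and set $\tilde f:=f\,g^{-p'/p}$, a bounded simple $\mathcal M_+$-valued function. A direct computation gives $\int_\Sigma\tilde f\,d\nu=C^{-1}\int_\Sigma fg\,d\mu$ and $\int_\Sigma\tilde f^{\,p}\,d\nu=C^{-1}\int_\Sigma f^p\,d\mu$, so (since $C^{1-1/p}=C^{1/p'}$) the asserted inequality is equivalent to
\[
\int_\Sigma\tilde f\,d\nu\ \le\ \Big(\int_\Sigma\tilde f^{\,p}\,d\nu\Big)^{1/p},
\]
which I will prove for an arbitrary probability measure $\nu$, an arbitrary simple $\mathcal M_+$-valued $\tilde f$, and every $1<p<\infty$. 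For the base case $1<p\le 2$, the function $t\mapsto t^p$ is operator convex on $[0,\infty)$; for simple $\tilde f$ the integral $\int_\Sigma\tilde f\,d\nu$ is a finite convex combination of elements of $\mathcal M_+$, so operator convexity gives $\big(\int_\Sigma\tilde f\,d\nu\big)^p\le\int_\Sigma\tilde f^{\,p}\,d\nu$, and then operator monotonicity of $t\mapsto t^{1/p}$ (here $0<1/p\le 1$) yields the displayed bound. (When $p=2$ one may equally read this off from Lemma \ref{convex} with the scalar function taken identically $1$, followed by operator monotonicity of $t\mapsto t^{1/2}$.)

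For the inductive step let $p>2$ and assume the displayed inequality at the exponent $p/2>1$. Applying the inductive hypothesis to the $\mathcal M_+$-valued simple function $\tilde f^{\,2}$ at exponent $p/2$, and using $(\tilde f^{\,2})^{p/2}=\tilde f^{\,p}$, gives $\int_\Sigma\tilde f^{\,2}\,d\nu\le\big(\int_\Sigma\tilde f^{\,p}\,d\nu\big)^{2/p}$. Combining this with the already established case $p=2$, namely $\int_\Sigma\tilde f\,d\nu\le\big(\int_\Sigma\tilde f^{\,2}\,d\nu\big)^{1/2}$, and applying the operator monotone map $t\mapsto t^{1/2}$, one obtains
\[
\int_\Sigma\tilde f\,d\nu\ \le\ \Big(\int_\Sigma\tilde f^{\,2}\,d\nu\Big)^{1/2}\ \le\ \Big(\int_\Sigma\tilde f^{\,p}\,d\nu\Big)^{1/p}.
\]
Since the exponent halves at each step and reaches the interval $(1,2]$ after finitely many halvings, this closes the induction, and unwinding the substitution gives the lemma.

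I expect the only genuine obstacle to be the well-known failure of operator convexity of $t\mapsto t^p$ for $p>2$, which blocks the one-line operator Jensen argument in that range; the doubling induction is designed precisely to avoid ever invoking $t^p$ for $p>2$, using instead only operator convexity of $t\mapsto t^2$ together with operator monotonicity of $t\mapsto t^r$ for $0<r\le 1$. All remaining ingredients — the reduction to simple functions on a finite measure space, the substitution identities, and the two Löwner facts just quoted — are standard, and Lemma \ref{convex} supplies exactly the $p=2$ anchor.
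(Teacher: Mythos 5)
Your argument is correct. A point of comparison: the paper does not actually prove Lemma \ref{convex1}; it quotes it as a known convexity inequality, citing \cite[Lemma 2.4]{HLSX} and \cite[Page 9]{Mei09}. The standard argument behind those references is a one-step operator Jensen inequality: after the same normalization $d\nu=C^{-1}g^{p'}d\mu$, one applies operator concavity of $t\mapsto t^{1/p}$ (Hansen--Pedersen Jensen inequality for the unital positive map $h\mapsto\int h\,d\nu$) to $h=\tilde f^{\,p}$, which yields $\int\tilde f\,d\nu\le\big(\int\tilde f^{\,p}\,d\nu\big)^{1/p}$ at once for every $1<p<\infty$. Your doubling induction reaches the same conclusion using only operator convexity of $t\mapsto t^{p}$ on $(1,2]$ and L\"owner--Heinz monotonicity of $t\mapsto t^{r}$, $0<r\le1$; it is a correct and somewhat more elementary route, avoiding the Jensen inequality for fractional powers at the cost of the extra induction, and your substitution identities and the reduction to the set $\{g>0\}$ are all in order. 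The only step you pass over lightly is the ``routine limiting argument'' from simple to general weak*-integrable $f$: since $t\mapsto t^{p}$ is not operator monotone, a pointwise increasing approximation does not by itself compare $\int f_n^{p}\,d\mu$ with $\int f^{p}\,d\mu$, so one should approximate $f$ in a stronger sense (e.g. uniformly in norm on sets of finite measure, which suffices for the norm-continuous, Schwartz-type integrands to which the paper applies the lemma) and then use norm continuity of $A\mapsto A^{1/p}$ on positive operators together with weak* closedness of the positive cone to pass to the limit; this is standard and does not affect the validity of your proof.
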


The column and row function spaces $L_p(\mathcal{M};\ell_2^c)$ and $L_p(\mathcal{M};\ell_2^r)$ play an important role in noncommutative analysis, as they provide a framework for defining the noncommutative square function. Here, we expand on the definitions and some of their key properties.
For $1\leq p\leq\infty$, given a finite sequence $(x_n)$ in $L_p(\mathcal{M})$. Define
 \begin{equation*}
   \|(x_n)\|_{L_p(\mathcal{M};\ell_2^c)}:=\Big\|\big(\sum_n|x_n|^2\big)^{1/2}\Big\|_{L_p(\mathcal{M})},\quad  \|(x_n)\|_{L_p(\mathcal{M};\ell_2^r)}:=\|(x_n^*)\|_{L_p(\mathcal{M};\ell_2^c)}.
 \end{equation*}
 For $1\leq p<\infty$, $L_p(\mathcal{M};\ell_2^c)$ (resp. $L_p(\mathcal{M};\ell_2^r ) )$ are defined as the completions of the set of all finite sequences in $L_p(\mathcal{M})$ with respect to $\|\cdot\|_{L_p(\mathcal{M};\ell_2^c)}$ (resp. $\|\cdot\|_{L_p(\mathcal{M};\ell_2^r)} )$. For $p=\infty$, the Banach space $L_\infty(\mathcal{M};\ell_2^c)$ (resp. $L_\infty(\mathcal{M};\ell_2^r ))$ consists of all sequences in $L_\infty(\mathcal{M})$ such that $\sum_n x_n^*x_n$ (resp. $\sum_n x_nx_n^*$) converges in the weak*-topology.
 Then we introduce some properties related to the column and row function spaces, such as the H\"{o}lder inequality, complex interpolation.

 \begin{lemma}[\cite{PX03}]\label{sq}
Given $0<p,q,r\leq\infty$ satisfying the relation $\frac{1}{r}=\frac{1}{p}+\frac{1}{q}$. Then for any sequences $(x_n)\in L_p(\mathcal{M};\ell_2^c)$ and $(y_n)\in L_q(\mathcal{M};\ell_2^c)$, we have
\begin{equation*}
  \Big\|\sum_nx_n^*y_n\Big\|_{L_r(\mathcal{M})}\leq \|(x_n)\|_{L_p(\mathcal{M};\ell_2^c)}\|(y_n)\|_{L_q(\mathcal{M};\ell_2^c)}.
\end{equation*}
\end{lemma}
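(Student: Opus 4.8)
The plan is to reduce Lemma \ref{sq} to the ordinary noncommutative H\"older inequality on an amplified von Neumann algebra. First I would treat the case where $(x_n)$ and $(y_n)$ are finitely supported, the general case following by a routine density argument using the definition of $L_p(\mathcal{M};\ell_2^c)$ as a completion (and, when $p$ or $q$ equals $\infty$, by weak*-approximation together with lower semicontinuity of the relevant norms).

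For finitely supported sequences, introduce the matrix units $e_{ij}\in\mathcal{B}(\ell_2)$ and the "column" operators
$$X=\sum_n e_{n,1}\otimes x_n,\qquad Y=\sum_n e_{n,1}\otimes y_n,$$
regarded as elements of $L_t(\mathcal{B}(\ell_2)\overline{\otimes}\mathcal{M})$ for every $t$. Using $e_{1,m}e_{n,1}=\delta_{mn}e_{1,1}$ one computes directly that $X^*Y=e_{1,1}\otimes\sum_n x_n^*y_n$ and $X^*X=e_{1,1}\otimes\sum_n x_n^*x_n$, hence
$$\Big\|\sum_n x_n^*y_n\Big\|_{L_r(\mathcal{M})}=\|X^*Y\|_{L_r(\mathcal{B}(\ell_2)\overline{\otimes}\mathcal{M})},\qquad \|X\|_{L_p(\mathcal{B}(\ell_2)\overline{\otimes}\mathcal{M})}=\|(x_n)\|_{L_p(\mathcal{M};\ell_2^c)},$$
and likewise $\|Y\|_{L_q}=\|(y_n)\|_{L_q(\mathcal{M};\ell_2^c)}$. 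Since $\|X^*\|_{L_p}=\|X\|_{L_p}$ (the moduli of $X$ and $X^*$ have the same $L_p$-norm), applying the usual H\"older inequality $\|AB\|_{L_r}\le\|A\|_{L_p}\|B\|_{L_q}$ in $L_r(\mathcal{B}(\ell_2)\overline{\otimes}\mathcal{M})$ with $A=X^*$ and $B=Y$ gives exactly the claimed estimate.

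The only points needing care are: (i) the H\"older inequality on $L_r$ for the full range $0<r\le\infty$ with $\frac1r=\frac1p+\frac1q$, which is standard for semifinite von Neumann algebras, including the quasi-Banach range $r<1$; and (ii) the passage from finite to infinite sequences, where one checks that the partial sums $\sum_{n\le N}x_n^*y_n$ form a Cauchy net in $L_r(\mathcal{M})$ by applying the finite-sequence inequality to the tails $(x_n)_{n>N}$ and $(y_n)_{n>N}$. I expect the main (still routine) obstacle to be the endpoint cases $p=\infty$ or $q=\infty$, where one works with weak*-convergent series and uses order-continuity of the norms in place of norm density. Since the statement is classical, one may alternatively simply cite \cite{PX03}.
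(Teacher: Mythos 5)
Your argument is correct and is essentially the standard proof of this fact: the paper itself offers no proof, simply citing \cite{PX03}, and the amplification to $\mathcal{B}(\ell_2)\overline{\otimes}\mathcal{M}$ via column matrices $X=\sum_n e_{n,1}\otimes x_n$ followed by the ordinary noncommutative H\"older inequality is precisely how the result is established there. The computations $X^*Y=e_{1,1}\otimes\sum_n x_n^*y_n$ and $X^*X=e_{1,1}\otimes\sum_n x_n^*x_n$, the identity $\|X^*\|_{L_p}=\|X\|_{L_p}$, and your handling of the quasi-Banach range and the endpoint $p=\infty$ or $q=\infty$ are all sound, so nothing further is needed.
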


\begin{lemma}[\cite{PX03}]\label{sq1}
Given $1\leq p_0,p_1\leq\infty$ and $0<\alpha<1$. Let $\frac{1}{p}=\frac{1-\alpha}{p_0}+\frac{\alpha}{p_1}$, then we have isometrically
\begin{equation*}
  \big(L_{p_0}(\mathcal{M};\ell_2^c),L_{p_1}(\mathcal{M};\ell_2^c)\big)_\alpha=L_p(\mathcal{M};\ell_2^c).
\end{equation*}
This equality also holds for the row function spaces.
\end{lemma}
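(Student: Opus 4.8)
The plan is to realise the column space $L_p(\mathcal{M};\ell_2^c)$ as a contractively complemented subspace of an ordinary noncommutative $L_p$ space, so that the assertion follows from the known complex interpolation identity for noncommutative $L_p$ spaces together with the general fact that the complex method is stable under $1$-complemented subspaces.

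Concretely, I would set $\mathcal N:=\mathcal M\,\overline{\otimes}\,B(\ell_2)$ equipped with the semifinite trace $\tau\otimes\tr$, write $(e_{ij})$ for the matrix units of $B(\ell_2)$, and put $e:=1\otimes e_{11}$. For every $1\le p\le\infty$ the map $\iota\colon (x_n)_n\mapsto\sum_n x_n\otimes e_{n1}$ is an isometry from $L_p(\mathcal M;\ell_2^c)$ onto the ``first column'' $L_p(\mathcal N)e$: since $\big(\sum_n x_n\otimes e_{n1}\big)^*\big(\sum_m x_m\otimes e_{m1}\big)=\big(\sum_n x_n^*x_n\big)\otimes e_{11}$, one gets $\|\iota((x_n))\|_{L_p(\mathcal N)}=\big\|(\sum_n x_n^*x_n)^{1/2}\big\|_{L_p(\mathcal M)}$. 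These embeddings for $p=p_0$ and $p=p_1$ are compatible (they agree on finite sequences, hence carry the couple $\big(L_{p_0}(\mathcal M;\ell_2^c),L_{p_1}(\mathcal M;\ell_2^c)\big)$ into $L_{p_0}(\mathcal N)+L_{p_1}(\mathcal N)$ in the same way), so we do have a genuine interpolation couple. Furthermore, right multiplication $R_e\colon y\mapsto ye$ is a norm-one projection of $L_p(\mathcal N)$ onto $L_p(\mathcal N)e$ for every $p$ (as $e$ is a projection, $\|ye\|_p\le\|y\|_p$), and it is consistent on the couple $\big(L_{p_0}(\mathcal N),L_{p_1}(\mathcal N)\big)$.

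With this set-up I would invoke the classical isometric identity $\big(L_{p_0}(\mathcal N),L_{p_1}(\mathcal N)\big)_\alpha=L_p(\mathcal N)$ for $\frac1p=\frac{1-\alpha}{p_0}+\frac{\alpha}{p_1}$ (see \cite{PX03}) and transfer it through $R_e$ by a retract argument: the inclusion $L_{p_j}(\mathcal N)e\hookrightarrow L_{p_j}(\mathcal N)$ and the projection $R_e\colon L_{p_j}(\mathcal N)\to L_{p_j}(\mathcal N)e$ are contractive at each endpoint $j=0,1$ and their composite is the identity, so by functoriality of the complex method they induce contractions $\big(L_{p_0}(\mathcal N)e,L_{p_1}(\mathcal N)e\big)_\alpha\to\big(L_{p_0}(\mathcal N),L_{p_1}(\mathcal N)\big)_\alpha$ and back whose composite is again the identity; sandwiching forces $\big(L_{p_0}(\mathcal N)e,L_{p_1}(\mathcal N)e\big)_\alpha=R_e\,L_p(\mathcal N)=L_p(\mathcal N)e$ isometrically. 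Composing with the isometry $\iota$ then gives $\big(L_{p_0}(\mathcal M;\ell_2^c),L_{p_1}(\mathcal M;\ell_2^c)\big)_\alpha=L_p(\mathcal M;\ell_2^c)$. The row case is handled identically with the ``first row'' $eL_p(\mathcal N)$ and left multiplication $L_e$, or, more quickly, by conjugating with the anti-linear isometry $x\mapsto x^*$, which interchanges $L_p(\mathcal M;\ell_2^c)$ and $L_p(\mathcal M;\ell_2^r)$ and commutes with the interpolation functor.

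The step I expect to need the most care is obtaining an \emph{isometric} rather than merely isomorphic identification in the retract argument: one must check that $R_e$ has norm exactly one on every $L_p(\mathcal N)$, $1\le p\le\infty$, and apply functoriality of the complex method with sharp constants, so that both induced maps are genuine contractions. If one prefers to bypass the abstract retract lemma, a safe alternative is to first prove the statement with $\ell_2$ replaced by $\ell_2^n$ — where $B(\ell_2^n)=M_n$ is finite dimensional and the complementation is completely elementary — by applying Stein's interpolation theorem to a suitable analytic family of column vectors recovering the $L_{p_0}$- and $L_{p_1}$-column norms at the two boundary lines, and then to pass to the limit $n\to\infty$ using density of finite sequences.
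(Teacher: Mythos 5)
Your argument is correct, and it is essentially the standard proof of this fact: the paper itself states Lemma \ref{sq1} without proof, citing \cite{PX03}, and the argument given there (and in the surrounding literature) is exactly your realization of $L_p(\mathcal{M};\ell_2^c)$ as the contractively complemented first column of $L_p(\mathcal{M}\overline{\otimes}B(\ell_2))$ followed by the retract argument and the classical interpolation identity for noncommutative $L_p$ spaces. Nothing further is needed.
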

Based on Lemma \ref{sq1}, one can deduce the following lemma:
\begin{lemma}\label{emb}
For $2\leq p\leq\infty$ and any sequence $(x_n)\in L_p(\mathcal{M};\ell_2^c)$, it holds that
\begin{equation*}
 \Big(\sum_{n}\|x_n\|_p^p\Big)^{\frac{1}{p}}\leq \Big\|\Big(\sum_{n}|x_n|^2\Big)^{\frac{1}{2}}\Big\|_p.
\end{equation*}
\end{lemma}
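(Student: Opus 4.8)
The plan is to obtain the estimate by complex interpolation between the two endpoint cases $p=2$ and $p=\infty$, where it is elementary, invoking the interpolation of column spaces recorded in Lemma \ref{sq1}. By the very definition of $L_p(\mathcal M;\ell_2^c)$ as a completion it suffices to argue for a finite sequence $(x_n)_{n=1}^N$ and then let $N\to\infty$. Introduce the (formal) identity operator
$$T:(x_n)_{n=1}^N\longmapsto (x_n)_{n=1}^N,$$
and read the right-hand side of the asserted inequality as $\|(x_n)\|_{L_p(\mathcal M;\ell_2^c)}$, so that the left-hand side is the norm of $T(x_n)$ in the $\ell_p$-direct sum $\ell_p^N(L_p(\mathcal M))$. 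It is convenient to identify $\ell_p^N(L_p(\mathcal M))$ isometrically with $L_p\big(\mathcal M\overline{\otimes}\ell_\infty^N\big)$, where $\ell_\infty^N$ denotes the algebra of diagonal $N\times N$ matrices equipped with the (non-normalized) trace; the interpolation scale $\big(L_2,L_\infty\big)_\alpha$ of this algebra is then its $L_p$-scale by the standard noncommutative interpolation theorem (see \cite{PX03}).

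First I would verify that $\|T\|\le 1$ at both endpoints. For $p=2$ one has the identity $\sum_n\|x_n\|_2^2=\tau\big(\sum_n|x_n|^2\big)=\big\|\big(\sum_n|x_n|^2\big)^{1/2}\big\|_2^2$, so $T$ is even isometric. For $p=\infty$, since $|x_n|^2=x_n^*x_n\le\sum_m x_m^*x_m$ for every $n$, taking $\|\cdot\|_\infty$ and square roots yields $\|x_n\|_\infty\le\big\|\big(\sum_m|x_m|^2\big)^{1/2}\big\|_\infty$, and taking the supremum over $n$ gives $\|T\|_{L_\infty(\mathcal M;\ell_2^c)\to\ell_\infty^N(L_\infty(\mathcal M))}\le 1$.

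Then, for fixed $2<p<\infty$, I would interpolate with $\alpha:=1-2/p\in(0,1)$: Lemma \ref{sq1} gives $\big(L_2(\mathcal M;\ell_2^c),L_\infty(\mathcal M;\ell_2^c)\big)_\alpha=L_p(\mathcal M;\ell_2^c)$ isometrically, while the identification above gives $\big(\ell_2^N(L_2(\mathcal M)),\ell_\infty^N(L_\infty(\mathcal M))\big)_\alpha=\ell_p^N(L_p(\mathcal M))$ isometrically. Hence the interpolation theorem for bounded operators yields $\|T:L_p(\mathcal M;\ell_2^c)\to\ell_p^N(L_p(\mathcal M))\|\le 1^{1-\alpha}1^{\alpha}=1$, which is exactly the claimed inequality for $(x_n)_{n=1}^N$; letting $N\to\infty$ completes the argument. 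This is routine and I do not expect a genuine obstacle; the only points deserving a line of care are to record that the two interpolation couples are compatible and that $T$ acts consistently on (a dense part of) their sum, so that the interpolation theorem applies, together with the fact that the cases $p=2$ and $p=\infty$ are handled directly rather than by interpolation.
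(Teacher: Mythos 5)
Your proposal is correct and is essentially the argument the paper intends: the lemma is stated immediately after Lemma \ref{sq1} with the remark that it follows from that interpolation result, and your proof supplies exactly the missing details (the isometric case $p=2$, the contraction at $p=\infty$ via $x_n^*x_n\le\sum_m x_m^*x_m$, and interpolation of the identity between the column-space couple and the $\ell_p$-direct-sum couple). No gaps.
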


Finally, we present the Littlewood-Paley inequality, as established in \cite{MP}. Let $\big(\Delta_{\ell}\big)_{\ell\in\mathbb{Z}}$ be the  Littlewood-Paley operator (see the proof of Lemma \ref{leA_j} for specific definition), the following Littlewood-Paley inequality holds.

\begin{lemma}\label{LP}
 Suppose that $\big(\Delta_{\ell}\big)_{\ell\in\mathbb{Z}}$ is the Littlewood-Paley operator and $2\leq p<\infty$. Then for any $f\in L_p(L_\infty(\mathbb R^2)\overline{\otimes}\mathcal{M})$,
\begin{equation*}
  \max\Big\{\big\|\big(\Delta_{\ell}f\big)_{\ell\in\mathbb{Z}}\big\|_{L_p(\ell_2^c)}, \big\|\big(\Delta_{\ell}f\big)_{\ell\in\mathbb{Z}}\big\|_{L_p(\ell_2^r)}\Big\}\lesssim \|f\|_{p}.
\end{equation*}
\end{lemma}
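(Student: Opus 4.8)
The statement to prove is the Littlewood-Paley inequality of Lemma \ref{LP}: for $2\le p<\infty$ and $f\in L_p(L_\infty(\mathbb R^2)\overline\otimes\mathcal M)$,
\[
\max\Big\{\big\|(\Delta_\ell f)_\ell\big\|_{L_p(\ell_2^c)},\ \big\|(\Delta_\ell f)_\ell\big\|_{L_p(\ell_2^r)}\Big\}\lesssim\|f\|_p.
\]

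The plan is to reduce this to the already-known Littlewood-Paley inequality on the operator-valued Hardy space $H_1^c$ (and $H_1^r$) plus an $L_2$ identity, and then interpolate and dualize. First I would record the $p=2$ case: since the frequency supports of the $\Delta_\ell$ have bounded overlap, Plancherel in the classical variable together with the trace on $\mathcal M$ gives $\|(\Delta_\ell f)_\ell\|_{L_2(\ell_2^c)}^2 = \int \sum_\ell \tau(|\Delta_\ell f(x)|^2)\,dx \approx \|f\|_2^2$, and the same for the row version (here $|\cdot|^2$ column vs.\ row makes no difference under the trace). Next, for the upper endpoint I would invoke the noncommutative Hardy-space theory: the square-function characterization of the operator-valued Hardy space $H_p^c(\mathbb R^2;L_p(\mathcal M))$ from \cite{Mei09}, which states precisely that $\|(\Delta_\ell f)_\ell\|_{L_p(\ell_2^c)}\approx\|f\|_{H_p^c}$ for $1\le p<\infty$, and the fact that for $2\le p<\infty$ one has the continuous inclusion $L_p\hookrightarrow H_p^c + H_p^r$ — more precisely $\|f\|_{H_p^c}\lesssim\|f\|_p$ when $p\ge 2$ (this is the ``easy half'' of the $H_p$–$L_p$ comparison, valid on the range $p\ge2$ because the column square function is dominated by the $L_p$ norm there). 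Combining these two inputs already yields the column bound for all $2\le p<\infty$, and the row bound follows by applying the column bound to $f^*$ (or symmetrically, since $\Delta_\ell$ has real/even-symmetric multiplier so $\Delta_\ell(f^*)=(\Delta_\ell f)^*$).

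An alternative, more self-contained route avoids Hardy spaces: prove the weak-type $(1,1)$–style vector-valued Calderón–Zygmund estimate for the $\ell_2^c$-valued kernel $x\mapsto(\widehat{\psi}(2^{-\ell}\cdot)^\vee(x))_\ell$ — its values are operators into the column Hilbert space $\ell_2^c$, the kernel satisfies the Hörmander condition, and one can run the noncommutative Calderón–Zygmund decomposition of Parcu–Junge–Xu to get $L_p(\ell_2^c)\lesssim L_p$ for $1<p\le2$; then the $2\le p<\infty$ range — which is what we actually need — follows by complex interpolation between $p=2$ (the Plancherel identity above) and the trivial-looking but in fact delicate endpoint. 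Actually for the stated range $2\le p<\infty$ the cleanest argument is: interpolate the $p=2$ isomorphism with the $p=\infty$ bound $\|(\Delta_\ell f)_\ell\|_{L_\infty(\ell_2^c)}\lesssim\|f\|_\infty$, which itself is the $H_\infty^c$–BMO$^c$ endpoint from \cite{Mei09}; Lemma \ref{sq1} (complex interpolation of the column spaces $L_{p}(\mathcal M;\ell_2^c)$) then transfers the bound to every intermediate $p$. I would phrase the final write-up this way, citing \cite{Mei09} for both endpoints and Lemma \ref{sq1} for the interpolation.

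Concretely the steps, in order: (i) fix the Littlewood-Paley functions $\psi_\ell$ with $\widehat{\psi_\ell}$ supported in $\{2^{\ell-1}\le|\xi|\le2^{\ell+1}\}$ and $\sum_\ell\widehat{\psi_\ell}\equiv1$ away from $0$, $\Delta_\ell f:=\psi_\ell * f$ acting only on the $\mathbb R^2$ variable; (ii) establish $\|(\Delta_\ell f)_\ell\|_{L_2(\ell_2^c)}\approx\|f\|_2$ by Plancherel and bounded overlap; (iii) quote from \cite{Mei09} the BMO$^c$ endpoint $\|(\Delta_\ell f)_\ell\|_{L_\infty(\ell_2^c)}=\|f\|_{\mathrm{BMO}^c}\lesssim\|f\|_\infty$ (using that the $\Delta_\ell$ form an admissible averaging family); (iv) interpolate via Lemma \ref{sq1} to obtain $\|(\Delta_\ell f)_\ell\|_{L_p(\ell_2^c)}\lesssim\|f\|_p$ for $2\le p<\infty$; (v) obtain the row bound by replacing $f$ with $f^*$ and using $\Delta_\ell(f^*)=(\Delta_\ell f)^*$, which holds because $\psi_\ell$ can be chosen real and even. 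The main obstacle is step (iii)/(iv): one must be careful that the interpolation of the endpoint $p=\infty$ (a BMO-type space) with $p=2$ through the column spaces is legitimate in the operator-valued setting — this is exactly what the noncommutative Hardy/BMO machinery of Mei provides, so the real content is citing it correctly rather than reproving it; everything else is soft. If one prefers to stay entirely within $L_p$ spaces, the harder alternative is running the noncommutative vector-valued Calderón–Zygmund argument for the column-valued kernel, where the delicate point is the weak-type $(1,1)$ bound and the fact that $|x^*x|$ and $|xx^*|$ differ forces one to treat column and row endpoints separately — but for the range $p\ge2$ claimed here the interpolation route is shorter and is the one I would present.
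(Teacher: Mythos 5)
The paper does not prove this lemma at all: it is quoted directly from Mei--Parcet \cite{MP}, so any correct citation-based argument would be acceptable. Your first paragraph's route --- the discrete square-function characterization of $H_p^c$ together with the containment $\|f\|_{H_p^c}\lesssim\|f\|_p$ for $p\ge2$ from Mei's memoir, plus the $f\mapsto f^*$ symmetry for the row bound --- is sound and is essentially what \cite{MP} establishes.

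However, the route you say you would actually present contains a genuine gap: the endpoint in step (iii) is false. The inequality $\|(\Delta_\ell f)_\ell\|_{L_\infty(\ell_2^c)}\lesssim\|f\|_\infty$ fails already for scalar-valued $f$; e.g.\ for $f=\chi_{[0,1]}$ in one variable one has $\sum_\ell|\Delta_\ell f(x)|^2\approx\log(1/|x|)$ near the jump, so the square function of a bounded function is generally unbounded and lies only in BMO. Consequently the equality you write, $\|(\Delta_\ell f)_\ell\|_{L_\infty(\ell_2^c)}=\|f\|_{\mathrm{BMO}^c}$, cannot hold, and the interpolation via Lemma \ref{sq1} between $p=2$ and $p=\infty$ cannot be run: Lemma \ref{sq1} only interpolates the target scale $L_p(\ell_2^c)$, and you have no valid operator bound at the $p=\infty$ end. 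The correct endpoint is that $f\mapsto(\Delta_\ell f)_\ell$ maps $L_\infty$ into a column BMO space, and the passage to $2<p<\infty$ then requires the $(\mathrm{BMO},L_2)_{2/p}=L_p$ interpolation of Musat/Mei for the nonclassical couple, not the interpolation of column spaces. This is precisely the content of the Hardy--BMO machinery you gesture at in the first paragraph, so the fix is to present that version (or simply cite \cite{MP}) rather than the $L_\infty(\ell_2^c)$ endpoint. The $p=2$ Plancherel step and the reduction of the row estimate to the column one via $\Delta_\ell(f^*)=(\Delta_\ell f)^*$ (with real, radial $\psi_\ell$) are both fine.
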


\subsection{Noncommutative $\ell_\infty$-valued $L_p$ spaces}
In the noncommutative setting, the maximal norm requires an equivalent definition since $\sup_n|x_n|$ does not make sense for a sequence $(x_n)_n$ of operators. We adopt the definition of the noncommutative maximal norm which was introduced by Pisier \cite{Pis98} and later generalized by Junge \cite{Jun02}.

Given $1\leq p\leq\infty$, the Banach space $L_p(\mathcal{M};\ell_\infty)$ is defined as the space of all sequences $x=(x_n)_{n\in\mathbb{N}}$ in $L_p(\mathcal{M})$ that  admit a factorization through elements $a,b\in L_{2p}(\mathcal{M})$ and a bounded sequence $y=(y_n)_{n\in\mathbb{N}}$ in $L_\infty(\mathcal{M})$ such that $$x_n=ay_nb,\quad \forall n\in\mathbb{N}.$$ The norm of $x$ in $L_p(\mathcal{M};\ell_\infty)$ is given by
\begin{equation*}
  \|x\|_{L_p(\mathcal{M};\ell_\infty)}:=\inf\Big\{\|a\|_{2p}\sup_{n\in\mathbb{N}}\|y_n\|_\infty\|b\|_{2p}\Big\},
\end{equation*}
where the infimum runs over all factorizations of $x$ as above. Additionally, it is worth noting that for a sequence $x=(x_n)_{n\in\mathbb{N}}$ of self-adjoint operators in $L_p(\mathcal{M})$, $x\in L_p(\mathcal{M};\ell_\infty)$ if and only if there exists a positive element $a\in L_p(\mathcal{M})$ such that
\begin{equation*}
  -a\leq x_n\leq a\quad\text{for all}\ n\in\mathbb{N}.
\end{equation*}
In this case, we have
\begin{equation}\label{maxnorm}
  \|x\|_{L_p(\mathcal{M};\ell_\infty)}=\inf\{\|a\|_p\ :\ a\in L_p(\mathcal{M}),\ -a\leq x_n\leq a,\ \forall n\in\mathbb{N}\}.
\end{equation}
For this reason, the maximal norm is intuitively denoted by
$\|\sup^+_nx_n\|_p$, but it is crucial to emphasize that $\|\sup^+_nx_n\|_p$ merely is a notation and $\sup^+_nx_n$ does not make sense in the noncommutative setting. In fact, we find that the notation $\|\sup^+_nx_n\|_p$ offers a more intuitive understanding compared to the formal expression $\|x\|_{L_p(\mathcal{M};\ell_\infty)}$.
More broadly, this concept extends to any index set $\Lambda$. Specifically, given any index set $\Lambda$, $L_p(\mathcal{M};\ell_\infty(\Lambda))$ denotes the space of all $x=(x_\lambda)_{\lambda\in\Lambda}$ in $L_p(\mathcal{M})$  admitting a factorization of the form $x_\lambda=ay_\lambda b,$ where $a,b\in L_{2p}(\mathcal{M}), y_\lambda\in L_\infty(\mathcal{M})$ and $\sup_{\lambda\in\Lambda}\|y_\lambda\|_\infty<\infty.$
The norm in $L_p(\mathcal{M};\ell_\infty(\Lambda))$ is then defined as
\begin{equation*}
  \|{\sup_{\lambda\in\Lambda}}^+ x_\lambda\|_p:=\inf_{x_\lambda=ay_\lambda b}\Big\{\|a\|_{2p}\sup_{\lambda\in\Lambda}\|y_\lambda\|_\infty\|b\|_{2p}\Big\}.
\end{equation*}
As demonstrated in \cite{JX07}, $x\in L_p(\mathcal{M};\ell_\infty(\Lambda))$ if and only if
\begin{equation*}
  \sup\Big\{\|{\sup_{\lambda\in I}}^+ x_\lambda\|_p\ :\  I\ \text{ is a finite subset of}\ \Lambda\Big\}<\infty.
\end{equation*}
Indeed, this supremum coincides with $\|{\sup_{\lambda\in\Lambda}}^+ x_\lambda\|_p$.
For the sake of simplicity and clarity, when no confusion arises, we may continue to denote the space $L_p(\mathcal{M};\ell_\infty(\Lambda))$ simply as $L_p(\mathcal{M};\ell_\infty)$.

On the other hand, we can define a closely related Banach space $L_p(\mathcal{M};\ell_\infty^c)$ for $p\geq2$, which consists of all sequences
$x=(x_\lambda)_{\lambda\in\Lambda}$ in $L_p(\mathcal{M})$ such that
\begin{equation*}
  \big\|{\sup_{\lambda\in\Lambda}}^+ |x_\lambda|^2\big\|_{p/{2}}^{1/2}<\infty,
\end{equation*}
with the norm defined as $\|x\|_{L_p(\mathcal{M};\ell_\infty^c)}:=\|{\sup_{\lambda\in\Lambda}}^+ |x_\lambda|^2\|_{p/{2}}^{1/2}$. Furthermore, the Banach space
$L_p(\mathcal{M};\ell_\infty^r)$ for $p\geq2$ is defined as
 \begin{equation*}
   L_p(\mathcal{M};\ell_\infty^r):=\big\{x=(x_\lambda)_{\lambda\in\Lambda}\ :\ x^*=(x_\lambda^*)_{\lambda\in\Lambda}\in L_p(\mathcal{M};\ell_\infty^c)\big\}
 \end{equation*}
 with the norm $\|x\|_{L_p(\mathcal{M};\ell_\infty^r)}:=\|{\sup_{\lambda\in\Lambda}}^+ |x_\lambda^*|^2\|_{p/{2}}^{1/2}$.
 The following useful interpolation results can be found in \cite{HWW}, \cite{JP10} and \cite{JX07}.
\begin{lemma}\label{max interpolation}
{\rm (i)}  Suppose $1\leq p_0,p_1\leq\infty$ , $0<\alpha<1$. Let $\frac{1}{p}=\frac{1-\alpha}{p_0}+\frac{\alpha}{p_1}$, then we have isometrically:
      \begin{equation*}
        L_p(\mathcal{M};\ell_\infty)=\big(L_{p_0}(\mathcal{M};\ell_\infty), L_{p_1}(\mathcal{M};\ell_\infty)\big)_\alpha.
      \end{equation*}

{\rm (ii)} For $2\leq p\leq\infty$, we have
  \begin{equation*}
    L_p(\mathcal{M};\ell_\infty)=\big(L_{p}(\mathcal{M};\ell_\infty^c), L_{p}(\mathcal{M};\ell_\infty^r)\big)_{1/2}.
  \end{equation*}
\end{lemma}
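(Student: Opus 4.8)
The plan is to obtain both parts by applying Calderón's analytic interpolation method to the factorization descriptions of the spaces, following \cite{JX07} for part (i) and \cite{JP10}, \cite{HWW} for part (ii); I only indicate the structure. By \eqref{maxnorm} and the definitions, $x=(x_\lambda)\in L_p(\mathcal M;\ell_\infty)$ precisely when $x_\lambda=ay_\lambda b$ with $a,b\in L_{2p}(\mathcal M)_+$ and $\sup_\lambda\|y_\lambda\|_\infty<\infty$ (the norm being the infimum of $\|a\|_{2p}\|b\|_{2p}\sup_\lambda\|y_\lambda\|_\infty$), while $x\in L_p(\mathcal M;\ell_\infty^c)$ (resp. $L_p(\mathcal M;\ell_\infty^r)$) precisely when $x_\lambda=y_\lambda c$ (resp. $x_\lambda=cy_\lambda$) with $c\in L_p(\mathcal M)_+$ and $\sup_\lambda\|y_\lambda\|_\infty<\infty$, the norm being $\inf\|c\|_p\sup_\lambda\|y_\lambda\|_\infty$. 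Using spectral cut-offs of $a,b,c$ together with a final limiting argument, I would first reduce to factors that are invertible with bounded inverse.

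For the ``easy'' inclusions---namely $L_p(\mathcal M;\ell_\infty)\hookrightarrow(L_{p_0}(\mathcal M;\ell_\infty),L_{p_1}(\mathcal M;\ell_\infty))_\alpha$ in (i), and $L_p(\mathcal M;\ell_\infty)\hookrightarrow(L_p(\mathcal M;\ell_\infty^c),L_p(\mathcal M;\ell_\infty^r))_{1/2}$ in (ii)---I would exhibit explicit analytic families. For (i), starting from $x_\lambda=ay_\lambda b$ with invertible $a,b\in L_{2p}(\mathcal M)_+$, set $F(z)=a^{w(z)}y_\lambda b^{w(z)}$ with $w(z)=(1-z)p/p_0+zp/p_1$; since $(1-\alpha)/p_0+\alpha/p_1=1/p$ one has $F(\alpha)=(x_\lambda)$, and on each line $\Re z=j$ the factor $a^{w(z)}$ splits (spectral cut-off at $1$) into an $L_{2p_j}$-part and a bounded part times a unitary, so $F(j+i\mathbb R)$ stays bounded in $L_{p_j}(\mathcal M;\ell_\infty)$; the three-lines lemma then yields the inclusion with the sharp norm $\|x\|_{L_p(\mathcal M;\ell_\infty)}$. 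For (ii), from $x_\lambda=ay_\lambda b$ with invertible $a,b\in L_{2p}(\mathcal M)_+$ and $\sup_\lambda\|y_\lambda\|_\infty\le1$, use $F(z)=a^{2(1-z)}y_\lambda b^{2z}$: then $F(1/2)=(x_\lambda)$, $F(it)=a^2(a^{-2it}y_\lambda b^{2it})\in L_p(\mathcal M;\ell_\infty^r)$ of norm $\le\|a\|_{2p}^2$, and $F(1+it)=(a^{-2it}y_\lambda b^{2it})b^2\in L_p(\mathcal M;\ell_\infty^c)$ of norm $\le\|b\|_{2p}^2$ (as $a^{\pm2it},b^{\pm2it}$ are unitaries), so the three-lines lemma together with the symmetry of the complex method at the parameter $1/2$ finishes this direction.

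For the reverse inclusions I would pass from an analytic $F$ on the strip with $F(\alpha)=x$ (resp. $F(1/2)=x$) and the prescribed boundary bounds to a single factorization of $x$. On each of the two boundary lines $F$ admits a factorization of the relevant type; I would take the Poisson (harmonic) extension to the strip of the positive ``densities'' occurring in these boundary factorizations and then, using an operator Jensen inequality together with the operator convexity of Lemmas \ref{convex}--\ref{convex1}, check that $F$ factors across the whole strip as $A(z)^{1/2}\widetilde y(z)B(z)^{1/2}$ for a bounded family $\widetilde y$; evaluating at $z=\alpha$ (resp. $z=1/2$) produces the required factorization of $x$ with the right norm. This scheme works for all $1\le p_0<p_1\le\infty$ in (i) and all $2\le p\le\infty$ in (ii), once one deals separately with the endpoints: at $p_1=\infty$ (and at $p=\infty$ in (ii)) one uses $L_\infty(\mathcal M;\ell_\infty)=L_\infty(\mathcal M;\ell_\infty^c)=L_\infty(\mathcal M;\ell_\infty^r)=\ell_\infty(\Lambda;\mathcal M)$, and $p_0=1$ in (i) then follows by an extra reiteration step.

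The conceptual content, and the main obstacle, lies entirely in these reverse inclusions: one must reconstruct a genuine two-sided factorization $x_\lambda=ay_\lambda b$ out of factorizations that are only available along the two boundary lines of the strip. This is exactly where the interplay of the harmonic extension of positive boundary data with operator Jensen and operator convexity is essential, and it is the technical heart of \cite{JX07}, \cite{JP10}; the remaining points---unboundedness of the factors, placing $F$ inside the Calderón space via spectral cut-offs, and the behaviour at the $\infty$ endpoint where $L_{p_0}(\mathcal M;\ell_\infty)\cap L_{p_1}(\mathcal M;\ell_\infty)$ need not be dense in $L_\infty(\mathcal M;\ell_\infty)$---are handled by the routine truncation/limiting and reiteration indicated above.
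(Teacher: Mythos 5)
The paper does not actually prove this lemma: it is quoted from the literature, with the reader referred to \cite{HWW}, \cite{JP10} and \cite{JX07}. Your outline reconstructs the standard proofs from those references, and the ``easy'' inclusions are done correctly: for (i) the family $F(z)=a^{w(z)}y_\lambda b^{w(z)}$ with $w(z)=(1-z)p/p_0+zp/p_1$ gives $a^{w(j+it)}=a^{p/p_j}\cdot(\text{unitary})$ with $a^{p/p_j}\in L_{2p_j}(\mathcal M)$, and for (ii) the family $a^{2(1-z)}y_\lambda b^{2z}$ lands in $L_p(\mathcal M;\ell_\infty^r)$ and $L_p(\mathcal M;\ell_\infty^c)$ on the two boundary lines, with the right norms after normalizing $\|a\|_{2p}=\|b\|_{2p}$. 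These computations check out (modulo the truncation to invertible factors, which you flag).

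The genuine gap is in the reverse inclusions, and your proposed mechanism for them does not work as stated. Taking the Poisson (harmonic) extension of the positive densities appearing in the boundary factorizations produces harmonic, not analytic, operator families; one cannot then ``check that $F$ factors across the whole strip as $A(z)^{1/2}\widetilde y(z)B(z)^{1/2}$'' with $\widetilde y$ bounded, because dividing the analytic function $F$ by the non-analytic $A(z)^{1/2}$, $B(z)^{1/2}$ destroys all control in the interior; operator Jensen and the convexity inequalities of Lemmas \ref{convex}--\ref{convex1} give one-sided operator bounds, not a two-sided factorization $x_\lambda=ay_\lambda b$. The known arguments supply exactly the missing input: for (i), \cite{JX07} avoids the factorization problem altogether by first interpolating the predual spaces $L_{p'}(\mathcal M;\ell_1)$ (where the description as a projective-type tensor norm makes interpolation direct) and then dualizing; for (ii), \cite{JP10} and \cite{HWW} use an analytic (outer-function/Szeg\H{o}-type) factorization of the boundary densities rather than a harmonic extension. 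Since you explicitly defer ``the technical heart'' to \cite{JX07} and \cite{JP10}, your proposal is no less complete than the paper's own treatment (a citation), but the specific route you sketch for the hard direction would need to be replaced by one of these two devices to become a proof. A further point to watch at the endpoint $p_1=\infty$ (resp.\ $p=\infty$ in (ii)): since $X_0\cap X_1$ need not be dense in $L_\infty(\mathcal M;\ell_\infty)$, the lower complex method only recovers a closure, so the stated isometric \emph{equality} at that endpoint requires the reiteration/limiting argument you allude to, or a restriction to the separable/dense situation in which the lemma is actually applied.
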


\subsection{Quantum Euclidean spaces }
 Let $\theta$ be a $d\times d $ real  antisymmetric matrix and for each $t\in\mathbb{R}^d$, we define the unitary
operator $U_\theta(t)$ acting on $L_2(\mathbb{R}^d)$ as follows:
\beq\label{wll}
		(U_\theta(t)f)(r):=e^{-\frac i2(t,\theta r)}f(r-t),\quad f\in{L_2(\mathbb{R}^d)}, r\in\mathbb{R}^d.
	\eeq
The family $\{U_\theta(t)\}_{t\in\mathbb{R}^d}$ is strongly continuous and satisfies the Weyl relation $$U_\theta(t)U_\theta(s)=e^{\frac i2(s,\theta t)}U_\theta(t+s),\quad U_{\theta}^*(t)=U_\theta(-t)$$ for $s,t\in\mathbb{R}^d$.
The von Neumann subalgebra of $\mathcal{B}(L_2(\mathbb{R}^d))$ generated by $\{U_\theta(t)\}_{t\in\mathbb{R}^d}$, denoted $\mathcal{R}_\theta^d$, is referred to as quantum Euclidean space.
In the special case when $\theta=0$, $\mathcal{R}_\theta^d $ reduces to the von Neumann algebra generated by the unitary group of translations on $\mathbb{R}^d$, which is
  $*$-isomorphic to $L_\infty(\mathbb{R}^d)$. We recommend that readers consult, for example, the references \cite{GJP, HLW23, LSZ, Xiong1} for further information on $\mathcal{R}_\theta^d$.

As outlined in \cite{Xiong1}, an injective map, also denoted by $U_\theta$, can be defined from $L_1(\mathbb{R}^d)$ into the quantum Euclidean space $\mathcal{R}_\theta^d $:
\begin{equation}\label{defU}
 U_\theta(f):=\int_{\mathbb{R}^d}f(t)U_\theta(t)dt,\quad f \in L_1(\mathbb{R}^d).
\end{equation}
Furthermore, the class of Schwartz functions within $\mathcal{R}_\theta^d $ is defined as :
$$ \mathcal{S}(\mathcal{R}_\theta^d):=\{x\in\mathcal{R}_\theta^d:\quad x=U_\theta(f), \text{\:for\: } f\in\mathcal{S}(\mathbb{R}^d)\}.$$
One can see $U_\theta$ is a bijection between $\mathcal{S}(\mathbb{R}^d)$ and $ \mathcal{S}(\mathcal{R}_\theta^d)$,
and thus $\mathcal{S}(\mathcal{R}_\theta^d)$ is a Fr\'{e}chet topological space equipped with the Fr\'{e}chet topology induced by $U_\theta$. Moreover, we denote the space of continuous linear functionals on $\mathcal{S}(\mathcal{R}_\theta^d)$ as $\mathcal{S}^\prime(\mathcal{R}_\theta^d)$, and $U_\theta$ extends to a bijection between $\mathcal{S}^\prime(\mathbb{R}^d)$ and $\mathcal{S}^\prime(\mathcal{R}_\theta^d)$, where for $f\in\mathcal{S}^\prime(\mathbb{R}^d)$,
\beq (U_\theta(f),U_\theta(g)):=(f, \tilde{g}), \quad \text{for\:all\;}g\in \mathcal{S}(\mathbb{R}^d).\eeq
Given $x\in \mathcal{S}(\mathcal{R}_\theta^d)$ represented as $x=U_\theta(f)$ for some $f\in \mathcal{S}(\mathbb{R}^d)$, we define $\tau_\theta(x):=f(0)$, then $\tau_\theta$ extends to a $n.s.f.$ trace on $\mathcal{R}_\theta^d$. The associated noncommutative $L_p$ space is denoted $L_p(\mathcal{R}_\theta^d)$.
Additionally, the space $ \mathcal{S}(\mathcal{R}_\theta^d)$ is dense in $L_p(\mathcal{R}_\theta^d)$ for $1\leq p<\infty$ with respect to the norm, and is also dense in $L_\infty(\mathcal{R}_\theta^d)$ in the weak* topology. See \cite{GJP, Xiong1} for more information.

\begin{definition}
  Suppose that $m$ be an essentially bounded function on $\mathbb{R}^d$. Let $T_m$ denote the corresponding Fourier multiplier, defined as
  \begin{equation*}
    T_m(U_\theta(f)):=U_\theta(mf), \quad \forall\ U_\theta(f)\in\mathcal{S}(\mathcal{R}_\theta^d).
  \end{equation*}
\end{definition}
Now we introduce the $L_p$-Sobolev space on $\mathcal{R}_\theta^d$, one can refer to \cite{M23, Xiong1}.
\begin{definition}
 For $1\leq p\leq\infty$ and $s\in\mathbb{R}$, let $J_s(\xi)=(1+|\xi|^2)^{s/2}$ be a function on $\mathbb{R}^d$. The  $L_p$-Sobolev space $L_{p,s}(\mathcal{R}_\theta^d)$ is defined as the subset of $\mathcal{S}^\prime(\mathcal{R}_\theta^d)$ consisting of elements $x$  such that $T_{J_s}(x)\in L_p(\mathcal{R}_\theta^d)$, with the norm given by:
 \begin{equation*}
   \|x\|_{L_{p,s}(\mathcal{R}_\theta^d)}:=\|T_{J_s}(x)\|_{L_p(\mathcal{R}_\theta^d)}.
 \end{equation*}

\end{definition}
\begin{remark}
 \rm{The $L_p$-Sobolev space defined here is inhomogeneous. One can similarly define the homogeneous Sobolev space on quantum Euclidean space. In general, the differential operators can be defined by the Fourier multiplier. For example, the Laplace operator  $\Delta_\theta$ on $\mathcal{R}_\theta^d$ is defined with the Fourier symbol $m(\xi)=-|\xi|^2$, $\xi\in\mathbb{R}^d$.}
\end{remark}

\begin{lemma}[see  \cite{HLW23} or \cite{Xiong1}]\label{HY}
For any $f\in \mathcal{S}(\mathbb{R}^d)$, we have
\beq \label{hy1}
\|U_\theta(f)\|_{L_{2}(\mathcal{R}_\theta^d)}=\|f\|_{L_2(\mathbb{R}^d)},\eeq
and  for $1\leq p<2$, \beq \label{hy2}
\|U_\theta(f)\|_{L_{p^\prime}(\mathcal{R}_\theta^d)}\leq\|f\|_{L_{p}(\mathbb{R}^d)}.\eeq
Thus $U_\theta$ extends to a contraction from $L_p(\mathbb{R}^d)$ ($1\leq p\leq2$) to $L_{p^\prime}(\mathcal{R}_\theta^d)$, with the extension being an isometry when $p=2$.
\end{lemma}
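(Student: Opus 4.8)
The plan is to establish the three assertions in the order they are stated: first the $L_2$-isometry \eqref{hy1}, then the elementary $L_1\to L_\infty$ contraction, and finally deduce \eqref{hy2} and the extension statement by complex interpolation together with the density of $\mathcal{S}(\mathbb{R}^d)$ in $L_p(\mathbb{R}^d)$.

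For \eqref{hy1} I would compute $\|U_\theta(f)\|_{L_2(\mathcal{R}_\theta^d)}^2=\tau_\theta\big(U_\theta(f)^*U_\theta(f)\big)$ directly. From \eqref{defU} and $U_\theta^*(t)=U_\theta(-t)$, a change of variables gives $U_\theta(f)^*=U_\theta(g)$ with $g(t):=\overline{f(-t)}$. Since $\mathcal{S}(\mathcal{R}_\theta^d)$ is a $*$-subalgebra of $\mathcal{R}_\theta^d$ on which $\tau_\theta$ is finite---equivalently, twisted convolution maps $\mathcal{S}(\mathbb{R}^d)\times\mathcal{S}(\mathbb{R}^d)$ into $\mathcal{S}(\mathbb{R}^d)$, see \cite{GJP, Xiong1}---the product $U_\theta(g)U_\theta(f)$ again lies in $\mathcal{S}(\mathcal{R}_\theta^d)\subset L_1(\mathcal{R}_\theta^d)$. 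Expanding it by the Weyl relation $U_\theta(s)U_\theta(t)=e^{\frac i2(t,\theta s)}U_\theta(s+t)$, substituting $r=s+t$ and using $(s,\theta s)=0$ (antisymmetry of $\theta$), one gets $U_\theta(g)U_\theta(f)=U_\theta(h)$, where $h$ is the twisted convolution
\[
h(r)=\int_{\mathbb{R}^d}g(s)\,f(r-s)\,e^{\frac i2(r,\theta s)}\,ds .
\]
Since $\tau_\theta(U_\theta(h))=h(0)$ by the very definition of $\tau_\theta$,
\[
\|U_\theta(f)\|_{L_2(\mathcal{R}_\theta^d)}^2=h(0)=\int_{\mathbb{R}^d}g(s)f(-s)\,ds=\int_{\mathbb{R}^d}|f(-s)|^2\,ds=\|f\|_{L_2(\mathbb{R}^d)}^2,
\]
which proves \eqref{hy1}.

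For the $L_1\to L_\infty$ bound, since $L_\infty(\mathcal{R}_\theta^d)=\mathcal{R}_\theta^d\subset\mathcal{B}(L_2(\mathbb{R}^d))$ carries the operator norm and each $U_\theta(t)$ is unitary, \eqref{defU} and the triangle inequality give $\|U_\theta(f)\|_{L_\infty(\mathcal{R}_\theta^d)}\le\int_{\mathbb{R}^d}|f(t)|\,\|U_\theta(t)\|\,dt=\|f\|_{L_1(\mathbb{R}^d)}$. Now fix $1<p<2$ and choose $\alpha\in(0,1)$ with $\frac1p=\frac{1-\alpha}{1}+\frac\alpha2$, so that also $\frac1{p'}=\frac{1-\alpha}{\infty}+\frac\alpha2$; by complex interpolation of the commutative and noncommutative $L_p$-scales, $\big(L_1(\mathbb{R}^d),L_2(\mathbb{R}^d)\big)_\alpha=L_p(\mathbb{R}^d)$ and $\big(L_\infty(\mathcal{R}_\theta^d),L_2(\mathcal{R}_\theta^d)\big)_\alpha=L_{p'}(\mathcal{R}_\theta^d)$. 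Since $U_\theta$ is simultaneously a contraction $L_1(\mathbb{R}^d)\to L_\infty(\mathcal{R}_\theta^d)$ and $L_2(\mathbb{R}^d)\to L_2(\mathcal{R}_\theta^d)$ on $\mathcal{S}(\mathbb{R}^d)$ by the two previous steps, Stein's interpolation theorem yields $\|U_\theta(f)\|_{L_{p'}(\mathcal{R}_\theta^d)}\le\|f\|_{L_p(\mathbb{R}^d)}$ for all $f\in\mathcal{S}(\mathbb{R}^d)$, which is \eqref{hy2}. As $\mathcal{S}(\mathbb{R}^d)$ is dense in $L_p(\mathbb{R}^d)$ for $1\le p<\infty$, $U_\theta$ extends to a contraction $L_p(\mathbb{R}^d)\to L_{p'}(\mathcal{R}_\theta^d)$ for $1\le p\le2$, and \eqref{hy1} makes this extension an isometry when $p=2$.

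The only point requiring care is the bookkeeping in the first step: one must invoke the standard facts that $\mathcal{S}(\mathcal{R}_\theta^d)$ is a $*$-algebra (twisted convolution preserves the Schwartz class) and that the quantization $U_\theta$ intertwines twisted convolution with the operator product, so that $\tau_\theta(U_\theta(h))=h(0)$ may legitimately be applied to $h=g\ast_\theta f$. A further minor subtlety is the appearance of the $L_\infty$-endpoint in the noncommutative complex interpolation scale, which is nonetheless classical in this context \cite{PX03}. I do not expect a genuine obstacle in this lemma.
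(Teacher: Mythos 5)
The paper does not prove this lemma itself but cites it to \cite{HLW23, Xiong1}, and your argument is precisely the standard one used there: the $L_2$ identity via $\tau_\theta(U_\theta(g\ast_\theta f))=(g\ast_\theta f)(0)$ with the antisymmetry of $\theta$ killing the phase, the trivial $L_1\to L_\infty$ bound from unitarity, and Riesz--Thorin type complex interpolation plus density. The computation is correct (the phase $e^{\frac i2(r-s,\theta s)}=e^{\frac i2(r,\theta s)}$ and the exponent bookkeeping $\alpha=2/p'$ both check out); the only cosmetic point is that you need only Riesz--Thorin for the compatible couple $(L_1,L_2)\to(L_\infty(\mathcal{R}_\theta^d),L_2(\mathcal{R}_\theta^d))$, not Stein's analytic-family theorem.
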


\section{Proof of the main Theorems}\label{Se3}
Recall that the circular mean operator $\mathcal{A}_t^\sigma$ for a given $t>0$ is a Fourier multiplier defined as:
\begin{equation*}
 \mathcal{A}_t^\sigma f(x)=\int_{\mathbb{R}^2}e^{ix\xi}m_\sigma(t|\xi|)\hat{f}(\xi)d\xi,
\end{equation*}
where $m_\sigma(r)=2 ^\sigma \pi r^{-\sigma}J_{\sigma}(r)$, with  $J_\sigma$ being the Bessel function given by
\begin{equation*}
J_\sigma(r)=\frac{(\frac{r}{2})^\sigma}{\Gamma(\sigma+\frac12)\Gamma(\frac12)}
\int_{-1}^1e^{irs}(1-s^2)^{\sigma-\frac12}ds.
\end{equation*}
 More information for Bessel function we refer to \cite[Appendix B]{GF3} and \cite[Page 338]{St93}. For the sake of completeness, we present here the asymptotic expansion of $J_{\sigma}$ under the condition $\Re\sigma>-1/2$.
 \begin{lemma}\label{Bessel}
 Let $J_{\sigma}$ be the Bessel function with $\Re\sigma>-1/2$, then for $r\geq1$, one has
 \begin{equation*}
  J_{\sigma}(r)=\Big(\frac{2}{\pi r}\Big)^{1/2}\cos\Big(r-\frac{\pi\sigma}{2}-\frac{\pi}{4}\Big)+O(r^{-3/2}).
 \end{equation*}
 More generally, $J_{\sigma}$ admits a complete asymptotic expansion: for any $N\geq1$,
 \begin{equation}\label{ae}
  J_{\sigma}(r)=r^{-1/2}e^{ir}\sum_{j=0}^Na_jr^{-j}+ r^{-1/2}e^{-ir}\sum_{j=0}^Nb_jr^{-j}+ R_{\sigma,N}(r),
 \end{equation}
 where $a_j, b_j$ are suitable constants and the error term $R_{\sigma,N}$ satisfies
 \begin{equation*}
  \Big| \Big(\frac{d }{dr}\Big)^kR_{\sigma,N}(r)\Big|\leq C_kr^{-N-k}\quad \text{for}\ k\in\mathbb{N},\ r\geq1.
 \end{equation*}
 In particular, one has $J_{1/2}(r)=cr^{-1/2}\sin(r)$ and $J_{-1/2}(r)=c^\prime r^{-1/2}\cos(r)$.
 \end{lemma}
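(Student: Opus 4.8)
The plan is to derive the expansion \eqref{ae} directly from the oscillatory integral representation of $J_\sigma$ displayed just above the lemma, by a standard stationary-phase / Watson's-lemma analysis localized at the endpoints $s=\pm1$, where the amplitude $(1-s^2)^{\sigma-1/2}$ is singular (there is no interior stationary point since the phase is linear). The leading asymptotics stated in Lemma \ref{Bessel} will then follow by collecting the first terms, and the two special values $J_{\pm1/2}$ by direct evaluation.

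First I would fix a smooth partition of unity $1=\chi_+(s)+\chi_-(s)+\chi_0(s)$ on $[-1,1]$, with $\chi_\pm$ supported in a small neighbourhood of $\pm1$ and $\chi_0$ supported away from $\pm1$, and split the integral accordingly. On the support of $\chi_0$ the amplitude $\chi_0(s)(1-s^2)^{\sigma-1/2}$ is $C^\infty$ with compact support in the open interval while $s\mapsto e^{irs}$ has non-vanishing derivative, so repeated integration by parts shows this contribution is $O(r^{-M})$ for every $M$, hence absorbable into $R_{\sigma,N}$ together with all its $r$-derivatives (differentiating in $r$ only inserts a bounded factor $is$ under the integral).

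Next, for the piece near $s=1$ I would substitute $s=1-u$, write $1-s^2=(2-u)u$, and Taylor-expand the smooth factor $\chi_+(1-u)(2-u)^{\sigma-1/2}=\sum_{k=0}^{N}c_k u^{k}+O(u^{N+1})$ at $u=0$. Each monomial produces $e^{ir}\int_0^\infty e^{-iru}u^{\sigma-1/2+k}\,\mathrm{d}u$ up to a tail that is $O(r^{-M})$ (integrate by parts in the tail), and $\int_0^\infty e^{-iru}u^{\sigma-1/2+k}\,\mathrm{d}u=\Gamma(\sigma+\tfrac12+k)(ir)^{-(\sigma+1/2+k)}$ by the usual contour rotation, legitimate since $\Re(\sigma+\tfrac12+k)>0$ for all $k\geq0$. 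Multiplying by the prefactor $(r/2)^\sigma/(\Gamma(\sigma+\tfrac12)\Gamma(\tfrac12))$ turns the power $r^{\sigma-(\sigma+1/2+k)}$ into $r^{-1/2-k}$, yielding exactly a series $r^{-1/2}e^{ir}\sum_j a_j r^{-j}$; the analogous substitution $s=-1+u$ near $s=-1$ gives $r^{-1/2}e^{-ir}\sum_j b_j r^{-j}$. Combining the two $j=0$ terms and simplifying the constants — the factors $(\pm i)^{-(\sigma+1/2)}=e^{\mp i\pi(\sigma+1/2)/2}$ produce the phase shift $\mp(\pi\sigma/2+\pi/4)$ — gives the closed form $(2/\pi r)^{1/2}\cos(r-\pi\sigma/2-\pi/4)$, while truncating at $j=N$ leaves a remainder of size $O(r^{-N-3/2})$.

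Finally, the uniform derivative bounds $|(\mathrm{d}/\mathrm{d}r)^k R_{\sigma,N}(r)|\le C_k r^{-N-k}$ I would obtain either by differentiating the integral representation $k$ times in $r$ before rerunning the localized analysis, or — more cleanly — by plugging the main terms into Bessel's equation $r^2 y''+ry'+(r^2-\sigma^2)y=0$: the main terms solve it up to a forcing term of size $O(r^{-N-5/2})$, so $R_{\sigma,N}$ solves the same ODE with a small right-hand side, and the claimed bounds follow by elementary ODE estimates. For the last assertion: when $\sigma=\tfrac12$ the amplitude is $(1-s^2)^0\equiv1$ and the integral is $\int_{-1}^1 e^{irs}\,\mathrm{d}s=2\sin r/r$, so $J_{1/2}(r)=\sqrt{2/\pi}\,r^{-1/2}\sin r$; and $J_{-1/2}(r)=\sqrt{2/\pi}\,r^{-1/2}\cos r$ follows from the power-series definition of $J_\sigma$ together with $\Gamma(m+\tfrac12)=\sqrt\pi\,(2m)!/(4^m m!)$, since the displayed integral representation degenerates at $\sigma=-\tfrac12$. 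The main obstacle is bookkeeping — controlling the contour-rotation and tail-truncation errors uniformly and, above all, propagating the estimate to every $r$-derivative of the remainder; the ODE bootstrap is the device that makes this step painless.
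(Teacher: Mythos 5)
The paper does not prove Lemma \ref{Bessel} at all: it is quoted as a classical fact with pointers to \cite[Appendix B]{GF3} and \cite[Page 338]{St93}. Your argument is precisely the standard proof found in those references --- endpoint stationary phase (Erd\'elyi's lemma) applied to the Poisson integral representation displayed just above the lemma, with the interior contribution killed by non-stationary-phase integration by parts --- and it is correct, including the constant bookkeeping that produces the phase $r-\pi\sigma/2-\pi/4$ and the treatment of $J_{\pm1/2}$ (correctly switching to the power series for $\sigma=-1/2$, where the integral representation degenerates). Two points are stated more loosely than they should be: the identity $\int_0^\infty e^{-iru}u^{z-1}\,du=\Gamma(z)(ir)^{-z}$ does not literally hold as a convergent integral once $\Re z\ge 1$ (i.e.\ for $k$ large), so what you actually need is the cut-off version of Erd\'elyi's lemma, which your localization already provides but which you should invoke explicitly rather than a bare contour rotation; and the ODE bootstrap for the derivative bounds on $R_{\sigma,N}$ is asserted rather than carried out (one still needs an a priori decay of $R_{\sigma,N}$ and a variation-of-parameters estimate, or alternatively one just differentiates the integral representation $k$ times and reruns the same endpoint analysis, comparing with the explicit derivatives of the main terms). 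Neither issue affects the validity of the approach; note also that the claimed remainder bound $O(r^{-N-k})$ is weaker than the $O(r^{-N-3/2-k})$ your expansion actually yields, so you have room to spare.
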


\subsection{Proof of Theorem \ref{theorem1}}\label{Se3.1}
In this subsection, we present the proof of Theorem \ref{theorem1}, which fundamentally relies on the Littlewood-Paley decomposition.
Let $\beta\in C_0^\infty(\mathbb{R})$ be a nonnegative function with support in $[1/2,2]$ such that $\sum_{j\in\mathbb{Z}}\beta(2^{-j}r)=1$ for all $r>0$. Set $\eta(r):=1-\sum_{j\geq0}\beta(2^{-j}r)$, and
\begin{equation}\label{defA_0}
  \mathcal{A}_{t,0}^\sigma f(x):=\int_{\mathbb{R}^2}e^{ix\xi}m_\sigma(t|\xi|)\eta(|t\xi|)\hat{f}(\xi)\ d\xi,
\end{equation}
and for $j\geq1$
\begin{equation}\label{defA_j}
  \mathcal{A}_{t,j}^\sigma f(x):=\int_{\mathbb{R}^2}e^{ix\xi}m_\sigma(t|\xi|)\beta(|2^{-j}t\xi|)\hat{f}(\xi)\ d\xi.
\end{equation}
Then $\mathcal{A}_t^\sigma f(x)=\sum_{j=0}^\infty \mathcal{A}_{t,j}^\sigma f(x)$ and
$$\left\|{\sup_{t>0}}^+ \mathcal{A}_{t}^\sigma f\right\|_p\leq\sum_{j=0}^\infty \left\|{\sup_{t>0}}^+ \mathcal{A}_{t,j}^\sigma f\right\|_p,$$
which implies Theorem \ref{theorem1} once we have Lemmas
\ref{leA_0} and \ref{leA_j} below.

\begin{lemma}\label{leA_0}
Let $p>1$, for any $f\in L_p(L_\infty(\mathbb{R}^2)\overline{\otimes}\mathcal{M})$, one has $$ \Big\|{\sup_{t>0}}^+ \mathcal{A}_{t,0}^\sigma f\Big\|_p\lesssim \|f\|_p.$$

\end{lemma}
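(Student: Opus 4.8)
The plan is to treat $\mathcal{A}_{t,0}^\sigma$ as a Fourier multiplier whose symbol is supported in a region $|t\xi|\lesssim 1$, so that morally $\mathcal{A}_{t,0}^\sigma f$ is an average of $f$ against a nice, rapidly decaying (operator-valued) kernel at scale $1/t$, and to dominate the maximal function by the noncommutative Hardy--Littlewood maximal operator acting on $L_p(L_\infty(\mathbb{R}^2)\overline{\otimes}\mathcal{M})$. First I would write $\mathcal{A}_{t,0}^\sigma f(x) = (K_t^\sigma \ast f)(x)$, where $K_t^\sigma(x) = \int_{\mathbb{R}^2} e^{ix\xi} m_\sigma(t|\xi|)\eta(|t\xi|)\, d\xi$. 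A rescaling $\xi \mapsto \xi/t$ gives $K_t^\sigma(x) = t^{-2} K^\sigma(x/t)$ with $K^\sigma(y) = \int_{\mathbb{R}^2} e^{iy\xi} m_\sigma(|\xi|)\eta(|\xi|)\, d\xi$. The function $\zeta(\xi) := m_\sigma(|\xi|)\eta(|\xi|)$ is smooth and compactly supported: indeed $\eta$ cuts off to $|\xi| \lesssim 1$, and on that region $m_\sigma(r) = 2^\sigma\pi r^{-\sigma} J_\sigma(r)$ is smooth in $\xi$ because near $r=0$ the factor $r^{-\sigma}J_\sigma(r)$ is an entire function of $r^2$ (from the power-series of $J_\sigma$), so there is no singularity at the origin. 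Hence $\zeta \in C_0^\infty(\mathbb{R}^2)$ and $K^\sigma = \check\zeta$ is a Schwartz function; in particular $|K^\sigma(y)| \leq C_N (1+|y|)^{-N}$ for every $N$.

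Next I would use the standard pointwise majorization of a convolution with a radially-decreasing-dominated kernel by the Hardy--Littlewood maximal operator, carried out at the operator level. Since $|K^\sigma(y)| \le \psi(|y|)$ for a nonnegative, radially nonincreasing, integrable $\psi$ (take $\psi(r) = C_3(1+r)^{-3}$), the classical argument gives, for every positive $g \in L_1(\mathbb{R}^2) + L_\infty(\mathbb{R}^2)$ and $x \in \mathbb{R}^2$,
\begin{equation*}
  \Big| \int_{\mathbb{R}^2} K_t^\sigma(x-y)\, g(y)\, dy \Big| \leq \|\psi\|_{L_1(\mathbb{R}^2)}\, M g(x),
\end{equation*}
where $M$ is the (commutative) Hardy--Littlewood maximal operator. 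To make this precise in the noncommutative setting I would invoke the operator-valued convexity inequality of Lemma \ref{convex1}: writing $K_t^\sigma(x-y)\,dy$ as a positive measure of total mass $\lesssim 1$ and applying the lemma with that measure against the $\mathcal{M}_+$-valued function $|f(y)|$ (after the usual splitting $f = f_1 - f_2 + i(f_3-f_4)$ into four positive parts), one obtains the operator inequality $|\mathcal{A}_{t,0}^\sigma f(x)|^2 \lesssim (M|f|)(x)^2$ uniformly in $t$, hence by the characterization \eqref{maxnorm} of the maximal norm a positive majorant $a \in L_p(L_\infty(\mathbb{R}^2)\overline{\otimes}\mathcal{M})$ with $-a \le \mathcal{A}_{t,0}^\sigma f \le a$ built out of the Hardy--Littlewood maximal function of $f$. (More carefully, since $\mathcal{A}_{t,0}^\sigma f$ need not be self-adjoint, I would apply this to real and imaginary parts, or equivalently dominate $\mathcal{A}_{t,0}^\sigma f (\mathcal{A}_{t,0}^\sigma f)^*$ and its adjoint companion and then pass to $L_p(\mathcal{M};\ell_\infty)$ via the interpolation identity in Lemma \ref{max interpolation}(ii).)

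Finally, I would conclude by the noncommutative Hardy--Littlewood maximal inequality on $\mathbb{R}^2$ with operator-valued functions, which is bounded on $L_p(L_\infty(\mathbb{R}^2)\overline{\otimes}\mathcal{M})$ for all $1 < p \le \infty$ (this is precisely the dimension-free operator-valued maximal inequality from \cite{Hong}, or one may simply use the non-dimension-free version, which suffices here): $\|{\sup_{t>0}}^+\, \mathcal{A}_{t,0}^\sigma f\|_p \lesssim \| M|f| \|_p \lesssim \|f\|_p$, using $p>1$. The only mild subtlety — and the step I would be most careful about — is the non-self-adjointness of $\mathcal{A}_{t,0}^\sigma f$, which forces one to run the column/row ($\ell_\infty^c$, $\ell_\infty^r$) version of the argument and reassemble via Lemma \ref{max interpolation}(ii); everything else is a routine transplantation of the classical proof, with the Schwartz decay of $K^\sigma$ (guaranteed by the smoothness of $m_\sigma(|\cdot|)\eta(|\cdot|)$ discussed above) doing the work on the kernel side.
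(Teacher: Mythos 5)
Your proposal is correct and follows essentially the same route as the paper: establish the kernel bound $|K_{t,0}(x)|\lesssim_N t^{-2}(1+|x|/t)^{-N}$ (the paper gets it by integration by parts, you by observing that $m_\sigma(|\cdot|)\eta(|\cdot|)$ is smooth and compactly supported) and then dominate by averages over balls and invoke the noncommutative Hardy--Littlewood maximal inequality of \cite{Mei09}. One small caution: for the non-self-adjointness issue the paper splits both $f$ and the kernel into four nonnegative pieces so that the order characterization \eqref{maxnorm} applies directly for all $p>1$, whereas your alternative via $L_p(\mathcal{M};\ell_\infty^c)$, $L_p(\mathcal{M};\ell_\infty^r)$ and Lemma \ref{max interpolation}(ii) is only available for $p\geq2$, so you should commit to the positive-part splitting to cover the full range $p>1$.
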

\begin{proof}
 Without loss of generality, we may assume that $f$ is positive. Recall that
  \begin{align*}
  \mathcal{A}_{t,0}^\sigma f(x)&=\int_{\mathbb{R}^2}e^{ix\xi}m_\sigma(t|\xi|)\eta(|t\xi|)\hat{f}(\xi)\ d\xi       \\
&=\int_{\mathbb{R}^2}K_{t,0}(x-y)f(y)\ dy,
  \end{align*}
where $K_{t,0}(x)=\int_{\mathbb{R}^2}e^{ix\xi}m_\sigma(t|\xi|)\eta(|t\xi|)\ d\xi$. Using integration by parts, we get
\begin{equation} \label{esk1}
 |K_{t,0}(x)|\leq C_N t^{-2}(1+|x|/t)^{-N}
\end{equation}
for any integer $N$.
Since $K_{t,0}$ can be written as a linear combination of four nonnegative functions which still satisfy the above estimate \eqref{esk1},  for simplicity, we may directly assume $K_{t,0}$ to be nonnegative. Then,
 \begin{align*}
  \mathcal{A}_{t,0}^\sigma f(x)&=\int_{\mathbb{R}^2}K_{t,0}(x-y)f(y)\ dy\\
  &\leq C_N\left(\int_{\frac{|x-y|}{t}<1}f(y)\ dy+\sum_{k=1}^\infty \int_{2^{k-1}\leq\frac{|x-y|}{t}< 2^k}t^{-2}(1+|x-y|/t)^{-N}f(y)\ dy\right)\\
  &\leq \sum_{k=0}^\infty C_N2^{-kN}t^{-2}\int_{B(x,2^kt)}f(y)\ dy= \sum_{k=0}^\infty \frac{C_N2^{-k(N-2)}}{|B(x,2^kt)|}\int_{B(x,2^kt)}f(y)\ dy.
  \end{align*}
By choosing $N>2$ and applying the noncommutative Hardy-Littlewood maximal inequality (see \cite[Theorem 3.3 ]{Mei09}), one derives the desired estimates.
\end{proof}
  \begin{lemma}\label{leA_j}
Suppose that $2<p<\infty$ and $\Re\sigma> -\kappa(p)$, there exists $\varepsilon(p)>0$ such that the following estimate
\begin{equation}\label{leA_jeq1}
   \Big\|{\sup_{t>0}}^+ \mathcal{A}_{t,j}^\sigma f\Big\|_p\lesssim 2^{-\varepsilon(p)j}\|f\|_p,\quad for\ \ j\geq1,
   \end{equation}
holds for all $f\in {L_p(L_\infty(\mathbb{R}^2)\overline{\otimes}\mathcal{M})}$.
\end{lemma}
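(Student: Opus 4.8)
The plan is to decompose $\mathcal{A}_{t,j}^\sigma$ further so that the problematic, purely oscillatory part is identified and the remainder contributes only a harmless error. Writing $m_\sigma(r) = 2^\sigma\pi r^{-\sigma} J_\sigma(r)$ and inserting the asymptotic expansion \eqref{ae} of $J_\sigma$ into \eqref{defA_j}, on the support $|2^{-j}t\xi|\in[1/2,2]$ we have $t|\xi|\approx 2^j$, so $m_\sigma(t|\xi|)$ splits as $c\,(t|\xi|)^{-\sigma-1/2}e^{\pm i t|\xi|}\cdot(\text{symbol})$ plus a term of size $O(2^{-j(\Re\sigma+3/2)})$ whose kernel, after summing the full asymptotic series, obeys rapidly decaying $L^1$ bounds uniformly in $t$; that remainder piece is controlled exactly as in Lemma \ref{leA_0} by the noncommutative Hardy--Littlewood maximal inequality, gaining a factor $2^{-j/2}$ (indeed any $2^{-jN}$). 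Thus it suffices to treat the two half-wave pieces
\begin{equation*}
  \mathcal{W}_{t,j}^\pm f(x) := \int_{\mathbb{R}^2} e^{ix\xi} e^{\pm it|\xi|}\, (t|\xi|)^{-\sigma-1/2}\, \beta(|2^{-j}t\xi|)\, a_j(t,\xi)\, \hat f(\xi)\, d\xi,
\end{equation*}
where $a_j$ is a symbol of order $0$; by taking complex conjugates the two cases are symmetric, so fix the $+$ sign.

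The next step is to linearize the supremum over $t$ and reduce the maximal estimate to a fixed-time space-time estimate of the form \eqref{IntTh3}. Concretely, I would freeze the frequency annulus: on $\supp\beta(|2^{-j}t\xi|)$ one has $|\xi|\approx 2^j/t$, and absorbing the factor $(2^j)^{-\sigma-1/2}$ and dyadic-localizing $f$ to $|\xi|\approx 2^\ell$ (so that $t\approx 2^{j-\ell}$), the operator $\mathcal{W}_{t,j}^+$ restricted to this piece becomes, after rescaling $t\mapsto 2^{j-\ell}t$ with $t\in[1,2]$, precisely an operator-valued Fourier integral operator of the type $F_m f$ appearing in Proposition \ref{theorem3}/\eqref{IntTh3}, at the relevant dyadic scale $m\approx j$. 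Here one uses the standard passage from the noncommutative $\ell_\infty$-maximal norm to an $\ell_2^c \oplus \ell_2^r$ square-function norm over the time variable combined with the Sobolev embedding in $t$ (a half-derivative in $t$), which is exactly the mechanism by which the $L_p(L_\infty([1,2]\times\mathbb{R}^2)\overline\otimes\mathcal{M})$ estimate \eqref{loc1}/\eqref{IntTh3} feeds into a maximal bound; the noncommutative Littlewood-Paley inequality (Lemma \ref{LP}) reassembles the dyadic pieces in $\ell$. The gain $2^{-\varepsilon(p)j}$ comes from the mismatch between the regularity $-\sigma-1/2$ actually present in the symbol and the critical threshold $s_p - \kappa$: since $\Re\sigma > -\kappa(p)$ we may choose $\kappa$ with $\Re\sigma > -\kappa > -\kappa(p)$, and \eqref{IntTh3} with $\mu < \kappa(p)-\kappa$ (permissible because $F_j$ gains $2^{\mu j}$ for all $\mu<\kappa(p)$ in the relevant reformulation, up to the $s_p$ scaling bookkeeping) yields a net negative power $2^{-\varepsilon(p)j}$ with $\varepsilon(p) = \Re\sigma + \kappa(p) - \delta$ for small $\delta>0$.

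I expect the main obstacle to be the bookkeeping of the reduction in the previous paragraph: tracking how the Sobolev orders $s_p$, the loss $\kappa$, the Bessel-decay exponent $\sigma+1/2$, and the dyadic frequency scale $2^\ell$ versus the time scale $2^{j-\ell}$ interact once one passes to the noncommutative maximal norm, so that \eqref{IntTh3} is applied with an admissible $\mu$ strictly below $\kappa(p)$ and still leaves room for a positive $\varepsilon(p)$. A secondary subtlety is the handling of the maximal norm itself: unlike the scalar case, $\sup_t$ must be realized through a factorization $x_t = a y_t b$, and one must verify that the square-function-in-time plus Sobolev-embedding argument is compatible with the column/row decomposition of $L_p(\mathcal{M};\ell_\infty)$ from Lemma \ref{max interpolation}(ii); this is where the remark following Theorem \ref{theorem1} warns that the argument is more delicate than the one for Theorem \ref{thmloc}, and I would treat the $\ell_\infty^c$ and $\ell_\infty^r$ pieces separately, estimating each by the appropriate one-sided version of \eqref{IntTh3} and interpolating. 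The small-$j$ analogue (the $\eta$-part, already dispatched in Lemma \ref{leA_0}) and the non-oscillatory remainder require nothing beyond the maximal inequality, so the entire weight of the proof sits on importing \eqref{IntTh3} correctly through these two reductions.
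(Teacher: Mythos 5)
Your proposal follows essentially the same route as the paper: the Bessel asymptotic expansion \eqref{ae} splits $\mathcal{A}_{t,j}^\sigma$ into two half-wave Fourier integral pieces of size $2^{-(\Re\sigma+1/2)j}$ plus a rapidly decaying remainder handled exactly as in Lemma \ref{leA_0}; the half-wave pieces are reduced, after a Littlewood--Paley decomposition in frequency and a dyadic rescaling of $t$, to the local maximal estimate on $t\in(1,2)$ (Proposition \ref{theorem2}), which is in turn deduced from \eqref{IntTh3} via the noncommutative Sobolev embedding in $t$ (Lemma \ref{remax} with $\lambda=2^{-j}$). Your exponent bookkeeping, $\varepsilon(p)=\Re\sigma+\kappa(p)-\delta$, is exactly what the paper obtains.

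The one substantive step you leave unaddressed is the assembly of the global supremum over $t>0$ from the local ones, and this is precisely where ``linearizing the supremum'' is unavailable noncommutatively. After rescaling, one faces a maximal norm over the joint index set $\{(\ell,t):\ell\in\mathbb{Z},\ 1\le t<2\}$, and it is not automatic that this is controlled by any aggregate of the individual local maximal norms. The paper proves the claim \eqref{le22}: after splitting $\mathcal{A}_{t,j}^\sigma(f_\ell)$ into self-adjoint real and imaginary parts, it extracts for each $\ell$ a dominating positive element $F_{\ell,\epsilon}$ from the characterization \eqref{maxnorm}, forms the single majorant $F_\epsilon=\big(\sum_\ell F_{\ell,\epsilon}^p\big)^{1/p}$ (which dominates every block by operator monotonicity of $s\mapsto s^{1/p}$ and satisfies $\|F_\epsilon\|_p^p=\sum_\ell\|F_{\ell,\epsilon}\|_p^p$), and only then converts the resulting $\ell_p$-sum into the column square function via Lemma \ref{emb} so that Lemma \ref{LP} applies. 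Your column/row interpolation via Lemma \ref{max interpolation}(ii) is indeed used, but only inside the local-in-$t$ Sobolev embedding; it does not by itself glue the dyadic time-blocks together. Without some version of this $\ell_p$-direct-sum construction of the majorant, your reduction to \eqref{IntTh3} is incomplete.
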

Before delving into the proof of Lemma \ref{leA_j}, we require the subsequent proposition, which will be rigorously established in the following subsection.
\begin{proposition}\label{theorem2}
  Suppose that $2<p<\infty$ and $\Re\sigma> -\kappa(p)$, there exists $\varepsilon(p)>0$ such that the following estimate
 \begin{equation}\label{Th2}
  \Big\|{\sup_{1<t<2}}^+ \mathcal{A}_{t,j}^\sigma f\Big\|_p\lesssim2^{-\varepsilon(p)j}\|f\|_p,\quad for\ \ j\geq1,
   \end{equation}
holds for all $f\in {L_p(L_\infty(\mathbb{R}^2)\overline{\otimes}\mathcal{M})}$.
 \end{proposition}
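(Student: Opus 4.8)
The plan is to deduce the time--localized maximal bound \eqref{Th2} from the operator--valued space--time estimate \eqref{IntTh3} of Proposition~\ref{theorem3}, the bridge between the two being a completely bounded Sobolev embedding in the time variable. The first step strips the multiplier down to a frequency--$2^{j}$ half--wave operator: for $1<t<2$, $j\ge1$ one has $|t\xi|\ge1$ on the support of $\beta(|2^{-j}t\xi|)$, and $\Re\sigma>-\kappa(p)>-\tfrac12$, so \eqref{ae} applies and gives
\[
 m_{\sigma}(t|\xi|)\,\beta(|2^{-j}t\xi|)=2^{-j(\Re\sigma+1/2)}\sum_{\pm}e^{\pm it|\xi|}\,a_{j}^{\pm}(t,\xi)+E_{j}(t,\xi),
\]
where each $a_{j}^{\pm}$ is supported in $\{|\xi|\thickapprox 2^{j}\}$, depends on $(t,\xi)$ only through $2^{-j}t|\xi|$ and $\beta$, and hence is a symbol of order $0$ smooth in $t\in[\tfrac12,\tfrac52]$ with all $(t,\xi)$--derivatives bounded uniformly in $j$ (the lower--order terms $k\ge1$ of \eqref{ae} carry extra factors $2^{-kj}$ and are absorbed here), while $E_{j}$, coming from the remainder $R_{\sigma,N}$, has size $\lesssim_{N}2^{-j(\Re\sigma+N)}$ together with the corresponding derivative bounds. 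The operator with symbol $E_{j}$ therefore has kernel dominated, after splitting into four nonnegative pieces, by $C_{N}2^{-j(\Re\sigma+N)}$ times an $L^{1}$--normalised approximate identity at scale $2^{-j}$, uniformly in $t$; its time--maximal operator is thus controlled by the noncommutative Hardy--Littlewood maximal inequality exactly as in Lemma~\ref{leA_0}, contributing $\lesssim 2^{-j}\|f\|_{p}$ once $N$ is large. It remains to bound $2^{-j(\Re\sigma+1/2)}\|{\sup_{1<t<2}}^{+}W_{j}^{\pm}(t)f\|_{p}$, where $W_{j}^{\pm}(t)$ is the Fourier multiplier with symbol $e^{\pm it|\xi|}a_{j}^{\pm}(t,\xi)$.

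\textbf{Passing to the space--time estimate.} Fix $\psi\in C_{c}^{\infty}(\mathbb{R})$ with $\psi\equiv1$ on $[1,2]$ and $\supp\psi\subset[\tfrac12,\tfrac52]$; then $\|{\sup_{1<t<2}}^{+}W_{j}^{\pm}(t)f\|_{p}\le\|{\sup_{t\in\mathbb{R}}}^{+}\psi(t)W_{j}^{\pm}(t)f\|_{p}$. Since $\psi(t)a_{j}^{\pm}(t,\xi)$ is Schwartz in $t$ with seminorms bounded uniformly in $\xi$, the $t$--Fourier transform of $t\mapsto\psi(t)W_{j}^{\pm}(t)f$ decays like $(1+|\tau\mp|\xi||)^{-N}$, so, up to a rapidly decaying remainder that is negligible in both the $L_{p}$ and the maximal norm (by the kernel/maximal--function argument above), it is supported in $\{|\tau|\le 2^{j+3}\}$. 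The crucial ingredient is then the following completely bounded Sobolev (Bernstein) inequality: for any semifinite von Neumann algebra $\mathcal{N}$, any $R>0$, and any $t$--band--limited $x\in L_{p}(L_{\infty}(\mathbb{R}_{t})\overline{\otimes}\mathcal{N})$ with $\supp_{t}\widehat{x}\subset\{|\tau|\le R\}$,
\[
 \Big\|{\sup_{t\in\mathbb{R}}}^{+}x(t)\Big\|_{L_{p}(\mathcal{N})}\lesssim R^{1/p}\,\|x\|_{L_{p}(L_{\infty}(\mathbb{R}_{t})\overline{\otimes}\mathcal{N})},\qquad 2\le p\le\infty .
\]
Applying it with $\mathcal{N}=L_{\infty}(\mathbb{R}^{2})\overline{\otimes}\mathcal{M}$ and $R=2^{j+3}$ gives $\|{\sup_{1<t<2}}^{+}W_{j}^{\pm}(t)f\|_{p}\lesssim 2^{j/p}\|W_{j}^{\pm}f\|_{L_{p}(L_{\infty}([\frac12,\frac52]\times\mathbb{R}^{2})\overline{\otimes}\mathcal{M})}$; and since $W_{j}^{\pm}$ is the frequency--$2^{j}$ operator--valued Fourier integral operator with phase $x\xi\pm t|\xi|$ and amplitude in the symbol class of Proposition~\ref{theorem3} (the time interval being immaterial after rescaling), \eqref{IntTh3} gives $\|W_{j}^{\pm}f\|_{L_{p}(L_{\infty}([\frac12,\frac52]\times\mathbb{R}^{2})\overline{\otimes}\mathcal{M})}\lesssim 2^{j(s_{p}-\kappa)}\|f\|_{p}$ for every $\kappa<\kappa(p)$.

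\textbf{Proof of the Bernstein inequality and conclusion.} I would prove the Bernstein inequality by interpolating between two easy endpoints. At $p=\infty$ it is trivial: $t$--band--limited elements have continuous representatives, so $\|{\sup_{t}}^{+}x(t)\|_{\infty}=\operatorname{ess\,sup}_{t}\|x(t)\|_{\mathcal{N}}$. At $p=2$, write $x=\phi\ast_{t}x$ with $\widehat{\phi}\equiv1$ on $[-2R,2R]$ and $\|\phi\|_{\infty}\lesssim R$; then the positive function $|x(t)|^{2}=x(t)^{*}x(t)$ is $t$--band--limited with band $\lesssim R$, so
\[
 |x(t)|^{2}=\int_{\mathbb{R}}\phi(t-s)|x(s)|^{2}\,\dif s\le\int_{\mathbb{R}}|\phi(t-s)|\,|x(s)|^{2}\,\dif s\le\|\phi\|_{\infty}\int_{\mathbb{R}}|x(s)|^{2}\,\dif s
\]
as operators, uniformly in $t$; taking traces, $\|{\sup_{t}}^{+}|x(t)|^{2}\|_{1}\lesssim R\int_{\mathbb{R}}\|x(s)\|_{2}^{2}\,\dif s=R\|x\|_{L_{2}(L_{\infty}(\mathbb{R}_{t})\overline{\otimes}\mathcal{N})}^{2}$, i.e.\ the column maximal norm is $\lesssim R^{1/2}\|x\|_{L_{2}}$, and the row maximal norm likewise via $x(t)x(t)^{*}$. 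Combining column and row by Lemma~\ref{max interpolation}(ii) gives the $p=2$ bound in $L_{2}(\mathcal{N};\ell_{\infty})$ with constant $\lesssim R^{1/2}$; complex interpolation of the maximal $L_{p}$--norms (Lemma~\ref{max interpolation}(i)) between $p=2$ and $p=\infty$ then yields the factor $R^{(1-\theta)/2}=R^{1/p}$, where $1/p=(1-\theta)/2$. Putting everything together and using $s_{p}=\tfrac12-\tfrac1p$,
\[
 \Big\|{\sup_{1<t<2}}^{+}\mathcal{A}_{t,j}^{\sigma}f\Big\|_{p}\lesssim 2^{-j(\Re\sigma+1/2)}\,2^{j/p}\,2^{j(s_{p}-\kappa)}\|f\|_{p}+2^{-j}\|f\|_{p}=2^{-j(\Re\sigma+\kappa)}\|f\|_{p}+2^{-j}\|f\|_{p}.
\]
Since $\Re\sigma>-\kappa(p)$, one may fix $\kappa<\kappa(p)$ with $\Re\sigma+\kappa>0$, and \eqref{Th2} follows with any $\varepsilon(p)\in\bigl(0,\min\{\Re\sigma+\kappa,1\}\bigr)$.

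\textbf{Main obstacle.} The heart of the matter is the completely bounded Sobolev embedding, specifically obtaining the sharp time--frequency exponent $1/p$: the naive $L^{2}$ square--function route only gives $2^{j/2}$, which fails to beat the prefactor $2^{j(\Re\sigma+1/2)}$ exactly when $\Re\sigma$ is close to $-\kappa(p)$, so interpolating the noncommutative maximal norm against the $p=\infty$ endpoint is essential; one must also carefully propagate the time--frequency localisation through the cutoff $\psi$ and the $t$--dependence of the amplitudes $a_{j}^{\pm}$. A more routine but necessary point is checking that the amplitudes produced by the Bessel expansion satisfy the symbol--class hypotheses imposed on $F_{j}$ in Proposition~\ref{theorem3}, so that \eqref{IntTh3} applies to $W_{j}^{\pm}$.
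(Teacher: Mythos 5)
Your proposal is correct and follows the same overall architecture as the paper: the Bessel asymptotic expansion \eqref{ae} splits $\mathcal{A}_{t,j}^\sigma$ into two half-wave pieces carrying the prefactor $2^{-(1/2+\Re\sigma)j}$ plus a remainder killed by the noncommutative Hardy--Littlewood maximal inequality as in Lemma \ref{leA_0}, and the half-wave pieces are fed into Proposition \ref{theorem3}. The genuine difference lies in the bridge from the space--time bound to the maximal bound. The paper uses Lemma \ref{remax}, a fundamental-theorem-of-calculus Sobolev embedding $\|{\sup_{t\in I}}^+F\|_p\lesssim\lambda^{-1/p}\|F\|_p+\lambda^{1-1/p}\|F'\|_p$ with $\lambda=2^{-j}$, which requires applying Proposition \ref{theorem3} a second time to $\partial_t(\mathcal{A}_{t,j,1}^\sigma f)$ (whose symbol gains a factor $2^j$); this is entirely elementary, needs no information about the $\tau$-support in time, and avoids your tail-splitting step. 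You instead apply Proposition \ref{theorem3} only once and invoke a band-limited noncommutative Bernstein maximal inequality, proved by a reproducing-kernel argument at $p=2$ (column/row bounds combined via Lemma \ref{max interpolation}(ii)) and interpolation of maximal norms against the trivial $p=\infty$ endpoint via Lemma \ref{max interpolation}(i). The two lemmas are interchangeable here and yield the identical loss $2^{j/p}$ and the same final exponent $2^{-j(\Re\sigma+\kappa)}$; your version is slightly more general (it is the natural cb-Bernstein statement and does not need a bound on the time derivative) but costs you the extra bookkeeping of showing that the non-band-limited tails of $\psi(t)W_j^{\pm}(t)f$ are $O_N(2^{-Nj})$ in the maximal norm --- a routine point, but one you should carry out explicitly (e.g.\ by dyadic decomposition in $\tau$ or by a crude $\lambda=1$ application of the FTC embedding), and you should also fix $\phi$ real so that the operator inequality $\phi(t-s)|x(s)|^2\leq|\phi(t-s)|\,|x(s)|^2$ is legitimate.
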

\begin{proof}[\textbf{Proof of Lemma \ref{leA_j}}]
  Let  $\Delta_\ell^j$ be the Littlewood-Paley operator, that is
 \begin{equation*}
    \Delta_\ell^j f(x):=\left(\tilde{\beta}(2^{-j}|2^\ell\cdot|)\hat{f}(\cdot)\right)^\vee(x),
\end{equation*}
where $\tilde{\beta}\in C_0^\infty(\mathbb{R})$ is a nonnegative function which satisfies $\beta(|t\xi|)=\beta(|t\xi|)\tilde{\beta}(|\xi|)$ for any $t\in [1,2)$ and $\xi\in\mathbb{R}^2$. Then,
\begin{align}\label{le21}
  \Big\|{\sup_{t>0}}^+ \mathcal{A}_{t,j}^\sigma f\Big\|_p&=\left\|{\sup_{t>0}}^+\int_{\mathbb{R}^2}e^{ix\xi}m_\sigma(t|\xi|)\beta(|2^{-j}t\xi|)\hat{f}(\xi)\ d\xi\right\|_p\nonumber\\
  &=\left\|{\sup_{\ell\in\mathbb{Z}}\sup_{1\leq t<2}}^+\int_{\mathbb{R}^2}e^{i\xi x}m_\sigma(|2^\ell t\xi|)\beta(|2^{-j+\ell}t\xi|)\tilde{\beta}(|2^{-j+\ell}\xi|)\hat{f}(\xi)\ d\xi\right\|_p\nonumber\\
  &=\left\|{\sup_{\ell\in\mathbb{Z}}\sup_{1\leq t<2}}^+\int_{\mathbb{R}^2}e^{i\xi x}m_\sigma(|2^\ell t\xi|)\beta(|2^{-j+\ell}t\xi|)\widehat{\Delta_\ell^j f}(\xi)\ d\xi\right\|_p\nonumber\\&=\Big\|{\sup_{\ell\in\mathbb{Z}}\sup_{1\leq t<2}}^+\mathcal{A}_{t,j}^\sigma\Big((\Delta_\ell^j f)(2^\ell\cdot)\Big)(2^{-\ell} \cdot)\Big\|_p.\nonumber\\
\end{align}

To proceed with the argument, we assert the following claim:
\begin{equation}\label{le22}
  \Big\|{\sup_{\ell\in\mathbb{Z}}\sup_{1\leq t<2}}^+\mathcal{A}_{t,j}^\sigma\left((\Delta_\ell^j f)(2^\ell\cdot)\right)(2^{-\ell} \cdot)\Big\|_p\lesssim 2^{-\varepsilon(p)j}\Big(\sum_{\ell\in\mathbb{Z}}\|2^{\frac{2\ell}{p}}(\Delta_\ell^j f)(2^\ell\cdot)\|_p^p\Big)^{\frac{1}{p}}.
\end{equation}
We prove the above claim at the end of this proof.
Now combining the inequalities \eqref{le21},  claim \eqref{le22} and Lemma \ref{emb}, we deduce
\begin{align*}
  \Big\|{\sup_{t>0}}^+ \mathcal{A}_{t,j}^\sigma f\Big\|_p &\lesssim 2^{-\varepsilon(p)j}\Big(\sum_{\ell\in\mathbb{Z}}\|2^{\frac{2\ell}{p}}(\Delta_\ell^j f)(2^\ell\cdot)\|_p^p\Big)^{\frac{1}{p}}\\
  &=2^{-\varepsilon(p)j}\Big(\sum_{\ell\in\mathbb{Z}}\|(\Delta_\ell^j f)(\cdot)\|_p^p\Big)^{\frac{1}{p}}\\
  &\leq 2^{-\varepsilon(p)j}\Big\|\Big(\sum_{\ell\in\mathbb{Z}}|\Delta_\ell^j f|^2\Big)^{\frac{1}{2}}\Big\|_p\lesssim 2^{-\varepsilon(p)j}\|f\|_p,
\end{align*}
where the last inequality follows from Lemma \ref{LP} and we complete the proof of Lemma \ref{leA_j} once we prove the claim \eqref{le22}.

For this target, let $f_\ell$ be an element in $\mathcal{S}(\mathbb{R}^2)\otimes\mathcal{M}$ associated with an integer $\ell$, it suffices to show
\begin{equation}\label{le23}
  \Big\|{\sup_{\ell\in\mathbb{Z}}\sup_{1<t<2}}^+\mathcal{A}_{t,j}^\sigma(f_\ell)(2^{-\ell} \cdot)\Big\|_p\lesssim 2^{-\varepsilon(p)j}\Big(\sum_{\ell\in\mathbb{Z}}\|2^{\frac{2\ell}{p}}f_\ell\|_p^p\Big)^{\frac{1}{p}}.
\end{equation}
Fix $\ell\in\mathbb{Z}$, we write $\mathcal{A}_{t,j}^\sigma(f_\ell)$ as $\Re[\mathcal{A}_{t,j}^\sigma(f_\ell)] +i\Im[\mathcal{A}_{t,j}^\sigma(f_\ell)]$ where
\begin{equation*}
\begin{split}
\Re[\mathcal{A}_{t,j}^\sigma(f_\ell)]&=\frac{1}{2} \Big(\mathcal{A}_{t,j}^\sigma(f_\ell)+ \Big(\mathcal{A}_{t,j}^\sigma(f_\ell)\Big)^*\Big),\\
\Im[\mathcal{A}_{t,j}^\sigma(f_\ell)]&=\frac{1}{2i} \Big(\mathcal{A}_{t,j}^\sigma(f_\ell)- \Big(\mathcal{A}_{t,j}^\sigma(f_\ell)\Big)^*\Big).
\end{split}\end{equation*}
By the triangle inequality, a fact that the adjoint map is an isometric
isomorphism on $L_p(L_\infty(\mathbb{R}^2)\overline{\otimes}\mathcal{M};\ell_\infty)$ and applying Proposition \ref{theorem2},
\begin{align*}
  2^{\frac{-2\ell}{p}}\Big\|{\sup_{1<t<2}}^+ \Re[\mathcal{A}_{t,j}^\sigma(f_\ell)](2^{-\ell}\cdot)\Big\|_p&= \Big\|{\sup_{1<t<2}}^+  \Re[\mathcal{A}_{t,j}^\sigma(f_\ell)]\Big\|_p\\
  &\leq \frac{1}{2}\Big\|{\sup_{1<t<2}}^+  \mathcal{A}_{t,j}^\sigma(f_\ell)\Big\|_p + \frac{1}{2}\Big\|{\sup_{1<t<2}}^+  \Big(\mathcal{A}_{t,j}^\sigma(f_\ell)\Big)^*\Big\|_p\\
  &=\Big\|{\sup_{1<t<2}}^+  \mathcal{A}_{t,j}^\sigma(f_\ell)\Big\|_p\lesssim 2^{-\varepsilon(p)j}\|f_\ell\|_p.
  \end{align*}
Thus for $\epsilon>0$, we can find a positive element $F_{\ell,\epsilon}$ such that for all $1<t<2$,
\begin{equation*}
 -F_{\ell,\epsilon}\leq  \Re[\mathcal{A}_{t,j}^\sigma(f_\ell)](2^{-\ell}\cdot)\leq F_{\ell,\epsilon}
\end{equation*}
 and $\|F_{\ell,\epsilon}\|_p^p\lesssim 2^{-p\varepsilon(p)j+2\ell}\|f_\ell\|_p^p+2^{-|\ell|}\epsilon$.
  Let $F_\epsilon:=\Big(\sum_{\ell\in\mathbb{Z}} F_{\ell,\epsilon}^p\Big)^{\frac{1}{p}}$, then
   \begin{equation*}
    -F_\epsilon\leq  \Re[\mathcal{A}_{t,j}^\sigma(f_\ell)](2^{-\ell}\cdot)\leq F_\epsilon
   \end{equation*}
  for all $\ell\in\mathbb{Z}, 1<t<2$. And we have
  \begin{equation*}
  \|F_\epsilon\|_p^p=\sum_{\ell\in\mathbb{Z}}\|F_{\ell,\epsilon}\|_p^p\lesssim 2^{-p\varepsilon(p)j}\Big(\sum_{\ell\in\mathbb{Z}}\|2^{\frac{2\ell}{p}}f_\ell\|_p^p\Big)+\epsilon.
  \end{equation*}
  Let $\epsilon\rightarrow0$, we deduce
  \begin{equation}\label{le211}
   \Big\|{\sup_{\ell\in\mathbb{Z}}\sup_{1<t<2}}^+\Re[\mathcal{A}_{t,j}^\sigma(f_\ell)](2^{-\ell}\cdot)\Big\|_p\lesssim 2^{-\varepsilon(p)j}\Big(\sum_{\ell\in\mathbb{Z}}\|2^{\frac{2\ell}{p}}f_\ell\|_p^p\Big)^{\frac{1}{p}}.
  \end{equation}
 Similarly we also have
  \begin{equation}\label{le212}
   \Big\|{\sup_{\ell\in\mathbb{Z}}\sup_{1<t<2}}^+\Im[\mathcal{A}_{t,j}^\sigma(f_\ell)](2^{-\ell}\cdot)\Big\|_p\lesssim 2^{-\varepsilon(p)j}\Big(\sum_{\ell\in\mathbb{Z}}\|2^{\frac{2\ell}{p}}f_\ell\|_p^p\Big)^{\frac{1}{p}}.
   \end{equation}
   Using the triangle inequality, along with the inequalities \eqref{le211} and \eqref{le212}, we can derive \eqref{le23}.
\end{proof}

\subsection{Proof of Proposition \ref{theorem2}}\label{Subs3.2}
To prove Proposition \ref{theorem2}, we first state a key proposition whose proof will be given later.

 \begin{proposition}\label{theorem3}
 Fix $\rho_0,\rho_1\in C_0^\infty\big((1/8,8)\big)$, let $a(\xi,t)$ be a symbol of order zero, i.e. for any $t\in(1,2), \alpha\in\mathbb{N}^2$,
 \begin{equation*}
   \Big|\left(\frac{\partial}{\partial \xi}\right)^\alpha a(\xi,t)\Big|\leq c_\alpha(1+|\xi|)^{-|\alpha|}.
 \end{equation*}
Define the operator $F_j$ by $$F_jf(x,t):=\rho_1(t)\int_{\mathbb{R}^2}e^{i(x\xi+t|\xi|)}\rho_0(|2^{-j}\xi|)a(\xi,t)\hat{f}(\xi)d\xi$$ for $j\geq1$. Then
 \begin{equation}\label{Th3}
  \|F_jf\|_{L_p\left(L_\infty(\mathbb{R}^3)\overline{\otimes}\mathcal{M}\right)}\leq C_{\mu}2^{\mu j}\|f\|_{L_p\left(L_\infty(\mathbb{R}^2)\overline{\otimes}\mathcal{M}\right)},
   \end{equation}
   for all $\mu>\frac{1}{2}-\frac{1}{p}-\kappa(p)$.
 \end{proposition}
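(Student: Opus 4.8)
\emph{Reduction to $p=4$.} The plan is to dispose of the two trivial endpoints and interpolate. For $p=2$, using the identification $L_2(\mathcal N)\cong L_2(\mathbb R^3;L_2(\mathcal M))$, the Plancherel theorem in the $x$-variable (with values in the Hilbert space $L_2(\mathcal M)$), the bound $|a(\xi,t)|\lesssim 1$ from the order-zero hypothesis, and the compact $t$-support of $\rho_1$, one gets
\[
  \|F_jf\|_{L_2(\mathcal N)}\lesssim\|f\|_{L_2(L_\infty(\mathbb R^2)\overline{\otimes}\mathcal M)},
\]
i.e.\ \eqref{Th3} with $\mu=0$. For $p=\infty$, writing $F_jf(x,t)=\int_{\mathbb R^2}K_j^{t}(x-y)f(y)\,dy$ with scalar kernel $K_j^{t}(z)=\rho_1(t)\int_{\mathbb R^2}e^{i(z\xi+t|\xi|)}\rho_0(|2^{-j}\xi|)a(\xi,t)\,d\xi$, a two-dimensional stationary phase computation gives $\|K_j^{t}\|_{L_1(\mathbb R^2)}\lesssim 2^{j/2}$ uniformly for $t$ in the support of $\rho_1$; hence, by the operator triangle inequality,
\[
  \|F_jf\|_{L_\infty(\mathcal N)}\le\Big(\sup_t\|K_j^{t}\|_{L_1(\mathbb R^2)}\Big)\|f\|_{L_\infty(L_\infty(\mathbb R^2)\overline{\otimes}\mathcal M)}\lesssim 2^{j/2}\|f\|_{L_\infty(L_\infty(\mathbb R^2)\overline{\otimes}\mathcal M)},
\]
which is \eqref{Th3} with $\mu=1/2$. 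Granting \eqref{Th3} for $p=4$ with every $\mu>\tfrac18=\tfrac12-\tfrac14-\kappa(4)$, complex interpolation of the noncommutative $L_p$ spaces (cf.\ \cite{PX03}) between $p=2$ and $p=4$ covers $2<p<4$, and between $p=4$ and $p=\infty$ covers $4\le p<\infty$; in both ranges an elementary computation shows that the interpolated exponent is exactly $\tfrac12-\tfrac1p-\kappa(p)$, so \eqref{Th3} follows for all $\mu>\tfrac12-\tfrac1p-\kappa(p)$. It therefore remains to prove the case $p=4$, which is the content of Sections \ref{Se4}--\ref{Se5}.

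\emph{The case $p=4$.} Here the target is $\|F_jf\|_{L_4(\mathcal N)}\lesssim_\varepsilon 2^{(1/8+\varepsilon)j}\|f\|_{L_4(L_\infty(\mathbb R^2)\overline{\otimes}\mathcal M)}$ for every $\varepsilon>0$, and we follow the strategy of Mockenhaupt--Seeger--Sogge \cite{MSS} with the modifications forced by noncommutativity. Decompose the frequency support $\{|\xi|\sim 2^j\}$ into $O(2^{j/2})$ sectors $\Gamma_\nu$ of angular aperture $\sim 2^{-j/2}$, write $\rho_0(|2^{-j}\xi|)a(\xi,t)=\sum_\nu a_\nu(\xi,t)$ with $a_\nu$ supported in $\Gamma_\nu$, and correspondingly $F_j=\sum_\nu F_j^\nu$; after a rotation and an anisotropic rescaling each $F_j^\nu$ is a model ``plate'' Fourier integral operator whose output has space-time frequency support in a slab $\Pi_\nu\subset\mathbb R^3$ tangent to the light cone, and which, modulo rapidly decaying tails, is concentrated on a family of dual planks of dimensions $\sim 2^{-j}\times 2^{-j/2}\times 1$ in the direction fixed by $\nu$; the $t$-cutoff $\rho_1$ and the $t$-dependence of $a$ are absorbed by routine symbol manipulations (Taylor expansion in $t$, integration by parts). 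The proof then rests on two new operator-valued inequalities. First, a \emph{noncommutative square function estimate adapted to the conical (uniform) decomposition}: for $g_\nu$ with $\widehat{g_\nu}$ supported in $\Pi_\nu$,
\[
  \Big\|\sum_\nu g_\nu\Big\|_{L_4(\mathcal N)}\lesssim_\varepsilon 2^{\varepsilon j}\max\Big\{\Big\|\big(\sum_\nu|g_\nu|^2\big)^{1/2}\Big\|_{L_4(\mathcal N)},\ \Big\|\big(\sum_\nu|g_\nu^*|^2\big)^{1/2}\Big\|_{L_4(\mathcal N)}\Big\},
\]
obtained by expanding $\|\sum_\nu g_\nu\|_{L_4(\mathcal N)}^2=\|(\sum_\nu g_\nu)^*(\sum_\nu g_\nu)\|_{L_2(\mathcal N)}$ (and the analogous identity with $xx^*$) bilinearly, grouping the pairs $(\nu,\nu')$ by the dyadic size of $|\nu-\nu'|$, and using that within each group the frequency supports $-\Pi_\nu+\Pi_{\nu'}$ are almost disjoint --- this almost-disjointness is the \emph{new geometric lemma for subsets of $\mathbb R^3$}, of which we in fact establish a slightly sharper form than \cite{MSS}, while the column and row square functions $L_4(\mathcal M;\ell_2^c)$ and $L_4(\mathcal M;\ell_2^r)$ are manipulated by Lemmas \ref{sq} and \ref{sq1}. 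Second, a \emph{noncommutative Kakeya-type maximal inequality}, bounding $\|(\sum_\nu|F_j^\nu f|^2)^{1/2}\|_{L_4(\mathcal N)}$ and its row analogue by $2^{\varepsilon j}2^{j/8}\|f\|_{L_4(L_\infty(\mathbb R^2)\overline{\otimes}\mathcal M)}$: decomposing each $F_j^\nu f$ over the tiling of $\mathbb R^3$ by $\nu$-planks, this follows from the locally constant behaviour of $F_j^\nu f$ at the dual scale, the orthogonality $\sum_\nu\|F_j^\nu f\|_{L_2(\mathcal N)}^2\lesssim\|f\|_{L_2}^2$, Bernstein's inequality at frequency scale $2^j$, and the bounded-overlap property of the $\nu$-planks (the planar Nikodym/Kakeya bound, with a $2^{\varepsilon j}$ loss), all executed with operator-valued functions via the $L_p(\mathcal M;\ell_2^{c/r})$ and $L_p(\mathcal M;\ell_\infty)$ calculus and the convexity inequalities of Section \ref{S2}. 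Combining the two displayed estimates yields $\|F_jf\|_{L_4(\mathcal N)}\lesssim_\varepsilon 2^{(1/8+\varepsilon)j}\|f\|_{L_4}$, completing the proof.

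\emph{The main obstacle.} The principal difficulty is genuinely noncommutative: because $|x^*x|\neq|xx^*|$ in general, the single classical square function $(\sum_\nu|g_\nu|^2)^{1/2}$ splits into inequivalent column and row versions, and this dichotomy propagates through the whole argument --- the bilinear expansion, the square function estimate and the Kakeya maximal inequality all have to be run simultaneously in the column and row pictures, with the passage between them controlled only through the $L_p(\mathcal M;\ell_2^{c,r})$ and $L_p(\mathcal M;\ell_\infty)$ machinery of Section \ref{S2}. Compounding this, the geometric input can no longer be the planar overlap lemmas used in \cite{Lai21,HLW23}: the wave propagator in $2+1$ dimensions forces us to count overlaps of slabs and planks in $\mathbb R^3$, and producing a count sharp enough to close the argument (sharper, in fact, than the one in \cite{MSS}) is the most technical new ingredient and the real heart of the proof; everything else is a careful but essentially standard adaptation of the classical Carleson--Sj\"olin / local smoothing scheme to the operator-valued setting.
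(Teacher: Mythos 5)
Your reduction is exactly the paper's: the $p=2$ Plancherel bound ($\mu=0$), the $p=\infty$ bound via $\sup_t\|K_j^t\|_{L_1}\lesssim 2^{j/2}$ and Young's inequality ($\mu=1/2$), and interpolation against the $L_4$ case (Proposition \ref{key}); your verification that the interpolated exponents reproduce $\tfrac12-\tfrac1p-\kappa(p)$ on both ranges is correct. The $L_4$ outline also follows the same Mockenhaupt--Seeger--Sogge blueprint as Sections \ref{Se4}--\ref{Se5} (angular decomposition, bilinear expansion, an overlap count in $\mathbb{R}^3$, noncommutative square functions, a Kakeya-type maximal inequality). However, there is a genuine gap in how you distribute the losses between the two key steps. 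Your first displayed inequality asserts an essentially lossless ($2^{\varepsilon j}$) reverse square function estimate for the purely angular decomposition into $O(2^{j/2})$ slabs $\Pi_\nu$, justified by ``almost disjointness'' of the difference sets $-\Pi_\nu+\Pi_{\nu'}$. For the light cone this almost-disjointness fails: the slabs are long in the radial direction, so $|\xi|-|\xi'|$ is essentially unconstrained and the difference sets are fat and heavily overlapping. This is precisely why the paper (following \cite{MSS}) introduces the additional decompositions in $\tau$ and in $|\xi|$ --- the operators $P^n$ of \eqref{Pn} and the pieces $f_n$ of \eqref{fn} --- so that the relevant overlap count becomes that of the $\approx 2^j$ sets $\mathcal{U}^v_n-\mathcal{U}^w_m$, which Lemma \ref{geo} bounds by $2^{j/2+2\epsilon j}$ (not $2^{O(\epsilon j)}$). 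That $2^{j/2}$, together with the matching factor from Lemma \ref{lpn}, is exactly where the paper's $2^{j/8}$ comes from (see \eqref{esI} and \eqref{IIe3}); the Kakeya-type estimate (Lemma \ref{ka}) is then proved with only $2^{\varepsilon j}$ loss. You have inverted this bookkeeping: your Kakeya claim with loss $2^{j/8+\varepsilon j}$ is weaker than the truth and unproblematic, but your compensating square function claim is a substantially stronger statement than anything established here (in the commutative case it amounts to the $L^4$ cone square function theorem, whose proof is essentially the whole MSS argument; noncommutatively it is not available), and your one-line justification does not supply it.

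A second, related omission: in the noncommutative setting the bilinear expansion cannot be closed by overlap counting alone even after the fine decomposition, because $\sum_{v,w}\chi_{\mathcal{U}^v_n-\mathcal{U}^w_m}$ blows up for nearby $v,w$ (Remark \ref{remark4}). The paper must split the double sum into $|v-w|\le 1000$ and $|v-w|>1000$, treat the near-diagonal part by the strengthened square function Lemma \ref{lpn}, and apply the geometric Lemma \ref{geo} only to the separated pairs. Your dyadic grouping in $|\nu-\nu'|$ gestures at this but does not identify the near-diagonal obstruction or the lemma needed to handle it. In short: the architecture is right, but the central estimate you rely on is unproven as stated, and the actual source of the $2^{j/8}$ loss --- the $\tau$-localization, the $f_n$'s, and the $2^{j/2+2\epsilon j}$ overlap bound --- is missing from your argument.
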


In \cite{MSS}, the authors utilized the Sobolev embedding inequality to prove Proposition \ref{theorem2} in the commutative case. Here, we use the following alternative lemma, which should be regarded as a cognate version of the Sobolev embedding inequality. A more complete theory of the noncommutative Sobolev embedding  inequalities can be found in \cite{Hong+}.
\begin{lemma}\label{remax}
Given $F\in \mathcal{S}(\mathbb{R})\otimes\mathcal{S}(\mathcal{M})$, let $I$ be an interval on $\mathbb{R}$ and $\mathcal{N_I}$ be the tensor von Neumann algebra $L_\infty(I)\overline{\otimes}\mathcal{M}$. Suppose that $2\leq p\leq\infty$ , then for all $0<\lambda\leq |I|$, we have
\begin{align*}
  \Big\|{\sup_{t\in I}}^+F(t)\Big\|_{L_p(\mathcal{M})}\lesssim &\lambda^{-1/p}\Big\|F\Big\|_{L_p(\mathcal{\mathcal{N}_I})}+\lambda^{1-1/p}\Big\|F^\prime\Big\|_{L_p(\mathcal{\mathcal{N}_I})}.
\end{align*}
\end{lemma}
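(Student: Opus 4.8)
Our plan is to deduce the estimate from the fundamental theorem of calculus applied at scale $\lambda$, combined with the operator Cauchy--Schwarz inequality of Lemma \ref{convex} and a gluing performed at the level of $p$-th powers. Since $F(t)$ is not assumed self-adjoint, I would first reduce to the column component: by Lemma \ref{max interpolation}(ii) we have $L_p(\mathcal{M};\ell_\infty)=\big(L_p(\mathcal{M};\ell_\infty^c),L_p(\mathcal{M};\ell_\infty^r)\big)_{1/2}$, and replacing $F$ by $F^{*}$ (which leaves $\|F\|_{L_p(\mathcal{N}_I)}$ unchanged, since $\tau$ is tracial, and sends $F'$ to $(F')^{*}$) interchanges the column and row components; hence it suffices to prove
\[
\Big\|{\sup_{t\in I}}^{+}|F(t)|^{2}\Big\|_{p/2}^{1/2}\ \lesssim\ \lambda^{-1/p}\|F\|_{L_p(\mathcal{N}_I)}+\lambda^{1-1/p}\|F'\|_{L_p(\mathcal{N}_I)}.
\]
One may assume $2\le p<\infty$, the case $p=\infty$ being immediate from Newton--Leibniz and the triangle inequality.

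Next I would cover $I$ by intervals $J_1,\dots,J_M$ with $M\lesssim |I|/\lambda$, each of length exactly $\lambda$ and with overlap at most two. Fix $k$ and $t\in J_k$; averaging the identity $F(t)-F(s)=\int_s^{t}F'(r)\,dr$ over $s\in J_k$ gives
\[
F(t)=\frac1\lambda\int_{J_k}F(s)\,ds+\frac1\lambda\int_{J_k}\Big(\int_s^{t}F'(r)\,dr\Big)\,ds=:G_k+H_k(t).
\]
Using $|x+y|^{2}\le 2|x|^{2}+2|y|^{2}$ together with Lemma \ref{convex} (with weight $1$ for $G_k$, and with weights $1$ and $\mathbf 1_{(s\wedge t,\,s\vee t)}$ for $H_k(t)$, using $|s-t|\le|J_k|=\lambda$) one obtains the bound, uniform in $t\in J_k$,
\[
|F(t)|^{2}\ \le\ \frac2\lambda\int_{J_k}|F(s)|^{2}\,ds+2\lambda\int_{J_k}|F'(r)|^{2}\,dr\ =:\ a_k\ \in\ L_{p/2}(\mathcal{M})_{+}.
\]

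The final step is to glue the $a_k$'s correctly. I would set $a:=\big(\sum_{k=1}^{M}a_k^{p/2}\big)^{2/p}$; operator monotonicity of $z\mapsto z^{2/p}$ (valid since $p\ge 2$) yields $a_k\le a$, so $|F(t)|^{2}\le a$ for all $t\in I$, and by \eqref{maxnorm} the left-hand side is $\le\|a\|_{p/2}^{1/2}$. Since $a^{p/2}=\sum_k a_k^{p/2}$ and $\tau$ is additive, we get the exact identity $\|a\|_{p/2}^{p/2}=\sum_k\|a_k\|_{p/2}^{p/2}$. Bounding each summand by the triangle inequality in $L_{p/2}(\mathcal{M})$ followed by H\"older in $s$ (and $|J_k|=\lambda$) gives $\|a_k\|_{p/2}\lesssim \lambda^{-2/p}\|F\|_{L_p(\mathcal{N}_{J_k})}^{2}+\lambda^{2-2/p}\|F'\|_{L_p(\mathcal{N}_{J_k})}^{2}$, hence $\|a_k\|_{p/2}^{p/2}\lesssim \lambda^{-1}\|F\|_{L_p(\mathcal{N}_{J_k})}^{p}+\lambda^{p-1}\|F'\|_{L_p(\mathcal{N}_{J_k})}^{p}$; summing over $k$ and using the bounded overlap (so that $\sum_k\|F\|_{L_p(\mathcal{N}_{J_k})}^{p}\lesssim\|F\|_{L_p(\mathcal{N}_I)}^{p}$, and likewise for $F'$) and taking square roots yields the claimed column estimate.

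The one point that really needs care is this last gluing: the naive choice $a=\sum_k a_k$ would replace the $\ell_p$-sum of the $\|a_k\|_{p/2}$ by an $\ell_1$-sum, costing a factor $M^{1-2/p}$ and destroying the estimate as soon as $\lambda\ll|I|$; using $a=\big(\sum_k a_k^{p/2}\big)^{2/p}$ instead, traciality turns the overhead into an exact $\ell_p$-sum which reassembles the global norms $\|F\|_{L_p(\mathcal{N}_I)}$ and $\|F'\|_{L_p(\mathcal{N}_I)}$. The covering intervals must have length exactly $\lambda$ so that the averaging in the second step is carried out at the correct scale; the column/row detour forced by non-self-adjointness is then routine. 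Everything else (the Newton--Leibniz step, the operator Cauchy--Schwarz manipulations, the scalar H\"older inequalities) is mechanical given that $F\in\mathcal{S}(\mathbb{R})\otimes\mathcal{S}(\mathcal{M})$.
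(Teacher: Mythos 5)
Your proposal is correct. The skeleton coincides with the paper's proof: reduce to the column (and, by applying the result to $F^{*}$, the row) maximal norm via Lemma \ref{max interpolation}(ii), average the Newton--Leibniz identity over an interval of length $\lambda$ containing $t$, and control the two resulting terms by the operator Cauchy--Schwarz inequality of Lemma \ref{convex}. Where you genuinely diverge is in how the $t$-independent majorant is produced. The paper stays with a single window $I_t$ and invokes the operator-valued H\"older inequality (Lemma \ref{convex1}) together with operator monotonicity of $z\mapsto z^{2/p}$ to pass in one stroke from $\int_{I_t}|F(s)|^{2}\,ds$ to $\lambda^{1-2/p}\big(\int_{I}|F(s)|^{p}\,ds\big)^{2/p}$, which is already uniform in $t$; no covering or gluing is needed, and the constant is absolute. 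You instead discretize $I$ into $\lambda$-length intervals $J_k$, obtain a local majorant $a_k$ on each, and glue via $a=\big(\sum_k a_k^{p/2}\big)^{2/p}$, using operator monotonicity for the domination $a_k\le a$ and traciality to turn $\|a\|_{p/2}^{p/2}$ into the exact $\ell_{p/2}$-sum $\sum_k\|a_k\|_{p/2}^{p/2}$, which the bounded overlap reassembles into the global norms. Your diagnosis of the danger in the naive choice $a=\sum_k a_k$ is exactly right, and your remedy is sound; the only costs relative to the paper's route are a $p$-dependent constant (from $(x+y)^{p/2}\lesssim_p x^{p/2}+y^{p/2}$), which is harmless here since the lemma is always applied at fixed $p$, and a slightly longer argument. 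Both proofs rely on \eqref{maxnorm} for the final passage from the pointwise domination $|F(t)|^{2}\le a$ to the maximal norm.
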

\begin{proof}
 Fix $t\in I$, and let $I_t\subset I$ be an interval containing $t$ with $|I_t|=\lambda$. For any $s\in I_t$,
 \begin{equation*}
   F(t)-F(s)=\int_{s}^t F^\prime(r) dr,
 \end{equation*}
 then integrating both sides over  $s\in I_t$,
 \begin{equation*}
   \lambda F(t)=\int_{I_t}F(s)ds+\int_{I_t}\int_{s}^t F^\prime(r)drds.
 \end{equation*}
Employing Lemma \ref{convex} and Lemma \ref{convex1},  along with the operator monotone property of the function $t^{2/p}$ for $p\geq2$, we derive the following operator inequality,
 \begin{align}\label{opconvex}
   |\lambda F(t)|^2 &\leq2\Big|\int_{I_t}F(s)ds\Big|^2+2\Big| \int_{I_t}\int_{s}^t F^\prime(r)drds\Big|^2\nonumber\\
  &\leq 2\lambda\int_{I_t}|F(s)|^2ds+2\lambda\int_{I_t}|t-s|\int_{s}^t | F^\prime(r)|^2drds\nonumber\\
  &\leq2\lambda^{2-2/p}\Big(\int_{I_t}|F(s)|^pds\Big)^{2/p}+ 2\lambda \int_{I_t}|t-s|^{2-2/p}\Big(\int_{s}^t | F^\prime(r)|^pdr\Big)^{2/p}ds\nonumber\\
  &\leq 2\lambda^{2-2/p}\Big(\int_{I}|F(r)|^pdr\Big)^{2/p} +2\lambda^{4-2/p}\Big(\int_{I}|F^\prime(r)|^pdr\Big)^{2/p}.
 \end{align}
 Note that the complex interpolation theorem for the maximal norm (see Lemma \ref{max interpolation}) implies that
 \begin{equation}\label{e:maximaladj}
  \Big\|{\sup_{t\in I}}^+F(t)\Big\|_{L_p(\mathcal{M})}\leq\Big\|{\sup_{t\in I}}^+|F(t)|^2\Big\|_{L_{p/2}(\mathcal{M})}^{1/4}\Big\|{\sup_{t\in I}}^+|F^*(t)|^2\Big\|_{L_{p/2}(\mathcal{M})}^{1/4}.
 \end{equation}
To prove our lemma,  it suffices to show
 \begin{equation}\label{remax11}
 \Big\|{\sup_{t\in I}}^+|F(t)|^2\Big\|_{L_{p/2}(\mathcal{M})}^{1/2}\lesssim\lambda^{-1/p}\Big\|F\Big\|_{L_p(\mathcal{\mathcal{N}_I})}+\lambda^{1-1/p}\Big\|F^\prime\Big\|_{L_p(\mathcal{\mathcal{N}_I})},
 \end{equation}
since the second term in \eqref{e:maximaladj} can be handled similarly. Proceeding further, by the operator inequality \eqref{opconvex} and the triangle inequality, we deduce
 \begin{align*}
  &\Big\|{\sup_{t\in I}}^+|F(t)|^2\Big\|_{L_{p/2}(\mathcal{M})}\\&\leq 2 \Big\|\lambda^{-2/p}\Big(\int_{I}|F(r)|^pdr\Big)^{2/p} +\lambda^{2-2/p}\Big(\int_{I}|F^\prime(r)|^pdr\Big)^{2/p}\Big\|_{L_{p/2}(\mathcal{\mathcal{M}})}\\
  &\leq 2\lambda^{-2/p}\Big\|\Big(\int_{I}|F(r)|^pdr\Big)^{2/p}\Big\|_{L_{p/2}(\mathcal{\mathcal{M}})} +2\lambda^{2-2/p}\Big\|\Big(\int_{I}|F^\prime(r)|^pdr\Big)^{2/p}\Big\|_{L_{p/2}(\mathcal{\mathcal{M}})}\\
  &=2\lambda^{-2/p}\big\|F\big\|_{L_p(\mathcal{\mathcal{N}_I})}^2+2\lambda^{2-2/p}\big\|F^\prime\big\|_{L_p(\mathcal{\mathcal{N}_I})}^2,
 \end{align*}
which implies \eqref{remax11}.
 \end{proof}

  To the end, we assume that $t\in(1,2)$ and $j$ is a sufficiently large positive integer. We are now ready to prove Proposition \ref{theorem2}.

Recall the definition of the operator $\mathcal{A}_{t,j}^\sigma$:
\begin{equation*}
  \mathcal{A}_{t,j}^\sigma f(x)=\int_{\mathbb{R}^2}e^{ix\xi}m_\sigma(t|\xi|)\beta(|2^{-j}t\xi|)\hat{f}(\xi)d\xi,
\end{equation*}
where (see Lemma \ref{Bessel})
\begin{equation*}
(t|\xi|)^{\sigma} m_{\sigma}(t|\xi|)=(t|\xi|)^{-1/2}e^{it|\xi|}\sum_{\ell=0}^Na_\ell\cdot(t|\xi|)^{-\ell}+ (t|\xi|)^{-1/2}e^{-it|\xi|}\sum_{\ell=0}^Nb_\ell\cdot(t|\xi|)^{-\ell}+R_{\sigma,N}(t|\xi|),
 \end{equation*}
 with $a_\ell,b_\ell$ being suitable constants, and the error term $R_{\sigma,N}$ satisfying the decay condition:
 \begin{equation}\label{decay}
  \Big| \Big(\frac{d }{dr}\Big)^kR_{\sigma,N}(r)\Big|\leq C_kr^{-N-k}\quad \text{for}\ k\in\mathbb{N},\ r\geq1.
 \end{equation}
 We introduce two symbols of order zero, $A_1(\xi,t)$ and $A_2(\xi,t)$, defined as
  \begin{equation*}
    A_1(\xi,t):= \frac{2^{(1/2+\sigma)j}\beta(|2^{-j}t\xi|)}{(t|\xi|)^{1/2+\sigma}}\sum_{\ell=0}^Na_\ell\cdot(t|\xi|)^{-\ell},
  \end{equation*}
\begin{equation*}
    A_2(\xi,t):= \frac{2^{(1/2+\sigma)j}\beta(|2^{-j}t\xi|)}{(t|\xi|)^{1/2+\sigma}}\sum_{\ell=0}^Nb_\ell\cdot(t|\xi|)^{-\ell}.
  \end{equation*}
  Furthermore, we decompose the operator $\mathcal{A}_{t,j}^\sigma$ into three parts,  $\mathcal{A}_{t,j}^\sigma=\sum_{i=1}^3\mathcal{A}_{t,j,i}^\sigma$, where
  \begin{equation*}
    \mathcal{A}_{t,j,1}^\sigma f(x)=2^{-(1/2+\sigma)j}\int_{\mathbb{R}^2}e^{ix\xi+it|\xi|}A_1(\xi,t)\hat{f}(\xi)d\xi,
  \end{equation*}
  \begin{equation*}
    \mathcal{A}_{t,j,2}^\sigma f(x)=2^{-(1/2+\sigma)j}\int_{\mathbb{R}^2}e^{ix\xi-it|\xi|}A_2(\xi,t)\hat{f}(\xi)d\xi,
  \end{equation*}
  \begin{equation*}
    \mathcal{A}_{t,j,3}^\sigma f(x)=\int_{\mathbb{R}^2}e^{ix\xi}|t\xi|^{-\sigma}R_{\sigma,N}(t|\xi|)\beta(|2^{-j}t\xi|)\hat{f}(\xi)d\xi.
  \end{equation*}
Therefore, it suffices to show for $i=1,2,3$, and $\Re\sigma>-\kappa(p)$, one may find a positive real number $\varepsilon(p)$ such that
\begin{equation}\label{the1.3.1}
 \Big\|{\sup_{1<t<2}}^+\mathcal{A}_{t,j,i}^\sigma f\Big\|_p\lesssim 2^{-\varepsilon(p)j}\|f\|_{L_p}.
\end{equation}
For $i=3$, the inequality \eqref{the1.3.1} follows from the same argument as in Lemma \ref{leA_0}  once we apply the decay property \eqref{decay} of $R_{\sigma,N}$.

We only give the proof of \eqref{the1.3.1} for $i=1$ below since the proof for $i=2$ is similar.
Let $\rho_0,\rho_1\in C_0^\infty\big((1/8,8))$ and $\rho_0(r)=\rho_1(r)=1$ when $r\in(1/4,4)$. Then
\begin{equation*}
    \mathcal{A}_{t,j,1}^\sigma f(x)=2^{-(1/2+\sigma)j}\rho_1(t)\int_{\mathbb{R}^2}e^{ix\xi+it|\xi|}\rho_0(|2^{-j}\xi|)A_1(\xi,t)\hat{f}(\xi)d\xi.
  \end{equation*}
  Employing Proposition \ref{theorem3} with $\mu>\frac{1}{2}-\frac{1}{p}-\kappa(p)$, we establish the estimate
  \begin{equation}\label{Atji}
   \Big\|\mathcal{A}_{t,j,1}^\sigma f\Big\|_{L_p(L_\infty(\mathbb{R}^3)\overline{\otimes}\mathcal{M})}\leq C_\mu2^{(\mu-1/2-\Re\sigma)j}\|f\|_{L_p(L_\infty(\mathbb{R}^2)\overline{\otimes}\mathcal{M})}.
  \end{equation}
  Next, we define $\tilde{A_{1}}(\xi)(t)=2^{-j}\big(\frac{d}{dt}\big)(e^{it|\xi|}A_1(\xi,t))$, which is a symbol of order zero.
 Applying Proposition \ref{theorem3} once more, we derive
 \begin{equation}\label{Atji1}
   \Big\|\partial_t\big(\mathcal{A}_{t,j,1}^\sigma f\big)\Big\|_{L_p(L_\infty(\mathbb{R}^3)\overline{\otimes}\mathcal{M})}\leq C_\mu2^{(\mu+1/2-\Re\sigma)j}\|f\|_{L_p(L_\infty(\mathbb{R}^2)\overline{\otimes}\mathcal{M})}.
  \end{equation}

   To proceed with the arguments, we choose $I=(1,2)$ and $\lambda=2^{-j}$ in Lemma \ref{remax}. Combining the inequalities \eqref{Atji} and \eqref{Atji1}, and obtain
  \begin{equation*}
   \Big\|{\sup_{1<t<2}}^+\mathcal{A}_{t,j,1}^\sigma f\Big\|_p\lesssim C_\mu 2^{(\mu+1/p-1/2-\Re\sigma)j}\|f\|_p,
  \end{equation*}
 which holds for all $\mu>\frac{1}{2}-\frac{1}{p}-\kappa(p)$. When $\Re\sigma>-\kappa(p)$, we can choose suitable $\mu$ such that $\mu+1/p-1/2-\Re\sigma=\varepsilon(p)>0$. Consequently, we arrive at
\begin{equation*}
 \Big\|{\sup_{1<t<2}}^+\mathcal{A}_{t,j,1}^\sigma f\Big\|_p\lesssim 2^{-\varepsilon(p)j}\|f\|_{p}.
\end{equation*}
This completes the proof of Proposition \ref{theorem2} for the case $i=1$.

\subsection{Proof of Theorem \ref{thmloc}}\label{Se3.2}
In this subsection, we will show that Proposition \ref{theorem3} implies Theorem \ref{thmloc}. Employing Lemma \ref{Bessel}, we express the solution to the Cauchy problem of the wave equation as
\begin{equation*}
  u(x,t)=c_0\mathcal{A}_{t}^{-1/2}f(x)+c_1t\mathcal{A}_t^{1/2}g(x)
\end{equation*}
where $c_0$ and $c_1$ are universal constants. By applying the triangle inequality, we derive the inequality
\begin{equation*}
  \|u\|_{L_p(L_\infty([1,2]\times\mathbb{R}^2)\overline{\otimes}\mathcal{M})}
  \lesssim\Big\|\mathcal{A}_{t}^{-1/2}f\Big\|_{L_p(L_\infty([1,2]\times\mathbb{R}^2)\overline{\otimes}\mathcal{M})}+ \Big\|t\mathcal{A}_t^{1/2}g\Big\|_{L_p(L_\infty([1,2]\times\mathbb{R}^2)\overline{\otimes}\mathcal{M})}.
\end{equation*}
Next, it suffices to demonstrate that for $\nu>\frac{1}{2}-\frac{1}{p}-\kappa(p)$, we have
\begin{equation}\label{eq126}
  \Big\|\mathcal{A}_{t}^{-1/2}f\Big\|_{L_p(L_\infty([1,2]\times\mathbb{R}^2)\overline{\otimes}\mathcal{M})}
  \lesssim\|(I+\Delta)^{\nu/2}f\|_{L_p(L_\infty(\mathbb{R}^2)\overline{\otimes}\mathcal{M})}
\end{equation}
and
\begin{equation}\label{eq127}
  \Big\|t\mathcal{A}_t^{1/2}g\Big\|_{L_p(L_\infty([1,2]\times\mathbb{R}^2)\overline{\otimes}\mathcal{M})}
  \lesssim\|(I+\Delta)^{(\nu-1)/2}g\|_{L_p(L_\infty(\mathbb{R}^2)\overline{\otimes}\mathcal{M})}.
\end{equation}
Furthermore, adopting the methodology employed in the proof of Theorem \ref{theorem1}, it suffices to show that for $\nu>\frac{1}{2}-\frac{1}{p}-\kappa(p)$, we have
\begin{equation*}
  \sum_{j=0}^\infty\Big\|\mathcal{A}_{t,j}^{-1/2}f\Big\|_{L_p(L_\infty([1,2]\times\mathbb{R}^2)\overline{\otimes}\mathcal{M})}
  \lesssim\|(I+\Delta)^{\nu/2}f\|_{L_p(L_\infty(\mathbb{R}^2)\overline{\otimes}\mathcal{M})}
\end{equation*}
and
\begin{equation*}
  \sum_{j=0}^\infty\Big\|t\mathcal{A}_{t,j}^{1/2}g\Big\|_{L_p(L_\infty([1,2]\times\mathbb{R}^2)\overline{\otimes}\mathcal{M})}
  \lesssim\|(I+\Delta)^{(\nu-1)/2}g\|_{L_p(L_\infty(\mathbb{R}^2)\overline{\otimes}\mathcal{M})}.
\end{equation*}
Given that the proof in Lemma \ref{leA_0} still work here, we are reduced to showing that for a large $j>0$ with $\nu=\frac{1}{2}-\frac{1}{p}-\kappa(p)+2\varepsilon$, the inequalities
\begin{equation}\label{loc11}
  \Big\|\mathcal{A}_{t,j}^{-1/2}\big((I+\Delta)^{-\nu/2}f\big)\Big\|_{L_p(L_\infty([1,2]\times\mathbb{R}^2)\overline{\otimes}\mathcal{M})}\lesssim2^{-\varepsilon j}
  \|f\|_{L_p(L_\infty(\mathbb{R}^2)\overline{\otimes}\mathcal{M})}
\end{equation}
and
\begin{equation}\label{loc12}
  \Big\|t\mathcal{A}_{t,j}^{1/2}\big((I+\Delta)^{-(\nu-1)/2}g\big)\Big\|_{L_p(L_\infty([1,2]\times\mathbb{R}^2)\overline{\otimes}\mathcal{M})}\lesssim2^{-\varepsilon j}
 \|g\|_{L_p(L_\infty(\mathbb{R}^2)\overline{\otimes}\mathcal{M})}
\end{equation}
hold for all $\varepsilon>0$.
Here, we focus on the proof of inequality \eqref{loc12}, as the proof for inequality \eqref{loc11} can be done similarly.  Recall the expression for $ m_{\frac{1}{2}}(t|\xi|)$, given by
\begin{equation*}
  m_{\frac{1}{2}}(t|\xi|)=c\big(t|\xi|\big)^{-1}\sin(t|\xi|)=\frac{c}{2i}\big(t|\xi|\big)^{-1}\big(e^{it|\xi|}-e^{-it|\xi|}\big).
\end{equation*}
Thus we have
\begin{align}\label{local21}
  t\mathcal{A}_{t,j}^{1/2}\big((I+\Delta)^{-(\nu-1)/2}g\big)(x)& =c t\int_{\mathbb{R}^2}e^{ix\xi}(1+|\xi|^2)^{-(\nu-1)/2}m_{\frac{1}{2}}(t|\xi|)\beta(|2^{-j}t\xi|)\hat{g}(\xi)\ d\xi\nonumber\\
  &=\frac{c}{2i}\int_{\mathbb{R}^2}e^{ix\xi}\big(e^{it|\xi|}-e^{-it|\xi|}\big)\frac{\beta(|2^{-j}t\xi|)\hat{g}(\xi)}{|\xi|(1+|\xi|^2)^{(\nu-1)/2}}\ d\xi\nonumber\\
  &=\frac{c}{2i}\int_{\mathbb{R}^2}e^{ix\xi}\big(e^{it|\xi|}-e^{-it|\xi|}\big)2^{-\nu j}a(\xi,t)\hat{g}(\xi)\ d\xi,
\end{align}
where $$a(\xi,t)=\frac{2^{\nu j}\beta(|2^{-j}t\xi|)}{|\xi|(1+|\xi|^2)^{(\nu-1)/2}}$$ is a symbol of order zero. Next, let $\rho_0,\rho_1\in C_0^\infty\big((1/8,8))$ and $\rho_0(r)=\rho_1(r)=1$ when $r\in(1/4,4)$. For $t\in[1,2]$, we can rewrite the integral as
\begin{equation*}
  t\mathcal{A}_{t,j}^{1/2}\big((I+\Delta)^{-(\nu-1)/2}g\big)(x)=\frac{c2^{-\nu j}\rho_1(t)}{2i}\int_{\mathbb{R}^2}e^{ix\xi}\big(e^{it|\xi|}-e^{-it|\xi|}\big)\rho_0(|2^{-j}\xi|)a(\xi,t)\hat{g}(\xi)\ d\xi.
\end{equation*}

 Combining this with Proposition \ref{theorem3}, and choosing $\mu=\frac{1}{2}-\frac{1}{p}-\kappa(p)+\varepsilon$, we obtain
\begin{align*}
 \Big\|t\mathcal{A}_{t,j}^{1/2}\big((I+\Delta)^{-(\nu-1)/2}g\big)\Big\|_{L_p(L_\infty([1,2]\times\mathbb{R}^2)\overline{\otimes}\mathcal{M})}&\lesssim2^{-\nu j+\mu j}\|g\|_{L_p(L_\infty(\mathbb{R}^2)\overline{\otimes}\mathcal{M})}\\&\leq 2^{-\varepsilon j}
 \|g\|_{L_p(L_\infty(\mathbb{R}^2)\overline{\otimes}\mathcal{M})}.
\end{align*}
This completes the proof of Theorem \ref{thmloc}.

\subsection{Proof of Proposition \ref{theorem3}}\label{Se3.4}
In this subsection, we commence by proving Proposition \ref{theorem3} specifically for the cases $p=2$ and $p=\infty$.
Employing the Plancherel theorem twice, we have
\begin{align*}
  \|F_jf\|_{L_2\left(L_\infty(\mathbb{R}^3)\overline{\otimes}\mathcal{M}\right)}^2&=\tau\Big(\int_{\mathbb{R}^3}\Big|\rho_1(t)\rho_0(|2^{-j}\xi|)a(\xi,t)\hat{f}(\xi)\Big|^2d\xi dt\Big) \\
  &\lesssim\int_{1}^2\tau\Big(\int_{\mathbb{R}^2}|\hat{f}(\xi)|^2d\xi \Big)dt =\|f\|_{L_2\left(L_\infty(\mathbb{R}^2)\overline{\otimes}\mathcal{M}\right)}^2.
  \end{align*}
On the other hand, we express $ F_jf(x,t)$ as a convolution integral,
\begin{align*}
  F_jf(x,t)&=\rho_1(t)\int_{\mathbb{R}^2}e^{i(x\xi+t|\xi|)}\rho_0(|2^{-j}\xi|)a(\xi,t)\hat{f}(\xi) d\xi\\
  &=\int_{\mathbb{R}^2}\int_{\mathbb{R}^2}e^{i((x-y)\xi+t|\xi|)}\rho_1(t)\rho_0(|2^{-j}\xi|)a(\xi,t)f(y)d\xi dy
  \\&\triangleq \int_{\mathbb{R}^2}K_t(x-y)f(y)dy.
\end{align*}
Notice that the kernel satisfies $\|K_t\|_{L_1(\mathbb{R}^2)}\leq C2^{j/2}$ uniformly in $t$ (see \cite[Page 406-409]{St93} for further details). Consequently, by the Young inequality,
\begin{equation*}
 \|F_jf\|_{L_\infty\left(L_\infty(\mathbb{R}^3)\overline{\otimes}\mathcal{M}\right)}=\|K_t\ast f\|_{L_\infty\left(L_\infty(\mathbb{R}^3)\overline{\otimes}\mathcal{M}\right)}\leq\|f\|_\infty\sup_{t\in \mathbb{R}}\|K_t\|_1\leq C2^{j/2}\|f\|_\infty.
\end{equation*}
 To handle the general case for $p$, by interpolation, it suffices to establish a crucial $L^4$ estimate which is the key estimate among this paper.

\begin{proposition}\label{key}
Fix $\rho_0,\rho_1\in C_0^\infty\big((1/8,8)\big)$, let $a(\xi,t)$ be a symbol of order zero, i.e. for any $t\in(1,2), \alpha\in\mathbb{N}^2$,
 \begin{equation*}
   \Big|\left(\frac{\partial}{\partial \xi}\right)^\alpha a(\xi,t)\Big|\leq c_\alpha(1+|\xi|)^{-|\alpha|}.
 \end{equation*}
Define $$F_jf(x,t):=\rho_1(t)\int_{\mathbb{R}^2}e^{i(x\xi+t|\xi|)}\rho_0(|2^{-j}\xi|)a(\xi,t)\hat{f}(\xi) d\xi$$
 for $j\geq1$. Then we have
 \begin{equation*}
  \|F_jf\|_{L_4\left(L_\infty(\mathbb{R}^3)\overline{\otimes}\mathcal{M}\right)}\leq C_\mu2^{\mu j}\|f\|_{L_4\left(L_\infty(\mathbb{R}^2)\overline{\otimes}\mathcal{M}\right)}, \quad for\ all\ \mu>1/8.
   \end{equation*}
\end{proposition}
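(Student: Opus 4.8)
The plan is to follow the classical square-function/local-smoothing strategy of Mockenhaupt--Seeger--Sogge, adapted to the operator-valued setting where the fundamental difficulty is that $|x^*x|$ and $|xx^*|$ differ. Since $F_jf(x,t) = \int_{\mathbb R^2} K_t(x-y)f(y)\,dy$ is a left convolution in the operator variable (the kernel $K_t$ is scalar), and we want an $L_4$ estimate on $\mathcal N = L_\infty(\mathbb R^3)\overline\otimes\mathcal M$, the natural first move is to compute $\|F_jf\|_4^4 = \varphi(|F_jf|^2\,|F_jf|^2)$ by pairing against a test element, or equivalently to work with $\|F_jf\|_4^2 = \big\||F_jf|^2\big\|_2$ and expand $|F_jf(x,t)|^2 = F_jf(x,t)^*F_jf(x,t)$. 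After a partition of unity on $S^1$ of angular width $\sim 2^{-j/2}$, one decomposes $F_j = \sum_\nu F_j^\nu$ into pieces with frequency support in $2^{-j/2}$-sectors (Cordoba-type decomposition); on the support of each $F_j^\nu$ the phase $x\xi + t|\xi|$ is, after a rotation, essentially linear in one variable, and $F_j^\nu f$ is concentrated (in space-time) in a slab $T_\nu$ of dimensions $\sim 2^{-j}\times 2^{-j/2}\times 1$.

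The core is then a \textbf{noncommutative square-function/bi-orthogonality estimate}: $\|F_jf\|_4^2 = \big\|\sum_\nu |F_j^\nu f|^2 + \sum_{\nu\neq\nu'}(F_j^\nu f)^*F_j^{\nu'}f\big\|_2$, and one must show the off-diagonal sum is essentially controlled by the diagonal one. In the commutative case this uses that the tiles $T_\nu\cap T_{\nu'}$ overlap with bounded multiplicity once the separation $|\nu-\nu'|$ is fixed, summed geometrically; here the obstruction is exactly that $(F_j^\nu f)^* F_j^{\nu'} f$ is not self-adjoint and cannot simply be bounded by $\tfrac12(|F_j^\nu f|^2 + |F_j^{\nu'}f|^2)$ under the $L_2$ norm in a way that closes. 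The remedy, following \cite{Lai21,HLW23}, is to pass through $L_2(\mathcal N;\ell_2^c)$ and $L_2(\mathcal N;\ell_2^r)$ norms of the family $(F_j^\nu f)_\nu$ separately, using Lemma \ref{sq} and the interpolation Lemma \ref{sq1}, together with a \emph{geometric counting estimate} for how many triples of slabs $(T_\nu, T_{\nu'}, T_{\nu''})$ can mutually intersect over a given point of $\mathbb R^3$ --- this is the new three-dimensional geometric input that the introduction flags as more delicate than its two-dimensional analogues. Reducing the off-diagonal terms to the diagonal square function and then to $\|f\|_4$ will cost the $2^{\mu j}$ factor with $\mu$ any number exceeding $1/8$, which matches the loss $\tfrac12-\tfrac1p-\kappa(p) = \tfrac12-\tfrac14-\tfrac14 = 1/8$ at $p=4$ (note $\kappa(4) = 1/8$).

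More concretely the steps are: (1) angular decomposition $F_j = \sum_{\nu} F_j^\nu$, $|\nu|\sim 2^{j/2}$, and a kernel estimate showing each $F_j^\nu f$ lives on a slab $T_\nu$ with the stated dimensions, using nonstationary phase / integration by parts in the $a(\xi,t)$ symbol; (2) an $L_4$ bound for a \emph{single} slab, $\|F_j^\nu f\|_{L_4(\mathcal N)}\lesssim 2^{j/8}(\text{localized }L_4\text{ norm of }f)$, via Bernstein on the $2^{-j}\times 2^{-j/2}\times 1$ box combined with the $L_2\to L_2$ bound already proved and Hölder; (3) the orthogonality step, writing $\|F_jf\|_4^4 = \big\|\big(\sum_\nu F_j^\nu f\big)^*\big(\sum_{\nu'}F_j^{\nu'}f\big)\big\|_2^2$, splitting into diagonal and off-diagonal, and estimating the off-diagonal contribution by the column/row square functions of $(F_j^\nu f)_\nu$ plus the new geometric lemma controlling triple overlaps of slabs in $\mathbb R^3$; (4) a noncommutative Kakeya-type maximal inequality to sum the localized pieces back up and recover $\|f\|_{L_4(L_\infty(\mathbb R^2)\overline\otimes\mathcal M)}$; (5) assemble, tracking the exponent to get $\mu > 1/8$. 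The main obstacle, as indicated, is step (3)--(4): replacing the scalar orthogonality and Kakeya arguments of \cite{MSS} by their noncommutative counterparts, where one must route every product of slab-localized pieces through column and row Hilbert-space-valued $L_p$ norms and prove the accompanying three-dimensional geometric overlap estimate --- everything else is a fairly direct transcription of the classical proof once the $L_2$ and $L_\infty$ endpoints are in hand.
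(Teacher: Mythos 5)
Your overall architecture --- angular decomposition into $2^{-j/2}$-sectors, computing $\|F_jf\|_4^4=\big\||F_jf|^2\big\|_2^2$, splitting into diagonal and off-diagonal parts, routing products through column/row square functions, and closing with a noncommutative Kakeya-type maximal inequality --- is indeed the paper's strategy, and you correctly locate the exponent $\mu>1/8$ via $\kappa(4)=1/8$. However, two of your key ingredients are misidentified or missing, and the plan as written would not close. The angular decomposition alone is not enough: the proof also decomposes in the frequency variable $\tau$ dual to $t$ into intervals of length $2^{j/2}$ (the operators $P^n$ of \eqref{Pn}) and correspondingly decomposes $f$ into frequency blocks $P_mf$ on a $2^{j/2}$-grid, with $f_n=\sum_{||m|-n|<100}P_mf$. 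This second decomposition turns the frequency supports into $2^{j/2}\times2^{j/2}\times2^{\epsilon j}$ plates $\mathcal U_n^v$ on the cone rather than full radial sectors; without it the overlap count fails and, more importantly, there is no mechanism to reassemble the square function $\sum_{v,n}|F_v(f_n)|^2$ back into $\|f\|_4$, which the paper does via the bounded-multiplicity sets $\mathcal J_{v,n}$, Lemma \ref{lpm}, and duality against the Kakeya maximal average.

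The geometric input is also not the statement you propose. Because the biorthogonality is applied to $\big(F_v(f_n)\big)^*F_w(f_{n'})$, whose Fourier transform is supported in the algebraic \emph{difference} $\mathcal U_n^v-\mathcal U_{n'}^w$ of the frequency supports, the needed estimate (Lemma \ref{geo}) is a pointwise bound in \emph{frequency} space, $\sum_{n,m}\sum_{|v-w|>1000}\chi_{\mathcal U_n^v-\mathcal U_m^w}(\xi,\tau)\leq C2^{j/2+2\epsilon j}$, not a count of triples of physical-space slabs $T_\nu$ meeting at a point. The restriction $|v-w|>1000$ is forced precisely by noncommutativity: for difference sets (unlike the sum sets of \cite{MSS}) the overlap degenerates near the origin when $v$ and $w$ are close, so the near-diagonal pairs must be split off and handled by a separate reassembly estimate for the $P^n$ family (Lemma \ref{lpn}), which is exactly where the $2^{(1/4-1/2p)j}=2^{j/8}$ loss enters at $p=4$. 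Your plan anticipates neither the diagonal/off-diagonal split being dictated by this failure of the difference-set count, nor the role of the $P^n$ reassembly lemma; the Kakeya ingredient you list is correct but enters only at the final stage, in estimating $\big\|\big(\sum_{v,n}|F_v(f_n)|^2\big)^{1/2}\big\|_4$ by duality.
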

In the subsequent sections of this paper, our primary focus will be directed towards the rigorous proof of Proposition \ref{key}.

\section{Proof of Proposition \ref{key}}\label{Se4}
In this section, we fix a sufficiently large positive integer $j$ in Proposition \ref{key}, and omit the subscript $j$, equivalently denote $F(f)=F_j(f)$, for simplicity; similarly, starting with $F_j(f)$, we will need an angular decomposition $F_{j,v}(f)$ which is going to be simplified as $F_v(f)$ in \eqref{Fv}. Without loss of generality, we assume that the symbol $a(\xi,t)$ in Proposition \ref{key} is supported in the first quadrant with respect to the variable $\xi$. Our goal is to establish the following $L_4$-estimate:
\begin{equation}\label{L4}
  \|F(f)\|_{L_4\left(L_\infty(\mathbb{R}^3)\overline{\otimes}\mathcal{M}\right)}\leq C_\mu2^{\mu j}\|f\|_{L_4\left(L_\infty(\mathbb{R}^2)\overline{\otimes}\mathcal{M}\right)}, \qquad \text{for all}\ \mu>1/8,
\end{equation}
where $$F(f)(x,t)=\rho_1(t)\int_{\mathbb{R}^2}e^{i(x\xi+t|\xi|)}\rho_0(|2^{-j}\xi|)a(\xi,t)\hat{f}(\xi) d\xi$$ and $\rho_0,\rho_1\in C_0^\infty\big((1/8,8)\big)$.
We begin by rewriting $F(f)(x,t)$ as follows:
\begin{align*}
F(f)(x,t)&=\int_{\mathbb{R}}\int_{\mathbb{R}^2}e^{i(x\xi+t\tau)}\big(\rho_1(\cdot)a(\xi,\cdot)\big)^{\vee}(\tau-|\xi|)\rho_0(|2^{-j}\xi|)\hat{f}(\xi)\ d\xi\ d\tau\\
 &=\int_{\mathbb{R}^2}K(x-y,t)f(y)dy=\int_{\mathbb{R}^2}K_I(x-y,t)f(y)dy+\int_{\mathbb{R}^2}K_{II}(x-y,t)f(y)dy,
\end{align*}
where
\begin{align*}
 & K(x,t)=\int_{\mathbb{R}}\int_{\mathbb{R}^2}e^{i(x\xi+t\tau)}\big(\rho_1(\cdot)a(\xi,\cdot)\big)^{\vee}(\tau-|\xi|)\rho_0(|2^{-j}\xi|)\ d\xi\ d\tau,\\
 & K_I(x,t)=\int_{|\tau-2^j|\leq \frac{2^j}{10}}\int_{\mathbb{R}^2}e^{i(x\xi+t\tau)}\big(\rho_1(\cdot)a(\xi,\cdot)\big)^{\vee}(\tau-|\xi|)\rho_0(|2^{-j}\xi|)\ d\xi\ d\tau,\\
 & K_{II}(x,t)=\int_{|\tau-2^j|> \frac{2^j}{10}}\int_{\mathbb{R}^2}e^{i(x\xi+t\tau)}\big(\rho_1(\cdot)a(\xi,\cdot)\big)^{\vee}(\tau-|\xi|)\rho_0(|2^{-j}\xi|)\ d\xi\ d\tau.\\
\end{align*}
Using integration by parts, we obtain the following estimate:
\begin{equation}\label{K2}
  |K_{II}(x,t)|\lesssim_N\frac{2^{-jN}}{(1+|(x,t)|^2)^N}
\end{equation}
for any $N>0$ (see \cite[Page 81-88]{Sogge17} or \cite[Page 98]{Li18} for further details). Utilizing the Young inequality, we can further deduce that:
\begin{equation*}
  \Big\|\int_{\mathbb{R}^2}K_{II}(x-y,t)f(y)dy\Big\|_{_{L_4\left(L_\infty(\mathbb{R}^3)\overline{\otimes}\mathcal{M}\right)}}\lesssim_N 2^{-jN}\|f\|_{L_4\left(L_\infty(\mathbb{R}^2)\overline{\otimes}\mathcal{M}\right)}.
\end{equation*}
Given this result, in the remainder of this section, it suffices to take care of the estimates related to the kernel $K_I$, and one may thus assume that $\tau\thickapprox 2^{j}$, which will simplify our task of proving the estimate \eqref{L4}.
\subsection{Some lemmas}
In this subsection, we will introduce some frequency decompositions and prove some lemmas.

Let $\phi$ be a smoothing function on $\mathbb{R}$ with support in $[-1,1]$ such that $\sum_{n\in\mathbb{Z}}\phi^2(\cdot-n)=1$. Additionally, let $\rho\in C_0^\infty\big((1/8,8)\big)$ be a function that equals $1$ in the support of $\rho_0$. We define a decomposition in $\tau$, the variable dual to $t$, by introducing the operator $P^n$ on operator-valued functions in $\mathbb{R}^3$  as follows:
\begin{equation}\label{Pn}
  (P^ng)^{\wedge}(\xi,\tau):=\rho(2^{-j}|\xi|)\phi(2^{-j/2}\tau-n)\hat{g}(\xi,\tau).
\end{equation}
We denote the set $\{n\in\mathbb{Z}:n\thickapprox 2^{j/2}\}$ by $\Gamma_j$, then $|\Gamma_j|\thickapprox 2^{j/2}$.
Similarly, for $m=(m_1,m_2)\in\mathbb{Z}^2$, we define the operator $P_m$ acting on operator-valued functions in $\mathbb{R}^2$ as:
\begin{equation}\label{Pm}
  (P_mf)^{\wedge}(\xi):=\rho(2^{-j}|\xi|)\phi(2^{-j/2}\xi_1-m_1)\phi(2^{-j/2}\xi_2-m_2)\hat{f}(\xi),
\end{equation}
and
\begin{equation}\label{fn}
  f_n:=\sum_{\big\{m\in\mathbb{Z}^2:\big||m|-n\big|<100\big\}}P_mf.
\end{equation}

\begin{lemma}\label{lpn}
 Let $g=(g_{n,v})_{n\in\Gamma_j,v\in I}$ be an element in $L_p(\mathcal{N};\ell_2^c)$, where $I$ is a finite index set and $2\leq p\leq\infty$,we have
\begin{equation}\label{lpne}
  \Big\|\Big(\sum_{v\in I}\big|\sum_{n\in\Gamma_j} P^n g_{n,v}\big|^2\Big)^{1/2}\Big\|_{L_p(\mathcal{N})}\lesssim2^{(1/4-1/2p)j}\Big\|\Big(\sum_{v\in I}\sum_{n\in\Gamma_j}|g_{n,v}|^2\Big)^{1/2}\Big\|_{L_p(\mathcal{N})}.
\end{equation}
\end{lemma}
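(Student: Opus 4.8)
The plan is to reduce the column-square estimate to the two endpoints $p=2$ and $p=\infty$ and interpolate via Lemma \ref{sq1}. Recall that $P^n$ is a Fourier multiplier in the $\tau$-variable only, localizing $2^{-j/2}\tau$ near $n$, with an additional harmless factor $\rho(2^{-j}|\xi|)$ in $\xi$. The crucial structural fact is that the supports of $\phi(2^{-j/2}\cdot-n)$ have bounded overlap in $n$ (each point $\tau$ lies in at most $O(1)$ of them), so the family $(P^n)_{n\in\Gamma_j}$ behaves like an almost-orthogonal partition of unity on the frequency annulus $\tau\thickapprox 2^{j/2}\cdot 2^{j/2}=2^j$... more precisely $|\tau|\lesssim 2^{j/2}|\Gamma_j|\thickapprox 2^j$, consistent with the restriction $\tau\thickapprox 2^j$ established for $K_I$.

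First I would treat $p=\infty$. Here I claim the stronger pointwise-type bound without loss: for fixed $v$, $\bigl|\sum_{n\in\Gamma_j}P^ng_{n,v}\bigr|^2\le |\Gamma_j|\sum_{n\in\Gamma_j}|P^ng_{n,v}|^2$ by Lemma \ref{convex} (Cauchy--Schwarz over the finite set $\Gamma_j$), and then $\|P^n\|_{L_\infty(\mathcal N)\to L_\infty(\mathcal N)}\lesssim 1$ uniformly in $n$ because $P^n$ is convolution in $(\xi,\tau)$ against an $L_1$-normalized kernel (the inverse Fourier transform of $\rho(2^{-j}|\cdot|)\phi(2^{-j/2}\cdot-n)$ has $L_1$ norm $O(1)$ by scaling). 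Summing over $v\in I$ and taking square roots gives the factor $|\Gamma_j|^{1/2}\thickapprox 2^{j/4}=2^{(1/4-1/2p)j}|_{p=\infty}$, as desired. For $p=2$, one uses instead the almost-orthogonality: $\|\sum_n P^ng_{n,v}\|_{L_2(\mathcal N)}^2\lesssim \sum_n\|P^ng_{n,v}\|_{L_2(\mathcal N)}^2\lesssim\sum_n\|g_{n,v}\|_{L_2(\mathcal N)}^2$ because the $\tau$-supports overlap boundedly and $|P^n$'s symbol$|\le 1$; here the constant is $O(1)=2^{(1/4-1/2p)j}|_{p=2}$. Summing over $v$ and using that the $L_2(\mathcal N;\ell_2^c)$ norm is literally the $\ell_2$-sum of the $L_2(\mathcal N)$ norms (tracial case) closes the $p=2$ endpoint.

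Then I would interpolate. By Lemma \ref{sq1}, $(L_2(\mathcal N;\ell_2^c),L_\infty(\mathcal N;\ell_2^c))_\alpha=L_p(\mathcal N;\ell_2^c)$ with $\tfrac1p=\tfrac{1-\alpha}{2}$, i.e.\ $\alpha=1-\tfrac2p$. The linear map $T:(h_{n,v})\mapsto(\sum_n P^nh_{n,v})_v$ maps $L_2(\mathcal N;\ell_2^c(\Gamma_j\times I))\to L_2(\mathcal N;\ell_2^c(I))$ with norm $O(1)$ and $L_\infty(\mathcal N;\ell_2^c(\Gamma_j\times I))\to L_\infty(\mathcal N;\ell_2^c(I))$ with norm $O(2^{j/4})$; complex interpolation yields norm $O\bigl(1^{1-\alpha}\cdot(2^{j/4})^{\alpha}\bigr)=O(2^{(j/4)(1-2/p)})=O(2^{(1/4-1/2p)j})$, which is exactly \eqref{lpne}. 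One must check that $T$ is indeed bounded as a map between these vector-valued $L_p(\mathcal N;\ell_2^c)$ spaces in the sense of the interpolation scale; this is standard since $T$ is built from Fourier multipliers acting diagonally, but it should be spelled out using that $\ell_2^c$-valued bounded maps interpolate (the "operator-space" matrix amplification is automatic here because $T$ acts by scalar multipliers in the $\mathcal N$-variable).

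The main obstacle I expect is establishing the $p=\infty$ endpoint with the correct power $2^{j/4}$ rather than the trivial $2^{j/2}$ (which would come from a crude triangle inequality and the cardinality $|\Gamma_j|$); the gain of a square root is precisely the Cauchy--Schwarz-in-$n$ step combined with uniform $L_\infty\to L_\infty$ boundedness of each $P^n$, and one must be careful that the noncommutative absolute values $|P^ng_{n,v}|$ interact correctly — this is where Lemma \ref{convex} (rather than a scalar Cauchy--Schwarz) is essential, since $|{\textstyle\sum_n}x_n|^2\le |\Gamma_j|\sum_n|x_n|^2$ is the operator inequality we need. A secondary subtlety is verifying the uniform $L_1$-kernel bound for $P^n$ despite the $j$-dependent and $n$-dependent localizations; this follows by the change of variables $\xi\mapsto 2^{-j}\xi$, $\tau\mapsto 2^{-j/2}\tau$ together with the modulation $e^{in\tau}$ (which does not change $L_1$ norms), reducing everything to a fixed Schwartz kernel.
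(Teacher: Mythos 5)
Your proposal follows essentially the same route as the paper: Plancherel at $p=2$, the operator Cauchy--Schwarz inequality over $\Gamma_j$ plus a uniform kernel bound at $p=\infty$, and complex interpolation via Lemma \ref{sq1}. The only point to tighten is that the uniform bound $\|P^n\|_{L_\infty\to L_\infty}\lesssim 1$ is not by itself enough to control $\big\|\sum_{v,n}|P^ng_{n,v}|^2\big\|_\infty$; one must apply Lemma \ref{convex} a second time to the convolution integral itself, $|K^n\ast g|^2\le\|K^n\|_1\,(|K^n|\ast|g|^2)\lesssim \widetilde{K}\ast|g|^2$ with $\widetilde{K}$ integrable and independent of $n$, and then conclude with Young's inequality --- which is exactly what the paper does.
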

\begin{proof}
  By the Plancherel theorem and the support property of $\phi$, we can establish the estimate \eqref{lpne} for $p=2$. Due to the complex interpolation (see Lemma \ref{sq1}), it suffices to prove the inequality \eqref{lpne} for $p=\infty$.
  Let $K^n$ be the kernel associated with $P^n$, such that $P^ng(x,t)=(K^n\ast g)(x,t)$. From \eqref{Pn}, we derive,
  \begin{equation*}
    K^n(x,t)=\int_{\mathbb{R}^3}e^{i(x\xi+t\tau)}\rho(2^{-j}|\xi|)\phi(2^{-j/2}\tau-n) d\xi d\tau.
  \end{equation*}
  By applying integration by parts, we obtain the bound $|K^n(x,t)|\lesssim \frac{2^{\frac{5j}{2}}}{(1+2^j|x|+2^{j/2}|t|)^3}\triangleq \widetilde{K}(x,t)$, which is integrable. By Lemma \ref{convex} and the Young inequality,
  \begin{align*}
  \Big\|\Big(\sum_{v\in I}\big|\sum_{n\in\Gamma_j} P^n g_{n,v}\big|^2\Big)^{1/2}\Big\|_{L_\infty(\mathcal{N})}
    &\leq  \Big\|\big|\Gamma_j\big|\sum_{v\in I}\sum_{n\in\Gamma_j} |P^n g_{n,v}|^2\Big\|_{L_\infty(\mathcal{N})}^{1/2}\\
     &\lesssim 2^{j/4} \Big\|\sum_{v\in I}\sum_{n\in\Gamma_j} |K^n\ast g_{n,v}|^2\Big\|_{L_\infty(\mathcal{N})}^{1/2}\\
     &\lesssim 2^{j/4} \Big\|\widetilde{K}\ast\Big(\sum_{v\in I}\sum_{n\in\Gamma_j} |g_{n,v}|^2\Big)\Big\|_{L_\infty(\mathcal{N})}^{1/2}\\
      &\lesssim 2^{j/4} \Big\|\sum_{v\in I}\sum_{n\in\Gamma_j} |g_{n,v}|^2\Big\|_{L_\infty(\mathcal{N})}^{1/2}.
  \end{align*}
Hence we complete the proof.
\end{proof}

\begin{lemma}\label{lpm}
Given  $P_m$ as defined in \eqref{Pm}, for $2\leq p\leq \infty$, we have
\begin{equation}\label{lpme}
  \Big\|\Big(\sum_{m\in\mathbb{Z}^2}\big| P_m f\big|^2\Big)^{1/2}\Big\|_{L_p\left(L_\infty(\mathbb{R}^2)\overline{\otimes}\mathcal{M}\right)}\lesssim\|f\|_{L_p\left(L_\infty(\mathbb{R}^2)\overline{\otimes}\mathcal{M}\right)}.
\end{equation}
\end{lemma}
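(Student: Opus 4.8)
The plan is to interpolate between the two endpoint cases $p=2$ and $p=\infty$, using Lemma \ref{sq1}. For $p=2$, the estimate \eqref{lpme} is essentially Plancherel's theorem: since the operators $P_m$ arise from a smooth partition of unity $\sum_{m\in\mathbb{Z}^2}\phi^2(2^{-j/2}\xi_1-m_1)\phi^2(2^{-j/2}\xi_2-m_2)$ (multiplied by the fixed cutoff $\rho(2^{-j}|\xi|)^2\le 1$), one has
\begin{equation*}
  \Big\|\Big(\sum_{m\in\mathbb{Z}^2}|P_mf|^2\Big)^{1/2}\Big\|_{L_2\left(L_\infty(\mathbb{R}^2)\overline{\otimes}\mathcal{M}\right)}^2
  =\sum_{m\in\mathbb{Z}^2}\tau\!\int_{\mathbb{R}^2}|\widehat{P_mf}(\xi)|^2\,d\xi
  \le \tau\!\int_{\mathbb{R}^2}|\hat f(\xi)|^2\,d\xi
  =\|f\|_{L_2\left(L_\infty(\mathbb{R}^2)\overline{\otimes}\mathcal{M}\right)}^2,
\end{equation*}
where we used that for each fixed $\xi$ at most a bounded number of the $\phi(2^{-j/2}\xi_i-m_i)$ are nonzero (the support of $\phi$ lies in $[-1,1]$), so $\sum_m |\phi(2^{-j/2}\xi_1-m_1)\phi(2^{-j/2}\xi_2-m_2)|^2\lesssim 1$ uniformly.

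For $p=\infty$, I would argue via the convolution kernels. Write $P_mf=K_m*f$, where
\begin{equation*}
  K_m(x)=\int_{\mathbb{R}^2}e^{ix\xi}\rho(2^{-j}|\xi|)\phi(2^{-j/2}\xi_1-m_1)\phi(2^{-j/2}\xi_2-m_2)\,d\xi.
\end{equation*}
Each $K_m$ is a modulated, rescaled bump: $K_m(x)=e^{i2^{j/2}(m_1x_1+m_2x_2)}\,2^{j}\Phi_m(2^{j/2}x)$ for suitable smooth functions $\Phi_m$ with derivative bounds uniform in $m$ (on the relevant range $|m|\approx 2^{j/2}$), so that integration by parts gives a pointwise bound $|K_m(x)|\lesssim 2^j(1+2^{j/2}|x|)^{-4}=:\widetilde K(x)$, which is integrable with $\|\widetilde K\|_1\lesssim 1$, and moreover $\sum_m |K_m(x-y)|^2 \le \widetilde K(x-y)\cdot\big(\sum_m\text{(bounded multiplicity)}\big)\lesssim \widetilde K(x-y)$ after exploiting the almost-orthogonality of the frequency boxes (the boxes have finite overlap, so the modulation phases make $\sum_m|\Phi_m|^2$ bounded). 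Then by Lemma \ref{convex},
\begin{equation*}
  \Big(\sum_{m\in\mathbb{Z}^2}|P_mf(x)|^2\Big)
  =\sum_m\Big|\int K_m(x-y)f(y)\,dy\Big|^2
  \le \int \Big(\sum_m|K_m(x-y)|\Big)dy\cdot\int\Big(\sum_m |K_m(x-y)|\Big)|f(y)|^2 dy,
\end{equation*}
and bounding $\sum_m|K_m(x-y)|$ by a fixed integrable majorant $\widetilde K\ast(\text{finite overlap})$ of $L^1$-norm $O(1)$, followed by the Young-type inequality for operator-valued convolutions as in the proof of Lemma \ref{lpn}, yields $\big\|\big(\sum_m|P_mf|^2\big)^{1/2}\big\|_{L_\infty(\mathcal N)}\lesssim\|f\|_{L_\infty(\mathcal N)}$.

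Finally, complex interpolation (Lemma \ref{sq1}) between these two endpoints — identifying $\big(\sum_m|P_mf|^2\big)^{1/2}$ with the column norm of the sequence $(P_mf)_{m\in\mathbb{Z}^2}$ in $L_p(L_\infty(\mathbb{R}^2)\overline{\otimes}\mathcal M;\ell_2^c)$ — gives \eqref{lpme} for all $2\le p\le\infty$. The main subtlety, and the step I would be most careful about, is the $p=\infty$ estimate: one must set things up so that the sum $\sum_m|K_m|$ (not merely each $|K_m|$) is controlled by a single integrable kernel of bounded $L^1$ norm, which requires using the bounded overlap of the Fourier supports of the $P_m$ rather than a crude termwise triangle inequality; everything else is a routine adaptation of the commutative Littlewood–Paley square function estimate together with the operator convexity inequality of Lemma \ref{convex}.
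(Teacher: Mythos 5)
Your $p=2$ step and the reduction to $p=\infty$ by interpolation in $L_p(\mathcal M;\ell_2^c)$ coincide with the paper's, and the preliminary removal of the cutoff $\rho(2^{-j}|\cdot|)$ (by convolving the square function with an integrable majorant as in Lemma \ref{lpn}) is also what the paper does. The problem is your $p=\infty$ endpoint. You correctly isolate the crux — one cannot simply bound $\sum_m|K_m|$ termwise — but the mechanism you propose to fix it does not work: finite overlap of the \emph{Fourier supports} of the $P_m$ gives no pointwise control on $\sum_m|K_m(x)|$ in \emph{physical space}. Taking absolute values destroys the modulation: $|K_m(x)|=2^{j}|\Phi(2^{j/2}x)|$ up to harmless factors, essentially independently of $m$, and all these bumps concentrate at the same point $x=0$ with the same size. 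Since roughly $2^{j}$ lattice points $m$ contribute, one has $\bigl\|\sum_m|K_m|\bigr\|_{L_1}\approx 2^{j}$, not $O(1)$, and your claim that "the modulation phases make $\sum_m|\Phi_m|^2$ bounded" is false — the phases are gone after taking moduli. The crude Cauchy--Schwarz route therefore yields only $\bigl\|(\sum_m|P_mf|^2)^{1/2}\bigr\|_\infty\lesssim 2^{j/2}\|f\|_\infty$, a loss that is fatal for the lemma, which must be uniform in $j$.

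The orthogonality of the modulations $e^{i2^{j/2}m\cdot x}$ has to be exploited \emph{before} absolute values are taken, and the paper does this by a periodization argument: writing $P_mf(0)=\sum_{n\in\mathbb Z^2}\hat h_n(m)$ with $h_n(x)=\hat\phi(x_1-2n_1\pi)\hat\phi(x_2-2n_2\pi)f(2^{-j/2}(x-2\pi n))$, applying Parseval's identity on the torus $[-\pi,\pi]^2$ to get $\sum_m|\hat h_n(m)|^2=\int_{[-\pi,\pi]^2}|h_n|^2$ (as an operator inequality, via testing against vectors $b\in L_2(\mathcal M)$), and then summing over $n$ using the Schwartz decay of $\hat\phi$. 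This lattice Parseval identity is exactly the quantitative form of the almost-orthogonality you are reaching for; without it, or some equivalent device, your $p=\infty$ argument does not close.
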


\begin{proof}
  When $p=2$, the result follows from the Plancherel theorem. Therefore, due to interpolation, it suffices to prove the inequality for
  $p=\infty$. Recall that the definition of $P_m$ in \eqref{Pm} involves the expression
  \begin{equation*}
    (P_mf)^{\wedge}(\xi)=\rho(2^{-j}|\xi|)\phi(2^{-j/2}\xi_1-m_1)\phi(2^{-j/2}\xi_2-m_2)\hat{f}(\xi).
  \end{equation*}
  For simplicity, we may assume $(P_mf)^{\wedge}(\xi):=\phi(2^{-j/2}\xi_1-m_1)\phi(2^{-j/2}\xi_2-m_2)\hat{f}(\xi)$. Indeed, suppose
  $(\widetilde{P_m}f)^{\wedge}(\xi):=\phi(2^{-j/2}\xi_1-m_1)\phi(2^{-j/2}\xi_2-m_2)\hat{f}(\xi)$, using the same technique as in the proof of Lemma \ref{lpn}, we can deduce that
  \begin{equation*}
    \Big\|\Big(\sum_{m\in\mathbb{Z}^2}\big| P_m f\big|^2\Big)^{1/2}\Big\|_{L_p\left(L_\infty(\mathbb{R}^2)\overline{\otimes}\mathcal{M}\right)}\lesssim\Big\|\Big(\sum_{m\in\mathbb{Z}^2}\big| \widetilde{P_m} f\big|^2\Big)^{1/2}\Big\|_{L_p\left(L_\infty(\mathbb{R}^2)\overline{\otimes}\mathcal{M}\right)}.
  \end{equation*}
Therefore,
\begin{align*}
    |P_mf(0)|&=\Big|\int_{\mathbb{R}^2}\phi(2^{-j/2}\xi_1-m_1)\phi(2^{-j/2}\xi_2-m_2)\hat{f}(\xi)d\xi\Big| \\&=\Big|\int_{\mathbb{R}^2}\int_{\mathbb{R}^2}e^{-ix\xi}\phi(2^{-j/2}\xi_1-m_1)\phi(2^{-j/2}\xi_2-m_2)f(x)d\xi dx\Big|\\&=\Big|\int_{\mathbb{R}^2}e^{-ix\cdot m}\hat{\phi}(x_1)\hat{\phi}(x_2)f(2^{-j/2}x)dx\Big|\\
    &=\Big|\sum_{n\in\mathbb{Z}^2}\int_{{[-\pi,\pi]}^2}e^{-ix\cdot m}h_n(x)dx\Big|=\Big|\sum_{n\in\mathbb{Z}^2}\hat{h}_n(m)\Big|,
\end{align*}
where $h_n(x)=\hat{\phi}(x_1-2n_1\pi)\hat{\phi}(x_2-2n_2\pi)f(2^{-j/2}(x-2\pi n))$.
Combining these results with the triangle inequality, we obtain
\begin{equation}\label{pme1}
\Big\|\Big(\sum_{m\in\mathbb{Z}^2}\big| P_m f(0)\big|^2\Big)^{1/2}\Big\|_{L_\infty(\mathcal{M})}
 \leq \sum_{n\in\mathbb{Z}^2}\Big\|\Big(\sum_{m\in\mathbb{Z}^2}\big|\hat{h}_n(m)\big|^2\Big)^{1/2}\Big\|_{L_\infty(\mathcal{M})}.
\end{equation}
Note that $L_\infty(\mathcal{M})\subset \mathcal{B}(L_2(\mathcal{M}))$, and applying the Parseval identity, we have
\begin{align*}
  \Big\|\Big(\sum_{m\in\mathbb{Z}^2}\big|\hat{h}_n(m)\big|^2\Big)^{1/2}\Big\|_{L_\infty(\mathcal{M})}^2&=\sup_{\|b\|_{L_2(\mathcal{M})}=1}\tau\big(\sum_{m\in\mathbb{Z}^2}\big|\hat{h}_n(m)b\big|^2\big)\\
  &=\sup_{\|b\|_{L_2(\mathcal{M})}=1}\tau\Big(\int_{{[-\pi,\pi]}^2}\big|h_n(x)b\big|^2dx\Big)\\
  &\leq \Big\|\int_{{[-\pi,\pi]}^2}\big|h_n(x)\big|^2dx\Big\|_{L_\infty(\mathcal{M})}.
\end{align*}
Using the inequality \eqref{pme1}, we can derive the following:
\begin{align*}
 &\Big\|\Big(\sum_{m\in\mathbb{Z}^2}\big| P_m f(0)\big|^2\Big)^{1/2}\Big\|_{L_\infty(\mathcal{M})}\leq
 \sum_{n\in\mathbb{Z}^2}\Big\|\int_{{[-\pi,\pi]}^2}\big|h_n(x)\big|^2dx\Big\|_{L_\infty(\mathcal{M})}^{1/2}\\
 &\leq
 \sum_{n\in\mathbb{Z}^2}\Big(\int_{{[-\pi,\pi]}^2}\big|\hat{\phi}(x_1-2n_1\pi)\hat{\phi}(x_2-2n_2\pi)\big|^2dx\Big)^{1/2}\times\|f\|_{L_\infty\left(L_\infty(\mathbb{R}^2)\overline{\otimes}\mathcal{M}\right)}
 \\&\lesssim \|f\|_{L_\infty\left(L_\infty(\mathbb{R}^2)\overline{\otimes}\mathcal{M}\right)}.
\end{align*}
Here, the final inequality holds due to the fact that $\hat{\phi}$ is a Schwartz function, which ensures that the sum over $n\in\mathbb{Z}^2$ is convergent.
Similarly, for any $y\in\mathbb{R}^2$, we have
\begin{equation*}
 \Big\|\Big(\sum_{m\in\mathbb{Z}^2}\big| P_m f(y)\big|^2\Big)^{1/2}\Big\|_{L_\infty(\mathcal{M})}\lesssim \|f\|_{L_\infty\left(L_\infty(\mathbb{R}^2)\overline{\otimes}\mathcal{M}\right)},
\end{equation*}
which implies the inequality \eqref{lpme} for $p=\infty$.
\end{proof}

We introduce the angular decomposition of the $\xi$-space, as outlined in \cite[Page 405]{St93}. Consider a roughly equally spaced set of points on the unit circle $S^1$ with grid length $2^{-j/2}$. Specifically, the set $\{\xi_v\}_{v=1}^{N(j)}$ comprises the unit vectors that adhere to the following conditions:
 \begin{itemize}
   \item [(1)] $|\xi_{v_1}-\xi_{v_2}|\geq 2^{-j/2}$, if $v_1\neq v_2$;
   \item [(2)] For any $\xi\in{\mathbb{R}^2\backslash 0}$, there exists one $\xi_v$, such that $\big|{\xi}/{|\xi|}-\xi_v\big|<2^{-j/2}$.
 \end{itemize}
 It is evident that $N(j)\thickapprox2^{j/2}$, and we can assume that $\arg\xi_v=2\pi v2^{-j/2}$.
Let $\Omega_v$ denote the cone in the $\xi$-space with the central direction $\xi_v$, defined as:
$$\Omega_v=\{\xi\in{\mathbb{R}^2\backslash \{0\}}: \big|{\xi}/{|\xi|}-\xi_v\big|<2\cdot2^{-j/2}\}.$$
Next, we construct an associated partition of unity: we select a family of smoothing functions $\chi_v$ that satisfy
$\sum_{v=1}^{N(j)}\chi_v(\xi)=1$ for all $\xi\in{\mathbb{R}^2\backslash \{0\}}$  and possess the following properties:
\begin{itemize}
  \item [(1)] $\chi_v$ is homogeneous of degree $0$, meaning $\chi_v(\xi)=\chi_{v}(\xi/|\xi|)$ for all $\xi\in{\mathbb{R}^2\backslash \{0\}}$;
  \item [(2)] $\chi_v$ is supported in $\Omega_v$ and fulfills the uniform estimate:
  \begin{equation*}
    |\partial_{\xi}^\alpha\xi_v(\xi)|\leq C_\alpha2^{|\alpha|j/2|}|\xi|^{-|\alpha|}.
  \end{equation*}
\end{itemize}
We now define the operators as follows:
\begin{equation}\label{Fv}
  F_v(f)(x,t):=\rho_1(t)\int_{\mathbb{R}^2}e^{i(x\xi+t|\xi|)}\rho_0(|2^{-j}\xi|)a(\xi,t)\chi_v(\xi)\hat{f}(\xi) d\xi
\end{equation}
and
\begin{equation}\label{Qv}
  (Q_vg)^{\wedge}(\xi,\tau):=\psi\left(\frac{|\xi|-\tau}{|\xi|^\epsilon}\right)\chi_v(\xi)\hat{g}(\xi,\tau)\triangleq\Psi_v(\xi,\tau)\hat{g}(\xi,\tau),
\end{equation}
where $\psi$ is a smoothing function on $\mathbb{R}$ supported in $[-2,2]$ and equal to $1$ in $[-1,1]$, and $\epsilon$ is a suitably chosen small positive number.

Prior to proving Proposition \ref{key}, we present two lemmas to address the error terms.
\begin{lemma}\label{er1}
  For $1\leq p\leq \infty$ and every $N\in\mathbb{N}$,
  \begin{equation}\label{er1e}
    \Big\|\sum_{n\in\mathbb{Z}}P^nF(f-f_n)\Big\|_{L_p\left(L_\infty(\mathbb{R}^3)\overline{\otimes}\mathcal{M}\right)}\lesssim_N2^{-Nj}\|f\|_{L_p\left(L_\infty(\mathbb{R}^2)\overline{\otimes}\mathcal{M}\right)}.
  \end{equation}
\end{lemma}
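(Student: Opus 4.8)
The plan is to view $f\mapsto P^nF(f-f_n)$ as a single scalar-valued convolution operator in the spatial variable, to reduce \eqref{er1e} to a super-polynomially small $L^1$-bound on its kernel, and then to sum over $n$. Since $F$, $P^n$ and $f\mapsto f_n$ are Fourier multipliers acting only on the $\mathbb R^d$-variables (each is $\mathrm{id}_{\mathcal M}$ tensored with a scalar Fourier multiplier), for every fixed $t$ one may write
\begin{equation*}
P^nF(f-f_n)(\cdot,t)=\mathcal L_{n,t}\ast f,\qquad
\widehat{\mathcal L_{n,t}}(\xi)=\rho_0(|2^{-j}\xi|)\big(1-\Phi_n(\xi)\big)\int_{\mathbb R}e^{it\tau}\,\phi(2^{-j/2}\tau-n)\,b(\xi,\tau)\,d\tau,
\end{equation*}
where $b(\xi,\tau):=\big(\rho_1(\cdot)a(\xi,\cdot)\big)^{\vee}(\tau-|\xi|)$ and $\Phi_n$ is the (scalar) symbol of $f\mapsto f_n$, which by construction equals $1$ on a neighbourhood of the sphere $\{|\xi|=n2^{j/2}\}$ of width a fixed large multiple of $2^{j/2}$. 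Because $\mathcal L_{n,t}$ is scalar, Young's inequality together with the integral Minkowski inequality for $L_p(\mathcal M)$-valued functions — exactly as in the $L_\infty$-estimate for $F_j$ in Section \ref{Se3.4} and in the proof of Lemma \ref{leA_0} — gives, for all $1\le p\le\infty$,
\begin{equation*}
\big\|P^nF(f-f_n)\big\|_{L_p(\mathcal N)}\le\big\|\,\|\mathcal L_{n,\cdot}\|_{L^1(\mathbb R^2)}\,\big\|_{L^p(\mathbb R)}\;\|f\|_{L_p(L_\infty(\mathbb R^2)\overline{\otimes}\mathcal M)},
\end{equation*}
so it suffices to prove $\|\mathcal L_{n,t}\|_{L^1(\mathbb R^2)}\lesssim_M 2^{-Mj}(1+|t|)^{-M}$ for every $M\in\mathbb N$ and every $n\in\mathbb Z$, and then sum.

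The mechanism is frequency separation from the light cone $\tau=|\xi|$. Since $\rho_1(\cdot)a(\xi,\cdot)$ is smooth, supported in a fixed compact $t$-interval and satisfies the order-zero symbol bounds in $\xi$, its inverse Fourier transform in $t$ is rapidly decreasing off the cone: $|\partial_\xi^\alpha b(\xi,\tau)|\lesssim_{\alpha,M}(1+|\tau-|\xi||)^{-M}$ uniformly for $|\xi|\thickapprox 2^j$. On the support of the $\tau$-integrand above one has $|\tau-n2^{j/2}|\le 2^{j/2}$ (from $\phi(2^{-j/2}\tau-n)$) and $|\xi|\thickapprox 2^j$ with $\big||\xi|-n2^{j/2}\big|\gtrsim 2^{j/2}$ (from $\rho_0(|2^{-j}\xi|)\big(1-\Phi_n(\xi)\big)$ and the reproducing property of $f_n$), hence $|\tau-|\xi||\ge\big||\xi|-n2^{j/2}\big|-|\tau-n2^{j/2}|\gtrsim 2^{j/2}$; and when $|n|\not\thickapprox 2^{j/2}$ one even gets $|\tau-|\xi||\gtrsim\max\{2^j,|n|2^{j/2}\}$ directly by comparing the scales $\tau\thickapprox n2^{j/2}$ and $|\xi|\thickapprox 2^j$, without using $1-\Phi_n$. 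In either case $|b(\xi,\tau)|\lesssim_M 2^{-Mj}$ on the relevant set, and combining this with $|\partial_\xi^\alpha\rho_0(|2^{-j}\cdot|)|\lesssim 2^{-|\alpha|j}$ and $|\partial_\xi^\alpha\Phi_n|\lesssim 2^{-|\alpha|j/2}$ yields $|\partial_\xi^\alpha\widehat{\mathcal L_{n,t}}(\xi)|\lesssim_{\alpha,M}2^{-Mj}$ on the annulus $|\xi|\thickapprox 2^j$, the rapid decay in $t$ coming from integration by parts in $\tau$ (each $\tau$-derivative of $\phi(2^{-j/2}\tau-n)$ costs $2^{-j/2}$, those of $b$ cost $O(1)$). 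A routine application of Cauchy--Schwarz and Plancherel in $\xi$ then gives $\|\mathcal L_{n,t}\|_{L^1(\mathbb R^2)}\lesssim_M 2^{-Mj}(1+|t|)^{-M}$, with an extra factor $(|n|2^{j/2})^{-M}$ when $|n|\gg 2^{j/2}$.

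It remains to sum over $n\in\mathbb Z$: there are $\thickapprox 2^{j/2}$ indices with $n\thickapprox 2^{j/2}$, each contributing $\lesssim_M 2^{-Mj}\|f\|_p$, and the indices with $|n|\not\thickapprox 2^{j/2}$ give a convergent series thanks to the extra $(|n|2^{j/2})^{-M}$ decay; therefore
\begin{equation*}
\Big\|\sum_{n\in\mathbb Z}P^nF(f-f_n)\Big\|_{L_p(\mathcal N)}\le\sum_{n\in\mathbb Z}\big\|P^nF(f-f_n)\big\|_{L_p(\mathcal N)}\lesssim_M 2^{j/2}\,2^{-Mj}\,\|f\|_{L_p(L_\infty(\mathbb R^2)\overline{\otimes}\mathcal M)},
\end{equation*}
and choosing $M=N+1$ yields \eqref{er1e}. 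The one step that will need genuine care — the rest being the scalar convolution-kernel estimate already rehearsed for $F_j$ and $\mathcal A_{t,0}^\sigma$ — is verifying that $1-\Phi_n$ vanishes on a neighbourhood of $\{|\xi|=n2^{j/2}\}$ of width $\gtrsim 2^{j/2}$ with a sufficiently large implied constant, since this is precisely what forces the separation $|\tau-|\xi||\gtrsim 2^{j/2}$ for the indices $n\thickapprox 2^{j/2}$; this is an elementary (if slightly fiddly) lattice bookkeeping, using that $\sum_m P_m$ reproduces the frequencies in the annulus $|\xi|\thickapprox n2^{j/2}$ and that the threshold $100$ in the definition \eqref{fn} of $f_n$ exceeds the number of lattice cubes of side $2^{j/2}$ meeting any sphere of radius $\thickapprox 2^j$.
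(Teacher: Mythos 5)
Your proposal is correct and follows essentially the same route as the paper: write $P^nF(f-f_n)$ as convolution against an explicit kernel, use the rapid decay of $\big(\rho_1(\cdot)a(\xi,\cdot)\big)^{\vee}(\tau-|\xi|)$ away from the cone together with the support separation enforced by $(1-\tilde\phi(\xi))\phi(2^{-j/2}\tau-n)$ to get an $O_N(2^{-Nj})$ kernel bound, apply Young's inequality, and sum over the $\thickapprox 2^{j/2}$ relevant indices. The only (harmless) deviation is that the paper disposes of $n\notin\Gamma_j$ by invoking the earlier reduction $\tau\thickapprox 2^j$ from the $K_I/K_{II}$ splitting (so $\widetilde K_n\equiv 0$ there), whereas you estimate those terms directly with extra decay in $|n|$.
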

\begin{proof}Fix an integer $n$. Then,
\begin{align*}
P^nF(f-f_n)(x,t)&=\int_{\mathbb{R}}\int_{\mathbb{R}^2}e^{i(x\xi+t\tau)}\big(\rho_1(\cdot)a(\xi,\cdot)\big)^{\vee}(\tau-|\xi|)\rho_0(|2^{-j}\xi|)
\\&\qquad \times\phi(2^{-j/2}\tau-n)(1-\tilde{\phi}(\xi))\hat{f}(\xi)\ d\xi\ d\tau\\
&\triangleq \int_{\mathbb{R}^2}\widetilde{K}_n(x-y,t)f(y)dy,
\end{align*}
where the kernel $\widetilde{K}_n(x,t)$ is given by
\begin{equation*}
  \int_{\mathbb{R}}\int_{\mathbb{R}^2}e^{i(x\xi+t\tau)}\big(\rho_1(\cdot)a(\xi,\cdot)\big)^{\vee}(\tau-|\xi|)\rho_0(|2^{-j}\xi|)\phi(2^{-j/2}\tau-n)(1-\tilde{\phi}(\xi))\ d\xi\ d\tau
\end{equation*}
and\begin{equation*}
  \tilde{\phi}(\xi)=\sum_{\big\{m\in\mathbb{Z}^2:\big||m|-n\big|<100\big\}}\rho(2^{-j}|\xi|)\phi(2^{-j/2}\xi_1-m_1)\phi(2^{-j/2}\xi_2-m_2).
\end{equation*}
 Recall that we have assumed that $\tau\thickapprox 2^j$, and by the support of $\phi$, $\widetilde{K}_n=0$ if $n\notin\Gamma_j$. As the estimate \eqref{K2}, one has
 \begin{equation*}
  |\widetilde{K}_n(x,t)|\lesssim_N\frac{2^{-Nj}}{(1+|(x,t)|^2)^N}
\end{equation*}
for any $N>0$ (see \cite[Page 82]{Sogge17} for more details). Then, by the Young inequality,
\begin{equation*}
  \Big\|P^nF(f-f_n)\Big\|_{{L_p\left(L_\infty(\mathbb{R}^3)\overline{\otimes}\mathcal{M}\right)}}\lesssim_N 2^{-Nj}\|f\|_{L_p\left(L_\infty(\mathbb{R}^2)\overline{\otimes}\mathcal{M}\right)}.
\end{equation*}
 Therefore, noting that  $|\Gamma_j|\thickapprox 2^{j/2}$,
 \begin{align*}
   \Big\|\sum_{n\in\mathbb{Z}}P^nF(f-f_n)\Big\|_{L_p\left(L_\infty(\mathbb{R}^3)\overline{\otimes}\mathcal{M}\right)}&\leq \sum_{n\in\Gamma_j}\Big\|P^nF(f-f_n)\Big\|_{L_p\left(L_\infty(\mathbb{R}^3)\overline{\otimes}\mathcal{M}\right)}\\
   & \lesssim_N2^{-Nj}\|f\|_{L_p\left(L_\infty(\mathbb{R}^2)\overline{\otimes}\mathcal{M}\right)}.
 \end{align*}
\end{proof}

\begin{lemma}\label{er2}
Define  $R_v(f):=F_v(f)-Q_vF_v(f)$, then for any $N\in\mathbb{N}$ and $1\leq p\leq\infty$, we have
\begin{equation}\label{er2e}
  \|R_v(f)\|_{{L_p\left(L_\infty(\mathbb{R}^3)\overline{\otimes}\mathcal{M}\right)}}\lesssim_N 2^{-Nj}\|f\|_{L_p\left(L_\infty(\mathbb{R}^2)\overline{\otimes}\mathcal{M}\right)}.
\end{equation}
\end{lemma}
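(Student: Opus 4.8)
The plan is to realize $R_v(f)=(I-Q_v)F_v(f)$ as a convolution, \emph{in the space variable only}, against a scalar kernel whose $L_1(\mathbb{R}^2)$-norm is $O_N(2^{-Nj})$ uniformly in $t$, and then to conclude by the operator-valued Young inequality exactly as in the proof of Lemma \ref{er1}. First I would pass to the space-time Fourier transform. Arguing as in the decomposition of $F(f)$ at the beginning of this section,
\[
\widehat{F_v(f)}(\xi,\tau)=\big(\rho_1(\cdot)a(\xi,\cdot)\big)^{\vee}(\tau-|\xi|)\,\rho_0(|2^{-j}\xi|)\,\chi_v(\xi)\,\hat f(\xi),
\]
so that, with $\Psi_v(\xi,\tau)=\psi\big(\tfrac{|\xi|-\tau}{|\xi|^{\epsilon}}\big)\chi_v(\xi)$ as in \eqref{Qv},
\[
\widehat{R_v(f)}(\xi,\tau)=\big(1-\Psi_v(\xi,\tau)\big)\big(\rho_1(\cdot)a(\xi,\cdot)\big)^{\vee}(\tau-|\xi|)\,\rho_0(|2^{-j}\xi|)\,\chi_v(\xi)\,\hat f(\xi)=:m_v(\xi,\tau)\hat f(\xi).
\]
Hence $R_v(f)(x,t)=\int_{\mathbb{R}^2}\widetilde K_v(x-y,t)f(y)\,dy$ with $\widetilde K_v(x,t)=\iint e^{i(x\xi+t\tau)}\,m_v(\xi,\tau)\,d\xi\,d\tau$.

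Two elementary facts drive the estimate. (i) Since $\rho_1\in C_0^\infty$ and the symbol $a(\xi,t)$ is smooth in $t$ with all its $t$-derivatives bounded uniformly on $\{|\xi|\thickapprox 2^j\}$, integration by parts in $t$ shows that $s\mapsto\big(\rho_1(\cdot)a(\xi,\cdot)\big)^{\vee}(s)$ is Schwartz, with $\big|\big(\rho_1(\cdot)a(\xi,\cdot)\big)^{\vee}(s)\big|\lesssim_M\langle s\rangle^{-M}$ for every $M$, uniformly in $\xi$ with $|\xi|\thickapprox 2^j$. (ii) Since $\psi\equiv 1$ on $[-1,1]$ and the angular factor inside $\Psi_v$ equals $1$ on the $\xi$-support of $\widehat{F_v(f)}$, the symbol $1-\Psi_v$ vanishes, on that support, whenever $|\,|\xi|-\tau|\le|\xi|^{\epsilon}$; therefore $m_v$ is supported in the region $\{|\,|\xi|-\tau|\ge|\xi|^{\epsilon}\thickapprox 2^{\epsilon j}\}$, where (i) forces $|m_v|\lesssim_M 2^{-\epsilon M j}\langle\tau-|\xi|\rangle^{-2}$ (say). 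This is the analogue of the mechanism used for \eqref{K2} and for Lemma \ref{er1}, except that the separation from the light cone $\tau=|\xi|$ is now only of size $2^{\epsilon j}$ rather than $2^{j/2}$.

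Next I would bound $\widetilde K_v$ by non-stationary phase in $\xi$. On the support of $m_v$ the variable $\xi$ lies in a sector of the annulus $\{|\xi|\thickapprox 2^j\}$ of angular width $2^{-j/2}$, hence of measure $\thickapprox 2^{3j/2}$; moreover every $\xi$-derivative of $m_v$ costs nothing, since the cut-offs $\rho_0$, $\chi_v$ and $\psi\big(\tfrac{|\xi|-\tau}{|\xi|^{\epsilon}}\big)$ only produce negative powers of $2^{j}$ (using $|\partial_\xi^\alpha\chi_v|\lesssim_\alpha 2^{-|\alpha|j/2}$ on $\{|\xi|\thickapprox 2^j\}$), while $\xi$-derivatives of $\big(\rho_1(\cdot)a(\xi,\cdot)\big)^{\vee}(\tau-|\xi|)$ are again Schwartz in $\tau-|\xi|$ uniformly in $\xi$. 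Integrating by parts $L$ times in $\xi$ (using $\nabla_\xi(x\xi+t\tau)=x$) and then integrating the Schwartz factor over $\{|\,|\xi|-\tau|\ge 2^{\epsilon j}\}$ in $\tau$, one gets
\[
|\widetilde K_v(x,t)|\lesssim_{L,M} 2^{3j/2}\,2^{-\epsilon M j}\,(1+|x|)^{-L},
\]
uniformly in $t\in\mathrm{supp}\,\rho_1$; taking $L=3$ and $M=M(N,\epsilon)$ large enough yields $\sup_t\|\widetilde K_v(\cdot,t)\|_{L_1(\mathbb{R}^2)}\lesssim_N 2^{-Nj}$.

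Finally, since $R_v(f)(\cdot,t)=\widetilde K_v(\cdot,t)\ast f$ is convolution in the space variable against a scalar kernel, the operator-valued Young inequality gives, for all $1\le p\le\infty$,
\[
\big\|R_v(f)(\cdot,t)\big\|_{L_p(L_\infty(\mathbb{R}^2)\overline{\otimes}\mathcal{M})}\le\|\widetilde K_v(\cdot,t)\|_{L_1(\mathbb{R}^2)}\,\|f\|_{L_p(L_\infty(\mathbb{R}^2)\overline{\otimes}\mathcal{M})};
\]
since $\rho_1$ confines $t$ to a set of finite measure, taking the $L_p$-norm (or, when $p=\infty$, the supremum) in $t$ produces $\|R_v(f)\|_{L_p(L_\infty(\mathbb{R}^3)\overline{\otimes}\mathcal{M})}\lesssim_N 2^{-Nj}\|f\|_{L_p(L_\infty(\mathbb{R}^2)\overline{\otimes}\mathcal{M})}$, as claimed. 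The only step requiring genuine care is (ii), i.e. making precise that $Q_v$ reproduces $F_v(f)$ up to the thin cone-neighborhood $\{|\,|\xi|-\tau|\le|\xi|^{\epsilon}\}$ — equivalently, that the angular cut-off inside $\Psi_v$ acts as the identity on the $\xi$-support of $\widehat{F_v(f)}$; once this is in place, the argument is the same non-stationary-phase plus operator-valued-Young routine already used for Lemma \ref{er1}, so I expect this lemma to be essentially routine.
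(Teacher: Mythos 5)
Your proof is correct and follows essentially the same route as the paper: both realize $R_v(f)$ as a spatial convolution whose kernel is a three-dimensional oscillatory integral with symbol supported at distance $\gtrsim 2^{\epsilon j}$ from the light cone $\tau=|\xi|$, deduce $|K_v(x,t)|\lesssim_N 2^{-Nj}(1+|x|)^{-N}$ from the rapid decay of $(\rho_1(\cdot)a(\xi,\cdot))^{\vee}$ combined with non-stationary phase in $\xi$ (the paper simply cites Sogge's book for this kernel bound), and conclude by the operator-valued Young inequality. The caveat you flag in (ii) — that the angular factor inside $\Psi_v$ should act as the identity on $\operatorname{supp}\chi_v$ — is precisely the convention the paper adopts implicitly, since its written kernel carries the factor $\big(1-\psi(\tfrac{|\xi|-\tau}{|\xi|^{\epsilon}})\big)\chi_v(\xi)$ rather than $(1-\psi\chi_v)\chi_v$, so your treatment is, if anything, slightly more careful on this point.
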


\begin{proof}
Note that
the Fourier transform of $F_v(f)$  is given by
\begin{equation*}
  \hat{f}(\xi)\rho_0(|2^{-j}\xi|)\chi_v(\xi)(\rho_1(\cdot)a(\xi,\cdot))^{\vee}(\tau-|\xi|).
\end{equation*}
Then, the kernel associated with $R_v$ is
\begin{equation*}
  K_v(x,t)=\int_{\mathbb{R}^3}e^{i(x\cdot\xi+t\tau)} \Big(1-\psi\big(\frac{|\xi|-\tau}{|\xi|^\epsilon}\big)\Big)\rho_0(|2^{-j}\xi|)\chi_v(\xi)(\rho_1(\cdot)a(\xi,\cdot))^{\vee}(\tau-|\xi|)d\xi d\tau.
\end{equation*}
As the estimate \eqref{K2}, one has
 \begin{equation*}
  |K_v(x,t)|\lesssim_N\frac{2^{-jN}}{(1+|(x,t)|^2)^N}
\end{equation*}
for any $N>0$ (see \cite[Page 84]{Sogge17} for further details). Now, we complete the proof as in Lemma \ref{er1}.
\end{proof}

\subsection{An $L_4$ estimate}
To prove the $L_4$-estimate \eqref{L4}, we start by noting that $$\sum_{n\in\Gamma_j}(P^n)^2F(f)=F(f).$$ Applying the triangle inequality and Lemma \ref{er1}, which also holds when $(P^n)^2$ is used instead of $P^n$, we have
\begin{align*}
  \|F(f)\|_{L_4(\mathcal{N})}&=\Big\|\sum_{n\in\Gamma_j}(P^n)^2F(f)\Big\|_{L_4(\mathcal{N})}\\&\leq \Big\|\sum_{n\in\Gamma_j}(P^n)^2F(f_n)\Big\|_{L_4(\mathcal{N})}+\Big\|\sum_{n\in\Gamma_j}(P^n)^2F(f-f_n)\Big\|_{L_4(\mathcal{N})}\\
  &\lesssim \Big\|\sum_{n\in\Gamma_j}(P^n)^2F(f_n)\Big\|_{L_4(\mathcal{N})}
  +\|f\|_{L_4\left(L_\infty(\mathbb{R}^2)\overline{\otimes}\mathcal{M}\right)}.
\end{align*}
Next, let $I:=\{1\leq v\leq N(j): \xi_v \ \text{in the first quadrant}\}$ and recall that $a(\xi,t)$ in \eqref{Fv} is supported in the first quadrant with respect to the variable $\xi$. By the triangle inequality, the Young inequality and Lemma \ref{er2}, we obtain
\begin{align*}
  &\Big\|\sum_{n\in\Gamma_j}(P^n)^2F(f_n)\Big\|_{L_4(\mathcal{N})}=\Big\|\sum_{n\in\Gamma_j}\sum_{v\in I}(P^n)^2F_v(f_n)\Big\|_{L_4(\mathcal{N})}\\
  &\leq \Big\|\sum_{n\in\Gamma_j}\sum_{v\in I}(P^n)^2Q_vF_v(f_n)\Big\|_{L_4(\mathcal{N})}+
   \sum_{n\in\Gamma_j}\sum_{v\in I}\Big\|(P^n)^2R_v(f_n)\Big\|_{L_4(\mathcal{N})}\\
   &\leq \Big\|\sum_{n\in\Gamma_j}\sum_{v\in I}(P^n)^2Q_vF_v(f_n)\Big\|_{L_4(\mathcal{N})}+
   \sum_{n\in\Gamma_j}\sum_{v\in I}\Big\|R_v(f_n)\Big\|_{L_4(\mathcal{N})}\\
  &\lesssim \Big\|\sum_{n\in\Gamma_j}\sum_{v\in I}(P^n)^2Q_vF_v(f_n)\Big\|_{L_4(\mathcal{N})}
  +\|f\|_{L_4\left(L_\infty(\mathbb{R}^2)\overline{\otimes}\mathcal{M}\right)}.
\end{align*}
The final inequality above uses the facts that $|I|\thickapprox|\Gamma_j|\thickapprox2^{j/2}$ and
\begin{equation*}
  \|f_n\|_{L_4\left(L_\infty(\mathbb{R}^2)\overline{\otimes}\mathcal{M}\right)}=\Big\|\sum_{m\in\mathbb{Z}^2:\big||m|-n\big|<100}P_mf\Big\|_{L_4\left(L_\infty(\mathbb{R}^2)\overline{\otimes}\mathcal{M}\right)}
  \lesssim\|f\|_{L_4\left(L_\infty(\mathbb{R}^2)\overline{\otimes}\mathcal{M}\right)}.
\end{equation*}
It now suffices to show:
\begin{equation}\label{key1115}
  \Big\|\sum_{n\in\Gamma_j}\sum_{v\in I}(P^n)^2Q_vF_v(f_n)\Big\|_{L_4(\mathcal{N})}\lesssim_\mu2^{\mu j}\|f\|_{L_4\left(L_\infty(\mathbb{R}^2)\overline{\otimes}\mathcal{M}\right)},\qquad \text{for all}\ \mu>1/8.
\end{equation}
To proceed with the proof, we deduce that
\begin{align}\label{svw}
  &\Big\|\sum_{n\in\Gamma_j}\sum_{v\in I}(P^n)^2Q_vF_v(f_n)\Big\|_{L_4(\mathcal{N})}^4\nonumber\\ &=\varphi\Big(\Big|\sum_{v,w\in I}\big(\sum_{n\in\Gamma_j}((P^n)^2Q_vF_v(f_n))^*\big)\big(\sum_{n\in\Gamma_j}(P^n)^2Q_wF_w(f_n)\big)\Big|^2\Big)\nonumber\\
  &\lesssim (I)+(II),
\end{align}
where
\begin{equation*}
  (I)=\varphi\Big(\Big|\sum_{v,w\in I,|v-w|\leq1000}\big(\sum_{n\in\Gamma_j}((P^n)^2Q_vF_v(f_n))^*\big)\big(\sum_{n\in\Gamma_j}(P^n)^2Q_wF_w(f_n)\big)\Big|^2\Big)
\end{equation*}
and
\begin{equation*}
  (II)=\varphi\Big(\Big|\sum_{v,w\in I,|v-w|>1000}\big(\sum_{n\in\Gamma_j}((P^n)^2Q_vF_v(f_n))^*\big)\big(\sum_{n\in\Gamma_j}(P^n)^2Q_wF_w(f_n)\big)\Big|^2\Big).
\end{equation*}

 We first estimate part $(I)$. By the triangle inequality, and Lemma \ref{sq},
 \begin{align*}
   (I) &=\varphi\Big(\Big|\sum_{i=-1000}^{1000}\sum_{v,v+i\in I}\big(\sum_{n\in\Gamma_j}((P^n)^2Q_vF_v(f_n))^*\big)\big(\sum_{n\in\Gamma_j}(P^n)^2Q_{v+i}F_{v+i}(f_n)\big)\Big|^2\Big) \\
   &\lesssim \sum_{i=-1000}^{1000}\varphi\Big(\Big|\sum_{v,v+i\in I}\big(\sum_{n\in\Gamma_j}((P^n)^2Q_vF_v(f_n))^*\big)\big(\sum_{n\in\Gamma_j}(P^n)^2Q_{v+i}F_{v+i}(f_n)\big)\Big|^2\Big)\\
   &\lesssim \sum_{i=-1000}^{1000}\Big\|\Big(\sum_{v:v,v+i\in I}\big|\sum_{n\in\Gamma_j}(P^n)^2Q_vF_v(f_n)\big|^2\Big)^{1/2}\Big\|_{L_4(\mathcal{N})}^2\\&
   \qquad\times\Big\|\Big(\sum_{v+i:v,v+i\in I}\big|\sum_{n\in\Gamma_j}(P^n)^2Q_{v+i}F_{v+i}(f_n)\big|^2\Big)^{1/2}\Big\|_{L_4(\mathcal{N})}^2\\
   &\lesssim\Big\|\Big(\sum_{v\in I}\big|\sum_{n\in\Gamma_j}(P^n)^2Q_vF_v(f_n)\big|^2\Big)^{1/2}\Big\|_{L_4(\mathcal{N})}^4.
 \end{align*}
By applying Lemma \ref{lpn}, we arrive at
 \begin{equation}\label{esI}
   (I)\lesssim 2^{j/2}\Big\|\Big(\sum_{v\in I}\sum_{n\in\Gamma_j}\big|P^nQ_vF_v(f_n)\big|^2\Big)^{1/2}\Big\|_{L_4(\mathcal{N})}^4.
 \end{equation}

To estimate part $(II)$,  we require a geometric estimate. For each $n\in\Gamma_j,v\in I$, observe that the Fourier transform of $P^nQ_vF_v(f_n)$ is supported in the region
\begin{equation*}
  \mathcal{U}_{n}^v:=\{(\xi,\tau)\in\mathbb{R}^3:(\xi,\tau)\in\supp\Psi_v,|2^{-j/2}\tau-n|\leq1\},
\end{equation*}
where $\Psi_v$ was defined in \eqref{Qv}.
Next, we derive an estimate for the number of overlaps of algebraic differences between the sets $\mathcal{U}_{n}^{v}$ and $\mathcal{U}_{m}^{w}$:
\begin{lemma}\label{geo}
  There exists a constant $C$, independent of $j$, such that
  \begin{equation}\label{geoe}
    \sum_{n,m\in\Gamma_j}\sum_{v,w\in I,|v-w|>1000}\chi_{\mathcal{U}_{n}^v-\mathcal{U}_{m}^{w}}(\xi,\tau)\leq C2^{j/2+2\epsilon j}.
  \end{equation}
\end{lemma}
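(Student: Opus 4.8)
\emph{Setup and the shape of the frequency sets.} Since \eqref{geoe} is a pointwise bound, the plan is to fix $(\xi,\tau)\in\mathbb{R}^3$ and estimate the number $\mathcal{Q}$ of quadruples $(n,m,v,w)$ with $n,m\in\Gamma_j$, $v,w\in I$, $|v-w|>1000$ and $(\xi,\tau)\in\mathcal{U}_n^v-\mathcal{U}_m^w$; I aim to show $\mathcal{Q}\lesssim 2^{j/2+\epsilon j}$, which is even stronger than \eqref{geoe}. The first step is to read off the geometry of $\mathcal{U}_n^v$ from the multipliers defining $P^n$, $Q_v$ and $F_v$: $(P^n)^2$ imposes $|\zeta|\approx 2^j$ and $|\sigma-n2^{j/2}|\lesssim 2^{j/2}$, while $Q_v$ imposes $\zeta\in\Omega_v$ and $|\sigma-|\zeta||\lesssim|\zeta|^\epsilon\approx 2^{\epsilon j}$, so that $|\zeta|=n2^{j/2}+O(2^{j/2})$ and
\[
 \mathcal{U}_n^v\subset\Big\{(\zeta,\sigma):\ |\zeta|\in[n2^{j/2}-c\,2^{j/2},\,n2^{j/2}+c\,2^{j/2}],\ |\arg\zeta-\theta_v|\le c\,2^{-j/2},\ |\sigma-|\zeta||\le c\,2^{\epsilon j}\Big\}
\]
for a fixed constant $c$, with $\theta_v:=\arg\xi_v$ and $|\zeta|\approx\sigma\approx 2^j$. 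In the frame $\{\xi_v,\xi_v^{\perp}\}$ this is a curved plank of dimensions $\approx 2^{j/2}\times 2^{j/2}\times 2^{\epsilon j}$, the thin $2^{\epsilon j}$-direction being transverse to the light cone $\{\sigma=|\zeta|\}$.

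\emph{Reduction to a planar count.} If $(\xi,\tau)\in\mathcal{U}_n^v-\mathcal{U}_m^w$, I would pick $(\xi_a,\tau_a)\in\mathcal{U}_n^v$ and $(\xi_b,\tau_b)\in\mathcal{U}_m^w$ with $\xi_a-\xi_b=\xi$, $\tau_a-\tau_b=\tau$. Then $\xi_a=\xi+\xi_b$, and subtracting $\big|\tau_a-|\xi_a|\big|\le c2^{\epsilon j}$ and $\big|\tau_b-|\xi_b|\big|\le c2^{\epsilon j}$ gives
\[
 \big|\,|\xi+\xi_b|-|\xi_b|-\tau\,\big|\le C\,2^{\epsilon j},
\]
while $|v-w|>1000$ forces $|\arg\xi_a-\arg\xi_b|\ge|\theta_v-\theta_w|-O(2^{-j/2})>2^{-j/2}$ once $j$ is large. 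Hence $\xi_b$ lies in
\[
 \mathcal{H}:=\Big\{\eta\in\mathbb{R}^2:\ |\eta|\approx 2^j,\ |\xi+\eta|\approx 2^j,\ \big|\,|\xi+\eta|-|\eta|-\tau\,\big|\le C\,2^{\epsilon j},\ \angle(\eta,\xi+\eta)\ge 2^{-j/2}\Big\}.
\]
Next, partition the annulus $\{|\eta|\approx 2^j\}$ into cells $R$ of radial length $2^{j/2}$ and angular width $2^{-j/2}$ (so $\mathrm{diam}\,R\lesssim 2^{j/2}$); if $\xi_b\in R$ then $|\xi_b|$ and $|\xi+\xi_b|=|\xi_a|$ each vary by $\lesssim 2^{j/2}$, which fixes $m$ and $n$ up to $O(1)$, and $\arg\xi_b$ and $\arg(\xi+\xi_b)=\arg\xi_a$ each vary by $\lesssim 2^{-j/2}$ (here $|\xi_a|\approx 2^j$ is used), which fixes $w$ and $v$ up to $O(1)$. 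Therefore $\mathcal{Q}\lesssim\#\{R:R\cap\mathcal{H}\neq\emptyset\}$.

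\emph{The geometric estimate via curvature of the cone.} With $g(\eta):=|\xi+\eta|-|\eta|$ one has $\nabla g(\eta)=\frac{\xi+\eta}{|\xi+\eta|}-\frac{\eta}{|\eta|}$, hence $|\nabla g(\eta)|=2\sin\big(\tfrac12\angle(\eta,\xi+\eta)\big)\gtrsim 2^{-j/2}$ on $\mathcal{H}$. Consequently $\mathcal{H}$ is contained in a $C2^{\epsilon j}/2^{-j/2}=C2^{j/2+\epsilon j}$-neighbourhood of the level curve $\{g=\tau\}$, a branch of a conic with foci $0$ and $-\xi$ (a hyperbola branch, degenerating to a line or a ray in limiting cases); being convex and meeting a ball of radius $\approx 2^j$, the relevant arc has length $\lesssim 2^j$, so its $C2^{j/2+\epsilon j}$-tube meets at most $\lesssim\frac{2^j}{2^{j/2}}\cdot\frac{2^{j/2+\epsilon j}}{2^{j/2}}=2^{j/2+\epsilon j}$ cells. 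This gives $\mathcal{Q}\lesssim 2^{j/2+\epsilon j}$ and hence \eqref{geoe}.

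\emph{Main obstacle.} The technical heart is the uniform transversality bound $|\nabla g|\gtrsim 2^{-j/2}$ on $\mathcal{H}$: this is exactly the nonvanishing curvature of the light cone, and it degenerates when $v=w$ (there $\angle(\eta,\xi+\eta)$ may vanish), which is precisely why the near-diagonal block $|v-w|\le 1000$ was peeled off earlier into part $(I)$. Making the above rigorous requires coordinating the three scales carefully — the angular scale $2^{-j/2}$, the radial scale $2^{j/2}$, and the cone-transverse scale $2^{\epsilon j}$ — through the mildly non-constant weight $|\zeta|^\epsilon$ and the fact that the planks $\mathcal{U}_n^v$ are genuinely curved rather than affine; this three-dimensional bookkeeping, absent in the planar settings of \cite{Lai21,HLW23}, is also where the slight sharpening over \cite{MSS} appears.
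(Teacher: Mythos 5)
Your reduction to a planar cell count is correct and is a genuinely different route from the paper's. The paper first uses the $\tau$-coordinate to pin $m$ to $O(1)$ values for each $n$ (giving the factor $2^{j/2}$), and then, for fixed $(n,m)$, shows by an explicit coordinate comparison — writing points of $\mathcal{U}_n^v$ as $(1+r_v)e^{i(v\delta+\theta_v)}$ plus an $O(\delta^{2-2\epsilon})$ error, equating two representations of the same point of the difference set, and projecting onto two orthogonal directions — that overlapping difference sets force $|v_1-v_2|,|w_1-w_2|\lesssim 2^{\epsilon j}$, hence $\lesssim 2^{2\epsilon j}$ pairs. Your count keeps $v$ and $w$ correlated through the cell $R$ (each cell determines the whole quadruple up to $O(1)$), which is why you land on $2^{j/2+\epsilon j}$ rather than $2^{j/2+2\epsilon j}$; that part of the argument, including the verification that $m,n,w,v$ are each pinned to $O(1)$ values by $R$, is sound.

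The gap is in the last step. From $|g(\eta_0)-\tau|\le C2^{\epsilon j}$ and $|\nabla g(\eta_0)|\gtrsim 2^{-j/2}$ you conclude that $\eta_0$ lies within $C2^{j/2+\epsilon j}$ of the level curve $\{g=\tau\}$, but a pointwise gradient lower bound \emph{on $\mathcal{H}$} does not give this: to reach the level set you must travel a distance $\sim 2^{j/2+\epsilon j}$, over which $\nabla g$ changes by $\|\nabla^2 g\|\cdot 2^{j/2+\epsilon j}\sim 2^{-j/2+\epsilon j}$, i.e.\ by more than its guaranteed size, and the angle condition (hence the gradient bound) is only known on $\mathcal{H}$ itself, not along the path. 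Worse, the identity $(|\xi|-g)(|\xi|+g)=2|\eta||\xi+\eta|(1-\cos\angle(\eta,\xi+\eta))$ shows that when $|\xi|<\tau\le|\xi|+C2^{\epsilon j}$ the level set $\{g=\tau\}$ is \emph{empty} while $\mathcal{H}$ need not be, so the containment fails outright in that regime. The fix is to avoid the level-set detour: slice $\mathcal{H}$ by the circles $\{|\eta|=r\}$ and note that $\partial_\phi\,g(re^{i\phi})=\pm r\sin\angle(\eta,\xi+\eta)$, which on $\mathcal{H}$ has absolute value $\gtrsim 2^{j}\cdot 2^{-j/2}=2^{j/2}$ and changes sign only at the two points where $\eta\parallel\xi$; piecewise monotonicity then bounds the $\phi$-measure of each slice of $\mathcal{H}$ by $C2^{\epsilon j-j/2}$, i.e.\ $O(2^{\epsilon j})$ angular cells per radial shell, and summing over the $\approx 2^{j/2}$ shells recovers your count. (Alternatively, decompose dyadically in the angle and use the coarea formula.) As written, the step you flag as the ``technical heart'' is exactly where the proof is incomplete.
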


\begin{remark}\label{remark4}
  \rm{ Caused by the fact that $|x^*x|^2\neq|xx|^2$ for a general operator $x$, a geometric estimate for the form like $\sup_{(\xi,\tau)\in\mathbb{R}^3}\sum_{v,w\in I}\chi_{\mathcal{U}_{n}^v-\mathcal{U}_{m}^{w}}(\xi,\tau)$, which differs from the geometric estimate presented in the classical setting \cite{MSS}, has to be considered. Furthermore, we cannot anticipate the same upper bound as in \cite{MSS} for our new estimate, as $\sum_{v,w\in I}\chi_{\mathcal{U}_{n}^v-\mathcal{U}_{n^\prime}^{w}}(\xi,\tau)$ may become significant large near the origin.
   To overcome this obstacle, we observe that if $|v-w|\geq C$, where $C$ is a large positive constant, then $\sup_{(\xi,\tau)\in\mathbb{R}^3}\sum_{v,w\in I,|v-w|>C}\chi_{\mathcal{U}_{n}^v-\mathcal{U}_{n^\prime}^{w}}(\xi,\tau)$  admits a suitable upper bound. Consequently, we must partition the double sums over $v,w$ in the inequality \eqref{svw}. Therefore, in comparison to Lemma 1.2 in \cite{MSS}, we require a stronger version as Lemma \ref{lpn} to tackle the first part $(I)$.}
\end{remark}

The proof of Lemma \ref{geo} will be given in the subsequent section. For the term $(II)$, invoking the Plancherel theorem, we derive
\begin{align}\label{IIe1}
 (II)&=\varphi\Big(\Big|\sum_{v,w\in I,|v-w|>1000}\big(\sum_{n\in\Gamma_j}((P^n)^2Q_vF_v(f_n))^*\big)\big(\sum_{n\in\Gamma_j}(P^n)^2Q_wF_w(f_n)\big)\Big|^2\Big)\\
 &=\varphi\Big(\Big|\sum_{v,w\in I,|v-w|>1000}\sum_{n,n^\prime\in\Gamma_j}\big(((P^n)^2Q_vF_v(f_n))^*\big)^\wedge\ast\big((P^{n^\prime})^2Q_wF_w(f_{n^\prime})\big)^\wedge\Big|^2\Big)\nonumber\\
 &=\varphi\Big(\Big|\sum_{v,w\in I,|v-w|>1000}\sum_{n,n^\prime\in\Gamma_j}\chi_{\mathcal{U}_{n}^v-\mathcal{U}_{n^\prime}^{w}}\big(((P^n)^2Q_vF_v(f_n))^*\big)^\wedge\ast\big((P^{n^\prime})^2Q_wF_w(f_{n^\prime})\big)^\wedge\Big|^2\Big)\nonumber.
\end{align}
By Lemma \ref{convex},
\begin{align}\label{IIe2}
  &\Big|\sum_{v,w\in I,|v-w|>1000}\sum_{n,n^\prime\in\Gamma_j}\chi_{\mathcal{U}_{n}^v-\mathcal{U}_{n^\prime}^{w}}\big(((P^n)^2Q_vF_v(f_n))^*\big)^\wedge\ast\big((P^{n^\prime})^2Q_wF_w(f_{n^\prime})\big)^\wedge\Big|^2\nonumber\\
  &\leq\Big(\sum_{v,w\in I,|v-w|>1000}\sum_{n,n^\prime\in\Gamma_j}\Big|\big(((P^n)^2Q_vF_v(f_n))^*\big)^\wedge\ast\big((P^{n^\prime})^2Q_wF_w(f_{n^\prime})\big)^\wedge\Big|^2\Big)\nonumber
  \\&\quad\times\Big(\sum_{v,w\in I,|v-w|>1000}\sum_{n,n^\prime\in\Gamma_j}\chi_{\mathcal{U}_{n}^v-\mathcal{U}_{n^\prime}^{w}}\Big).
\end{align}
Combining the inequalities \eqref{IIe1}, \eqref{IIe2}, \eqref{geoe}, and applying once more the Plancherel theorem, we obtain:
\begin{align}\label{IIe3}
  (II)
  &\leq C2^{j/2+2\epsilon j} \varphi\Big(\sum_{v,w\in I,|v-w|>1000}\sum_{n,n^\prime\in\Gamma_j}\Big|((P^n)^2Q_vF_v(f_n))^*(P^{n^\prime})^2Q_wF_w(f_{n^\prime})\Big|^2\Big)\nonumber\\
  &\leq C2^{j/2+2\epsilon j} \varphi\Big(\sum_{v,w\in I}\sum_{n,n^\prime\in\Gamma_j}\Big|((P^n)^2Q_vF_v(f_n))^*(P^{n^\prime})^2Q_wF_w(f_{n^\prime})\Big|^2\Big)\nonumber\\
  &\leq C2^{j/2+2\epsilon j} \Big\|\Big(\sum_{v\in I}\sum_{n\in\Gamma_j}\big|\big((P^n)^2Q_vF_v(f_n)\big)^*\big|^2\Big)^{1/2}\Big\|_{L_4(\mathcal{N})}^4.
\end{align}

By leveraging the inequalities \eqref{key1115}, \eqref{svw}, \eqref{esI}, and \eqref{IIe3}, Proposition \ref{key} can be reduced to demonstrating the following inequalities:
\begin{equation}\label{last1}
  \Big\|\Big(\sum_{v\in I}\sum_{n\in\Gamma_j}\big|P^nQ_vF_v(f_n)\big|^2\Big)^{1/2}\Big\|_{L_4(\mathcal{N})}\leq C_\varepsilon 2^{\varepsilon j}\|f\|_{L_4\left(L_\infty(\mathbb{R}^2)\overline{\otimes}\mathcal{M}\right)},
\end{equation}
and
\begin{equation}\label{last2}
 \Big\|\Big(\sum_{v\in I}\sum_{n\in\Gamma_j}\big|\big((P^n)^2Q_vF_v(f_n)\big)^*\big|^2\Big)^{1/2}\Big\|_{L_4(\mathcal{N})}\leq C_\varepsilon 2^{\varepsilon j}\|f\|_{L_4\left(L_\infty(\mathbb{R}^2)\overline{\otimes}\mathcal{M}\right)}
\end{equation}
for $\varepsilon>0$.
Here, we will concentrate on proving the inequality \eqref{last1}, since the proof of the inequality \eqref{last2} is analogous.

Using Lemma \ref{er2}, it is sufficient to estimate the norm $$\Big\|\Big(\sum_{v\in I}\sum_{n\in\Gamma_j}\big|P^nF_v(f_n)\big|^2\Big)^{1/2}\Big\|_{L_4(\mathcal{N})}.$$
Indeed, by applying the triangle inequality alongside Lemma \ref{er2}, we derive the following inequality:
\begin{align*}
  & \Big\|\Big(\sum_{v\in I}\sum_{n\in\Gamma_j}\big|P^nQ_vF_v(f_n)\big|^2\Big)^{1/2}\Big\|_{L_4(\mathcal{N})}\\
  &\leq \Big\|\Big(\sum_{v\in I}\sum_{n\in\Gamma_j}\big|P^nF_v(f_n)\big|^2\Big)^{1/2}\Big\|_{L_4(\mathcal{N})}+\Big\|\Big(\sum_{v\in I}\sum_{n\in\Gamma_j}\big|P^nR_v(f_n)\big|^2\Big)^{1/2}\Big\|_{L_4(\mathcal{N})}\\
  &\lesssim \Big\|\Big(\sum_{v\in I}\sum_{n\in\Gamma_j}\big|P^nF_v(f_n)\big|^2\Big)^{1/2}\Big\|_{L_4(\mathcal{N})}+\|f\|_{L_4\left(L_\infty(\mathbb{R}^2)\overline{\otimes}\mathcal{M}\right)}.
\end{align*}
We assert that the following inequality holds:
\begin{equation}\label{last3}
  \Big\|\Big(\sum_{v\in I}\sum_{n\in\Gamma_j}\big|P^nF_v(f_n)\big|^2\Big)^{1/2}\Big\|_{L_4(\mathcal{N})}\lesssim\Big\|\Big(\sum_{v\in I}\sum_{n\in\Gamma_j}\big|F_v(f_n)\big|^2\Big)^{1/2}\Big\|_{L_4(\mathcal{N})}.
\end{equation}
To justify this claim, recall from the proof of Lemma \ref{lpn} that the kernel $K^n$ associated with the operator $P^n$ satisfies the bound
\begin{equation*}
  |K^n(x,t)|\leq C_N \frac{2^{\frac{5j}{2}}}{(1+2^j|x|+2^{j/2}|t|)^N}.
\end{equation*}
Combining this with Lemma \ref{convex}, and denoting $F_v(f_n)$ by $g_{v,n}$,
\begin{align}\label{last4}
  \big|P^nF_v(f_n)\big|^2&=\big|\int_{\mathbb{R}}\int_{\mathbb{R}^2}K^n(y,s)g_{v,n}(y-x,s-t)dyds\big|^2 \nonumber\\
&\leq \Big(\int_{\mathbb{R}}\int_{\mathbb{R}^2}|K^n(y,s)|dyds\Big)\Big(\int_{\mathbb{R}}\int_{\mathbb{R}^2}|K^n(y,s)||g_{v,n}(y-x,s-t)|^2dyds\Big)\nonumber\\
&\leq C_N\int_{\mathbb{R}}\int_{\mathbb{R}^2}\frac{2^{\frac{5j}{2}}|g_{v,n}(y-x,s-t)|^2}{(1+2^j|y|+2^{j/2}|s|)^N}dyds.
\end{align}
Utilizing the inequality \eqref{last4}, the triangle inequality, and the Cauchy-Schwarz inequality, we derive
 \begin{align*}
   &\Big\|\Big(\sum_{v\in I}\sum_{n\in\Gamma_j}\big|P^nQ_vF_v(f_n)\big|^2\Big)^{1/2}\Big\|_{L_4(\mathcal{N})}^4\\
   &\leq C_N\Big\|\sum_{v\in I}\sum_{n\in\Gamma_j}\int_{\mathbb{R}}\int_{\mathbb{R}^2}\frac{2^{\frac{5j}{2}}|g_{v,n}(y-x,s-t)|^2}{(1+2^j|y|+2^{j/2}|s|)^N}dyds\Big\|_{L_2(\mathcal{N})}^2\\
   &\leq C_N\Big(\int_{\mathbb{R}}\int_{\mathbb{R}^2}\frac{2^{\frac{5j}{2}}}{(1+2^j|y|+2^{j/2}|s|)^N}\Big\|\sum_{v\in I}\sum_{n\in\Gamma_j}|g_{v,n}(y-x,s-t)|^2\Big\|_{L_2(\mathcal{N})}dyds\Big)^2\\
  &\lesssim \Big\|\sum_{v\in I}\sum_{n\in\Gamma_j}|g_{v,n}|^2\Big\|_{L_2(\mathcal{N})}^2=\Big\|\Big(\sum_{v\in I}\sum_{n\in\Gamma_j}\big|F_v(f_n)\big|^2\Big)^{1/2}\Big\|_{L_4(\mathcal{N})}^4.
 \end{align*}
 This confirms our claim \eqref{last3}. Now, we proceed to give an estimate for the term $$\Big\|\Big(\sum_{v\in I}\sum_{n\in\Gamma_j}\big|F_v(f_n)\big|^2\Big)^{1/2}\Big\|_{L_4(\mathcal{N})}.$$ As in \cite{MSS},
for $v\in I, n\in\Gamma_j$, we define the set
 \begin{equation*}
   \mathcal{J}_{v,n}:=\{m\in\mathbb{Z}^2:\chi_v(P_mf)^\vee\neq0,\big||m|-n\big|\leq 100\}.
 \end{equation*}
 It follows that $|\mathcal{J}_{v,n}|\lesssim 1$ and, for $m\in\mathbb{Z}^2$, there exists an absolute constant $C$ such that
  \begin{equation}\label{Jvn1}
   \big| \{(v,n)\in I\times \Gamma_j:m\in\mathcal{J}_{v,n}\}\big|\leq C.
\end{equation}
Furthermore, we have $F_v(f_n)=F_v(f_{v,n})$, where $f_{v,n}=\sum_{m\in\mathcal{J}_{v,n}}P_mf$. Before presenting the final proof of Proposition \ref{key}, we introduce the following lemma, which will be proved in the subsequent section.
 \begin{lemma}\label{ka}
   For $v\in I,n\in\Gamma_j$ and $g\in L_2(\mathcal{N})$, let $K_{v,n}(y,t)$ be the kernel associated with $F_v(f_n)$. Then,
   \begin{equation}\label{ka1}
    \sup_{y,t,v,n}\int_{\mathbb{R}^2}|K_{v,n}(y-x,t)|dx\leq C
   \end{equation} and
   \begin{equation}\label{ka2}
     \Big\|{\sup_{v\in I, n\in\Gamma_j}}^+\int_{\mathbb{R}^3}|K_{v,n}(y-x,t)|g(y,t)dydt\Big\|_2\leq C_\varepsilon2^{\varepsilon j}\|g\|_{L_2(\mathcal{N})}
   \end{equation}
   for $\varepsilon>0$.
\end{lemma}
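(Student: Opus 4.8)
The plan is to derive both assertions from a sharp pointwise bound on the kernel $K_{v,n}$. Using $F_v(f_n)=F_v(f_{v,n})$ and writing this as a Fourier multiplier in $x$ with $t$-dependent symbol, one has $K_{v,n}(z,t)=\rho_1(t)\,\check b_{v,n}(z,t)$ with
\begin{equation*}
b_{v,n}(\xi,t)=e^{it|\xi|}\rho_0(|2^{-j}\xi|)\rho(2^{-j}|\xi|)a(\xi,t)\chi_v(\xi)\sum_{m\in\mathcal J_{v,n}}\phi(2^{-j/2}\xi_1-m_1)\phi(2^{-j/2}\xi_2-m_2).
\end{equation*}
Since $|\mathcal J_{v,n}|\lesssim1$, the $\xi$-support of $b_{v,n}(\cdot,t)$ lies in a box $\widetilde B_{v,n}$ of side $\sim2^{j/2}$ at distance $\sim2^{j}$ from the origin, whose centre $\xi_0=\xi_0(v,n)$ has direction within $C2^{-j/2}$ of $\xi_v$. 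On such a box the phase $t|\xi|$ equals an affine function of $\xi$ plus an error with $|\partial^\alpha_\xi(\cdot)|\lesssim 2^{-j|\alpha|/2}$ (because $|\partial^\alpha_\xi|\xi||\lesssim|\xi|^{1-|\alpha|}$ on $|\xi|\sim2^j$ and the box has side $2^{j/2}$); absorbing this error, together with $a,\chi_v,\rho_0,\rho$ and the $\phi$'s, into a single amplitude $A_{v,n}(\cdot,t)$ adapted to the box ($|\partial^\alpha_\xi A_{v,n}|\lesssim2^{-j|\alpha|/2}$) and extracting the remaining linear oscillation $e^{it\,\widehat{\xi_0}\cdot\xi}$, non-stationary phase gives, for every $N$,
\begin{equation*}
|K_{v,n}(z,t)|\ \lesssim_N\ \rho_1(t)\,2^{j}\big(1+2^{j/2}|z+t\nu_{v,n}|\big)^{-N},\qquad \nu_{v,n}:=\widehat{\xi_0},\quad |\nu_{v,n}-\xi_v|\lesssim2^{-j/2}.
\end{equation*}
Integrating in $z$ (substitute $u=2^{j/2}(z+t\nu_{v,n})$ and take $N>2$) yields $\int_{\mathbb R^2}|K_{v,n}(z,t)|\,dz\lesssim1$ uniformly in $t,v,n$, which is \eqref{ka1}; and the bump sits at $z\approx-t\nu_{v,n}$, a location depending on $(v,t)$ but, up to the bump scale $2^{-j/2}$, not on $n$.

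For \eqref{ka2}, the previous display and the harmlessness of replacing $\nu_{v,n}$ by $\xi_v$ (they differ by $\lesssim2^{-j/2}$, the scale of the weight) give, for $g\ge0$,
\begin{equation*}
\int_{\mathbb R^3}|K_{v,n}(y-x,t)|g(y,t)\,dy\,dt\ \lesssim_N\ \mathbf M_v g(x):=\int_{\mathbb R^3}2^{j}\big(1+2^{j/2}|y-x+t\xi_v|\big)^{-N}\rho_1(t)\,g(y,t)\,dy\,dt,
\end{equation*}
where $\mathbf M_v g(x)$ is a weighted average of $g$ over a $2^{-j/2}$-neighbourhood of the segment $\{(x-t\xi_v,t):t\in\operatorname{supp}\rho_1\}$ in $\mathbb R^3$, and is independent of $n$. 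Since the bound holds for all $(v,n)$ and the right-hand side does not see $n$, any positive majorant of $(\mathbf M_vg)_{v\in I}$ majorizes, up to a constant, the family in \eqref{ka2}, so \eqref{ka2} reduces to the noncommutative Kakeya/Nikodym maximal estimate
\begin{equation*}
\Big\|{\sup_{v\in I}}^{+}\mathbf M_v g\Big\|_{L_2(L_\infty(\mathbb R^2)\overline\otimes\mathcal M)}\ \lesssim\ j^{1/2}\,\|g\|_{L_2(\mathcal N)},
\end{equation*}
which suffices since $j^{1/2}\le C_\varepsilon2^{\varepsilon j}$, $|I|\approx2^{j/2}$, and the directions $\xi_v$ are $\sim2^{-j/2}$-separated on $S^1$.

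This last estimate is the heart of the matter and I expect it to be the main obstacle. The $j^{1/2}$ gain over the trivial bound (the majorant $(\sum_{v\in I}|\mathbf M_vg|^2)^{1/2}$ only yields a loss $|I|^{1/2}\sim2^{j/4}$) forces one to exploit the angular separation of the $\xi_v$ in the manner of C\'ordoba. The geometric input is classical and scalar: two such $2^{-j/2}$-tubes whose directions differ by an angle $\theta$ meet in a set of measure $\lesssim 2^{-j/2}|\mathbf T_v|/(\theta+2^{-j/2})$, hence $\sum_{w\in I}|\mathbf T_v\cap\mathbf T_w|\lesssim j\,|\mathbf T_v|$ and therefore $\big\|\sum_{v\in I}\chi_{\mathbf T_v}\big\|_{L^2(\mathbb R^3)}^2\lesssim j\sum_{v\in I}|\mathbf T_v|$. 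What is genuinely delicate is transplanting the passage from this $L^2$ overlap bound to the maximal inequality into the noncommutative setting: the classical argument runs through level sets of the maximal function and a Vitali/C\'ordoba covering lemma, none of which has a direct analogue when $\sup^{+}$ is merely a notation and one must instead exhibit an explicit positive operator $a$ with $\mathbf M_vg\le a$ for all $v$ and $\|a\|_{L_2}\lesssim j^{1/2}\|g\|_{L_2(\mathcal N)}$. I would handle the operator-valued averages by the convexity inequalities (Lemma \ref{convex}, Lemma \ref{convex1}) and use the factorization and interpolation description of the noncommutative maximal norm (Lemma \ref{max interpolation}), following the strategy behind the noncommutative Kakeya-type maximal inequalities of \cite{Lai21,HLW23}; with \eqref{ka1} already established in the first step, this completes the lemma.
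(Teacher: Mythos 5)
Your first half is sound and essentially matches the paper's reduction. The pointwise bound $|K_{v,n}(z,t)|\lesssim_N 2^{j}(1+2^{j/2}|z+t\nu_{v,n}|)^{-N}$ is a legitimate consequence of the $O(2^{j/2})$-box frequency localization coming from $\tilde\phi$ together with $\chi_v$ (the paper instead quotes the anisotropic MSS bound \eqref{keyes} and then coarsens it through the dyadic rectangles $\mathcal{R}_{v,\ell}$ and the thin cylinders $\mathcal{T}_{v,i}$, but both routes land in the same place); \eqref{ka1} follows, and the reduction of \eqref{ka2} to a noncommutative Nikodym-type maximal inequality over $2^{-j/2}$-tubes with $2^{-j/2}$-separated directions, with the $n$-dependence discarded by positivity and \eqref{maxnorm}, is exactly the paper's strategy.

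The gap is that you do not prove the estimate you yourself call ``the heart of the matter.'' The scalar input you supply --- Córdoba's overlap bound $\sum_{w}|\mathbf T_v\cap\mathbf T_w|\lesssim j|\mathbf T_v|$ and the resulting $L^2$ bound on $\sum_v\chi_{\mathbf T_v}$ --- feeds a level-set/covering argument that, as you correctly observe, has no direct analogue when ${\sup}^+$ is only a notation; but then you merely defer to ``the strategy behind \cite{Lai21,HLW23}'' without exhibiting the majorant $a$ or any substitute argument, so the key inequality $\|{\sup_v}^+\mathbf M_v g\|_2\lesssim_\varepsilon 2^{\varepsilon j}\|g\|_2$ remains unestablished. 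The paper's proof of this step (Lemma \ref{kakeya}) is not geometric at all: it majorizes the tube average by the Fourier integral $\int\check a_\delta(-x+y+t(\cos\theta,\sin\theta))g(y,t)\,dy\,dt$, decomposes dyadically in $|\xi|\sim\lambda$ and further in the size of $\langle(-\sin\theta,\cos\theta),\xi/|\xi|\rangle$ (the pieces $A_\theta^{\lambda,k}$), controls the maximal norm in $\theta$ by the Sobolev-embedding-type Lemma \ref{remax} with window $c=\lambda^{-1/2}2^{-k}$ --- which trades ${\sup_\theta}^+$ for $L^2_\theta$ norms of $A_\theta^{\lambda,k}g$ and $\partial_\theta A_\theta^{\lambda,k}g$ --- and closes with Plancherel and the kernel bound on $H^{\lambda,k}(t-t',\xi)$. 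Some such mechanism (or the square-function arguments of \cite{Lai21,HLW23} written out) is indispensable here; without it the proof of \eqref{ka2} is incomplete.
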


Applying Lemma \ref{convex} and the inequality \eqref{ka1}, we derive
\begin{align*}
 |F_v(f_n)(y,t)|^2&=\Big|\int_{\mathbb{R}^2}K_{v,n}(y-x,t)f_{v,n}(x)dx\Big|^2\\
 &\leq C \int_{\mathbb{R}^2}|K_{v,n}(y-x,t)||f_{v,n}(x)|^2dx.
\end{align*}
Combining this and using duality, we obtain
 \begin{align}\label{last5}
   &\Big\|\Big(\sum_{v\in I}\sum_{n\in\Gamma_j}\big|F_v(f_n)\big|^2\Big)^{1/2}\Big\|_{L_4(\mathcal{N})}^2=\Big\|\sum_{v\in I}\sum_{n\in\Gamma_j}\big|F_v(f_n)\big|^2\Big\|_{L_2(\mathcal{N})}\nonumber\\&\lesssim\sup_{\|g\|_2=1}\varphi\Big( \sum_{v\in I}\sum_{n\in\Gamma_j}\int_{\mathbb{R}^2}|K_{v,n}(y-x,t)||f_{v,n}(x)|^2dxg(y,t)\Big)\nonumber\\
   &=\sup_{\|g\|_2=1}\int_{\mathbb{R}^2}\tau\Big( \sum_{v\in I}\sum_{n\in\Gamma_j}|f_{v,n}(x)|^2\int_{\mathbb{R}^3}|K_{v,n}(y-x,t)|g(y,t)dydt\Big)dx\nonumber\\
   &\leq \sup_{\|g\|_2=1}\Big\| \sum_{v\in I}\sum_{n\in\Gamma_j}|f_{v,n}|^2\Big\|_2 \Big\|{\sup_{v\in I, n\in\Gamma_j}}^+\int_{\mathbb{R}^3}|K_{v,n}(y-x,t)|g(y,t)dydt\Big\|_2.
 \end{align}
 Note that by Lemma \ref{convex} and the fact $|\mathcal{J}_{v,n}|\lesssim 1$,
 \begin{equation}\label{Jvn2}
  |f_{v,n}|^2=\Big|\sum_{m\in\mathcal{J}_{v,n}}P_mf\Big|^2\lesssim \sum_{m\in\mathcal{J}_{v,n}}|P_mf|^2.
\end{equation}
Combining the inequalities \eqref{Jvn1}, \eqref{Jvn2} with Lemma \ref{lpm}, we get
 \begin{align}\label{last6}
  \Big\|\sum_{v\in I}\sum_{n\in\Gamma_j}|f_{v,n}|^2\Big\|_2&\lesssim\Big\|\sum_{v\in I}\sum_{n\in\Gamma_j}\sum_{m\in\mathcal{J}_{v,n}}|P_mf|^2\Big\|_2\nonumber\\
  &\lesssim\Big\|\sum_{m\in\mathbb{Z}^2}\sum_{\{(v,n)\in I\times \Gamma_j:m\in\mathcal{J}_{v,n}\}}|P_mf|^2\Big\|_2\nonumber\\
  &\lesssim\Big\|\big(\sum_{m\in\mathbb{Z}^2}|P_mf|^2\big)^{1/2}\Big\|_4^2\leq \|f\|_4^2.
 \end{align}
 Hence by the inequalities \eqref{ka2}, \eqref{last5} and \eqref{last6},
 \begin{equation*}
   \Big\|\Big(\sum_{v\in I}\sum_{n\in\Gamma_j}\big|F_v(f_n)\big|^2\Big)^{1/2}\Big\|_{L_4(\mathcal{N})}\leq C_\varepsilon2^{\varepsilon j}\|f\|_4.
 \end{equation*}
By the inequality \eqref{last3}, we have completed the proof of the inequality \eqref{last1}. Thus, the proof of Proposition \ref{key} is now complete.

 \section{Proofs of Lemma \ref{geo} and Lemma \ref{ka}}\label{Se5}
 \subsection{Proof of Lemma \ref{geo}}
 In this subsection, our attention is directed towards proving Lemma \ref{geo}, which involves estimating the number of overlaps between algebraic differences of the sets $\mathcal{U}_{n}^{v}$ and $\mathcal{U}_{m}^{w}$. Recall that the set $\mathcal{U}_{n}^{v}$ is defined as follows:
\begin{equation*}
  \mathcal{U}_{n}^v:=\{(\xi,\tau)\in\mathbb{R}^3:(\xi,\tau)\in\supp\Psi_v,|2^{-j/2}\tau-n|\leq1\}.
\end{equation*}
where $\Psi_v(\xi,\tau)=\psi\big(\frac{|\xi|-\tau}{|\xi|^\epsilon}\big)\chi_v(\xi)$. Here, $\psi$ is a smoothing function on $\mathbb{R}$ with support in $[-2,2]$ and taking value $1$ within  $[-1,1]$. Notably, $\mathcal{U}_{n}^v$ is contained within a larger set (also denoted by $\mathcal{U}_{n}^v$) defined by
\begin{equation*}
 \mathcal{U}_{n}^{v}=\{(\xi,\tau)\in\mathbb{R}^3: dist((\xi,\tau),(\eta,|\eta|))\leq 2^{\epsilon j},\eta\in\Omega_{v,n}\}
\end{equation*}
with $\Omega_{v,n}$ specified as \begin{equation*}
   \Omega_{v,n}:=\{\eta: \ \mathrm{arg}\eta\in [2^{-j/2}(v-1),2^{-j/2}(v+1)], |\eta|\in [2^{j/2}(n-1),2^{j/2}(n+1)]\}.
 \end{equation*}

To establish the desired result, it suffices to prove that there exists a constant $C$, independent of $j$, such that the following inequality holds:
  \begin{equation*}
    \sum_{n,m\in\Gamma_j}\sum_{v,w\in I,|v-w|>1000}\chi_{\mathcal{U}_{n}^v-\mathcal{U}_{m}^{w}}(\xi,\tau)\leq C2^{j/2+2\epsilon j}.
  \end{equation*}

Observe that if $(\xi,\tau)\in \mathcal{U}_n^v$, then $\tau\in[2^{j/2}(n-1),2^{j/2}(n+1)]$. Consequently, for $(\xi,\tau)\in \mathcal{U}_{n}^v-\mathcal{U}_{m}^{w}$, we have $\tau\in[2^{j/2}(n-m-2),2^{j/2}(n-m+2)]$. Given a fixed $n\in\Gamma_j$
  and a fixed point $(\xi,\tau)\in\mathbb{R}^3$, if $(\xi,\tau)\in\mathcal{U}_{n}^{v_1}-\mathcal{U}_{m}^{w_1}$ and $(\xi,\tau)\in\mathcal{U}_{n}^{v_2}-\mathcal{U}_{m^{\prime}}^{w_2}$, we must have $$[2^{j/2}(n-m-2),2^{j/2}(n-m+2)]\cap[2^{j/2}(n-m^{\prime}-2),2^{j/2}(n-m^{\prime}+2)]\neq\emptyset.$$ This implies that $|m-m^{\prime}|\leq4$.
 Hence, for a fixed $(\xi,\tau)$ and $n\in\Gamma_j$, there can be at most nine distinct $m\in\Gamma_j$ such that
  \begin{equation*}
    \sum_{v,w\in I,|v-w|>1000}\chi_{\mathcal{U}_{n}^v-\mathcal{U}_{m}^{w}}(\xi,\tau)\neq0.
  \end{equation*}
Since $|\Gamma_j|\approx 2^{j/2}$,  it remains to demonstrate that for any fixed
  $n,m\in\Gamma_j$, the following estimate holds for an absolute constant $C$,
  \begin{equation}\label{geokey}
    \sum_{v,w\in I,|v-w|>1000}\chi_{\mathcal{U}_{n}^v-\mathcal{U}_{m}^{w}}(\xi,\tau)\leq C2^{2\epsilon j}.
  \end{equation}
  For simplicity,
we can assume that
\begin{equation*}
   \Omega_{v,n}:=\{\eta:\ \mathrm{arg}\eta\in [2^{-j/2}(v-1),2^{-j/2}(v+1)], |\eta|\in [2^{-j/2}(n-1),2^{-j/2}(n+1)]\},
 \end{equation*}
 and
 \begin{equation*}
 \mathcal{U}_{n}^{v}=\{(\xi,\tau)\in\mathbb{R}^3: dist((\xi,\tau),(\eta,|\eta|))\leq 2^{\epsilon j-j},\eta\in\Omega_{v,n}\}.
\end{equation*}
Elements in $\mathcal{U}_{n}^{v}$ can be expressed as
 \begin{equation*}
   (2^{-j/2}n+r_v)\cdot(e^{i(2^{-jv/2+\theta})},1)+\zeta_v,
 \end{equation*}
  where $|r_v|\leq 2^{-j/2},|\theta|\leq 2^{-j/2}$ and $\zeta_v$ is a  vector in $\mathbb{R}^3$ with $|\zeta_v|\leq2^{\epsilon j-j}$.
  For the sake of simplicity in the following proof, we denote $2^{-j/2}$ by $\delta$. Without loss of generality, we assume $v>w$, $\delta n=1$, $\delta m=\lambda$ for some constant $\lambda\thickapprox1$. Elements of $\mathcal{U}_{n}^v-\mathcal{U}_{m}^{w}$ have the form
 \begin{equation}\label{form}
   ((1+r_v)\cdot e^{i({v\delta+\theta_v})},r_v)-((\lambda+r_w)\cdot e^{i({w\delta+\theta_w})},r_w)+\zeta_{v}-\zeta_w,
\end{equation}
 where $|r_v|,|r_w|,|\theta_v|,|\theta_w|\leq \delta$ and $|\zeta_{v}|,|\zeta_w|\leq\delta^{2-2\epsilon}$.
  To establish the inequality \eqref{geokey}, it suffices to demonstrate that for a fixed pair $(v_1,w_1)$,
   \begin{equation}\label{geokey1}
     |\{(v_2,w_2): \mathcal{U}_{n}^{v_1}-\mathcal{U}_{n^\prime}^{w_1}\cap \mathcal{U}_{n}^{v_2}-\mathcal{U}_{n^\prime}^{w_2}\neq\emptyset\}\lesssim\delta^{-4\epsilon}.
   \end{equation}
   In fact, we will show below that $|v_1-v_2|,|w_1-w_2|\lesssim\delta^{-2\epsilon}$, which immediately implies the inequality \eqref{geokey1}.

 Given that $\mathcal{U}_{n}^{v_1}-\mathcal{U}_{m}^{w_1}\cap \mathcal{U}_{n}^{v_2}-\mathcal{U}_{m}^{w_2}\neq\emptyset$, by \eqref{form}, there exist $|r_{v_i}|,|r_{w_i}|, |\theta_{v_i}|,|\theta_{w_i}|\leq\delta$ for $i=1,2,$ and $\zeta\in\mathbb{R}^3$ with $|\zeta|\leq4\delta^{2-2\epsilon}$ such that
  \begin{align*}
     &((1+r_{v_1})\cdot e^{i({v_1\delta+\theta_{v_1}})},1+r_{v_1})-((\lambda+r_{w_1})\cdot e^{i({{w_1}\delta+\theta_{w_1}})},\lambda+r_{w_1})+\zeta  \\
    & =((1+r_{v_2})\cdot e^{i({{v_2}\delta+\theta_{v_2}})},1+r_{v_2})-((\lambda+r_{w_2})\cdot e^{i({{w_2}\delta+\theta_{w_2}})},\lambda+r_{w_2}).
  \end{align*}
  Comparing the third coordinate, we obtain
  \begin{equation}\label{geol1}
    |r_{v_1}-r_{v_2}-r_{w_1}+r_{w_2}|\leq 4\delta^{2-2\epsilon}.
  \end{equation}
  Next, by comparing the first two coordinates, we have
  \begin{align*}
     &(1+r_{v_1})( e^{i({v_1\delta+\theta_{v_1}})}-e^{i({{v_2}\delta+\theta_{v_2}})})+(r_{v_1}-r_{v_2})(e^{i({{v_2}\delta+\theta_{v_2}})}-e^{i({{w_2}\delta+\theta_{w_2}})})\nonumber\\
    &=(\lambda+r_{w_1})(e^{i({{w_1}\delta+\theta_{w_1}})}-e^{i({{w_2}\delta+\theta_{w_2}})})+(r_{w_1}-r_{w_2}-r_{v_1}+r_{v_2})e^{i({{w_2}\delta+\theta_{w_2}})}+x,
  \end{align*}
   where $x\in\mathbb{R}^2$ with $|x|\leq 4\delta^{2-2\epsilon}$, by the inequality \eqref{geol1}, we have
  \begin{align}\label{geol2}
    &(1+r_{v_1})( e^{i({v_1\delta+\theta_{v_1}})}-e^{i({{v_2}\delta+\theta_{v_2}})})+(r_{v_1}-r_{v_2})(e^{i({{v_2}\delta+\theta_{v_2}})}-e^{i({{w_2}\delta+\theta_{w_2}})})\nonumber\\
    &=(\lambda+r_{w_1})(e^{i({{w_1}\delta+\theta_{w_1}})}-e^{i({{w_2}\delta+\theta_{w_2}})})+\tilde{x},
  \end{align}
  where $\tilde{x}\in\mathbb{R}^2$ with $|\tilde{x}|\leq|\zeta|+|r_{v_1}-r_{v_2}-r_{w_1}+r_{w_2}|\leq 8\delta^{2-2\epsilon}$.
  Using the identity
  \begin{equation*}
    e^{i\alpha_1}-e^{i\alpha_2}=2\sin((\alpha_1-\alpha_2)/2)ie^{i((\alpha_1+\alpha_2)/2)},
 \end{equation*}
the left-hand side of equation  \eqref{geol2} becomes
   \begin{align}\label{geol3}
    &2(1+r_{v_1})\sin\big(\big((v_1-v_2)\delta+\theta_{v_1}-\theta_{v_2}\big)/2\big)ie^{i(((v_1+v_2)\delta+\theta_{v_1}+\theta_{v_2})/2)}\nonumber\\
    &+2(r_{v_1}-r_{v_2})\sin\big(\big((v_2-w_2)\delta+\theta_{v_2}-\theta_{w_2}\big)/2\big)ie^{i(((v_2+w_2)\delta+\theta_{v_2}+\theta_{w_2})/2)},
\end{align}
while the right-hand side of equation \eqref{geol2} becomes
\begin{equation}\label{geol4}
  2(1+r_{w_1})\sin\big(\big((w_1-w_2)\delta+\theta_{w_1}-\theta_{w_2}\big)/2\big)ie^{i(((w_1+w_2)\delta+\theta_{w_1}+\theta_{w_2})/2)}+\tilde{x}.
\end{equation}
The length of the projection from \eqref{geol3} onto the direction $e^{i(((w_1+w_2)\delta+\theta_{w_1}+\theta_{w_2})/2)}$  is denoted by $D$, and it equals
\begin{align} \label{geol5}
 &\big|2(1+r_{v_1})\sin\big(((w_1+w_2-v_1-v_2)\delta+\theta_{w_1}+\theta_{w_2}-\theta_{v_1}-\theta_{v_2})/2\big)\nonumber\\
& \times \sin\big(\big((v_1-v_2)\delta+\theta_{v_1}-\theta_{v_2}\big)/2\big)+2(r_{v_1}-r_{v_2})\sin\big(\big((v_2-w_2)\delta+\theta_{v_2}-\theta_{w_2}\big)/2\big)\nonumber\\
& \times\sin\big(((w_1-v_2)\delta+\theta_{w_1}-\theta_{v_2})/2\big)\big|.
\end{align}
Recalling that  $\frac{\alpha}{\pi}\leq\sin(\alpha)\leq\alpha$ for $\alpha\in[0,\frac{\pi}{2}]$, $|r_{v_i}|, |r_{w_i}|,|\theta_{v_i}|, |\theta_{w_i}|\leq \delta$ and $v_1-w_1,v_2-w_2>1000$, one can derive
\begin{equation}\label{keyy1}
  D\geq \frac{|(v_1-v_2)\delta+\theta_{v_1}-\theta_{v_2}|}{10}(v_2-w_2+v_1-w_1-4)\delta-(v_2-w_2)\delta^2.
\end{equation}
On the other hand, \eqref{geol4} implies $D\leq|\tilde{x}|\leq8 \delta^{2-2\epsilon}$. We claim that $|v_1-v_2|\leq 100 \delta^{-2\epsilon}$,
otherwise, by the inequality \eqref{keyy1},
\begin{equation*}
 8 \delta^{2-2\epsilon}\geq 2(v_2-w_2)\delta^{2-2\epsilon}-(v_2-w_2)\delta^2\geq (v_2-w_2)\delta^{2-2\epsilon},
\end{equation*}
leading to a contradiction since $v_2-w_2>1000$.
Using the same trick when we consider the projection onto the direction $ie^{i(((w_1+w_2)\delta+\theta_{w_1}+\theta_{w_2})/2)}$, we obtain
\begin{align*}
  &\big|2(1+r_{w_1})\sin\big(\big((w_1-w_2)\delta+\theta_{w_1}-\theta_{w_2}\big)/2\big)\big|\\&\leq|\tilde{x}|+\big|2(1+r_{v_1})\sin\big(\big((v_1-v_2)\delta+\theta_{v_1}-\theta_{v_2}\big)/2\big)\big|+|r_{v_1}-r_{v_2}|\\
 &\leq 8 \delta^{2-2\epsilon}+100\delta^{1-2\epsilon}+2\delta\lesssim\delta^{1-2\epsilon}.
\end{align*}
This implies
\begin{equation*}
  |(w_1-w_2)\delta+\theta_{w_1}-\theta_{w_2}|\lesssim\delta^{1-2\epsilon},
\end{equation*}
and thus $|w_1-w_2|\lesssim\delta^{-2\epsilon}$ since $|\theta_{w_1}-\theta_{w_2}|\leq2\delta$.
Hence, we conclude the inequality \eqref{geokey1}, which leads to the desired lemma.
\begin{remark}\label{rem5}
 \rm{In the paper by Mockenhaupt, Seeger, and Sogge \cite{MSS}, they derived an estimate for the number of overlaps of algebraic sums, given by:
  \begin{equation*}
   \sum_{v,w\in I}\chi_{\mathcal{U}_{n}^v+\mathcal{U}_{m}^{w}}(\xi,\tau)\leq Cj2^{j/2+2\epsilon j}.
  \end{equation*}
  Our method is equally applicable in this context, and furthermore, allows us to refine the estimate slightly. Specifically, we obtain:
   \begin{equation*}
   \sum_{v,w\in I}\chi_{\mathcal{U}_{n}^v+\mathcal{U}_{m}^{w}}(\xi,\tau)\leq C2^{j/2+2\epsilon j}.
  \end{equation*}}

  \end{remark}

 \subsection{Proof of Lemma \ref{ka}}
Let us begin by estimating the kernel $K_{v,n}(y,t)$. Recalling the definitions of $F_v(f_n)$ from \eqref{fn} and \eqref{Fv}, we derive
\begin{equation*}
  K_{v,n}(y,t)=\rho_1(t)\int_{\mathbb{R}^2}e^{i(y\cdot\xi+t|\xi|)}\rho_0(|2^{-j}\xi|)\chi_v(\xi)a(\xi,t)\tilde{\phi}(\xi)\ d\xi,
\end{equation*}
where\begin{equation*}
  \tilde{\phi}(\xi)=\sum_{\{m\in\mathbb{Z}^2:\big||m|-n\big|<100\}}\rho(2^{-j}|\xi|)\phi(2^{-j/2}\xi_1-m_1)\phi(2^{-j/2}\xi_2-m_2).
\end{equation*}
Through integration by parts, one can establish
 \begin{equation}\label{keyes}
  |K_{v,n}(y,t)|\leq C_N\frac{2^j}{(1+|2^j(\langle y,\xi_v\rangle+t)|^2)^N}\frac{2^{j/2}}{(1+|2^{j/2}(y-\langle y,\xi_v\rangle\xi_v)|^2)^N}
 \end{equation}
for any $N>0$ (see e.g. \cite[(1.10)]{MSS} for further details). Utilizing the inequality \eqref{keyes}, we deduce the inequality \eqref{ka1}, which states
 \begin{equation*}
    \sup_{y,t,v,n}\int_{\mathbb{R}^2}|K_{v,n}(y-x,t)|dx\lesssim1.
   \end{equation*}
  Next, it suffices to prove the inequality \eqref{ka2}, which asserts that for  $g\in L_2(\mathcal{N})$,  we have
   \begin{equation}\label{kakeya1}
     \Big\|{\sup_{v\in I, n\in\Gamma_j}}^+\int_{\mathbb{R}^3}|K_{v,n}(y,t;x)|g(y,t)dydt\Big\|_2\leq C_\varepsilon2^{\varepsilon j}\|g\|_{L_2(\mathcal{N})}
   \end{equation}
   for $\varepsilon>0$.

  Using a standard argument, it suffices to demonstrate the inequality \eqref{kakeya1} for $g\in \mathcal{S}_+(\mathbb{R}^3)\otimes \mathcal{S}_+(\mathcal{M})$.
 Given the inequality \eqref{keyes}, we observe that  $K_{v,n}(y,t)$ is essentially supported within a rectangle of size $2^{-j}\times 2^{-j/2}\times1$ centered around the ray $\gamma_{v}$, defined as:
 \begin{equation*}
   \gamma_{v}:=\{(y,t)\in\mathbb{R}^3:y+t\xi_v=0\}.
 \end{equation*}
 Motivated by this observation, for $\ell\geq0$,  we define the rectangle of size $2^{-j+\ell}\times 2^{-j/2+\ell}\times1$ centered around $\gamma_{v}$ as
 \begin{equation*}
   \mathcal{R}_{v,\ell}:=\{(y,0)+t(-\xi_v,1)\in\mathbb{R}^2\times[1,2]:|\langle y, \xi_v\rangle|\leq 2^{-j+\ell},\ |\langle y, \xi_v^\bot\rangle|\leq 2^{-j/2+\ell}\},
 \end{equation*}
 where $\xi_v^\bot$ is the unit vector orthogonal to $\xi_v$, and $\mathcal{R}_{v,-1}:=\varnothing$.
Utilizing the inequality \eqref{keyes} and noting that $\supp\rho_1\subset[1,2]$, we can write
 \begin{equation}\label{e:dyakakeya}
 \begin{split}
  \int_{\mathbb{R}^3}|K_{v,n}(y-x,t)|g(y,t)dydt&=\sum_{\ell=0}^\infty \int_{\mathcal{R}_{v,\ell} \backslash\mathcal{R}_{v,\ell-1}}|K_{v,n}(y,t)|g(x+y,t)dydt\\
  &\leq C_N\sum_{\ell=0}^\infty  \int_{\mathcal{R}_{v,\ell} \backslash\mathcal{R}_{v,\ell-1}}\frac{2^{3j/2}g(x+y,t)dydt}{(1+2^{2\ell})^N(1+2^{2\ell})^N}\\
  &\leq C_N\sum_{\ell=0}^\infty 2^{-2\ell(2N-1)} \frac{1}{|\mathcal{R}_{v,\ell}|}\int_{\mathcal{R}_{v,\ell} }g(x+y,t)dydt.
 \end{split}
 \end{equation}
 Furthermore, we split each $\mathcal{R}_{v,\ell}$ along its longer dimension of size $2^{-j/2+\ell}$ into $2^{j/2+\ell}$ pieces and along its shorter dimension of size $2^{-j+\ell}$ into $2^{\ell}$ pieces, yielding a collection of rectangles $\{\mathcal{R}_{v,\ell,i}\}_{i=1}^{{i(\ell)}}$ of dimensions $2^{-j}\times 2^{-j}\times1$, where $i(\ell)\thickapprox 2^{j/2+2\ell}$. Specifically
\begin{equation*}
   \mathcal{R}_{v,\ell,i}:=\{(y,0)+(x_i,0)+t(-\xi_v,1)\in\mathbb{R}^2\times[1,2]:|\langle y, \xi_v\rangle|\leq 2^{-j},\ |\langle y ,\xi_v^\bot\rangle|\leq 2^{-j}\},
 \end{equation*}
 for some $x_i\in\mathbb{R}^2$ such that $\mathcal{R}_{v,\ell}\subset\bigcup_{i=0}^{i(\ell)}\mathcal{R}_{v,\ell,i}$. Let $\gamma_{v,x_i}$ denote the ray $\gamma_{v}+(x_i,0)$ and $\mathcal{T}_{v,i}$ the cylinder defined as
 \begin{equation*}
   \mathcal{T}_{v,i}:=\{ (y,t)\in\mathbb{R}^2\times[1,2]: dist\{(y,t),\gamma_{v,x_i}\}<2^{-j+1}\}.
 \end{equation*}
 It is straightforward to verify that $\mathcal{R}_{v,\ell,i}\subset \mathcal{T}_{v,i}$. Note that the measures $|\mathcal{R}_{v,\ell}|\thicksim 2^{\frac{-3j}{2}+2\ell}$ and $|\mathcal{T}_{v,i}|\thicksim 2^{-2j+2}$. Proceeding with the inequalities \eqref{e:dyakakeya}, we have
 \begin{align*}
  \int_{\mathbb{R}^3}|K_{v,n}(y-x,t)|g(y,t)dydt
   &\leq C_N\sum_{\ell=0}^\infty 2^{-2\ell(2N-1)} \frac{1}{|\mathcal{R}_{v,\ell}|}\int_{\mathcal{R}_{v,\ell} }g(x+y,t)dydt\\
   &\leq C_N\sum_{\ell=0}^\infty\sum_{i=0}^{i(\ell)} 2^{-2\ell(2N-1)} \frac{1}{|\mathcal{R}_{v,\ell}|}\int_{\mathcal{R}_{v,\ell,i}}g(x+y,t)dydt\\
  &\leq C_N\sum_{\ell=0}^\infty\sum_{i=0}^{i(\ell)} 2^{-4\ell N-j/2} \frac{1}{|\mathcal{T}_{v,i}|}\int_{\mathcal{T}_{v,i}}g(x+y,t)dydt.
 \end{align*}
 Therefore, we deduce that
 \begin{align*}
   &\Big\|{\sup_{v\in I, n\in\Gamma_j}}^+\int_{\mathbb{R}^3}|K_{v,n}(y-x,t)|g(y,t)dydt\Big\|_2\\&\leq C_N\sum_{\ell=0}^\infty\sum_{i=0}^{i(\ell)} 2^{-4\ell N-j/2} \Big\|{\sup_{v\in I}}^+\frac{1}{|\mathcal{T}_{v,i}|}\int_{\mathcal{T}_{v,i}}g(x+y,t)dydt\Big\|_2.
 \end{align*}
Hence, it suffices to establish
\begin{align}\label{kakeyaf}
  \Big\|{\sup_{v\in I}}^+\frac{1}{|\mathcal{T}_{v,i}|}\int_{\mathcal{T}_{v,i}}g(x+y,t)dydt\Big\|_2\leq C_\varepsilon2^{\varepsilon j}\|g\|_{L_2(\mathcal{N})}.
\end{align}

 To prove the estimate \eqref{kakeyaf}, we require an operator-valued Kakeya-type estimate.
 For $\theta\in[0,2\pi]$ and $0<\delta<1/2$, we define a ray as
 \begin{equation*}
   \gamma_\theta:=\{(y,t)\in\mathbb{R}^3:y+t(\cos\theta,\sin\theta)=0\}
 \end{equation*}
 and a unit cylinder as
 \begin{equation*}
   \mathcal{R}_\theta:=\{ (y,t)\in\mathbb{R}^2\times[0,1]: dist\{(y,t),\gamma_\theta\}<\delta\}.
 \end{equation*}
 \begin{lemma}\label{kakeya} For $g\in \mathcal{S}_+(\mathbb{R}^3)\otimes \mathcal{S}_+(\mathcal{M})$, we have
    \begin{equation}\label{kakeya2}
     \Big\|{\sup_{\theta\in[0,2\pi]}}^+\frac{1}{\mathcal{R}_\theta}\int_{\mathcal{R}_\theta}g(x+y,t)dydt\Big\|_2\leq C|\log_2\delta|^{3}\|g\|_{L_2(\mathcal{N})}.
   \end{equation}
 \end{lemma}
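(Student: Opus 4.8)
The plan is to reduce the noncommutative Kakeya-type maximal estimate to a discrete, finitely-indexed version and then exploit the classical geometric fact that two $\delta$-tubes pointing in $\delta$-separated directions intersect in a set of measure $O(\delta^3)$ (the two-dimensional Kakeya/Córdoba bound), combined with the factorization characterization of the noncommutative maximal norm. First I would discretize the direction parameter: pick a $\delta$-net $\{\theta_\nu\}_{\nu=1}^{N}$ of $[0,2\pi]$ with $N\thickapprox \delta^{-1}$, and observe that for $\theta$ within $\delta$ of $\theta_\nu$ the tube $\mathcal{R}_\theta$ is contained in a fixed dilate $\mathcal{R}_{\theta_\nu}^*$ of $\mathcal{R}_{\theta_\nu}$ (of the same proportions, up to an absolute constant), so that $\frac{1}{|\mathcal{R}_\theta|}\int_{\mathcal{R}_\theta}g(x+y,t)\,dy\,dt \leq C\,\frac{1}{|\mathcal{R}_{\theta_\nu}^*|}\int_{\mathcal{R}_{\theta_\nu}^*}g(x+y,t)\,dy\,dt$ for positive $g$. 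Thus it suffices to bound $\big\|{\sup_\nu}^+ M_\nu g\big\|_2$ where $M_\nu g(x)=|\mathcal{R}_{\theta_\nu}^*|^{-1}\int_{\mathcal{R}_{\theta_\nu}^*}g(x+y,t)\,dy\,dt$, an average of $g$ over a fixed finite family of tubes.

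Next I would use the standard trick of reducing the $L_2$ maximal bound for positive averaging operators to an $L_2$ \emph{square-function} bound: since each $M_\nu$ is given by convolution with a positive kernel $k_\nu \geq 0$ with $\int k_\nu = 1$, one has for positive $g$ the pointwise operator inequality $\sup_\nu^+ M_\nu g \leq \big(\sum_\nu |M_\nu g|^2\big)^{1/2}$ in the sense of \eqref{maxnorm}, hence $\big\|{\sup_\nu}^+ M_\nu g\big\|_2 \leq \big\|\big(\sum_\nu |M_\nu g|^2\big)^{1/2}\big\|_2 = \big(\sum_\nu \|M_\nu g\|_2^2\big)^{1/2}$, using that the $L_2(\mathcal{N})$ norm of a sum of positive operators is the sum only after trace — more carefully, $\varphi\big(\sum_\nu |M_\nu g|^2\big) = \sum_\nu \|M_\nu g\|_2^2$. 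To exploit tube geometry I would instead dualize: $\big\|\sum_\nu |M_\nu g|^2\big\|_1 = \sup_{\|h\|_\infty \leq 1, h\geq 0}\varphi\big(h\sum_\nu |M_\nu g|^2\big)$, expand $M_\nu g = k_\nu * g$, and use Lemma \ref{convex} together with $\int k_\nu=1$ to get $|M_\nu g|^2 \leq k_\nu * |g|^2$; then $\sum_\nu |M_\nu g|^2 \leq \big(\sum_\nu k_\nu\big) * |g|^2$ and the problem collapses to the scalar overlap bound $\big\|\sum_\nu \mathbf{1}_{\mathcal{R}_{\theta_\nu}^*}\big\|_\infty$. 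The point where the logarithmic losses enter is precisely here: a naive bound $\sum_\nu \mathbf{1}_{\mathcal{R}_{\theta_\nu}^*}(y,t)\leq N$ is too weak, but the two-dimensional Kakeya bush/hairbrush argument (see e.g. \cite{Wo00}, or the elementary Córdoba $L_2$ argument) gives $\big\|\sum_\nu \mathbf{1}_{\mathcal{R}_{\theta_\nu}^*}\big\|_{L_2(\mathbb{R}^3)}\lesssim |\log\delta|\,\big(\sum_\nu |\mathcal{R}_{\theta_\nu}^*|\big)^{1/2}\thickapprox|\log\delta|\cdot\delta^{-1}\cdot\delta^{3/2}$; iterating this with an $L^4$ refinement, or using the sharp $L^2\to L^2$ Kakeya maximal inequality in the plane, yields the power of $|\log\delta|$ in the statement. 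I would structure the final estimate so that the three logarithmic factors arise as: one from the direction-net discretization (summing a geometric-type series only up to the natural scale $|\log\delta|$), one from the dyadic decomposition of the tube into its $|\log\delta|$-many essentially-disjoint pieces, and one from the planar Kakeya overlap estimate itself.

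The main obstacle is the genuinely noncommutative step: unlike in the scalar case, I cannot simply pass to the characteristic-function overlap bound for the \emph{maximal} function, because $\sup_\nu^+$ is not pointwise and the factorization $M_\nu g = a y_\nu b$ must be produced explicitly. The resolution is the reduction to the square function via the positivity inequality $\sup_\nu^+ x_\nu \leq (\sum_\nu |x_\nu|^2)^{1/2}$ valid for any finite family — so $\|\sup_\nu^+ M_\nu g\|_2 \leq \|(\sum_\nu |M_\nu g|^2)^{1/2}\|_2$ — after which everything is scalar-coefficient estimates on $\sum_\nu k_\nu$ and Lemma \ref{convex} does the rest; one must be slightly careful that this square-function-dominates-maximal step only loses a constant because we are at $p=2$ and $g\geq 0$ (for larger $p$ it would lose a factor of the cardinality, which is why the argument is organized precisely around $p=2$). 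A secondary technical point is making the containment $\mathcal{R}_\theta\subset C\mathcal{R}_{\theta_\nu}^*$ uniform in the translation parameter $x$ and in $t\in[0,1]$; this is routine since the tubes are defined by distance to a ray and $\delta$-close rays stay $O(\delta)$-close on the unit time interval. Once \eqref{kakeya2} is established with the $\delta=2^{-j+1}$ scaling, plugging back into the dyadic sum over $\ell$ in \eqref{e:dyakakeya} gives \eqref{kakeyaf} and hence \eqref{ka2}, since $|\log_2\delta|^3 \thickapprox j^3 \lesssim_\varepsilon 2^{\varepsilon j}$.
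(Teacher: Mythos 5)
Your proposal has a genuine gap at its central step. The domination $\|{\sup_\nu}^+M_\nu g\|_2\le\|(\sum_\nu|M_\nu g|^2)^{1/2}\|_2$ is a correct operator inequality for positive $M_\nu g$ (via operator monotonicity of the square root and \eqref{maxnorm}), but it is hopelessly lossy here: since each $M_\nu$ is an $L_2$-normalized averaging operator with $\|M_\nu g\|_2\thickapprox\|g\|_2$, the right-hand side equals $(\sum_\nu\|M_\nu g\|_2^2)^{1/2}\thickapprox N^{1/2}\|g\|_2=\delta^{-1/2}\|g\|_2$, a power loss that no subsequent geometric input can repair. Your attempt to rescue this by passing to $\sum_\nu|M_\nu g|^2\le(\sum_\nu k_\nu)\ast|g|^2$ and invoking overlap bounds does not close the gap either: the quantity your chain of inequalities actually produces is controlled by $\|\sum_\nu k_\nu\|_{L_1}\thickapprox N$ (or by $\|\sum_\nu\mathbf 1_{\mathcal R^*_{\theta_\nu}}\|_\infty\thickapprox N$, since all the tubes $\mathcal R_\theta$ pass through the origin and overlap completely at the bush point), not by C\'ordoba's $L_2$ overlap estimate. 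The classical route from the $L_2$ overlap bound to the Kakeya \emph{maximal} theorem proceeds by linearizing the supremum and dualizing against a selected family of tubes depending on $g$; that linearization is exactly what is unavailable for the noncommutative norm $\|{\sup}^+\|$, so citing "the sharp $L_2\to L_2$ Kakeya maximal inequality in the plane" at this point is circular. Relatedly, your accounting of the three logarithms is not backed by any estimate in the sketch.

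For contrast, the paper never discretizes the direction set. It dominates the tube average by the value of a positive Fourier integral operator $A_\theta g$ (choosing $a$ with $\check a\ge0$ so that $\check a_\delta(y+t(\cos\theta,\sin\theta))\gtrsim\delta^{-2}$ on $\mathcal R_\theta$), then performs two dyadic decompositions in frequency --- radially into $A_\theta^{2^l}$ with $O(|\log_2\delta|)$ values of $l$, and angularly into $A_\theta^{\lambda,k}$ according to the size of $\langle(-\sin\theta,\cos\theta),\xi/|\xi|\rangle$ --- and applies the noncommutative Sobolev embedding Lemma \ref{remax} in the variable $\theta$ with window $c=\lambda^{-1/2}2^{-k}$ to each piece. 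This converts the maximal bound into square-function bounds $\|\int_0^{2\pi}|(\partial/\partial\theta)^\ell A_\theta^{\lambda,k}g|^2d\theta\|_1$, which are then handled by Plancherel and a kernel estimate for $H^{\lambda,k}$. The continuous $\theta$-derivative information supplied by Lemma \ref{remax} is the noncommutative substitute for linearizing the supremum, and the logarithms arise from summing the uniformly bounded pieces over $l$ and $k$. If you want to salvage a direction-discretization approach, you would still need a mechanism playing the role of Lemma \ref{remax} (or of the $(P^n)$-type almost-orthogonality in Lemma \ref{lpn}) to avoid paying the full cardinality of the net.
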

 \begin{proof}
 Firstly, we select a suitable function $a\in C_0^\infty(\mathbb{R}^2)$  that satisfies $\check{a}\geq0$, and define
   $a_\delta(t,\xi):=\chi_{[0,1]}(t)a(\delta\xi)$. Note that
   \begin{align*}
    dist\{(y,t),\gamma_\theta\}^2&= |y|^2+t^2/2+t\langle y, (\cos\theta,\sin\theta)\rangle-\langle y, (\cos\theta,\sin\theta)\rangle^2/2\\
     &\geq |y+t(\cos\theta,\sin\theta)|^2/2.
   \end{align*}
Then, for $(y,t)\in\mathcal{R}_\theta$, we have
 $\check{a}_\delta( y+t(\cos\theta,\sin\theta))\gtrsim1/\delta^2$.  Therefore, we can bound the integral as follows:
 \begin{align*}
   \frac{1}{\mathcal{R}_\theta}\int_{\mathcal{R}_\theta}g(x+y,t)dydt&\lesssim\int_{\mathbb{R}^3} \check{a}_\delta(y+t(\cos\theta,\sin\theta))g(x+y,t)dydt\\
   &=\int_{\mathbb{R}^3} \check{a}_\delta(-x+y+t(\cos\theta,\sin\theta))g(y,t)dydt.
 \end{align*}
  It suffices to show
   \begin{equation}\label{kakeya3}
     \Big\|{\sup_{\theta\in[0,2\pi]}}^+A_\theta g(x)\Big\|_2\leq C|\log_2\delta|^{3}\|g\|_{L_2(\mathcal{N})},
   \end{equation}
   where
   \begin{equation*}
     A_\theta g(x)=\int_{\mathbb{R}}\int_{\mathbb{R}^2}e^{i[-\langle x,\xi\rangle+t\langle(\cos\theta,\sin\theta),\xi\rangle]} a_\delta(t,\xi)\tilde{g}(\xi,t)d\xi dt
   \end{equation*}
   and
   \begin{equation*}
    \tilde{g}(\xi,t)=\int_{\mathbb{R}^2}e^{iy\xi}g(y,t)dy.
   \end{equation*}

   To prove the inequality \eqref{kakeya3}, we define dyadic operators as follows:
   \begin{equation*}
     A_\theta^\lambda g(x)=\int_{\mathbb{R}}\int_{\mathbb{R}^2}e^{i[-\langle x,\xi\rangle+t\langle(\cos\theta,\sin\theta),\xi\rangle]} a_\delta(t,\xi)\beta(|\xi|/\lambda)\tilde{g}(\xi,t)d\xi dt.
   \end{equation*}
   Here $\beta$ is a smoothing function with support in $[1/2,2]$ and satisfies $\sum_{j\in\mathbb{Z}}\beta(2^{-j}r)=1$ for all $r>0$.
   Note that $a\in C_0^\infty(\mathbb{R}^2)$, one can decompose $A_\theta$ as $$A_\theta=\sum_{1<l<\log_2{\delta^{-1}}+C}A_\theta^{2^l}+\tilde{R}_\theta,$$ where $C$ is a fixed constant, and the kernel of
$\tilde{R}_\theta$ is controlled by $O(1+|y-x|)^{-N}$ for any $N$ with bounds independent of $\theta$ (see \cite[Page 89]{Sogge17}). Therefore, it suffices to demonstrate that
   \begin{equation}\label{kakeya4}
     \Big\|{\sup_{\theta\in[0,2\pi]}}^+ A_\theta^\lambda g(x)\Big\|_2\leq C(\log_2\lambda)^2\|g\|_{L_2(\mathcal{N})},\ \lambda>2.
   \end{equation}
   For $k=1,2,...$, we define
   \begin{equation*}
      A_\theta^{\lambda,k} g(x)=\int_{\mathbb{R}}\int_{\mathbb{R}^2}e^{i[-\langle x,\xi\rangle+t\langle(\cos\theta,\sin\theta),\xi\rangle]} a_\delta(t,\xi)\beta(|\xi|/\lambda)\beta_{\lambda,k}(\xi,\theta)\tilde{g}(\xi,t)d\xi dt,
   \end{equation*}
   where $\beta_{\lambda,k}(\xi,\theta)=\beta\big(2^{-k}\lambda^{1/2}\langle(-\sin\theta,\cos\theta),\frac{\xi}{|\xi|}\rangle\big).$
   Note that $|\langle(-\sin\theta,\cos\theta),\frac{\xi}{|\xi|}\rangle|\leq 1$ and the support of $\beta$ is contained in $[1/2,2]$, $A_\theta^{\lambda,k}$ is nonzero only if $2^{k-1}\leq\lambda^{1/2}$. Furthermore, we define $$A_\theta^{\lambda,0}=A_\theta^{\lambda}-\sum_{0\leq k-1\leq\log_2\lambda^{1/2}}A_\theta^{\lambda,k}.$$
   Thus, it suffices to show that for all $k=0,1,2,...$, there exists a constant $C$ such that following estimate holds:
   \begin{equation}\label{kakeya5}
     \Big\|{\sup_{\theta\in[0,2\pi]}}^+A_\theta^{\lambda,k} g(x)\Big\|_2\leq C\|g\|_{L_2(\mathcal{N})}.
   \end{equation}
   Since $A_\theta^\lambda g$ is positive, by Lemma \ref{max interpolation}, we have
   \begin{equation}\label{kare1}
     \Big\|{\sup_{\theta\in[0,2\pi]}}^+ A_\theta^{\lambda,k} g(x)\Big\|_2^2\leq \Big\|\sup_{\theta\in[0,2\pi]} |A_\theta^{\lambda,k} g(x)|^2\Big\|_1.
   \end{equation}
 Applying Lemma \ref{remax}, for all $c\leq2\pi$, we obtain
   \begin{align}\label{kare2}
    \Big\|{\sup_{r\in[0,2\pi]}}^+ |A_r^{\lambda,k} g(x)|^2\Big\|_1&\leq 2c^{-1}\Big\|\int_{0}^{2\pi}|A_\theta^{\lambda,k} g |^2d\theta\Big\|_1+2c
    \Big\|\int_{0}^{2\pi}\big|\frac{\partial A_\theta^{\lambda,k} g}{\partial\theta}\big|^2d\theta\Big\|_1.
   \end{align}
  Letting $c=\lambda^{-1/2}2^{-k}$, combining the inequalities \eqref{kare1} and \eqref{kare2}, to prove \eqref{kakeya5},
 we only need to show
  \begin{equation}\label{kare5}
    \Big\|\int_0^{2\pi}\Big|(\frac{\partial}{\partial\theta})^\ell A_\theta^{\lambda,k} g\Big|^2d\theta\Big\|_1^{1/2}\lesssim(\lambda^{-1/4}2^{-k/2})^{1-2\ell}\|g\|_{L_2(\mathcal{N})},\ \ell=0,1.
  \end{equation}
  Recall that on the support of the symbol of $A_\theta^{\lambda,k}$, we have $\big|\langle(-\sin\theta,\cos\theta),\frac{\xi}{|\xi|}\rangle\big|\thickapprox 2^k\lambda^{-1/2}$ and $|\xi|\thickapprox\lambda$. Therefore,
  \begin{equation*}
    \Big|\Big(\frac{\partial}{\partial\theta}\Big)\langle(\cos\theta,\sin\theta),\xi\rangle\Big|=\big|\langle(-\sin\theta,\cos\theta),\xi\rangle\big|\thickapprox 2^k\lambda^{1/2},
  \end{equation*}
  and $\Big|\Big(\frac{\partial}{\partial\theta}\Big)\beta_{\lambda,k}(\xi,\theta)\Big|\lesssim 2^{-k}\lambda^{1/2}$. Consequently, $\big(\frac{\partial }{\partial\theta}\big)A_\theta^{\lambda,k}$ behaves like $2^k\lambda^{1/2}A_\theta^{\lambda,k}$, it suffices to prove the inequality \eqref{kare5} for $\ell=0$.

Recall that
\begin{align*}
   A_\theta^{\lambda,k} g(x)&=\int_{\mathbb{R}}\int_{\mathbb{R}^2}e^{i[-\langle x,\xi\rangle+t\langle(\cos\theta,\sin\theta),\xi\rangle]} a_\delta(t,\xi)\beta(|\xi|/\lambda)\beta_{\lambda,k}(\xi,\theta)\tilde{g}(\xi,t)d\xi dt\\
   &=\int_{\mathbb{R}^2}e^{-i\langle x,\xi\rangle}\int_{\mathbb{R}}e^{it\langle(\cos\theta,\sin\theta),\xi\rangle} a_\delta(t,\xi)\beta(|\xi|/\lambda)\beta_{\lambda,k}(\xi,\theta)\tilde{g}(\xi,t)dtd\xi.
\end{align*}
By the Plancherel theorem, we can derive the following expression:
\begin{align*}
    &\Big\|\int_0^{2\pi}\Big| A_\theta^{\lambda,k}g\Big|^2d\theta\Big\|_1 \\&=\tau\Big(\int_{\mathbb{R}^2}\int_0^{2\pi}\Big| A_\theta^{\lambda,k} g(x)\Big|^2d\theta dx\Big)\\&
    =\tau\Big(\int_{\mathbb{R}^2}\int_0^{2\pi}\Big|\int_{\mathbb{R}}e^{it\langle(\cos\theta,\sin\theta),\xi\rangle} a_\delta(t,\xi)\beta(|\xi|/\lambda)\beta_{\lambda,k}(\xi,\theta)\tilde{g}(\xi,t)dt\Big|^2d\theta d\xi\Big)\\& \\
     &=\tau\Big(\int_{\mathbb{R}^2}\int_0^{1}\int_0^1H^{\lambda,k}(t-t^\prime,\xi)|\beta(|\xi|/\lambda)a(\delta\xi)|^2\tilde{g}^*(\xi,t)\tilde{g}(\xi,t^\prime)dtdt^\prime d\xi\Big), \\
  \end{align*}
where
  \begin{equation*}
    H^{\lambda,k}(t-t^\prime,\xi)=\int_0^{2\pi}e^{i(t-t^\prime)\langle(\cos\theta,\sin\theta),\xi\rangle}|\beta_{\lambda,k}(\xi,\theta)|^2d\theta.
  \end{equation*}
For $k\geq0$ and any $N$, it is known (see \cite{MSS}  for further details) that
  \begin{equation*}
   | H^{\lambda,k}(t-t^\prime,\xi)|\leq C_N\lambda^{-1/2}2^k(1+2^{2k}|t-t^\prime|)^{-N},\ |\xi|\thickapprox\lambda.
  \end{equation*}
Combining these results with the H\"{o}lder inequality and the Young inequality, we obtain
\begin{align*}
 &\tau\Big(\int_{\mathbb{R}^2}\int_0^{1}\int_0^1H^{\lambda,k}(t-t^\prime,\xi)|\beta(|\xi|/\lambda)a(\delta\xi)|^2\tilde{g}^*(\xi,t)\tilde{g}(\xi,t^\prime)dtdt^\prime d\xi\Big)\\
 &\leq C_N\lambda^{-1/2}2^k \tau\Big(\int_{\mathbb{R}^2}\int_0^{1}\int_0^1(1+2^{2k}|t-t^\prime|)^{-N}|\beta(|\xi|/\lambda)a(\delta\xi)|^2|\tilde{g}^*(\xi,t)\tilde{g}(\xi,t^\prime)|dtdt^\prime d\xi\Big)\\
 &=C_N\lambda^{-1/2}2^k \int_{\mathbb{R}^2}\tau\Big(\int_0^{1}\int_0^1(1+2^{2k}|t-t^\prime|)^{-N}|\tilde{g}^*(\xi,t)\tilde{g}(\xi,t^\prime)|dtdt^\prime \Big)|\beta(|\xi|/\lambda)a(\delta\xi)|^2d\xi\\
 &\leq C_N\lambda^{-1/2}2^k \int_{\mathbb{R}^2}\tau\Big(\int_0^{1}\big|(1+2^{2k}|\cdot|)^{-N}\ast|\tilde{g}(\xi,\cdot)|\big|^2dt \Big)^{1/2}\\&\quad\times\tau\Big(\int_0^{1}|\tilde{g}(\xi,t)|^2dt\Big)^{1/2}|\beta(|\xi|/\lambda)a(\delta\xi)|^2d\xi\\
 &\leq C_N\lambda^{-1/2}2^{-k}\int_{\mathbb{R}^2}\tau\Big(\int_0^{1}|\tilde{g}(\xi,t)|^2dt\Big)|\beta(|\xi|/\lambda)a(\delta\xi)|^2d\xi\leq C_N\lambda^{-1/2}2^{-k}\|g\|_{L_2(\mathcal{N})}^2.
\end{align*}
This completes the proof of the inequality \eqref{kare5}.
 \end{proof}
By applying Lemma \ref{kakeya} and the translation transform, we derive the inequality \eqref{kakeyaf}. Thus, the proof of Lemma \ref{ka} is concluded.

\section{Proof of Theorem \ref{qeloc}}\label{S6}
In this section, we provide the proof of Theorem \ref{qeloc}. To start, we first introduce the transference technique by defining a normal injective
 $*$-homomorphism, denoted as $\sigma_\theta$, from $\mathcal{R}_\theta^2$ to $L_\infty(\mathbb{R}^2)\overline{\otimes}\mathcal{R}_\theta^2$. This map is specified by $\sigma_\theta (U(\xi)):=\exp_\xi\otimes U(\xi)$, where $\exp_\xi$ represents the character $x \rightarrow \exp( i(x,\xi))$ in $L_\infty(\mathbb{R}^2)$ (see \cite[Corollary 1.4]{GJP} for further details). Let $m$ be a reasonable function, and $T_m$ denote the associated Fourier multiplier on $L_\infty(\mathbb{R}^2)$. And we denote the associated Fourier multiplier on $\mathcal{R}_\theta^2$ still by $T_m$ by slightly abusing the notation. Then, we have the following intertwining identity:
 \beq\label{ii}
 \sigma_\theta\circ T_m=(T_m\otimes id_{\mathcal{R}_\theta^2})\circ \sigma_\theta.\eeq

 Subsequently, we present the family of $*$-automorphism $(\alpha_\eta)_{\eta\in\mathbb{R}^2}$ on $\mathcal{R}_\theta^2$, defined by $\alpha_\eta(U(\xi)):=\exp(i(\xi,\eta))U(\xi)$ for each $\eta\in\mathbb{R}^2$. These automorphisms satisfy several crucial properties:
  \begin{itemize}
    \item[(1)] For all $x \in\mathcal{R}_\theta^2$, the map $\eta\rightarrow \alpha_\eta x$ from $\mathbb{R}^2$  to $\mathcal{R}_\theta^2$ is weak*- continuous ;
    \item[(2)] For all $\eta\in\mathbb{R}^2$ , $\alpha_\eta$ is a $*$-automorphism of $\mathcal{R}_\theta^2$;
    \item[(3)] For all $\eta\in\mathbb{R}^2$, we have $\tau_\theta=\tau_\theta \circ \alpha_\eta$.
  \end{itemize}
 Combining these properties with Lemma 1.1 from \cite{JX07}, we deduce the following proposition.
 \begin{proposition}\label{iso}
 Let $x \in L_p(\mathcal{R}_\theta^2)$ with $1\leq p<\infty$. Then the map $\eta\rightarrow \alpha_\eta x$ from $\mathbb{R}^2$ to $L_p(\mathcal{R}_\theta^2)$ is continuous in the norm topology, and for $\eta\in\mathbb{R}^2$, $\alpha_\eta$ is an isometry on $L_p(\mathcal{R}_\theta^2)$.
   \end{proposition}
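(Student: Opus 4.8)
The plan is to prove the two assertions by reducing both to the Schwartz class $\mathcal S(\mathcal R_\theta^2)$, where $\alpha_\eta$ acts very explicitly, and then to extend to all of $L_p(\mathcal R_\theta^2)$ by density; the uniform norm bound needed for that extension is exactly the isometry property, which I would establish first. The only ingredients are the three listed properties of $(\alpha_\eta)_{\eta\in\mathbb R^2}$ together with the density of $\mathcal S(\mathcal R_\theta^2)$ in $L_p(\mathcal R_\theta^2)$ for $1\le p<\infty$ recalled in Section \ref{S2}; this is precisely the package encapsulated in \cite[Lemma 1.1]{JX07}.

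For the isometry statement I would first observe that $\alpha_\eta$ preserves $\mathcal S(\mathcal R_\theta^2)$: writing $x=U_\theta(f)$ with $f\in\mathcal S(\mathbb R^2)$ and pushing $\alpha_\eta$ through the weak*-convergent integral $U_\theta(f)=\int f(t)U_\theta(t)\,dt$ via $\alpha_\eta(U_\theta(t))=e^{i(t,\eta)}U_\theta(t)$ (legitimate by property (1)) gives $\alpha_\eta x=U_\theta(e^{i(\cdot,\eta)}f)$, which again lies in $\mathcal S(\mathcal R_\theta^2)$ and has inverse $\alpha_{-\eta}$. Since $\alpha_\eta$ is a normal $*$-automorphism it commutes with the continuous functional calculus, so $|\alpha_\eta x|^p=\alpha_\eta(|x|^p)$ for $x\in\mathcal S(\mathcal R_\theta^2)$; combined with property (3) this yields $\|\alpha_\eta x\|_p^p=\tau_\theta(\alpha_\eta(|x|^p))=\tau_\theta(|x|^p)=\|x\|_p^p$. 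By density $\alpha_\eta$ extends uniquely to an isometric embedding of $L_p(\mathcal R_\theta^2)$, and it is surjective because $\alpha_{-\eta}$ furnishes a two-sided inverse.

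For the continuity statement, fix $\eta_0\in\mathbb R^2$. On the Schwartz class the claim is transparent: for $x=U_\theta(f)$ one has $\alpha_\eta x-\alpha_{\eta_0}x=U_\theta(g_\eta)$ with $g_\eta:=(e^{i(\cdot,\eta)}-e^{i(\cdot,\eta_0)})f\to 0$ in the Fréchet topology of $\mathcal S(\mathbb R^2)$ as $\eta\to\eta_0$ (the phase factors converge uniformly on compacta while $f$ controls the tails), hence $\alpha_\eta x\to\alpha_{\eta_0}x$ in $\mathcal S(\mathcal R_\theta^2)$ and therefore in $L_p(\mathcal R_\theta^2)$, using that the inclusion $\mathcal S(\mathcal R_\theta^2)\hookrightarrow L_p(\mathcal R_\theta^2)$ is continuous (for $2\le p<\infty$ one may instead quote $\|U_\theta(g_\eta)\|_p\le\|g_\eta\|_{L_{p'}(\mathbb R^2)}$ from Lemma \ref{HY} and $\|g_\eta\|_{L_{p'}}\to0$). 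For a general $x\in L_p(\mathcal R_\theta^2)$ and $\varepsilon>0$ I would pick $y\in\mathcal S(\mathcal R_\theta^2)$ with $\|x-y\|_p<\varepsilon/3$ and run the three-term estimate
\[
\|\alpha_\eta x-\alpha_{\eta_0}x\|_p\le\|\alpha_\eta(x-y)\|_p+\|\alpha_\eta y-\alpha_{\eta_0}y\|_p+\|\alpha_{\eta_0}(y-x)\|_p<\frac{2\varepsilon}{3}+\|\alpha_\eta y-\alpha_{\eta_0}y\|_p,
\]
where the isometry property controls the outer terms and the middle term is $<\varepsilon/3$ once $\eta$ is close enough to $\eta_0$; this gives norm continuity.

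There is no substantial obstacle here — this is a routine transference-flavored lemma. The only point that needs a little care is that the weak*-continuity provided by property (1) must be promoted to norm continuity on $L_p$, and that for the range $1\le p<2$ — where Hausdorff–Young gives no direct bound on $\|U_\theta(g_\eta)\|_p$ — one should instead invoke the continuity of the inclusion $\mathcal S(\mathcal R_\theta^2)\hookrightarrow L_p(\mathcal R_\theta^2)$ and anchor the $\varepsilon/3$ argument on the isometry bound. Both steps are exactly the ones subsumed in \cite[Lemma 1.1]{JX07}.
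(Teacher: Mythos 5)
Your proof is correct and is essentially the argument the paper delegates to \cite[Lemma 1.1]{JX07}: the paper's entire ``proof'' of Proposition \ref{iso} is that citation combined with properties (1)--(3), and what you write out (trace-preservation plus functional calculus giving $\|\alpha_\eta x\|_p=\|x\|_p$ on the Schwartz class, then density, the inverse $\alpha_{-\eta}$, and a three-term estimate for norm continuity) is exactly the content of that lemma specialized to $\mathcal R_\theta^2$. Two cosmetic points: the interchange $\alpha_\eta\bigl(\int f(t)U_\theta(t)\,dt\bigr)=\int f(t)\alpha_\eta(U_\theta(t))\,dt$ is justified by normality of the $*$-automorphism $\alpha_\eta$ rather than by the weak*-continuity in $\eta$ of property (1), and for $1\le p<2$ the continuity of the embedding $\mathcal S(\mathcal R_\theta^2)\hookrightarrow L_p(\mathcal R_\theta^2)$ that you invoke is a standard fact from \cite{GJP, Xiong1} not restated in this paper --- both are fine.
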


Given that $\mathbb{R}^2$ equipped with the Lebesgue measure $d\eta$ is not a probability space, the intertwining identity \eqref{ii} can not be efficiently applied.  An approximation argument is required. We select the heat kernels $h_\varepsilon(\eta)=(\varepsilon/\pi)e^{-\varepsilon|\eta|^2}$. Equipped with the Gaussian measure $h_\varepsilon(\eta)d\eta$, $\mathbb{R}^2$ becomes a probability space. We denote $ L_\infty(\mathbb{R}^2, d\eta)\overline{\otimes}\mathcal{R}_\theta^2$ by $\mathcal{N}_\theta$.

\begin{lemma}\label{lem1}
Let $x\in L_p(\mathcal{R}_\theta^2)$ with $1\leq p\leq \infty$. We have
  for any $\varepsilon>0$,
  \begin{align}\label{tral}
 \|x\|_{L_p(\mathcal{R}_\theta^2)}=\|h_\varepsilon^{1/p}\sigma_\theta(x)\|_{L_p(\mathcal{N_\theta})}.
  \end{align}
\end{lemma}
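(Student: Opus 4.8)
The plan is to recognise that the $*$-homomorphism $\sigma_\theta$ coincides with the map sending $x$ to the $\mathcal{R}_\theta^2$-valued function $\eta\mapsto\alpha_\eta(x)$, and then to deduce \eqref{tral} from the isometry property of the $\alpha_\eta$'s recorded in Proposition \ref{iso} together with the fact that $h_\varepsilon(\eta)\,d\eta$ is a probability measure on $\mathbb{R}^2$. To see the identification, note that on a generator $\sigma_\theta(U(\xi))=\exp_\xi\otimes U(\xi)$, whose value at $\eta\in\mathbb{R}^2$ (evaluating the first tensor leg) is $e^{i(\xi,\eta)}U(\xi)=\alpha_\eta(U(\xi))$; taking linear combinations this gives $\sigma_\theta(U_\theta(f))=\big(\eta\mapsto\alpha_\eta(U_\theta(f))\big)$ for $f\in\mathcal{S}(\mathbb{R}^2)$, i.e. on the weak*-dense $*$-subalgebra $\mathcal{S}(\mathcal{R}_\theta^2)$. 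Since $\sigma_\theta$ is normal and $x\mapsto\big(\eta\mapsto\alpha_\eta(x)\big)$ is normal as well (by the weak*-continuity of $\eta\mapsto\alpha_\eta(x)$ from property (1) of the family $(\alpha_\eta)_{\eta}$), the identity $\sigma_\theta(x)=\big(\eta\mapsto\alpha_\eta(x)\big)$ extends to every $x\in\mathcal{R}_\theta^2$. Consequently, for $x\in\mathcal{S}(\mathcal{R}_\theta^2)$ and $1\le p<\infty$, multiplying by the bounded scalar function $h_\varepsilon^{1/p}$ and using the identification $L_p(\mathcal{N}_\theta)=L_p\big(\mathbb{R}^2,d\eta;L_p(\mathcal{R}_\theta^2)\big)$, the element $h_\varepsilon^{1/p}\sigma_\theta(x)$ corresponds to the function $\eta\mapsto h_\varepsilon(\eta)^{1/p}\alpha_\eta(x)$, which by Proposition \ref{iso} is norm-continuous and dominated in norm by $h_\varepsilon(\eta)^{1/p}\|x\|_{L_p(\mathcal{R}_\theta^2)}$, hence genuinely lies in that Bochner space.

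Granting this, the case $1\le p<\infty$ is a direct computation: for $x\in\mathcal{S}(\mathcal{R}_\theta^2)$, using the identification above, then the fact from Proposition \ref{iso} that each $\alpha_\eta$ is an isometry of $L_p(\mathcal{R}_\theta^2)$, and finally $\int_{\mathbb{R}^2}h_\varepsilon(\eta)\,d\eta=1$, one gets
$$\big\|h_\varepsilon^{1/p}\sigma_\theta(x)\big\|_{L_p(\mathcal{N}_\theta)}^p=\int_{\mathbb{R}^2}h_\varepsilon(\eta)\,\big\|\alpha_\eta(x)\big\|_{L_p(\mathcal{R}_\theta^2)}^p\,d\eta=\|x\|_{L_p(\mathcal{R}_\theta^2)}^p\int_{\mathbb{R}^2}h_\varepsilon(\eta)\,d\eta=\|x\|_{L_p(\mathcal{R}_\theta^2)}^p.$$
Thus $x\mapsto h_\varepsilon^{1/p}\sigma_\theta(x)$ is isometric on the $\|\cdot\|_{L_p}$-dense subspace $\mathcal{S}(\mathcal{R}_\theta^2)$ of $L_p(\mathcal{R}_\theta^2)$, and therefore extends uniquely to an isometry on all of $L_p(\mathcal{R}_\theta^2)$; since this extension is the standard meaning of $h_\varepsilon^{1/p}\sigma_\theta(x)$ for a general $x\in L_p(\mathcal{R}_\theta^2)$, this is precisely \eqref{tral}.

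For the remaining case $p=\infty$ one has $h_\varepsilon^{1/p}=1$, so \eqref{tral} reads $\|x\|_{\mathcal{R}_\theta^2}=\|\sigma_\theta(x)\|_{L_\infty(\mathcal{N}_\theta)}$, which is immediate because a normal injective $*$-homomorphism of von Neumann algebras is isometric. The one genuinely delicate point in the argument is the identification in the first paragraph: one has to check carefully that $\sigma_\theta$ agrees with $\eta\mapsto\alpha_\eta(\cdot)$ and that, after weighting by $h_\varepsilon^{1/p}$, the resulting object really belongs to $L_p(\mathcal{N}_\theta)$ with norm computed by the above Bochner integral; once that is in place, \eqref{tral} follows quickly from Proposition \ref{iso} and the normalisation of the Gaussian density.
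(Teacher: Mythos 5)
Your proof is correct and follows essentially the same route as the paper: the paper's argument is precisely the computation $\|x\|_p^p=\int h_\varepsilon(\eta)\|\alpha_\eta x\|_p^p\,d\eta=\int h_\varepsilon(\eta)\|\sigma_\theta(x)(\eta)\|_p^p\,d\eta=\|h_\varepsilon^{1/p}\sigma_\theta(x)\|_{L_p(\mathcal{N}_\theta)}^p$, using Proposition \ref{iso} and $\int h_\varepsilon=1$. You merely spell out more carefully the identification $\sigma_\theta(x)(\eta)=\alpha_\eta(x)$ and the density/extension step, which the paper leaves implicit.
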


\begin{proof}
Applying Proposition \ref{iso}, we have
  \begin{align*}
    \|x\|_{L_p(\mathcal{R}_\theta^2)}^p&=\int_{\mathbb{R}^d}h_\varepsilon(\eta)\|x\|_{L_p(\mathcal{R}_\theta^2)}^pd\eta=\int_{\mathbb{R}^2}h_\varepsilon(\eta)\|\alpha_\eta x\|_{L_p(\mathcal{R}_\theta^2)}^pd\eta\\
    &=\int_{\mathbb{R}^2}h_\varepsilon(\eta)\|\sigma_\theta (x)(\eta)\|_{L_p(\mathcal{R}_\theta^2)}^pd\eta=\|h_\varepsilon^{1/p}\sigma_\theta (x)\|_{L_p(\mathcal{N}_\theta)}^p.
  \end{align*}
\end{proof}
We are now at the position to show Theorem \ref{qeloc}.
Given $x_0=U_\theta(f_0),x_1=U_\theta(f_1)$ for some $f_0,f_1\in\mathcal{S}(\mathbb{R}^2)$, and under the condition $\nu>\frac{1}{2}-\frac{1}{p}-\kappa(p)$, we define
\begin{equation}\label{nu}
m_{0,t}^\nu := c_0 \frac{\cos(|t\xi|)}{(1+|\xi|^2)^{\nu/2}} \quad \text{and} \quad m_{1,t}^\nu := c_1 \frac{\sin(|t\xi|)}{|\xi|(1+|\xi|^2)^{(\nu-1)/2}}.
\end{equation}
We then set
\begin{equation}
u_\nu(t) = T_{m_{0,t}^\nu}(x_0) + T_{m_{1,t}^\nu}(x_1).
\end{equation}
To establish Theorem \ref{qeloc}, it is equivalent to show
\begin{equation}\label{lasteq}
  \|u_\nu(t)\|_{L_p(L_\infty([1,2])\overline{\otimes}\mathcal{R}_\theta^2))}\lesssim\|x_0\|_{L_p(\mathcal{R}_\theta^2)}+\|x_1\|_{L_p(\mathcal{R}_\theta^2)}.
\end{equation}
By the triangle inequality, it is reduced to proving that
\begin{equation}\label{Tmies}
  \|T_{m_{i,t}^\nu}(x_i)\|_{L_p(L_\infty([1,2])\overline{\otimes}\mathcal{R}_\theta^2))}\lesssim\|x_i\|_{L_p(\mathcal{R}_\theta^2)},\qquad i=0,1.
\end{equation}
Combining Lemma \ref{lem1} with the intertwining identity \eqref{ii}, we have
\begin{align*}
  \|T_{m_{i,t}^\nu}(x_i)\|_{L_p(L_\infty([1,2])\overline{\otimes}\mathcal{R}_\theta^2))}^p &=\int_1^2\|T_{m_{i,t}^\nu}(x_i)\|_{L_p(\mathcal{R}_\theta^2))}^p dt \\
 &=\int_1^2\big\|h_\varepsilon^{1/p}\big(\sigma_\theta(T_{m_{i,t}^\nu}(x_i))\big)\big\|_{L_p(\mathcal{N}_\theta)}^pdt\\
 &=\int_1^2\big\|h_\varepsilon^{1/p}(T_{m_{i,t}^\nu}\otimes id_{\mathcal{R}_\theta^2})(\sigma_\theta(x_i))\big\|_{L_p(\mathcal{N}_\theta)}^pdt\\&\leq I_\varepsilon+II_\varepsilon.
\end{align*}
Here
\begin{equation*}
  I_\varepsilon=\int_1^2\big\|(T_{m_{i,t}^\nu}\otimes id_{\mathcal{R}_\theta^2})(h_\varepsilon^{1/p}\sigma_\theta (x_i))\big\|_{L_p(\mathcal{N}_\theta)}^pdt
\end{equation*}
and
\begin{equation*}
  II_\varepsilon=\int_1^2\big\|h_\varepsilon^{1/p}(T_{m_{i,t}^\nu}\otimes id_{\mathcal{R}_\theta^2})(\sigma_\theta(x_i))-(T_{m_{i,t}^\nu}\otimes id_{\mathcal{R}_\theta^2})(h_\varepsilon^{1/p}\sigma_\theta (x_i))\big\|_{L_p(\mathcal{N}_\theta)}^pdt.
\end{equation*}
To estimate $I_\varepsilon$, we utilize Theorem \ref{thmloc}, specifically the inequalities \eqref{eq126} and \eqref{eq127}, along with Lemma \ref{lem1}. This leads to the following inequality:
\begin{align}\label{Ies} I_\varepsilon\lesssim \|h_\varepsilon^{1/p}(\sigma_\theta (x_i))\big\|_{L_p(\mathcal{N})}^p=\|x_i\|_{L_p(\mathcal{R}_\theta^2)}^p.
 \end{align}
On the other hand, we claim that
\begin{align}\label{IIes}
\lim_{\varepsilon\rightarrow0}II_\varepsilon=0.
 \end{align}
Based on this claim, the inequalities \eqref{Ies} and \eqref{IIes} imply the inequality \eqref{Tmies}, which in turn implies Theorem \ref{qeloc}.

 At the end of this section, we complete the proof of the claim \eqref{IIes}. Note that
  $T_{m_{i,t}^\nu}(h_\varepsilon^{1/p}\exp_\xi)=T_{m_{i,t}^\nu(\cdot+\xi)}(h_\varepsilon^{1/p})\exp_\xi$, we have
  \begin{align*}
    &h_\varepsilon^{1/p}(T_{m_{i,t}^\nu}\otimes id_{\mathcal{R}_\theta^2})(\sigma_\theta(x_i))-(T_{m_{i,t}^\nu}\otimes id_{\mathcal{R}_\theta^2})(h_\varepsilon^{1/p}\sigma_\theta (x_i))\\
    &=\int_{\mathbb{R}^2}\left(h_\varepsilon^{1/p} T_{m_{i,t}^\nu}(\exp_\xi)-T_{m_{i,t}^\nu}(h_\varepsilon^{1/p}\exp_\xi)\right)\otimes f_i(\xi)U(\xi)d\xi\\
    &=\int_{\mathbb{R}^2}\left(h_\varepsilon^{1/p} m_{i,t}^\nu(\xi)-T_{m_{i,t}^\nu(\cdot+\xi)}(h_\varepsilon^{1/p})\right)\exp_\xi \otimes f_i(\xi)U(\xi)d\xi.\\
\end{align*}
Then, by Lemma \ref{HY} and the Minkowski inequality, we have
\begin{align*}
   & \Big\|(h_\varepsilon^{1/p}(T_{m_{i,t}^\nu}\otimes id_{\mathcal{R}_\theta^2})(\sigma_\theta(x_i))-(T_{m_{i,t}^\nu}\otimes id_{\mathcal{R}_\theta^2})(h_\varepsilon^{1/p}\sigma_\theta (x_i))\Big\|_{L_p(\mathcal{N}_\theta)}^p\\
   &=\int_{\mathbb{R}^2}\left\|\int_{\mathbb{R}^2}\left(h_\varepsilon^{1/p} m_{i,t}^\nu(\xi)-T_{m_{i,t}^\nu(\cdot+\xi)}(h_\varepsilon^{1/p})\right)\exp_\xi f_i(\xi)U(\xi)d\xi\right\|_{L_p(\mathcal{R}_\theta^2)}^pd\eta\\
   &\leq\int_{\mathbb{R}^2}\Big(\int_{\mathbb{R}^2}\left|h_\varepsilon^{1/p} m_{i,t}^\nu(\xi)-T_{m_{i,t}^\nu(\cdot+\xi)}(h_\varepsilon^{1/p})\right|^{p^\prime}(\eta)|f_i|^{p^\prime}(\xi) d\xi\Big)^{p/{p^\prime}} d\eta\\
   &\leq\Big(\int_{\mathbb{R}^2}\Big(\int_{\mathbb{R}^2}\left|h_\varepsilon^{1/p} m_{i,t}^\nu(\xi)-T_{m_{i,t}^\nu(\cdot+\xi)}(h_\varepsilon^{1/p})\right|^{p}(\eta) d\eta\Big)^{{p^\prime}/p}|f_i|^{p^\prime}(\xi) d\xi\Big)^{p/{p^\prime}}\\
  &=\Big(\int_{\mathbb{R}^2}\Big(\int_{\mathbb{R}^2}\left|h_1^{1/p} m_{i,t}^\nu(\xi)-T_{m_{i,t}^\nu(\sqrt{\varepsilon}\cdot+\xi)}(h_1^{1/p})\right|^{p}(\eta) d\eta\Big)^{{p^\prime}/p}|f_i|^{p^\prime}(\xi) d\xi\Big)^{p/{p^\prime}} \\
  &\leq \Big(\int_{\mathbb{R}^2}\Big(\int_{\mathbb{R}^2}\left|\big(h_1^{1/p}\big)^{\wedge} (\zeta) \big(m_{i,t}^\nu(\xi)-m_{i,t}^\nu(\sqrt{\varepsilon}\zeta+\xi)\big)\right|^{p^\prime} d\zeta\Big)|f_i|^{p^\prime}(\xi) d\xi\Big)^{p/{p^\prime}},
     \end{align*}
 where the last inequality follows from the classical Hausdorff-Young inequality. Note that $(h_1^{1/p}\big)^{\wedge}$ and $f_i$ are Schwartz functions, by the dominated convergence theorem, we get
  \begin{equation*}
    \lim_{\varepsilon\rightarrow0} \int_1^2\Big(\int_{\mathbb{R}^2}\Big(\int_{\mathbb{R}^2}\left|\big(h_1^{1/p}\big)^{\wedge} (\zeta) \big(m_{i,t}^\nu(\xi)-m_{i,t}^\nu(\sqrt{\varepsilon}\zeta+\xi)\big)\right|^{p^\prime} d\zeta\Big)|f_i|^{p^\prime}(\xi) d\xi\Big)^{p/{p^\prime}}dt=0,
  \end{equation*}
  which implies that $\lim_{\varepsilon\rightarrow0}II_\varepsilon=0.$
\begin{remark}\label{rem:trans}
{\rm (i) The aforementioned transference technique remains valid for higher-dimensional spaces $\mathcal{R}_\theta^d$ $d\geq3$.

(ii) In the above approximation arguments, the application of the Hausdorff-Young inequalities necessitates $p\geq2$; the approximation arguments and thus the transference techniques become quite tricky for $1\leq p<2$, and it is still open although a weak version is already available in  \cite{Hong+}.
}
\end{remark}

\appendix
\section{Sharp endpoint $L_p$ estimates}\label{AP1}
 In this appendix, we delve into the Cauchy problem associated with the wave equation in the context of quantum Euclidean spaces. Throughout this appendix, we will use the following convention: The tensor von Neumann algebra $L_\infty(\mathbb{R}^d)\overline{\otimes}\mathcal{M}$ is denoted by $\mathcal{N}_d$, and for a reasonable symbel $m$, the associated operator $T_m$ on $\mathbb{R}^d$ (resp. $\mathcal{N}_d$)  is defined by
 \begin{equation*}
   T_m(f)(\eta):=(m(\cdot)\hat{f}(\cdot))^{\vee}(\eta),\quad f\in \mathcal{S}(\mathbb{R}^d)\ (\text{resp.}\ f\in \mathcal{S}(\mathbb{R}^d){\otimes}\mathcal{S}(\mathcal{M})).
 \end{equation*}
 And for $\xi\in\mathbb{R}^d$, we set $$m_{0,t}^\nu(\xi)=\frac{\cos(|t\xi|)}{(1+|\xi|^2)^{\frac{\nu}{2}}}, \quad m_{1,t}^\nu(\xi)=\frac{\sin(|t\xi|)}{|\xi|(1+|\xi|^2)^{\frac{\nu-1}{2}}}.$$

Considering the wave equation with the initial values $x_0,x_1\in\mathcal{S}(\mathcal{R}_\theta^d)$ on $\mathcal{R}_\theta^d,$
\begin{eqnarray}\label{weqq1}
\left\{
\begin{array}{ll}
   (\partial_{tt}-\Delta_\theta)u=0, \  t\in \mathbb{R}_+,  \\[5pt]
 u(0)=x_0,\\[5pt]\partial_tu(0)=x_1.
\end{array}
\right.
\end{eqnarray}
 Here, $\partial_tu$  is defined in the sense of:
\begin{equation*}
  \lim_{h\rightarrow0}\frac{u(s+h)-u(s)}{h}=\partial_tu (s)\quad \text{in}\ \mathcal{S}^\prime(\mathcal{R}_\theta^d).
\end{equation*}
The solution $u(t)$ to this equation can be represented as a sum of Fourier multiplier operators:
\begin{equation*}
  u(t)=c_0T_{m_{0,t}^0}(x_0)+c_1T_{m_{1,t}^1}(x_1),
\end{equation*}
where $c_0,c_1$ are two constants.
We present the following noncommutative analogue of the fixed-time $L_p$  estimate on $\mathcal{R}_\theta^d$:
\begin{theorem}\label{fix time}
  Let $1<p<\infty$, and let $u(t)$ be the solution to the Cauchy problem of the wave equation. Then, for any fixed time $0<t<\infty$, the following estimate
  \begin{equation*}
    \|u(t)\|_{L_p(\mathcal{R}_\theta^d)}\leq C_{p,t}\Big(\|x_0\|_{L_{p,\nu}(\mathcal{R}_\theta^d)}+ \|x_1\|_{L_{p,\nu-1}(\mathcal{R}_\theta^d)}\Big)
  \end{equation*}
  holds for all $\nu\geq s_p=(d-1)\big|\frac{1}{2}-\frac{1}{p}\big|$.
\end{theorem}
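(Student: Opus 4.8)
The plan is to carry out exactly the three‑step program announced after Proposition~\ref{prop:fixed estimate}: pass to an operator‑valued Fourier multiplier via transference, use Mei's automatic complete boundedness of $H_1$‑multipliers to get the endpoint estimates, and close with the noncommutative Hardy--BMO scale together with Stein's analytic interpolation.

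First I would make the standard reductions. Fixing $t$ and writing $u(t)=c_0T_{m_{0,t}^0}(x_0)+c_1T_{m_{1,t}^1}(x_1)$, it suffices by the triangle inequality to bound each term. Putting $y_0=T_{J_{s_p}}(x_0)$ and $y_1=T_{J_{s_p-1}}(x_1)$ (so that $\|y_0\|_p=\|x_0\|_{L_{p,s_p}(\mathcal{R}_\theta^d)}$ and $\|y_1\|_p=\|x_1\|_{L_{p,s_p-1}(\mathcal{R}_\theta^d)}$) and composing multipliers, the two terms become $T_{m_{0,t}^{s_p}}(y_0)$ and $T_{m_{1,t}^{s_p}}(y_1)$, where $m_{0,t}^{s_p}(\xi)=\cos(|t\xi|)(1+|\xi|^2)^{-s_p/2}$ and $m_{1,t}^{s_p}(\xi)=\sin(|t\xi|)|\xi|^{-1}(1+|\xi|^2)^{-(s_p-1)/2}$ are symbols of order $-s_p$ carrying wave‑type oscillation. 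Since the Bessel potential $T_{J_{-(\nu-s_p)}}$ is a complete contraction on every $L_p(\mathcal{R}_\theta^d)$, the case of general $\nu\ge s_p$ follows from $\nu=s_p$, and since $s_p=s_{p'}$ and each $m_{i,t}^{s_p}$ is real‑valued (hence $T_{m_{i,t}^{s_p}}$ is self‑adjoint), it is enough to establish
$$\|T_{m_{i,t}^{s_p}}(y)\|_{L_p(\mathcal{R}_\theta^d)}\le C_{p,t}\|y\|_{L_p(\mathcal{R}_\theta^d)},\qquad 2\le p<\infty,\ i=0,1,$$
the range $1<p<2$ then coming from $L_p$–$L_{p'}$ duality on $\mathcal{R}_\theta^d$. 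Applying the transference machinery of Section~\ref{S6} --- the normal $*$‑homomorphism $\sigma_\theta$, the intertwining identity $\sigma_\theta\circ T_m=(T_m\otimes\mathrm{id})\circ\sigma_\theta$, and the heat‑kernel approximation of Lemma~\ref{lem1}, all valid for $d\ge 3$ by Remark~\ref{rem:trans}(i) and for $p\ge 2$ --- this reduces to the purely operator‑valued estimate
$$\big\|(T_{m_{i,t}^{s_p}}\otimes\mathrm{id}_{\mathcal{M}})f\big\|_{L_p(\mathcal{N}_d)}\le C_{p,t}\|f\|_{L_p(\mathcal{N}_d)},\qquad 2\le p<\infty,$$
for an arbitrary semifinite von Neumann algebra $\mathcal{M}$, with $\mathcal{N}_d=L_\infty(\mathbb{R}^d)\overline{\otimes}\mathcal{M}$ (the case $\theta=0$ being already contained in \eqref{PM}).

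To prove this operator‑valued bound I would interpolate analytically in the smoothness parameter. After a harmless partition of unity $m_{i,t}^z=a_{i,t}^0+\psi\, b_{i,t}^z$ with $a_{i,t}^0\in C_0^\infty$ contributing a (trivially completely bounded) multiplier and $b_{i,t}^z(\xi)=e^{it|\xi|}|\xi|^{-z}(\cdots)+e^{-it|\xi|}|\xi|^{-z}(\cdots)$ on $\mathrm{supp}\,\psi\subset\{|\xi|\ge 1\}$, the family $\{T_{\psi b_{i,t}^z}\otimes\mathrm{id}_{\mathcal{M}}\}$ is holomorphic in the strip $0\le\Re z\le (d-1)/2$. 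On the line $\Re z=0$ one has $|\psi b_{i,t}^{iy}|\lesssim 1$, so Plancherel gives boundedness on $L_2(\mathcal{N}_d)$ uniformly in $y$. On the line $\Re z=(d-1)/2$ the classical results of Miyachi~\cite{Miyachi80} and Peral~\cite{Per80} give that $T_{\psi b_{i,t}^{(d-1)/2+iy}}$ is bounded on the scalar Hardy space $H_1(\mathbb{R}^d)$ with norm $O((1+|y|)^N)$; by Mei's observation \cite{Mei09} that any such $H_1$‑multiplier is automatically completely bounded, its operator‑valued extension is bounded on the operator‑valued Hardy space $\mathcal{H}_1(\mathcal{N}_d)$ (a fortiori $\mathcal{H}_1(\mathcal{N}_d)\to L_1(\mathcal{N}_d)$), still with polynomial growth in $|y|$; taking adjoints and using $\overline{m_{i,t}^z}=m_{i,t}^{\bar z}$ yields the companion bound $L_\infty(\mathcal{N}_d)\to\mathrm{BMO}(\mathcal{N}_d)$ on that line. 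Now interpolating the holomorphic family between $L_2(\mathcal{N}_d)$ at $\Re z=0$ and $\mathrm{BMO}(\mathcal{N}_d)$ at $\Re z=(d-1)/2$ via Stein's analytic interpolation theorem, and using $[L_2(\mathcal{N}_d),\mathrm{BMO}(\mathcal{N}_d)]_{1-2/p}=L_p(\mathcal{N}_d)$ from \cite{Mei09}, one lands at $\Re z=(1-2/p)(d-1)/2=(d-1)(1/2-1/p)=s_p$ with precisely the desired $L_p(\mathcal{N}_d)$‑bound for $2\le p<\infty$. Unwinding the transference reduction and the duality step completes the proof of Theorem~\ref{fix time}.

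The main obstacle is the second step: one must arrange the analytic family so that the two boundary estimates hold with only polynomial growth in $|\Im z|$ and, crucially, so that genuine operator‑valued Hardy and BMO endpoints are available --- this is exactly where Mei's complete‑boundedness principle and the noncommutative $\mathcal{H}_1$–$\mathrm{BMO}$ interpolation scale enter decisively, since (as noted in the paper) the estimate \eqref{PM2} cannot be deduced from the noncommutative Hörmander--Mihlin multiplier theorem or Calderón--Zygmund theory. The transference step, though essentially the argument of Section~\ref{S6}, also contributes a technical point, namely that its approximation lemma is restricted to $p\ge 2$ (Remark~\ref{rem:trans}(ii)), which is why the $1<p<2$ range is reached only afterwards via the self‑adjointness of $T_{m_{i,t}^{s_p}}$ and duality on $\mathcal{R}_\theta^d$ rather than by transferring directly.
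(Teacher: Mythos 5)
Your proposal is correct and follows essentially the same route as the paper's Appendix~\ref{AP1}: duality on $\mathcal{R}_\theta^d$ to reduce to $2\le p<\infty$, the transference of Section~\ref{S6} to pass to the operator-valued multiplier on $\mathcal{N}_d$, Miyachi--Peral's scalar $H_1$ bound upgraded to the operator-valued Hardy space by Mei's automatic complete boundedness (Lemma~\ref{H1mul}), and complex interpolation with the trivial $L_2$ endpoint. The only (harmless) variation is that you interpolate the analytic family on the $L_2$--$\mathrm{BMO}(\mathcal{N}_d)$ side to reach $p\ge 2$ directly, whereas the paper interpolates $H^{cr}_1(\mathcal{N}_d)$--$L_2$ to get $1<p\le 2$ and then dualizes; these are dual arrangements of the same ingredients.
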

By leveraging the duality argument, it suffices to demonstrate Theorem \ref{fix time} for the range $2\leq p<\infty$. Subsequently, by applying the transference technique outlined in Section \ref{S6}, Theorem \ref{fix time} can be reduced to the following operator-valued fixed-time  $L_p$ estimate:
\begin{theorem}\label{fix time1}
  Let $1<p<\infty$ and  $i\in\{0,1\}$. For a fixed time $t$, the following estimate
  \begin{equation}\label{fix es1}
    \|T_{m_{i,t}^\nu}(f)\|_{L_p(\mathcal{N}_d)}\leq C_{p,t}\|f\|_{L_p(\mathcal{N}_d)}
  \end{equation}
  holds for all $\nu\geq s_p=(d-1)\big|\frac{1}{2}-\frac{1}{p}\big|$ and $f\in {L_p(\mathcal{N}_d)}$. Here $C_{p,t}$ is locally bounded with respect to $t$.
\end{theorem}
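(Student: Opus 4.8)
The plan is to prove the operator‑valued fixed‑time estimate \eqref{fix es1} by analytic interpolation: one moves the order of the Bessel potential into the complex plane and interpolates between a trivial $L_2$ bound and an operator‑valued $BMO$ bound, the latter produced from the classical Miyachi--Peral endpoint together with Mei's observation that Fourier multipliers bounded on $H_1(\mathbb{R}^d)$ are automatically completely bounded. Two preliminary reductions are made. First, by the duality $L_p(\mathcal{N}_d)^{*}=L_{p'}(\mathcal{N}_d)$ and the fact that $T_{m_{i,t}^{\nu}}^{*}=T_{\overline{m_{i,t}^{\nu}}}$ has exactly the same form, it is enough to treat $2\le p<\infty$. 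Second, one may assume $\nu=s_p$: for $\nu>s_p$ write $m_{i,t}^{\nu}(\xi)=m_{i,t}^{s_p}(\xi)\,(1+|\xi|^2)^{-(\nu-s_p)/2}$, so that $T_{m_{i,t}^{\nu}}=T_{m_{i,t}^{s_p}}\circ G_{\nu-s_p}$, where $G_{\nu-s_p}$ is convolution with the Bessel kernel of positive order $\nu-s_p$; this kernel lies in $L_1(\mathbb{R}^d)$, so Minkowski's inequality gives $\|G_{\nu-s_p}\|_{L_p(\mathcal{N}_d)\to L_p(\mathcal{N}_d)}\le\|G_{\nu-s_p}\|_{L_1(\mathbb{R}^d)}<\infty$, and it remains to bound $T_{m_{i,t}^{s_p}}$.

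Next I would introduce, for $i\in\{0,1\}$ and $z$ in the closed strip $0\le\operatorname{Re}z\le\frac{d-1}{2}$, the analytic family of symbols
$$m_{0,t}^{[z]}(\xi):=\frac{\cos(t|\xi|)}{(1+|\xi|^2)^{z/2}},\qquad m_{1,t}^{[z]}(\xi):=\frac{\sin(t|\xi|)}{|\xi|\,(1+|\xi|^2)^{(z-1)/2}},$$
which are globally smooth, and consider $z\mapsto T_{m_{i,t}^{[z]}}$ on the dense class $\mathcal{S}(\mathbb{R}^d)\otimes\mathcal{S}(\mathcal{M})$; for fixed Schwartz $f,g$ the pairing $z\mapsto\langle T_{m_{i,t}^{[z]}}f,g\rangle$ is analytic and of admissible growth on the strip, so this is an admissible family for Stein interpolation. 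On the line $\operatorname{Re}z=0$ one checks directly that $|m_{i,t}^{[z]}(\xi)|\lesssim\max(1,t)$ uniformly in $\operatorname{Im}z$ (for $i=1$ using $|\sin(t|\xi|)|/|\xi|\le\min(t,1/|\xi|)$ against the factor $(1+|\xi|^2)^{1/2}$), so by Plancherel $T_{m_{i,t}^{[z]}}$ is bounded on $L_2(\mathcal{N}_d)$ with norm $\lesssim\max(1,t)$ there.

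On the line $\operatorname{Re}z=\frac{d-1}{2}$ the symbol $m_{i,t}^{[z]}$ is, up to a harmless smooth low‑frequency factor, of the form $e^{\pm it|\xi|}(1+|\xi|^2)^{-(d-1)/4}$ multiplied by the unimodular oscillatory factor $(1+|\xi|^2)^{-i\operatorname{Im}z/2}$; hence the Miyachi--Peral endpoint estimate applies and gives boundedness of this scalar Fourier multiplier on $H_1(\mathbb{R}^d)$ with norm $\lesssim(1+t)^{C_d}(1+|\operatorname{Im}z|)^{C_d}$, the polynomial factors arising from differentiating the two oscillatory factors in $\xi$. By Mei's observation the multiplier is then automatically completely bounded on $H_1(\mathbb{R}^d)$, so $T_{m_{i,t}^{[z]}}$ is bounded on the operator‑valued column and row Hardy spaces $H_1^{c}(\mathcal{N}_d)$, $H_1^{r}(\mathcal{N}_d)$; taking adjoints (the conjugate symbol has the same structure) it is bounded on $BMO^{c}(\mathcal{N}_d)$, $BMO^{r}(\mathcal{N}_d)$, hence on $BMO(\mathcal{N}_d)=BMO^{c}(\mathcal{N}_d)\cap BMO^{r}(\mathcal{N}_d)$, with the same polynomial bounds in $t$ and $\operatorname{Im}z$.

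Finally I would apply Stein's interpolation theorem for analytic families to the couple $\big(L_2(\mathcal{N}_d),BMO(\mathcal{N}_d)\big)$, invoking the noncommutative Hardy--$BMO$ interpolation identity $\big[L_2(\mathcal{N}_d),BMO(\mathcal{N}_d)\big]_{1-2/p}=L_p(\mathcal{N}_d)$ for $2\le p<\infty$: since $\operatorname{Re}z=(1-\frac2p)\frac{d-1}{2}=s_p$ corresponds to parameter $\theta=1-\frac2p$, one concludes that $T_{m_{i,t}^{[z]}}$ is bounded on $L_p(\mathcal{N}_d)$ on that line, with constant locally bounded in $t$; specializing to the real point $z=s_p$ gives \eqref{fix es1} for $2\le p<\infty$ and, together with the reductions above, in full. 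Then Theorem \ref{fix time} follows by the duality argument already noted, and Proposition \ref{prop:fixed estimate} by the intertwining identity \eqref{ii} and Lemma \ref{lem1}. The step I expect to be the main obstacle is precisely this last one: making rigorous the admissibility of $z\mapsto T_{m_{i,t}^{[z]}}$ for Stein interpolation within the operator‑valued Hardy--$BMO$ scale (the needed analyticity and sub‑exponential growth of $z\mapsto\langle T_{m_{i,t}^{[z]}}f,g\rangle$, which reduce to the polynomial $\operatorname{Im}z$‑bounds above) and having the interpolation identity $[L_2(\mathcal{N}_d),BMO(\mathcal{N}_d)]_\theta=L_p(\mathcal{N}_d)$ available with the correct column/row bookkeeping; by comparison the Bessel‑potential reduction, the $L_2$ estimate, and the appeal to Mei's automatic complete boundedness are routine.
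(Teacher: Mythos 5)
Your proposal is correct and is essentially the paper's own argument: both rest on the Miyachi--Peral scalar endpoint, Mei's automatic extension of $H_1(\mathbb{R}^d)$ Fourier multipliers to the operator-valued Hardy spaces, noncommutative Hardy--BMO complex interpolation, and duality. The only difference is one of bookkeeping: the paper runs the interpolation on the $H_1^{cr}(\mathcal{N}_d)$ side for $1<p\le 2$ (keeping the operator $T_{m_{i,t}^i}$ fixed and letting the target Sobolev index $-s_p+i$ vary with $p$) and then dualizes to reach $p>2$, whereas you dualize first and run an explicit Stein analytic family between $L_2(\mathcal{N}_d)$ and $BMO(\mathcal{N}_d)$ for $2\le p<\infty$.
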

To prove Theorem \ref{fix time1}, we rely on the theory of operator-valued Hardy spaces introduced in \cite{Mei09}, but we omit the details here for simplicity.
\begin{lemma}[\cite{Mei09}, Theorem 6.4]\label{H1mul}
  Let $T_m$ be a Fourier multiplier bounded on the Hardy space $H_1(\mathbb{R}^d)$. Then $T_m$ automatically extends to a bounded Fourier multiplier on the operator-valued Hardy spaces $H^{cr}_1(\mathcal{N}_d)$. Moreover, one has
  \begin{equation*}
    \|T_m\|_{H^{cr}_1(\mathcal{N}_d)\rightarrow H^{cr}_1(\mathcal{N}_d)}\leq c_d\|T_m\|_{H_1(\mathbb{R}^d)\rightarrow H_1(\mathbb{R}^d)}.
  \end{equation*}
  \end{lemma}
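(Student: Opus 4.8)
The plan is to reproduce Mei's argument \cite{Mei09}, whose core is the observation that a Fourier multiplier bounded on the scalar Hardy space $H_1(\mathbb{R}^d)$ is automatically \emph{completely bounded} there, and that $H^{cr}_1(\mathcal{N}_d)$ is exactly the $\mathcal{M}$-module Hardy space built from the operator space structure that makes this complete boundedness visible. Since $H^{cr}_1(\mathcal{N}_d)=H^c_1(\mathcal{N}_d)+H^r_1(\mathcal{N}_d)$ carries the infimum-sum norm, and since on the row space $T_m$ is intertwined via the adjoint $*$-operation with the multiplier of symbol $\overline{m(-\,\cdot\,)}$ on the column space (and $\|T_{\overline{m(-\cdot)}}\|_{H_1(\mathbb{R}^d)\to H_1(\mathbb{R}^d)}=\|T_m\|_{H_1(\mathbb{R}^d)\to H_1(\mathbb{R}^d)}$ because reflection and conjugation are isometries of $H_1(\mathbb{R}^d)$), it suffices to establish $\|T_m\|_{H^c_1(\mathcal{N}_d)\to H^c_1(\mathcal{N}_d)}\le c_d\,\|T_m\|_{H_1(\mathbb{R}^d)\to H_1(\mathbb{R}^d)}$.

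For this I would use Mei's conical square-function description of $H^c_1(\mathcal{N}_d)$: with $u_f$ the Poisson extension of $f$ to $\mathbb{R}^{d+1}_+$, $\Gamma$ the cone over the origin and $d\lambda=t^{-d-1}\,dy\,dt$,
\[
\|f\|_{H^c_1(\mathcal{N}_d)}\ \thickapprox\ \big\|G_cf\big\|_{L_1(\mathcal{N}_d;\,L_2^c(\Gamma,\lambda))},\qquad (G_cf)(x)(y,t):=t\,\nabla u_f(x+y,t),
\]
together with its scalar analogue $\|f\|_{H_1(\mathbb{R}^d)}\thickapprox\|G_cf\|_{L_1(\mathbb{R}^d;\,L_2(\Gamma,\lambda))}$; thus $G_c$ embeds $H^c_1(\mathcal{N}_d)$ isomorphically (indeed completely isomorphically, for the natural operator space structure) into $L_1(\mathcal{N}_d;L_2^c(\Gamma))$. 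Because $T_m$, being a convolution operator in $x$, commutes with the Poisson semigroup, with $\nabla_x$, with $\partial_t$ and with translations in $x$, one has $u_{T_mf}=T_mu_f$ in the $x$-variable, which gives the intertwining identity
\[
G_c(T_mf)=\big(T_m\otimes\mathrm{id}_{L_2^c(\Gamma)}\big)\,G_cf
\]
on $L_1(\mathcal{N}_d;L_2^c(\Gamma))$, where $T_m$ on the right abbreviates $T_m\otimes\mathrm{id}_{\mathcal{M}}\otimes\mathrm{id}_{L_2^c(\Gamma)}$.

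It then remains to show that the scalar hypothesis — boundedness of $T_m\otimes\mathrm{id}_{L_2^c(\Gamma)}$ on the Hardy subspace $G_c(H_1(\mathbb{R}^d))\subset L_1(\mathbb{R}^d;L_2^c(\Gamma))$ — forces the same bound on $G_c(H^c_1(\mathcal{N}_d))$ with a dimensional constant. I would do this by either of two equivalent routes. (a) Invoke Mei's atomic decomposition of $H^c_1(\mathcal{N}_d)$, whose atoms are $a$ supported on a cube $Q$ with $\int_Q a=0$ and $\|(\tfrac1{|Q|}\int_Q|a|^2)^{1/2}\|_{\mathcal{M}}\le|Q|^{-1/2}$, and reduce to the uniform estimate $\|T_ma\|_{H^c_1(\mathcal{N}_d)}\lesssim\|T_m\|_{H_1(\mathbb{R}^d)\to H_1(\mathbb{R}^d)}$ for every atom, proved by splitting the contributions near and far from $2Q$ and testing against $L_2(\mathcal{M})$-vectors to fall back on the scalar multiplier bound. (b) Dually, use that $T_m$ bounded on $H_1(\mathbb{R}^d)$ is equivalent to $T_{\overline m}$ bounded on $BMO(\mathbb{R}^d)$, combine it with the identity $\|b\|_{BMO^c(\mathcal{N}_d)}=\sup_{\|\xi\|_{L_2(\mathcal{M})}\le1}\|b\xi\|_{BMO(\mathbb{R}^d;L_2(\mathcal{M}))}$ and with the Hilbert-space-valued $BMO$-multiplier bound (obtained from the scalar one by running the same square-function realization with $L_2(\Gamma)$ replaced by $L_2(\Gamma)\otimes_2 L_2(\mathcal{M})$) to deduce $T_{\overline m}$ bounded on $BMO^c(\mathcal{N}_d)$, hence $T_m$ bounded on $H^c_1(\mathcal{N}_d)$ by duality. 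Either way, chaining this with the two displays gives $\|T_mf\|_{H^c_1(\mathcal{N}_d)}\le c_d\,\|T_m\|_{H_1(\mathbb{R}^d)\to H_1(\mathbb{R}^d)}\,\|f\|_{H^c_1(\mathcal{N}_d)}$, and summing the column and row estimates yields the lemma.

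The main obstacle is precisely this last step: passing from scalar $H_1$-boundedness to the operator/Hilbert-space-valued estimate on the range of $G_c$. One cannot simply invoke a vector-valued $L_1$ bound for $T_m$ — that already fails for the Hilbert transform — so the cancellation encoded in the Hardy subspace (equivalently, the atomic structure, or the $BMO$ side) must genuinely be exploited; and one must carry the \emph{column} structure of the coefficient algebra through every step, so as to land in $H^c_1(\mathcal{N}_d)$ rather than in a strictly larger space, which is the delicate feature special to the noncommutative setting.
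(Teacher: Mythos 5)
The paper offers no proof of this lemma at all: it is quoted as is from Mei's memoir (\cite{Mei09}, Theorem 6.4), so there is nothing internal to compare your argument against, and the comparison has to be with Mei's own reasoning. Judged on that basis, your route (b) is essentially the right (and essentially the known) argument: reduce to the column space via $f\mapsto f^*$ and the symbol $\overline{m(-\cdot)}$; realize $H_1^c(\mathcal{N}_d)$ inside $L_1(\mathcal{N}_d;L_2^c(\Gamma))$ through the Lusin square function, through which $T_m$ acts as $T_m\otimes\mathrm{id}$; upgrade the scalar $H_1$ bound to a Hilbert-space--valued bound; dualize to $BMO(\mathbb{R}^d;H)$; use the testing identity $\|b\|_{BMO^c(\mathcal{N}_d)}=\sup_{\|\xi\|_{L_2(\mathcal{M})}\le1}\|b\xi\|_{BMO(\mathbb{R}^d;L_2(\mathcal{M}))}$ together with the fact that $T_{\overline m}\otimes\mathrm{id}_{\mathcal{M}}$ commutes with right multiplication by a fixed $\xi\in L_2(\mathcal{M})$; and finish with Mei's duality $(H_1^c(\mathcal{N}_d))^*=BMO^c(\mathcal{N}_d)$ (plus the routine density/adjoint bookkeeping needed to descend from the $BMO^c$ bound back to $H_1^c$). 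The dimensional constant then comes from the square-function equivalences, as it should.

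Route (a), however, is not an ``equivalent'' route as you state it, and it contains a genuine gap: the splitting of $T_m a$ into contributions near and far from $2Q$ requires pointwise kernel estimates of H\"ormander type, whereas the only hypothesis here is boundedness of the multiplier on scalar $H_1(\mathbb{R}^d)$; such a multiplier need not have any off-diagonal kernel decay, so the ``far'' part cannot be controlled this way. (Your atom normalization is also off: Mei's column atoms are normalized by an $L_1(\mathcal{M};L_2^c(Q))$-type condition, $\big\|\big(\int_Q|a|^2\,dx\big)^{1/2}\big\|_{L_1(\mathcal{M})}\le|Q|^{-1/2}$, not by an $\mathcal{M}$-norm bound, which is a BMO-type normalization.) Since (b) alone suffices, the proposal still reaches the lemma, but you should drop or repair (a), and in (b) you should make explicit the mechanism behind ``the scalar bound transfers to $L_2(\Gamma)\otimes_2 L_2(\mathcal{M})$-valued data'' --- e.g. Gaussian randomization (Marcinkiewicz--Zygmund) applied through the square-function embedding --- precisely because a naive vector-valued $L_1$ extension is what fails, as you yourself note.
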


  \begin{lemma}[\cite{Miyachi80}, Corollary 1]\label{opH1}
  Fix $t\in\mathbb{R}_+$. If $\nu\geq s_1$, one has
  \begin{equation*}
    \max\{\|T_{m_{0,t}^\nu}\|_{H_1(\mathbb{R}^d)\rightarrow H_1(\mathbb{R}^d)},\|T_{m_{1,t}^\nu}\|_{H_1(\mathbb{R}^d)\rightarrow H_1(\mathbb{R}^d)}\}\leq C_{t},
  \end{equation*}
  where $C_{t}$ is locally bounded with respect to $t$.

  \end{lemma}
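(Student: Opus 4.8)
\textbf{Plan of proof for Theorem \ref{fix time1}.}
The plan is to combine the two lemmas stated just above (Lemma \ref{H1mul} and Lemma \ref{opH1}) with the operator-valued Hardy--BMO theory and analytic interpolation. First I would fix $i\in\{0,1\}$ and a time $t\in\mathbb{R}_+$, and record the two endpoint estimates. At $p=2$, since $m_{i,t}^\nu$ is a bounded function on $\mathbb{R}^d$ whenever $\nu\geq 0$ (indeed $\|m_{i,t}^\nu\|_\infty\leq C_t$), the Plancherel theorem on $L_2(\mathcal{N}_d)\cong L_2(\mathbb{R}^d;L_2(\mathcal{M}))$ gives $\|T_{m_{i,t}^\nu}(f)\|_{L_2(\mathcal{N}_d)}\leq C_t\|f\|_{L_2(\mathcal{N}_d)}$ directly. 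At the Hardy-space endpoint, Lemma \ref{opH1} gives, for $\nu\geq s_1=\frac{d-1}{2}$, that $T_{m_{i,t}^\nu}$ is bounded on the classical Hardy space $H_1(\mathbb{R}^d)$ with norm $C_t$ locally bounded in $t$; Lemma \ref{H1mul} then upgrades this automatically to boundedness of $T_{m_{i,t}^\nu}$ on the operator-valued Hardy space $H_1^{cr}(\mathcal{N}_d)$, again with norm $\lesssim_d C_t$.

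Next I would invoke the noncommutative Hardy--BMO duality and interpolation scale (from Mei's work \cite{Mei09}), which says that $H_1^{cr}(\mathcal{N}_d)$ and $L_2(\mathcal{N}_d)$ interpolate, for $0<\alpha<1$, to $L_q(\mathcal{N}_d)$ with $\frac1q=1-\frac{\alpha}{2}$, i.e. $(H_1^{cr}(\mathcal{N}_d),L_2(\mathcal{N}_d))_\alpha=L_q(\mathcal{N}_d)$ for $1<q\leq 2$, and dually $(L_2(\mathcal{N}_d),\mathrm{BMO}^{cr}(\mathcal{N}_d))_\alpha=L_q(\mathcal{N}_d)$ for $2\leq q<\infty$ via the BMO endpoint. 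The subtle point is that the required Sobolev exponent $\nu$ depends on $p$: at $p=1$ we need $\nu\geq s_1=\frac{d-1}{2}$, at $p=2$ we need only $\nu\geq 0$, and the claim for general $p$ is $\nu\geq s_p=(d-1)|\frac12-\frac1p|$, which interpolates linearly between these. To capture this $p$-dependence I would set up an \emph{analytic family} of operators $T_{m_{i,t}^{z}}$ indexed by a complex parameter $z$ in a strip, where on the line $\Re z=s_1$ one has the $H_1^{cr}\to H_1^{cr}$ bound (Lemmas \ref{opH1} and \ref{H1mul}, noting the bounds are uniform in the imaginary direction since $|t\xi|^{i\,\Im z}$ contributes only harmless logarithmic-type factors that are absorbed) and on the line $\Re z=0$ one has the $L_2\to L_2$ bound; then Stein's interpolation theorem for analytic families of operators on the couple $(H_1^{cr}(\mathcal{N}_d),L_2(\mathcal{N}_d))$ yields, for $2\leq p<\infty$, the estimate $\|T_{m_{i,t}^\nu}\|_{L_p(\mathcal{N}_d)\to L_p(\mathcal{N}_d)}\leq C_{p,t}$ precisely for $\nu=s_p$, hence also for $\nu\geq s_p$ by a trivial comparison (the extra smoothing multiplier $(1+|\xi|^2)^{-(\nu-s_p)/2}$ is an $L_\infty$ Fourier multiplier and thus bounded on every $L_p(\mathcal{N}_d)$ for $1<p<\infty$ by Plancherel at $p=2$ together with the same interpolation). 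Finally, the range $1<p<2$ follows by the duality argument already indicated in the excerpt: $T_{m_{i,t}^\nu}$ is (formally) self-adjoint up to complex conjugation of the symbol, so the $L_p$ bound for $2<p'<\infty$ with exponent $s_{p'}=s_p$ transfers to $L_p$.

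The main obstacle I anticipate is organizing the analytic-family interpolation so that the correct $p$-dependent exponent $s_p$ emerges, rather than the worse uniform exponent $s_1$. Concretely this requires: (a) choosing the analytic family $z\mapsto (1+|\xi|^2)^{z/2}\big((1+|\xi|^2)^{-s_1/2}$-normalized symbols$\big)$ so that the two endpoint lines carry, respectively, the Hardy-space and $L_2$ estimates with constants growing at most polynomially (in fact, admissibly) along the imaginary axis; (b) verifying the admissibility/analyticity hypotheses of Stein interpolation in the noncommutative Hardy-space setting --- this is where one genuinely uses the structure of $H_1^{cr}(\mathcal{N}_d)$ and its compatibility with the complex interpolation method, as developed in \cite{Mei09}; and (c) checking that the imaginary powers of the wave symbol, namely $e^{\pm i|t\xi|}(1+|\xi|^2)^{i s/2}/(1+|\xi|^2)^{s_1/2}$, remain in the class to which Lemma \ref{opH1} applies with constants uniform in the real parameter $s$ --- this is true because Miyachi's and Peral's results are stated precisely for this analytic family and the imaginary shift only rotates phases. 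Once these points are in place, the remaining steps are routine, and the locally-bounded-in-$t$ conclusion is preserved because every constant invoked (Plancherel, Lemma \ref{opH1}, Lemma \ref{H1mul}, the interpolation constants) is either absolute or locally bounded in $t$.
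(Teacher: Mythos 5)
The statement you were asked to prove is Lemma \ref{opH1}: boundedness on the \emph{classical, scalar-valued} Hardy space $H_1(\mathbb{R}^d)$ of the fixed-time wave multipliers $T_{m_{i,t}^\nu}$, $i=0,1$, for $\nu\geq s_1=\frac{d-1}{2}$, with constant locally bounded in $t$. Your proposal never addresses this statement: it is a plan of proof for Theorem \ref{fix time1}, and at its Hardy-space endpoint it explicitly invokes ``Lemma \ref{opH1} gives, for $\nu\geq s_1$, \dots'' as a known input. Relative to the assigned statement the argument is therefore circular --- no step in your text establishes the $H_1(\mathbb{R}^d)\to H_1(\mathbb{R}^d)$ bound itself. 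For comparison, the paper does not prove this lemma either: it is quoted directly from Miyachi \cite{Miyachi80} (Corollary 1), with the companion $L_p$ results in Peral \cite{Per80}; so a blind proof would have to reproduce that classical argument, which your sketch does not attempt.

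Concretely, what is missing is a purely commutative oscillatory-integral argument: one must show that the scalar symbol $m_{i,t}^\nu$, whose principal part is $e^{\pm it|\xi|}(1+|\xi|^2)^{-\nu/2}$ with $\nu\geq\frac{d-1}{2}$, maps $H_1$-atoms to functions with uniformly bounded $L_1$ (molecule) norms. The standard route is a dyadic decomposition in $|\xi|\sim 2^k$, stationary-phase kernel estimates showing the $k$-th piece is concentrated in a shell of thickness $\sim 2^{-k}$ about $|x|=t$ with $L_1$ norm $O\bigl(2^{k(\frac{d-1}{2}-\nu)}\bigr)$, and, at the endpoint $\nu=\frac{d-1}{2}$, exploitation of the atom's cancellation (or Miyachi's original complex-interpolation scheme between $H_p$ spaces); the local boundedness in $t$ comes from tracking how these kernel bounds scale with $t$. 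None of the tools you list --- Plancherel at $p=2$, Mei's transfer result (Lemma \ref{H1mul}), noncommutative Hardy--BMO interpolation, Stein's analytic-family interpolation --- bears on this scalar estimate. As a side remark, if your text is instead read as a proposal for Theorem \ref{fix time1}, it is broadly compatible with the paper, but the paper obtains the $p$-dependent exponent $s_p$ more simply by interpolating the single operator $T_{m_{i,t}^i}$ between the endpoints $H_1^{cr}(\mathcal{N}_d)\to L_{1,-s_1+i}(\mathcal{N}_d)$ and $L_2(\mathcal{N}_d)\to L_{2,i}(\mathcal{N}_d)$, letting the Sobolev scale carry the $p$-dependence, with no analytic family needed.
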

  Now, let us proceed with the proof of Theorem \ref{fix time1}.
  \begin{proof}[\textbf{Proof of Theorem \ref{fix time1}}]
    Let us denote the operator-valued Sobolev space as  $L_{p,s}(\mathcal{N}_d)$, To establish Theorem \ref{fix time1}, it suffices to demonstrate the following inequality for $1<p<\infty$:
    \begin{equation}\label{fix es2}
      \|T_{m_{i,t}^{s_p}}(f)\|_{L_p(\mathcal{N}_d)}=\|T_{m_{i,t}^i}f\|_{L_{p,-s_p+i}(\mathcal{N}_d)}\leq C_{p,t}\|f\|_{L_p(\mathcal{N}_d)}, \quad i=0,1.
    \end{equation}
     We can prove the inequality \eqref{fix es2} using interpolation and duality: Combining Lemma \ref{opH1}, Lemma \ref{H1mul}, and the fact that $H^{cr}_1(\mathcal{N}_d)\subset L_1(\mathcal{N}_d)$, we obtain
    \begin{equation*}
      \big\|T_{m_{i,t}^i}f\big\|_{L_{1,-s_1+i}(\mathcal{N}_d)}=\|T_{m_{i,t}^{s_1}}f\big\|_{L_1(\mathcal{N}_d)}\leq \big\|T_{m_{i,t}^{s_1}}f\big\|_{H^{cr}_1(\mathcal{N}_d)}\leq C_t\|f\|_{H^{cr}_1(\mathcal{N}_d)}.
    \end{equation*}
    Applying the Plancherel theorem, we get
    \begin{equation*}
      \big\|T_{m_{i,t}^i}f\big\|_{L_{2,-s_2+i}(\mathcal{N}_d)}=\big\|T_{m_{i,t}^{s_2}}f\big\|_{L_{2}(\mathcal{N}_d)}\leq C_t\|f\|_{L_2(\mathcal{N}_d)}.
    \end{equation*}
    Then, by the complex interpolation, the estimate
     \begin{equation*}
       \|T_{m_{i,t}^i}f\|_{L_{p,-s_p+i}(\mathcal{N}_d)}\leq C_{p,t}\|f\|_{L_p(\mathcal{N}_d)}
     \end{equation*}
     holds for $1<p\leq2$. Using duality, we immediately obtain the inequality \eqref{fix es2} for $p\in(2,\infty)$.
     Thus, we have shown that Theorem \ref{fix time1} holds for all $1<p<\infty$, completing the proof.
  \end{proof}
\noindent {\bf Acknowledgements} \
This work is partially supported by the National Natural Science Foundation of China (No. 12071355, No. 12325105, No. 12031004, and No. W2441002).
X. Lai is supported by National Natural Science Foundation of China (No. 12322107, No. 12271124 and No. W2441002) and Heilongjiang Provincial Natural Science Foundation of China (YQ2022A005).


\begin{thebibliography}{99}	
\bibitem{Anantharaman} C. Anantharaman-Delaroche, \textit{On ergodic theorems for free group actions on noncommutative spaces.}  Probab. Theory Related Fields \textbf{135} (2006), no. 4, 520-546.
\bibitem{BL}J.Bergh, J.L\"{o}fstr\"{o}m, \textit{ Interpolation spaces. An introduction.} Grundlehren der Mathematischen Wissenschaften, No. \textbf{223}. Springer-Verlag, Berlin-New York, 1976.

\bibitem{Bou85}J.  Bourgain, \textit{On  the  spherical maximal  function  in  the plane}, IHES,  June  1985.

\bibitem{Bo86}J. Bourgain, \textit{Averages in the plane over convex curves and maximal operators.}  J. Analyse Math. \textbf{47} (1986), 69-85.
\bibitem{BD}J. Bourgain, C. Demeter, \textit{The proof of the $\ell^2$ decoupling conjecture.}  Ann. of Math. (2) \textbf{182} (2015), no.1, 351-389.
\bibitem{CP21}L. Cadilhac, J. Conde-Alonso, and J. Parcet, \textit{Spectral multipliers in group algebras and noncommutative Calder\'{o}n-Zygmund theory.} J. Math. Pures Appl. (9) \textbf{163} (2022), 450-472.
\bibitem{CR}L. Cadilhac, E. Ricard, \textit{ Revisiting the Marcinkiewicz theorem for non commutative maximal functions.} arXiv:2210.17201v2, to appear in Publicacions Matem$\grave{a}$tiques.

\bibitem{CPPR15} M. Caspers, J. Parcet, M. Perrin, E. Ricard,  \textit{Noncommutative de Leeuw theorems.} Forum Math. Sigma 3 (2015), Paper No. e21, 59 pp.

\bibitem{CGS}P. S. Chakraborty, D. Goswami, and K. B. Sinha, \textit{Probability and geometry on some noncommutative manifolds.}  J. Operator Theory. \textbf{49} (2003), 185-201.
\bibitem{CHWW1}D. Chen, G. Hong, L. Wang, and W. Wang, \textit{Well-posedness of Navier-Stokes equation on quantum Euclidean spaces}, preprint.
\bibitem{CHWW2}D. Chen, G. Hong, L. Wang, and W. Wang, \textit{The regularity of Navier-Stokes equation on quantum Euclidean spaces}, in progress.
\bibitem{CXY}Z. Chen, Q. Xu, and Z. Yin, \textit{Harmonic analysis on quantum tori.} Commun. Math. Phys. \textbf{322} (2013), 755--805.
\bibitem{DN01}  M. Douglas, N. Nekrasov, \textit{ Noncommutative field theory.} Rev. Modern Phys. \textbf{71} (2001), no. 4, 977-1029.
\bibitem{FHW23}Z. Fan, G. Hong, and L. Wang, \textit{Sharp endpoint $L_p$ estimates of quantum Schr\"{o}dinger groups.}   Commun. Math. Phys. \textbf{406} (2025), no. 1, Paper No. 13.
\bibitem{GJM} L. Gao, M. Junge, and E. McDonald, \textit{Quantum Euclidean spaces with noncommutative derivatives.} \textbf{16} J. Noncommut. Geom. (2022), no.1, 153-213.
\bibitem{GIV} V. Gayral, B. Iochum, and J. V\'{a}rilly, \textit{Dixmier traces on noncompact isospectral deformations.}  J. Funct. Anal. \textbf{237}  (2006), no. 2, 507-539.
\bibitem{GF3}L. Grafakos, \textit{Classical Fourier analysis.} Third edition. Graduate Texts in Mathematics, \textbf{249}. Springer, New York, 2014. xviii+638 pp.

\bibitem{GF}L. Grafakos, \textit{Modern Fourier Analysis.} Third edition. Graduate Texts in Mathematics, \textbf{250}. Springer, New York, 2014. xvi+624 pp.

\bibitem{GJP}A. Gonz\'{a}lez-P\'{e}rez, M. Junge, and J. Parcet, \textit{Singular integrals in quantum Euclidean spaces.} Mem. Amer. Math. Soc. \textbf{272} (2021), no.1334.

\bibitem{GWZ20}  L. Guth, H. Wang, and R. Zhang,  \textit{A sharp square function estimate for the cone in $\mathbb{R}^3$.} Ann. of Math. (2) \textbf{192} (2020), no. 2, 551-581.

\bibitem{Hong+} G. Hong, \textit{The theory of function spaces on quantum Euclidean spaces.} In progress.
\bibitem{Hong} G. Hong, \textit{The behavior of the bounds of matrix-valued maximal inequality in $\mathbb{R}^n$ for large $n$.}  Illinois J. Math. \textbf{57} (2013), no. 3, 855-869.
\bibitem{HJP} G. Hong, M. Junge, and J. Parcet, \textit{Algebraic Davis decomposition and asymmetric Doob inequalities.}   Commun. Math. Phys. \textbf{346} (2016), no. 3, 995-1019.
\bibitem{HLSX} G. Hong, X. Lai, S. Ray, and B. Xu, \textit{Noncommutative maximal strong $L_p$ estimates of Calder\'{o}n-Zygmund operators.}    arXiv:2212.13150, 2022, to appear in  Israel. J. Math.

\bibitem{HLW23} G. Hong, X. Lai, and L. Wang, \textit{Fourier restriction estimates on quantum Euclidean spaces.}    Adv. Math. \textbf{430} (2023), Paper No. 109232, 24 pp.

\bibitem{HLX} G. Hong, X. Lai, and B. Xu, \textit{ Maximal singular integral operators acting on noncommutative $L_p$-spaces. }  Math. Ann. \textbf{386} (2023), no. 1-2, 375-414.
\bibitem{HLW} G. Hong, B. Liao, and S. Wang, \textit{Noncommutative maximal ergodic inequalities associated with doubling conditions. }    Duke Math. J. \textbf{170} (2021), no. 2, 205-246.
\bibitem{HRW} G. Hong, S. Ray, and S. Wang, \textit{Maximal ergodic inequalities for some positive operators on noncommutative $L_p$-spaces.}  J. Lond. Math. Soc. (2) \textbf{108} (2023), no. 1, 362-408.
\bibitem{HWW} G. Hong, S. Wang, and X. Wang, \textit{Pointwise convergence of noncommutative Fourier series.} Mem. Amer. Math. Soc. \textbf{302} (2024), no. 1520.

\bibitem{Jun02}M. Junge, \textit{Doob's inequality for non-commutative martingales.} J. Reine Angew. Math. \textbf{549} (2002) 149-190.

\bibitem{JP10}M. Junge,  J. Parcet, \textit{Mixed-norm inequalities and operator space $L_p$ embedding theory.} Mem. Amer. Math. Soc. \textbf{203} (2010), no. 953, vi+155 pp.
\bibitem{JPMX} M. Junge, J. Parcet, T. Mei, and R. Xia, \textit{Algebraic Calder\'{o}n-Zygmund theory.} Adv. Math. \textbf{376} (2021), Paper No. 107443, 72 pp.
\bibitem{JX07}M. Junge, Q. Xu, \textit{Noncommutative maximal ergodic theorems.} J. Amer. Math. Soc. \textbf{20} (2007) 385-439.

\bibitem{Lai21} X. Lai, \textit{Sharp estimates of noncommutative Bochner-Riesz means on two-dimensional quantum tori.} Commun. Math. Phys.\textbf{390} (2022), no. 1, 193-230.

\bibitem{Li18} W. Li, \textit{Maximal functions associated with nonisotropic dilations of hypersurfaces in $\mathbb{R}^3$}.  J. Math. Pures Appl. (9) \textbf{113} (2018), 70-140.

\bibitem{LSZ} G. Levitina, F. Sukochev, and D. Zanin, \textit{Cwikel estimates revisited.} Proc. Lond. Math. Soc. (3) \textbf{120} (2020), no. 2, 265-304.
\bibitem{M23} E. McDonald, \textit{Nonlinear partial differential equations on noncommutative Euclidean spaces.}  J. Evol. Equ. \textbf{24}(2024), no. 1, Paper No. 16, 58 pp.
\bibitem{NS98} N. Nekrasov, A. Schwarz, \textit{Instantons on noncommutative $\mathbb{R}^4$, and $(2,0)$ superconformal six-dimensional theory.} Commun. Math. Phys. \textbf{198} (1998), no. 3, 689-703.
\bibitem{Xiong1} E. McDonald, F. Sukochev, and X. Xiong, \textit{Quantum differentiability on noncommutative Euclidean spaces.} Commun. Math. Phys. \textbf{379} (2020), 491-542.

\bibitem{Mei09}T. Mei, \textit{Operator valued Hardy spaces.} Mem. Amer. Math. Soc. \textbf{188} (2007) vi+64 pp.

\bibitem{MP}T. Mei, J. Parcet, \textit{Pseudo-localization of singular integrals and noncommutative Littlewood-Paley inequalities.} Int. Math. Res. Not. IMRN (2009), no. 8, 1433-1487.

\bibitem{Miyachi80} A. Miyachi, \textit{On some estimates for the wave equation in $L_p$ and $H_p$.} J. Fac. Sci. Univ. Tokyo Sect. IA Math. \textbf{27} (1980), no. 2, 331-354.

\bibitem{MSS} G.  Mockenhaupt, A. Seeger, and C. D. Sogge, \textit{ Wave front sets, local smoothing and Bourgain's circular maximal theorem}.  Ann. of Math. (2). \textbf{136} (1992), no.1, 207-218.
\bibitem{Mu06} M. Musat,\textit{Interpolation between non-commutative BMO and non-commutative $L_p$-spaces.}  J. Funct. Anal., \textbf{202} (2003), no. 1, 195-225.
\bibitem{Per80} J. Peral, \textit{$L_p$ estimates for the wave equation.}  J. Funct. Anal. \textbf{36} (1980), no. 1, 114-145.
\bibitem{Pis98} G. Pisier, \textit{Non-commutative vector valued $L_p$-spaces and completely p-summing maps.} Ast\'erisque No. 247 (1998), vi+131 pp.
\bibitem{PX03}G. Pisier, Q. Xu, \textit{Noncommutative $L^p$ spaces.} Handbook of Geometry of Banach spaces, Vol. 2 (2003), 1459--1517.
\bibitem{Re} M. A.  Rieffel, \textit{Deformation quantization for actions of $\mathbb{R}^d$.}   Mem. Amer. Math. Soc. \textbf{106} (1993), no. 506, x+93 pp.
\bibitem{Rosenberg}J. Rosenberg, \textit{Noncommutative variations on Laplace's equation.}  Anal. PDE \textbf{1} (2008), no. 1, 95-114.
\bibitem{SW99} N. Seiberg, E. Witten, \textit{String theory and noncommutative geometry.}  J. High Energy Phys. (1999), no. 9, Paper 32, 93 pp.
\bibitem{Sogge92} C. D. Sogge, \textit{ Propagation of singularities and maximal functions in the plane}. Invent. Math. \textbf{104} (1991), no. 2, 349-376.
\bibitem{Sogge17} C. D. Sogge, \textit{ Fourier integrals in classical analysis. Second edition.} Cambridge Tracts in Mathematics, \textbf{210}. Cambridge University Press, Cambridge, 2017. xiv+334 pp.

\bibitem{St76} E. M. Stein, \textit{Maximal functions. I. Spherical means.} Proc. Nat. Acad. Sci. U.S.A. \textbf{73} (1976), no. 7, 2174-2175.

\bibitem{St93} E. M. Stein, \textit{ Harmonic analysis: real-variable methods, orthogonality, and oscillatory integrals}. Princeton University Press, 1993.

\bibitem{St78} E. M. Stein, S.Wainger, \textit{Problems in harmonic analysis related to curvature.} Bull. Amer. Math. Soc. 84 (1978), no. 6, 1239-1295.
\bibitem{Tao}T. Tao, \textit{Some recent progress on the restriction conjecture.} Fourier analysis and convexity, 217-243, Appl. Numer. Harmon. Anal., Birkh\"{a}user Boston, Boston, MA, 2004.
\bibitem{Wo00}T. Wolff, \textit{Local smoothing type estimates on $L_p$ for large $p$.}  Geom. Funct. Anal. \textbf{10} (2000), no. 5, 1237-1288.
\bibitem{Xiong18}X. Xiong, Q. Xu, and Z. Yin, \textit{Sobolev, Besov and Triebel-Lizorkin spaces on
quantum tori.} Mem. Amer. Math. Soc. \textbf{252} (2018).
%
%
\end{thebibliography}
\end{document}